\documentclass[11pt]{amsart}
\usepackage{mathrsfs}
\usepackage{amssymb,color}
\usepackage{amsmath}
\usepackage{amsthm}
\usepackage{lineno}
\setlength{\topmargin}{-0.8cm}
\setlength{\oddsidemargin}{-12pt}
\setlength{\evensidemargin}{-10pt}
\setlength{\textwidth}{478pt}
\setlength{\textheight}{690pt}
\setlength{\marginparwidth}{0pt}
\makeatletter
\def\LaTeX{\leavevmode L\raise.42ex
    \hbox{\kern-.3em\size{\sf@size}{0pt}\selectfont A}\kern-.15em\TeX}
\makeatother

\newcommand{\BibTeX}{{\rm B\kern-.05em{\sc
          i\kern-.025emb}\kern-.08em\TeX}}

\newcommand{\lbl}[1]{\label{#1}}

\newtheorem{theo}{Theorem}[section]

\newtheorem{lem}[theo]{Lemma}
\newtheorem{remark}[theo]{Remark}
\newtheorem{definition}[theo]{Definition}
\newtheorem{prop}[theo]{Proposition}

\newcommand\ee{\end{equation}}
\newcommand\bes{\begin{eqnarray}}
\newcommand\ees{\end{eqnarray}}
\newcommand\bess{\begin{eqnarray*}}
\newcommand\eess{\end{eqnarray*}}

\begin{document}
\title[The weak competition system with a free boundary]
{asymptotic spreading speed for the weak competition system with a free boundary}

\author[Z.G. Wang, H. Nie and Y. Du]{Zhiguo Wang$^{\dag}$, Hua Nie$^{\dag}$, Yihong Du$^{\ddag,\ast}$}

\thanks{$^\dag$ School of Mathematics and Information Science,
Shaanxi Normal University, Xi'an, Shaanxi {\rm710119,} China.}

\thanks{$^\ddag$ School of Science and Technology,
University of New England, Armidale, NSW {\rm2351,} Australia.}

\thanks{$^\ast$The corresponding author. E-mail address: ydu@turing.edu.au (Y. Du).}


\keywords{Competition model; Free boundary problem; semi-wave; Asymptotic spreading speed.}

\subjclass{35B40, 35K51, 35R51, 92B05}
\date{\today}

\begin{abstract} \small This paper is concerned with a diffusive Lotka-Volterra type competition system with a free boundary in one space dimension. Such a system may be used to describe the invasion of a new species into the habitat of a native competitor.
We show that the long-time dynamical behavior of the system is determined by a spreading-vanishing dichotomy, and provide sharp criteria for spreading and vanishing of the invasive species. Moreover, we determine the asymptotic spreading speed of the invasive species when its spreading is successful, which involves two systems of traveling wave type equations, and is highly nontrivial to establish.
\end{abstract}
\maketitle

\section{Introduction}
{\setlength{\baselineskip}{16pt}{\setlength\arraycolsep{2pt}
The classical Lotka-Volterra reaction-diffusion system
\begin{linenomath*}\bes\left\{
\begin{aligned}
&u_t=d_1u_{xx}+u(a_1-b_1u-c_1v),&x\in\mathbb{R},&\quad t>0,\\
&v_t=d_2v_{xx}+v(a_2-b_2v-c_2u),&x\in\mathbb{R},&\quad t>0
\end{aligned}
\right.\lbl{section1-a1}\ees\end{linenomath*}
is a model frequently used to describe competitive behavior between two distinct species.
Here $u(x, t)$ and $v(x, t)$ denote the population densities of two competing species at the position
$x$ and time $t$; the constants $d_i, a_i,b_i$ and $c_i\; (i=1,2)$ are the diffusion rates, intrinsic growth rates, intra-specific competition rates, and inter-specific competition rates, respectively,  all of which are assumed to be positive. By setting
$$\begin{aligned}
\hat{u}(x,t):=\frac{b_1}{a_1}u\left(\sqrt{\frac{d_1}{a_1}}x, \frac{t}{a_1}\right),
&\quad\hat{v}(x,t):=\frac{b_2}{a_2}v\left(\sqrt{\frac{d_1}{a_1}}x, \frac{t}{a_1}\right),\\
d:=\frac{d_2}{d_1},\quad r:=\frac{a_2}{a_1},&\quad k:=\frac{a_2c_1}{a_1b_2},\quad h:=\frac{a_1c_2}{a_2b_1},
\end{aligned}$$
and dropping the hat signs, system \eqref{section1-a1} becomes the following nondimensional system:
\bes\left\{
\begin{aligned}
&u_t=u_{xx}+u(1-u-kv),&x\in\mathbb{R},\quad t>0,\\
&v_t=dv_{xx}+rv(1-v-hu),&x\in\mathbb{R},\quad t>0.
\end{aligned}
\right.\lbl{section1-a2}\ees
It is easy to see that \eqref{section1-a2} has four
equilibria: $(0,0),(1,0),(0,1)$ and $(u^\ast,v^\ast)=\left(\frac{1-k}{1-hk},\frac{1-h}{1-hk}\right)$, with $(u^*, v^*)$ meaningful only when  $(1-k)(1-h)>0$. When the entire real line $\mathbb{R}$ is replaced
by a bounded open interval in $\mathbb{R}$, under the zero Neumann boundary conditions, the asymptotic behavior of the solution $(u(x, t), v(x, t))$
for \eqref{section1-a2} with initial functions $u(x, 0), v(x, 0) > 0$ can be summarized below (see, for example \cite{Mo}):
 \begin{itemize}
   \item [(\textrm{I})] if $k<1<h$, then $\lim_{t\rightarrow\infty}(u(x, t), v(x, t))=(1,0)$;
   \item [(\textrm{II})] if $h<1<k$, then $\lim_{t\rightarrow\infty}(u(x, t), v(x, t))=(0,1)$;
   \item [(\textrm{III})] if $h,k<1$, then $\lim_{t\rightarrow\infty}(u(x, t), v(x, t))=(u^\ast,v^\ast)$;
   \item [(\textrm{IV})]if $h,k>1$, then $\lim_{t\rightarrow\infty}(u(x, t), v(x, t))=(1,0)$ or $(0,1)$ or $(u^*,v^*)$ (depending on the initial condition).
 \end{itemize}
The cases  (I) and (II) are usually called the weak-strong competition case, while (\textrm{III}) and (\textrm{IV}) are known as the  weak and strong competition cases, respectively.

A number of variations of \eqref{section1-a2} (or \eqref{section1-a1}) have been used to model the spreading of a new or invasive species.
For example, to describe the invasion of a new species into the habitat of a native competitor, Du and Lin \cite{DL14} considered the following
 free boundary problem
\bes\left\{
\begin{aligned}
&u_t=u_{xx}+u(1-u-kv),&&0<x<g(t),&\quad t>0,&\\
&v_t=dv_{xx}+rv(1-v-hu),&&0<x<\infty,&\quad t>0,&\\
&u_x(0,t)=v_x(0,t)=0,\quad u(x,t)=0,&&g(t)\leq x<\infty, &\quad t>0,&\\
&g'(t)=-\gamma u_x(g(t),t),&&&t>0,&\\
&g(0)=g_0,\quad u(x,0)=u_{0}(x),&&0\leq x\leq g_0,& &\\
&v(x,0)=v_{0}(x),&& 0\leq x<\infty,& &
\end{aligned}
\right.\lbl{section1-1}\ees
where $x=g(t)$ is usually called a free boundary, which is to be determined together with $u$ and $v$. The initial functions satisfy
\bes
\left\{
\begin{aligned}
&u_0\in C^2([0,g_0]),\; u_0'(0)=u_0(g_0)=0\mbox{ and }u_0(x)>0\mbox{ in }[0,g_0),\\
&v_0\in C^2([0,\infty))\cap L^\infty(0,\infty),\; v_0'(0)=0,\; \liminf_{x\rightarrow\infty}v_0(x)>0\mbox{ and }v_0(x)> 0\mbox{ in }[0,\infty).
\end{aligned}
\right.\lbl{section1-1a}\ees
This model describes how a new species with population density $u$ invades into the habitat of a native competitor $v$. It is assumed that the species $u$ exists initially in the range $0<x<g_0$, invades into  new territory through its invading front  $x=g(t)$. The native species $v$  undergoes diffusion and growth in the available habitat $0<x<\infty$. Both $u$ and $v$ obey a no-flux
boundary condition at $x=0$. The equation $g'(t)=-\gamma u_x(g(t),t)$ means that the invading speed  is proportional to the gradient of the population density of $u$  at the invading front, which coincides with the well-known Stefan free bounary condition.  All parameters $d,k,h,r,g_0$ and $\gamma$ are assumed to be positive. For  more  biological background, we refer to \cite{BDK, DG,DL10,DL14}.

The work \cite{DL14} considers the {\bf weak-strong competition} case only. It is shown in \cite{DL14} that when the invasive species $u$ is the inferior competitor $(k>1>h)$, if the resident species $v$ is already well established initially (i.e., $v_0$ satisfies the conditions in \eqref{section1-1a}), then  $u$ can never invade deep into the underlying habitat, and it dies out before its invading front reaches a certain finite limiting position, whereas if the invasive species $u$ is superior $(h>1>k)$, a spreading-vanishing dichotomy holds for $u$ (see Theorem 4.4 in \cite{DL14}). Moreover, when spreading of $u$ happens, the precise asymptotic spreading speed has been given by Du, Wang and Zhou \cite{DWZ}; it concludes that the spreading speed of $u$ has an asymptotic limit as time goes to infinity, which is determined by a certain traveling wave type system.

In this paper, we examine the {\bf weak competition} case of \eqref{section1-1}, namely the case
$$0<k<1,\; 0<h<1.$$
We will show that a similar spreading-vanishing dichotomy holds for the invasive species $u$, but in sharp contrast to the weak-strong competition case $(h>1>k)$ in \cite{DL14}, where when $u$ spreads successfully, $v$ vanishes eventually (namely $(u,v)\to (1,0)$ as $t\to\infty$), here in the weak competition case, when $u$ spreads successfully, the two populations converge to the co-existence steady state $(u^*, v^*)$ as time goes to infinity. In fact, our results here indicate that the native competitor $v$ always
survives the invasion of $u$. Moreover, we also determine the precise spreading speed of $u$ when the invasion is successful,
which turns out to be the most difficult part of this work and consititutes the main body of the paper. We would like to stress that
while the main steps in the approach  here are similar in spirit to those in \cite{DL14} and \cite{DWZ}, highly nontrivial  changes are needed in the detailed techniques,
due to the different nature of the dynamical behavior of the system under the current weak competition assumption.

\medskip

We now state our main results more precisely. From \cite{DL14} we know that \eqref{section1-1} has a unique solution, which is defined for all $t>0$. Our aim here is to determine its long-time behavior.

\begin{theo}\lbl{f2}Suppose that $h,k\in (0,1)$ and $(u,v,g)$ is the solution of \eqref{section1-1} with $u_0$ and $v_0$ satisying \eqref{section1-1a}. Then, as $t\to\infty$, the following dichotomy holds.

Either {\bf (i) the species $u$ spreads successfully:}
\[\mbox{
$\lim_{t\to\infty}g(t)=\infty$ and \;\; $
\lim_{t\to\infty}(u(\cdot,t),v(\cdot,t))= (u^\ast,v^\ast)$  in $C^2_{loc}([0,\infty))$;}
\]

or
 {\bf (ii) the species $u$ vanishes eventually:}
\[\mbox{ $\lim_{t\to\infty}g(t)<\infty$ and\;\; $
\lim_{t\to\infty}(u(\cdot,t),v(\cdot,t))= (0,1)$  in $C^2_{loc}([0,\infty))$.}
\]
\end{theo}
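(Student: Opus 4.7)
The plan is to split on $g_\infty := \lim_{t\to\infty} g(t)$, which exists in $(g_0, \infty]$ because Hopf's lemma at $x=g(t)$ gives $u_x(g(t),t)<0$ and hence $g'(t)=-\gamma u_x(g(t),t)>0$, so $g$ is strictly increasing. The two alternatives (i) and (ii) correspond respectively to $g_\infty=\infty$ (spreading) and $g_\infty<\infty$ (vanishing). Throughout I would lean on the parabolic comparison principle together with the classical fact that, under weak competition ($hk<1$), the coexistence equilibrium $(u^\ast,v^\ast)$ is a global attractor for the kinetic ODE system.

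For the vanishing case $g_\infty<\infty$, I would follow the argument pioneered in \cite{DL14}. First show $\|u(\cdot,t)\|_{C([0,g(t)])}\to 0$: if this failed, a shifted-in-time subsequence $u(\cdot,t+t_n)$ would subconverge to a nontrivial positive solution on $[0,g_\infty]$ vanishing at $g_\infty$, and the limit would have $g'\equiv 0$ on some interval, contradicting Hopf's lemma applied at $x=g_\infty$. Once $u\to 0$ locally uniformly, the $v$-equation is asymptotically autonomous with reaction $rv(1-v)+\text{o}(1)$; then a standard convergence result for the logistic equation on $[0,\infty)$ with Neumann boundary at $0$ and initial datum satisfying $\liminf_{x\to\infty}v_0>0$ yields $v(\cdot,t)\to 1$ in $C^2_{loc}$.

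For the spreading case $g_\infty=\infty$, I would set up a four-sided monotone iteration. Step A: logistic comparison (dropping the beneficial $-kv$, $-hu$ terms for upper bounds) gives $\limsup_{t\to\infty}u\le 1$ and $\limsup_{t\to\infty}v\le 1$ locally uniformly. Step B: for small $\epsilon>0$, eventually $u\le 1+\epsilon$ on any fixed compact set (which is eventually contained in $[0,g(t)]$); combined with $\liminf_{x\to\infty}v_0>0$ and a compactly supported sub-solution exploiting $1-h(1+\epsilon)>0$, this yields $\liminf v\ge 1-h$ locally uniformly. Step C: with $\limsup v\le 1+\epsilon$ in hand and $g(t)\to\infty$, construct a Dirichlet sub-solution for $u$ of the form $\mu\phi_R(x)$ on an auxiliary interval $[0,R]$ with $R\ll g(t)$, where $\phi_R$ is the principal eigenfunction, obtaining $\liminf u\ge 1-k$ on any prescribed $[0,L]$. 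Step D: feed these improved bounds back into the equations to generate interleaved sequences $(\underline u_n,\bar u_n,\underline v_n,\bar v_n)$ evolving by
\bess
\bar u_{n+1}=1-k\,\underline v_n,\quad \underline u_{n+1}=1-k\,\bar v_n,\quad \bar v_{n+1}=1-h\,\underline u_n,\quad \underline v_{n+1}=1-h\,\bar u_n,
\eess
starting from $(\underline u_0,\bar u_0,\underline v_0,\bar v_0)=(0,1,0,1)$. Because $hk<1$, these linear recursions contract to the unique fixed point $(u^\ast,v^\ast)$; upgrading the resulting local uniform convergence to $C^2_{loc}$ is standard via interior parabolic regularity.

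The main obstacle is Step C: producing a uniform strictly positive lower bound for $u$ on a fixed compact interval despite the moving Dirichlet-type condition $u(g(t),t)=0$. The reaction term $u(1-u-kv)$ is only favorable after the upper bound $v\le 1+\epsilon$ from Step B has already propagated uniformly into the relevant region, so the order of Steps A--C is essential. Moreover the cut-off radius $R$ and the amplitude $\mu$ of the sub-solution must be chosen so that the principal Dirichlet eigenvalue on $[-R,R]$ is strictly less than the effective growth rate $1-k(1+\epsilon)$, while simultaneously $R\ll g(t)$ for $t$ large. These are the ``highly nontrivial'' modifications alluded to in the introduction, and they contrast with the weak-strong case of \cite{DL14}, where one species ultimately vanishes so no such delicate two-sided squeeze is required.
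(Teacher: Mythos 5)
Your proposal follows the paper's proof essentially step for step: the dichotomy is organized around $g_\infty<\infty$ versus $g_\infty=\infty$; in the vanishing case the paper proves $\|u(\cdot,t)\|_{C([0,g(t)])}\to0$ by a compactness argument (after a rescaling $s=g_0x/g(t)$ to a fixed domain) followed by a Hopf-lemma contradiction with $g'(t)\to0$, then $v\to1$ by logistic comparison; in the spreading case the paper uses exactly the four-sided monotone recursion $\overline u_{n+1}=1-k\underline v_n$, $\underline u_{n+1}=1-k\overline v_n$, $\overline v_{n+1}=1-h\underline u_n$, $\underline v_{n+1}=1-h\overline u_n$, initialized at $(1,1,1-k,1-h)$, which contracts to $(u^\ast,v^\ast)$ because $hk<1$. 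The only cosmetic differences are that the paper implements your Steps B--C by comparing with the Dirichlet logistic problem on $[0,l]$ and letting $l\to\infty$, rather than via a principal-eigenfunction sub-solution, and it shifts the index of the recursion by one; these are immaterial.
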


The next theorem provides a sharp criterion for the above spreading-vanishing dichotomy.

\begin{theo}\lbl{f3}   Under the assumptions of Theorem \ref{f2}, there exists $\gamma^*\in [0,\infty)$ depending on $(u_0, v_0)$ such that
alternative {\rm (i)} in Theorem \ref{f2} happens if and only if $\gamma>\gamma^*$. Moreover,
$\gamma^*=0$ $($and hence  $u$ always spreads successfully$)$ if $g_0\geq \frac{\pi}{2\sqrt{1-k}}$, and $\gamma^*>0$ if
 $g_0<\frac{\pi}{2}$.
\end{theo}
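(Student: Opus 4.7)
The plan is to combine strict monotonicity of the solution triple in $\gamma$ with a sharp geometric criterion that forces spreading as soon as the invaded domain strictly exceeds the critical length $L_*:=\pi/(2\sqrt{1-k})$; once that key lemma is in hand, the two extremes $g_0\ge L_*$ and $g_0<\pi/2$ follow quickly. First I would show that for $\gamma_1<\gamma_2$ the corresponding solutions $(u_i,v_i,g_i)$ sharing the same initial data satisfy $g_1\le g_2$, $u_1\le u_2$ and $v_1\ge v_2$ on the common domain. The proof is by contradiction at the first instant when the ordering of the two free boundaries fails, combining Hopf's lemma at the touching point with the two-species comparison principle (after $\tilde v:=1-v$ the system is cooperative because $h,k<1$); this is a direct adaptation of the monotonicity lemma of \cite{DL14} and implies that the spreading set $\Sigma_S:=\{\gamma>0:u\text{ spreads}\}$ is upward-closed.

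\smallskip

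The key lemma is: \emph{if $g(t_0)>L_*$ for some $t_0\ge 0$, then spreading occurs}. Assume otherwise; by Theorem \ref{f2}(ii) one has $g_\infty<\infty$ and $(u(\cdot,t),v(\cdot,t))\to(0,1)$ in $C^2_{loc}([0,\infty))$. Choose $\ep>0$ so small that
\[
1-k(1+\ep)>\bigl(\pi/(2g(t_0))\bigr)^2,
\]
which is possible because $g(t_0)>L_*$, and pick $T\ge t_0$ with $v(x,t)\le 1+\ep$ for all $(x,t)\in[0,g(t_0)]\times[T,\infty)$. On that strip $u$ satisfies $u_t\ge u_{xx}+u(1-k(1+\ep)-u)$ with $u\ge 0$ at $x=g(t_0)$ for $t\ge T$, and a sufficiently small multiple $\delta\phi_*$ of the principal eigenfunction for the Neumann--Dirichlet problem on $[0,g(t_0)]$ (whose principal eigenvalue is negative by the displayed inequality, and satisfying $\phi_*'(0)=\phi_*(g(t_0))=0$) is a stationary subsolution. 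The parabolic comparison principle then yields $u(\cdot,t)\ge\delta\phi_*$ on $[0,g(t_0)]$ for all $t\ge T$, contradicting $u\to 0$. The borderline case $g(t_0)=L_*$ reduces to the strict one by applying the lemma at any $t_1>t_0$, since $g'(t)=-\gamma u_x(g(t),t)>0$ by Hopf's lemma.

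\smallskip

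Granted these two ingredients, the theorem assembles as follows. When $g_0\ge L_*$, the key lemma at $t_0=0$ (or at any $t_0>0$ in the equality case) gives spreading for every $\gamma>0$, so $\gamma^*=0$. When $g_0<\pi/2$, the inequality $-kuv\le 0$ makes $u$ a subsolution of the one-species free-boundary logistic problem with the same initial data, and a standard free-boundary comparison yields $g(t)\le G(t)$ where $G$ is the corresponding one-species boundary; the classical result of \cite{DL10} supplies $\gamma_0>0$ for which $G$ stays bounded whenever $\gamma<\gamma_0$, and Theorem \ref{f2} then forces vanishing, so $\gamma^*>0$. A cigar-shaped lower solution along the lines of \cite{DL14} shows that for all sufficiently large $\gamma$ the boundary $g$ overtakes $L_*$ in finite time, whence by the key lemma $\gamma^*:=\inf\Sigma_S<\infty$. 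Monotonicity gives $(\gamma^*,\infty)\subset\Sigma_S$; openness of $\Sigma_S$ holds because for $\gamma_0\in\Sigma_S$ there is $T$ with $g_{\gamma_0}(T)>L_*$, and continuous dependence of $(u_\gamma,v_\gamma,g_\gamma)$ on $\gamma$ preserves the strict inequality in a neighborhood, to which the key lemma then applies. Thus $\Sigma_S=(\gamma^*,\infty)$, and the dichotomy delivers vanishing for all $\gamma\le\gamma^*$.

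\smallskip

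The main obstacle is the key lemma, because $L_*$ is precisely the critical length for the linearization of the $u$-equation at the vanishing limit $(0,1)$: the principal eigenvalue of the associated Neumann--Dirichlet problem is exactly zero when $g=L_*$, and the whole argument relies on trading the strict inequality $g(t_0)>L_*$ against a small perturbation $k\ep$ in the reaction term. Handling the non-strict case $g_0=L_*$ additionally requires the strict monotonicity of $g$ via Hopf's lemma. Everything else reduces to comparison, continuous dependence, and sub-/super-solution constructions already developed in \cite{DL10,DL14}.
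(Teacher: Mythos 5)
Your proposal is correct, and its skeleton (monotonicity in $\gamma$, the length criterion that $g$ crossing $L_*:=\pi/(2\sqrt{1-k})$ forces spreading, continuous dependence to make the spreading set open, and the one-species logistic comparison for $g_0<\pi/2$) parallels the paper's Lemmas \ref{t5}--\ref{t9}. The genuine difference is in the proof of the length criterion. You freeze the domain $[0,g(t_0)]$ and build a time-independent eigenfunction subsolution $\delta\phi_*$ for the reaction $u(1-k(1+\ep)-u)$, exploiting $1-k(1+\ep)>\bigl(\pi/(2g(t_0))\bigr)^2$; its persistence contradicts $u\to 0$ under the vanishing alternative of Theorem \ref{f2}. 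The paper's Lemma \ref{t5} instead treats $(u,g)$ as an upper-solution pair for the one-species \emph{free-boundary} logistic with reaction $w(1-k(1+\ep)-w)$ and $\underline{g}(T)>\pi/(2\sqrt{1-k(1+\ep)})$, and invokes the spreading direction of \cite{DL10} (Lemma \ref{t3}) to force $\underline{g}\to\infty$; Lemma \ref{t7} then recycles the same comparison to obtain $\gamma^*<\infty$. Your fixed-domain construction is more elementary (it uses only classical parabolic comparison and avoids the spreading half of \cite{DL10}'s criterion), but you then need a separate cigar-shaped lower solution in the style of \cite{DL14} to show $g$ crosses $L_*$ for large $\gamma$, whereas the paper gets this directly from the same one-species free-boundary comparison. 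Both routes are valid; one trades a heavier citation for a heavier construction.
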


When case (i) happens in Theorem \ref{f2}, the spreading speed of $u$ is asymptotically linear, as indicated by the following theorem.

\begin{theo}\lbl{F5} If {\rm (i)} happens in Theorem \ref{f2}, then there exists $c^0>0$ such that
\bess\lim_{t\rightarrow\infty}\frac{g(t)}{t}=c^0.\eess
\end{theo}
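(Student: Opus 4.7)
The plan is to reduce the problem to the existence, uniqueness, and monotone dependence on $\gamma$ of a \emph{semi-wave} for \eqref{section1-1}, and then to bracket $g(t)$ by super- and subsolutions built from this semi-wave. The semi-wave is a pair $(\Phi(\xi),\Psi(\xi))$ together with a speed $c>0$ satisfying, in the moving coordinate $\xi=x-ct$, the ODE system obtained by substituting traveling-wave ansatze into \eqref{section1-1}, with $\Phi$ defined on $(-\infty,0]$, $\Psi$ on all of $\mathbb{R}$, boundary data $\Phi(0)=0$, $(\Phi,\Psi)(-\infty)=(u^\ast,v^\ast)$, $\Psi(+\infty)=1$, and the Stefan relation $c=-\gamma\Phi'(0)$. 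This is the origin of the ``two systems of traveling-wave type equations'' mentioned in the abstract: ahead of the front ($\xi>0$) one has $\Phi\equiv 0$, so $\Psi$ must solve a scalar Fisher--KPP type traveling-wave equation connecting $\Psi(0)$ to $1$; behind the front ($\xi<0$) both species are present and one must solve a genuinely coupled $2\times 2$ system connecting $(0,\Psi(0))$ to the coexistence state $(u^\ast,v^\ast)$. The two pieces have to match in a $C^1$ fashion at $\xi=0$.

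Granted a unique semi-wave speed $c^0=c^0(\gamma)$ that is continuous and strictly increasing in $\gamma$, the asymptotic speed assertion follows from two comparison arguments using the competitive partial order $(u_1,v_1)\preceq (u_2,v_2)\Longleftrightarrow u_1\le u_2,\ v_1\ge v_2$, for which the free-boundary comparison principle from \cite{DL14} applies. For the upper bound, I would fix $\varepsilon>0$, take the semi-wave associated with $\gamma+\varepsilon$, whose speed $c^0(\gamma+\varepsilon)$ exceeds $c^0$, and construct a supersolution $(\bar u,\bar v,\bar g)$ with $\bar g(t)=c^0(\gamma+\varepsilon)\,t+X_0$ and $(\bar u,\bar v)$ built from the corresponding profile (suitably capped so that $\bar v\le v^\ast$ behind the front and $\bar u$ vanishes at $\bar g(t)$). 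Choosing $X_0$ large enough that $(\bar u,\bar v,\bar g)$ dominates the actual solution at some time $t_0$ --- possible because $u_0,v_0$ are compactly controlled and because the locally uniform convergence in Theorem \ref{f2}(i) forces $(u,v)$ to approach $(u^\ast,v^\ast)$ on compact sets --- the comparison principle gives $g(t)\le\bar g(t)$ for all $t\ge t_0$, whence $\limsup_{t\to\infty}g(t)/t\le c^0(\gamma+\varepsilon)$. Sending $\varepsilon\to 0$ yields $\limsup g(t)/t\le c^0$.

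The lower bound $\liminf g(t)/t\ge c^0$ is obtained symmetrically, using the semi-wave with speed $c^0(\gamma-\varepsilon)<c^0$ to build a compactly supported subsolution. The key point is again that, by Theorem \ref{f2}, on any bounded $x$-interval the actual solution $(u,v)$ becomes arbitrarily close to $(u^\ast,v^\ast)$ for large $t$, so after a large enough waiting time a shifted, truncated version of the semi-wave profile can be inserted below $(u,v)$ in the competitive order and above zero at the free boundary. Running the comparison principle forward gives $g(t)\ge c^0(\gamma-\varepsilon)\,t-X_1$ for $t$ large, and then letting $\varepsilon\to 0$ closes the argument.

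The main obstacle is entirely in the semi-wave step, not in the comparison arguments. Unlike in \cite{DWZ}, where the limit state behind the front is $(1,0)$ and the semi-wave collapses to a scalar Fisher--KPP problem, in the weak-competition regime the limit $(u^\ast,v^\ast)$ is a genuine coexistence state, so the problem on $\xi\le 0$ is a true $2\times 2$ ODE system that is not order-preserving in the standard sense, and moreover the $v$-component lives on the entire line and must be matched across $\xi=0$ to a Fisher--KPP wave on $\xi\ge 0$. The existence of $(\Phi,\Psi)$ for each admissible $c$, the $C^1$ matching of $\Psi$ across $\xi=0$, the strict monotonicity of the map $c\mapsto -\gamma\Phi'(0)$ (so that the Stefan equation pins down a unique $c^0$), and the behavior of $c^0(\gamma)$ as $\gamma\to 0^+$ and $\gamma\to\infty$ will all require a careful shooting or upper-and-lower-solution analysis adapted to the competitive dynamics, and together they constitute the technically heaviest part of the argument; once they are in hand, the asymptotic speed follows by the bracketing scheme sketched above.
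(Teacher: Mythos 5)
Your overall plan --- build a semi-wave for \eqref{section1-1}, identify its speed $c_\gamma$ via the Stefan relation, and then squeeze $g(t)$ by comparison --- is the paper's plan, and your description of the semi-wave's structure is essentially sound (note though that the paper converts the coupled part to a cooperative system via $\tilde\psi := 1-\psi$ in \eqref{sect1}, so it \emph{is} order-preserving after a change of variables). The gap is in the comparison step. You propose obtaining the $\varepsilon$-margin by replacing $\gamma$ with $\gamma\pm\varepsilon$, so that the speed shifts to $c_{\gamma\pm\varepsilon}$ while the profile remains a solution of the \emph{unmodified} semi-wave system \eqref{section12}. But every solution of \eqref{section12} has the same end-states regardless of $c$: $\phi_c(\infty)=u^*$, $\psi_c(\infty)=v^*$, $\psi_c(-\infty)=1$, with strict bounds $\phi_c<u^*$ and $v^*<\psi_c<1$ on the whole line. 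Consequently the ordering at the starting time $T_0$ cannot be arranged: for the upper bound you need $\psi\big(\overline g(T_0)-x\big)\le v(x,T_0)$ for all $x\ge0$, yet the left side tends to $1$ as $x\to\infty$ while $v(\cdot,T_0)<1$; and you need $\phi\big(\overline g(T_0)-x\big)\ge u(x,T_0)$ near $x=0$, yet the left side is strictly below $u^*$ while Theorem \ref{f2}(i) gives only $u\to u^*$ locally uniformly, with no one-sided control. ``Suitably capping'' the profile cannot create room where the inequality runs the wrong way.

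The paper's device is instead to perturb the \emph{reaction coefficients}, not $\gamma$. For the upper bound (Lemma \ref{T14}) it uses \eqref{teif}, whose semi-wave $(\phi_\tau,\psi_\tau)$ has $\phi_\tau(\infty)=u^*_\tau>u^*$, $\psi_\tau(\infty)=v^*_\tau<v^*$ and $\psi_\tau(-\infty)=1-2\tau<1$; for the lower bound (Lemma \ref{T15}) the mirror perturbation \eqref{section401}, with $\psi_\delta(-\infty)=1+2\delta$, $\phi_\delta(\infty)=u^*_\delta<u^*$, $\psi_\delta(\infty)=v^*_\delta>v^*$. These shifted end-states are precisely what makes the initial ordering achievable (together with Steps 1--2 of Lemma \ref{T14}, which establish $v(x,T)\ge1-\tau$ for $x$ large and the ODE bounds $u\le u^*_{\tau/2}$, $v\ge v^*_{\tau/2}$ for $t$ large). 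A rescaling reduces both perturbed systems to \eqref{section12} with perturbed data, so Theorem \ref{F4} and Lemma \ref{tlem13} yield unique speeds $c^\tau_\gamma$, $c^\delta_\gamma$ converging to $c_\gamma$ as $\tau,\delta\to0$. Your bracketing scheme cannot close without some such modification of the semi-wave \emph{system itself}, not merely of $\gamma$.
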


As usual, the positive constant $c^0$ in Theorem \ref{F5} is called the {\bf asymptotic spreading speed} of $u$.
The key in the proof of this theorem is to find a way to determine $c^0$. It turns out that two systems of traveling
wave type equations are needed in order  to determine $c^0$. The first one is obtained by looking for traveling wave solutions of \eqref{section1-a2},
namely
\bes
\begin{cases}
\Phi''-c\Phi'+\Phi(1-\Phi-k\Psi)=0,\quad\Phi'>0,\quad -\infty<s<\infty,\\
d\Psi''-c\Psi'+r\Psi(1-\Psi-h\Phi)=0,\quad\Psi'<0,\quad -\infty<s<\infty,\\
(\Phi,\Psi)(-\infty)=(0,1),\quad (\Phi(\infty),\Psi(\infty))=(u^\ast,v^\ast).
\end{cases}\lbl{entirewave0}\ees
(The second system is \eqref{section12} below.)

Clearly, if $(\Phi(s),\Psi(s))$ solves \eqref{entirewave0}, then
$$(u(x,t),v(x,t)):=(\Phi(ct-x),\Psi(ct-x))$$
is a solution of \eqref{section1-a2}, which is often called a traveling wave solution with speed $c$.

By Theorem 4.2 and Example 4.2 in \cite{LWL}, we have the following result on \eqref{entirewave0}:

\begin{prop}\lbl{F6}Assume $h, k\in (0,1)$. Then there exists a critical speed $c_{\ast}\geq 2\sqrt{1-k}$ such that
 \eqref{entirewave0} has a solution when $c\geq c_{\ast}$ and it has no solution when $c<c_{\ast}$.
\end{prop}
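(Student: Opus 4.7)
The plan is to convert the system into a cooperative reaction-diffusion system by an order-reversing change of variables, and then apply standard traveling-wave theory for such systems. Specifically, I would set $\tilde\Psi = 1 - \Psi$. Since we require $\Phi' > 0$ and $\Psi' < 0$, both $\Phi$ and $\tilde\Psi$ become monotone increasing, and the boundary conditions become $(\Phi,\tilde\Psi)(-\infty) = (0,0)$, $(\Phi,\tilde\Psi)(+\infty) = (u^*, 1-v^*)$. The system then takes the form
\begin{align*}
\Phi'' - c\Phi' + \Phi\bigl[(1-k) - \Phi + k\tilde\Psi\bigr] &= 0,\\
d\tilde\Psi'' - c\tilde\Psi' + r(1-\tilde\Psi)(h\Phi - \tilde\Psi) &= 0,
\end{align*}
which is cooperative on the rectangle $[0,u^*]\times[0,1-v^*]$ (each nonlinearity is nondecreasing in the other variable there), and which connects the unstable equilibrium $(0,0)$ to the stable one $(u^*,1-v^*)$.

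For the transformed cooperative problem I would invoke one of the well-developed frameworks for monotone traveling waves: either the coupled upper/lower solution method combined with monotone iteration (as in Wu--Zou), or the spreading-speed / traveling-wave machinery for cooperative systems (as in Li--Weinberger--Lewis and Liang--Zhao). Either approach yields a monotone profile $(\Phi,\tilde\Psi)$ for every $c$ at or above a critical value $c_*$ defined as the asymptotic spreading speed of the linearization at $(0,0)$; reversing the substitution gives the desired solution of \eqref{entirewave0}.

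The lower bound $c_* \geq 2\sqrt{1-k}$ is then obtained from the asymptotic behavior of $\Phi$ near $s = -\infty$. There $\tilde\Psi(s) \to 0$, so $\Phi$ is governed to leading order by the linear equation $\Phi'' - c\Phi' + (1-k)\Phi = 0$, and a positive, monotone solution decaying to zero at $-\infty$ forces the characteristic polynomial $\lambda^2 - c\lambda + (1-k) = 0$ to have real positive roots, i.e.\ $c \geq 2\sqrt{1-k}$. Equivalently, since $\Psi \leq 1$ along the profile, $\Phi$ is a supersolution of the scalar Fisher--KPP equation with reaction $u((1-k) - u)$, whose minimal wave speed is $2\sqrt{1-k}$.

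I expect the main obstacle to be the borderline case $c = c_*$ (existence) together with nonexistence for $c < c_*$. The standard workaround for the borderline case is a limiting argument: take $c_n \searrow c_*$ with solutions $(\Phi_n, \tilde\Psi_n)$ normalized by, say, $\Phi_n(0) = u^*/2$, obtain uniform $C^2_{loc}$ bounds from the equations (using that the right-hand sides are bounded on the invariant rectangle and the equations are of second order), and pass to a convergent subsequence; the normalization combined with monotonicity prevents collapse onto a constant equilibrium, yielding a genuine profile at $c_*$. Nonexistence for $c < c_*$ would be deduced by identifying $c_*$ with the asymptotic spreading speed of the associated Cauchy problem and observing, via the comparison principle for the cooperative system, that a slower monotone traveling profile would contradict this spreading-speed characterization.
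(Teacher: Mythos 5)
Your approach is essentially the same as the paper's: the paper simply cites Theorem 4.2 and Example 4.2 of Li--Weinberger--Lewis \cite{LWL} for this proposition, and those results are proved by exactly the reduction you describe, namely setting $\tilde\Psi=1-\Psi$ to obtain a cooperative system connecting $(0,0)$ to $(u^*,hu^*)$ and then invoking the cooperative spreading-speed/traveling-wave machinery, with the bound $c_*\geq 2\sqrt{1-k}$ coming from the linearization at the unstable equilibrium. Your proposal is correct and in fact supplies more detail (the borderline case $c=c_*$ and the nonexistence argument) than the paper, which treats this as a known result.
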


We can further show that $c_{\ast}\leq 2\sqrt{u^\ast}$. Making use of $c_*$, we have the following result
on the system below which gives traveling wave type solutions to \eqref{section1-1}:
\bes\left\{
\begin{aligned}
&\phi''-c\phi'+\phi(1-\phi-k\psi)=0,\quad \phi'>0\quad\mbox{for }  0<s<\infty,\\
&d\psi''-c\psi'+r\psi(1-\psi-h\phi)=0,\quad \psi'<0\quad\mbox{for }  -\infty<s<\infty,\\
&\phi(s)\equiv0\mbox{ for }s\leq0,\quad\psi(-\infty)=1,\quad(\phi(\infty),\psi(\infty))=(u^\ast,v^\ast).&\\
\end{aligned}
\right.\lbl{section12}\ees

\begin{theo}\lbl{F4}
Assume $h, k\in (0,1)$. If $c\in[0,c_{\ast})$, then the system \eqref{section12} admits a unique solution $(\phi_c,\psi_c)\in [C(\mathbb{R})\cap C^2(\mathbb{R}^+)]\times C^2(\mathbb{R})$; if $c\geq c_{\ast}$, then  \eqref{section12} does not have a  solution.
Moreover, the following conclusions hold:
\begin{itemize}
\item[\rm{(i)}] $\lim_{c\nearrow c_{\ast}}(\phi_c,\psi_c)=(0,1)\mbox{ in }C_{loc}^2([0,\infty))\times C_{loc}^2(\mathbb{R}),$
\item[\rm{(ii)}] for any $\gamma>0$, there exists a unique $c_\gamma\in(0, c_\ast)$ such that $\gamma \phi_{c_\gamma}'(0)=c_\gamma$,
\item[\rm{(iii)}]the function $\gamma\longmapsto c_\gamma$ is strictly increasing and $\lim_{\gamma\rightarrow\infty}c_\gamma= c_\ast.$
\end{itemize}
\end{theo}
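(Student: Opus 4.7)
I follow the overall strategy of \cite{DL14,DWZ}, adapted to the weak-competition regime by working in the competitive partial order $(\phi_1,\psi_1)\preceq(\phi_2,\psi_2)$ iff $\phi_1\leq\phi_2$ and $\psi_1\geq\psi_2$, under which \eqref{section12} becomes order-preserving with $(0,1)$ as a sub-solution and $(u^\ast,v^\ast)$ as a super-solution. For each $c\in[0,c_\ast)$ and large $L$, I first solve the truncated problem on $[0,L]$ for $\phi$ and on $[-L,L]$ for $\psi$ with the Dirichlet data $\phi(0)=0$, $\phi(L)=u^\ast$, $\psi(-L)=1$, $\psi(L)=v^\ast$ (extending $\phi$ by zero on $[-L,0]$ when feeding it into the $\psi$-equation). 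Monotone iteration between $(0,1)$ and $(u^\ast,v^\ast)$ yields a maximal solution $(\phi_c^L,\psi_c^L)$ with $\phi_c^L$ nondecreasing and $\psi_c^L$ nonincreasing, and standard elliptic estimates let me pass to $L\to\infty$ and obtain a $C^2$ candidate $(\phi_c,\psi_c)$.

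\textbf{Non-degeneracy and uniqueness.} The role of the upper bound $c_\ast$ is to guarantee $\phi_c\not\equiv 0$. For the sub-critical range $c<2\sqrt{1-k}\leq c_\ast$ this follows from a compactly supported positive sub-solution built from the principal Dirichlet eigenfunction of $\phi''-c\phi'+(1-k)\phi=0$ on a long interval. For the intermediate range $c\in[2\sqrt{1-k},c_\ast)$, should it be nonempty, I instead compare $(\phi_c^L,\psi_c^L)$ with a suitable shift of the entire wave $(\Phi_{c_\ast},\Psi_{c_\ast})$ from Proposition \ref{F6} to show the truncated solutions stay uniformly above a positive bump on a fixed interval. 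A phase-plane argument then identifies $(u^\ast,v^\ast)$ as the forced limit at $+\infty$, since it is the only interior monotone equilibrium consistent with $\preceq$. Uniqueness follows by a sliding argument: translating one solution rightward by $\tau\gg 1$ places it above a second in the competitive order, and sliding $\tau\searrow 0$, using the Dirichlet condition $\phi(0)=0$ (which fixes translations) and the strong maximum principle, forces equality.

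\textbf{Nonexistence at $c\geq c_\ast$ and limit (i).} For nonexistence I adapt the comparison argument of \cite{DWZ}: assuming a semi-wave at some $c\geq c_\ast$ existed, an ODE comparison with the entire wave $(\Phi_{c_\ast},\Psi_{c_\ast})$, propagated on $[0,\infty)$ via the strong maximum principle, becomes incompatible with the Dirichlet condition $\phi(0)=0$ and the common right-hand limit $(u^\ast,v^\ast)$. For (i), interior $C^{2,\alpha}$ estimates yield $C^2_{loc}$ subsequential limits of $(\phi_c,\psi_c)$ as $c\nearrow c_\ast$; such a limit cannot solve \eqref{section12} at $c_\ast$ (just ruled out), and any nontrivial monotone limit could be translated and reassembled into a nontrivial semi-wave at $c_\ast$, again a contradiction. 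Hence the limit must be $(0,1)$.

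\textbf{Parts (ii) and (iii).} Set $F(c):=\gamma\phi_c'(0)-c$ on $[0,c_\ast)$. Continuous dependence and uniqueness make $c\mapsto\phi_c'(0)$ continuous; (i) combined with a Hopf-type boundary gradient bound gives $\phi_c'(0)\to 0$ as $c\nearrow c_\ast$, while $F(0)=\gamma\phi_0'(0)>0$ since $\phi_0$ is a nontrivial stationary profile. The IVT then supplies a root $c_\gamma$. Uniqueness of $c_\gamma$ and strict monotonicity of $\gamma\mapsto c_\gamma$ follow from strict monotonicity of $c\mapsto\phi_c'(0)$, which I prove by a sliding comparison of $(\phi_{c_1},\psi_{c_1})$ and $(\phi_{c_2},\psi_{c_2})$ for $c_1<c_2$ combined with the Hopf lemma at $s=0$. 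Finally, $c_\gamma$ is increasing in $\gamma$ and bounded by $c_\ast$; if its limit were $\bar c<c_\ast$, then $\phi_{c_\gamma}'(0)=c_\gamma/\gamma\to 0$ would force $\phi_{\bar c}'(0)=0$ and hence $\phi_{\bar c}\equiv 0$, a contradiction. \emph{The main obstacle} is the nonexistence at $c\geq c_\ast$ together with the degenerate limit in (i): unlike the weak-strong setting of \cite{DL14,DWZ}, both components have nontrivial limits at $\pm\infty$, so the compactness and sliding arguments must track two coupled profiles and exploit the competitive monotone structure together with sharp ODE control of $\psi$ on the region $s\leq 0$ where the $\phi$-equation decouples.
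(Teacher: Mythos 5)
Your overall architecture matches the paper's: convert to a cooperative order, construct the semi-wave between explicit sub/super-solutions, compare with the entire wave to pin down the critical speed $c_\ast$, and then deduce parts (ii)--(iii) by the intermediate value theorem plus strict monotonicity of $c\mapsto\phi_c'(0)$. Your existence route (truncated Dirichlet problems with monotone iteration, then $L\to\infty$) is a legitimate alternative to the paper's half-line upper/lower solution theorem proved via Schauder (Proposition \ref{s7}), and is arguably more elementary. However, several steps as written are genuine gaps rather than routine details.

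First, the uniqueness sliding argument is incomplete. You say that the Dirichlet condition $\phi(0)=0$ ``fixes translations'' and the strong maximum principle forces equality. That is not enough: after shifting one profile rightward by $\xi\gg1$ to dominate the other, one must pass to the minimal shift $\bar\zeta$ and then show $\bar\zeta=0$. The strong maximum principle only gives strict ordering; it does not let you decrease $\bar\zeta$. The paper needs a quantitative ingredient for this step: the precise exponential expansion of solutions near $+\infty$ (its Lemma 3.6, obtained from the linearization at $(u^\ast,hu^\ast)$ and Coddington--Levinson perturbation theory), which shows that after a small further left-shift the ordering still holds for $s\geq M$; the ordering on the compact set $[\bar\zeta,M]$ and on $s\leq\bar\zeta$ is then obtained by continuity, the Dirichlet condition, and a one-sided scalar comparison lemma. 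Without this asymptotic control, the sliding argument stalls at $\bar\zeta>0$.

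Second, your treatment of the limit in (i) glosses over a real possibility. Compactness gives a subsequential $C^2_{loc}$ limit $(\hat\phi_\ast,\hat\psi_\ast)$, and nonexistence at $c_\ast$ rules out a full semi-wave, but it does not immediately rule out the degenerate case $\hat\phi_\ast\equiv0$ with $\hat\psi_\ast\not\equiv1$. In that case $\hat\psi_\ast$ satisfies a decoupled Fisher-type equation on $\mathbb{R}$, and one must argue separately that it is identically $1$; the paper does this by comparison with positive solutions of $-d u''=ru(1-u)$ on $[-L,L]$ and letting $L\to\infty$. Your sentence ``any nontrivial monotone limit could be translated and reassembled into a nontrivial semi-wave'' does not cover this case, since $\hat\phi_\ast\equiv0$ yields no semi-wave to reassemble.

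Third, the non-degeneracy for $c\in[2\sqrt{1-k},c_\ast)$ (the range where a decoupled eigenfunction subsolution is unavailable) is the crux and your description is too vague. Translating the entire wave $(\Phi_{c_\ast},\Psi_{c_\ast})$ does not produce a sub-solution at speeds $c<c_\ast$, since translates solve the same $c_\ast$-equation. The paper instead exploits the slack $\epsilon\Phi_0'$ created by lowering $c$ to $c_\ast-\epsilon$ and corrects the profiles with exponentially decaying perturbations $\epsilon_1 p_1,\epsilon_2 p_2$ chosen so that the modified pair vanishes at finite points and remains a lower solution; the two cases $c_\ast>2\sqrt{1-k}$ and $c_\ast=2\sqrt{1-k}$ require different corrections because the decay rate of $\Phi_0$ at $-\infty$ changes. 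Without such a construction it is not established that the truncated solutions stay bounded away from zero as $L\to\infty$ for $c$ throughout $[0,c_\ast)$, which is exactly what is needed to identify the critical speed with $c_\ast$.

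The IVT argument for (ii) and the monotonicity/limit argument for (iii) are essentially the paper's and are fine once the above are repaired; note only that $\phi_c'(0)\to0$ already follows from the $C^2_{loc}$ convergence in (i), so no separate Hopf-type gradient bound is needed there.
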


We will show that the asymptotic spreading speed of $u$ in Theorem \ref{F5} is given by
\[
c^0:=c_\gamma.
\]
Let us note that if $(\phi,\psi, c)$ solves \eqref{section12}, then
\bess\tilde{u}(x,t):=\phi(ct-x),\quad\tilde{v}(x,t):=\psi(ct-x)\eess
 satisfy
\bess\left\{
\begin{aligned}
&\tilde{u}_t=\tilde{u}_{xx}+\tilde{u}(1-\tilde{u}-k\tilde{v}),&-\infty<x<ct,\quad t>0,\\
&\tilde{v}_t=d\tilde{v}_{xx}+r\tilde{v}(1-\tilde{v}-h\tilde{u}),&-\infty<x<\infty,\quad t>0,\\
&\tilde{u}(x,t)=0,&\quad ct\leq x<\infty, \quad t>0.
\end{aligned}
\right.\lbl{section1-01}\eess
If $c=c_\gamma$, then we have additionally
$$(ct)'=c=-\gamma\tilde{u}_{x}(ct,t).$$
We call $(\phi_c,\psi_c)$ with $c=c_\gamma$  the \emph{\textbf{semi-wave}} associated with \eqref{section1-1}. This pair of functions (and its suitable variations) will play a crucial role in the proof of Theorem \ref{F5}, and they also provide upper and lower bounds for the solution pair $(u,v)$ (see the proof of Lemmas \ref{T15} and \ref{T14} for details).

Related free boundary problems of two-species competition models have been investigated in many recent works. Apart from \cite{DL14, DWZ} mentioned earlier, one may find various interesting results in
 \cite{CLW,GW12,GW15,WJ,WJZ,WZ16,WZ14,Wu13,Wu15,ZW16}, where \cite{CLW,WZ16} considers time-periodic environment, \cite{WJ,WJZ,ZW16} considers space heterogeneous environment, \cite{GW12,Wu13,Wu15} considers the weak competition case and \cite{GW15,WZ14} covers more general situations.
However, except \cite{DWZ}, in all these works the question of whether there is a precise asymptotic spreading speed has been left open. This is in sharp contrast to the corresponding one species models, where the precise spreading speed is obtained in many situations; see, for example, \cite{DDL, DG, DGP, DLiang, DL10,  DLe, DMZ, DMZ2, LLS}.

The rest of this paper is organized as follows.
In section 2, we present some basic results including the existence of solutions to \eqref{section1-1}, and the existence of solutions to a more general system than \eqref{section12}.
 In section 3, we prove Theorem \ref{F4} based on an upper and lower solution result (Proposition \ref{s7}) established in section 2, and many other techniques. In section 4, we investigate the long time behavior of the solution to \eqref{section1-1}
and prove the spreading-vanishing dichotomy,  and obtain sharp criteria for spreading and vanishing, which finish the proof of Theorems \ref{f2} and \ref{f3}. In section 5, we complete the proof of Theorem \ref{F5} by making use of Theorem \ref{F4}.
Section 6 consists of the proof of Proposition \ref{s7} stated in section 2.

{\setlength{\baselineskip}{16pt}{\setlength\arraycolsep{2pt} \indent
\section{Preliminary results }
\setcounter{equation}{0}

In this section, we collect some basic facts which will be needed in our proof of the main results.
We first note that \eqref{section1-1} always has a unique  solution. Indeed, by Theorems 2.4 and 2.5 in \cite{DL14}, we have the following  results on the solution of system \eqref{section1-1}.

\begin{prop}\label{t1} For any initial function $(u_0, v_0)$ satisfying \eqref{section1-1a}, the free boundary problem \eqref{section1-1} admits a unique solution
$$(u,v,g)\in C^{1+\alpha,(1+\alpha)/2}(D_1)\times C^{1+\alpha,(1+\alpha)/2}(D_2)\times C^{1+\alpha/2}([0,\infty)),$$
where $D_1=\{(x,t):x\in[0,g(t)],t\in[0,\infty)\}$ and $D_2=\{(x,t):x\in[0,\infty),t\in[0,\infty)\}$. Furthermore, there exists a positive constant $M$ depending on $d,r,\gamma,\|u_0\|_\infty$ and $\|v_0\|_\infty$, such that
\bess\begin{aligned}
&0<u(x,t)\leq M,\; 0< g'(t)\leq M\mbox{ for } 0<x<g(t),\quad t>0,\\
&0<v(x,t)\leq M\mbox{ for } 0<x<\infty,\quad t>0.
\end{aligned}\eess
\end{prop}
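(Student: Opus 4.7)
The plan is to adapt the by-now standard framework for one-phase Stefan-type reaction--diffusion problems \cite{DL14,DL10}; indeed the present proposition is essentially a restatement of Theorems~2.4--2.5 of \cite{DL14}. To outline the argument, I would first establish local existence and uniqueness by straightening the free boundary via the change of variable $y=x/g(t)$ on $\{0\le x\le g(t)\}$, which converts the $u$-equation onto the fixed interval $[0,1]$ at the cost of a $g(t)^{-2}$ factor in the diffusion and a $(g'(t)/g(t))\,y$ drift term. The $v$-equation on $[0,\infty)$ is left untouched, and the Stefan condition becomes $g'(t)=-(\gamma/g(t))u_y(1,t)$. A contraction-mapping argument on a small H\"older ball of triples $(u,v,g)\in C^{1+\alpha,(1+\alpha)/2}$ over a short interval $[0,T_0]$, combined with parabolic Schauder estimates, produces the unique local solution in the claimed regularity class, with $T_0$ depending only on $\|u_0\|_\infty$, $\|v_0\|_\infty$ and the fixed parameters.

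For a priori bounds, comparison with the scalar logistic ODE yields $u\le M_1:=\max\{1,\|u_0\|_\infty\}$ (using $u_t\le u_{xx}+u(1-u)$ in the moving domain with the given boundary conditions) and similarly $v\le M_2:=\max\{1,\|v_0\|_\infty\}$. Positivity of $u$ on $\{0\le x<g(t)\}$ and of $v$ on $[0,\infty)$ follows from the strong maximum principle applied to the linear parabolic operators with bounded coefficients, and the Hopf lemma at $x=g(t)$ gives $u_x(g(t),t)<0$, hence $g'(t)>0$. The uniform upper bound on $g'(t)$ is the main obstacle, and I would obtain it via a local barrier
\[
w(x,t)=M_1\bigl[2\mu(g(t)-x)-\mu^2(g(t)-x)^2\bigr]
\]
on the strip $\{g(t)-\mu^{-1}\le x\le g(t)\}$, with $\mu>0$ chosen large enough (depending on $M_1$, $k$ and the local-in-time bound on $g'$ already available) so that $w$ dominates $u$ by parabolic comparison. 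Since $w(g(t),t)=u(g(t),t)=0$, one gets $|u_x(g(t),t)|\le|w_x(g(t),t)|=2M_1\mu$, and the Stefan condition yields $g'(t)\le 2\gamma M_1\mu=:M_3$. Taking $M:=\max\{M_1,M_2,M_3\}$ delivers the unified bound; standard continuation using the Schauder estimates then extends the local solution globally.

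The delicate point is arranging $w$ to be a supersolution on the strip, i.e.\ $w_t-w_{xx}\ge w(1-w-kv)$. Direct computation reduces this to $2M_1\mu^2+M_1g'(t)[2\mu-2\mu^2(g(t)-x)]\ge M_1w(1-w-kv)$, so $\mu$ must be chosen large relative to both $M_1$ and a bound on $g'$; since the latter is precisely what one is trying to control, a continuity/boot-strap argument is needed, using the short-time bound on $g'$ to pick $\mu$, running the barrier comparison to obtain a uniform bound, and iterating if necessary. This circular structure is the technical heart of the free-boundary analysis carried out in \cite{DL14}; the remaining ingredients reduce to routine applications of parabolic Schauder theory and the parabolic comparison principle applied termwise to the competition system.
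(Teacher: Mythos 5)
Your overall identification is correct: the paper does not prove this proposition but simply cites Theorems~2.4 and~2.5 of \cite{DL14}, and you correctly recognize this and then sketch the standard free-boundary machinery underlying those theorems (straighten the boundary, contraction mapping, logistic comparison for $u,v$, Hopf lemma for $g'>0$, barrier near the front for $g'\le M$, Schauder and continuation).

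However, your description of the barrier step for the upper bound on $g'$ contains a genuine misconception. Writing $z=g(t)-x$, the quadratic barrier $w=M_1\bigl[2\mu z-\mu^2 z^2\bigr]$ on the strip $\{0\le z\le 1/\mu\}$ satisfies $w_{xx}=-2M_1\mu^2$ and
\[
w_t = M_1\,g'(t)\bigl[\,2\mu-2\mu^2 z\,\bigr]\ \ge\ 0,
\]
because $g'(t)>0$ (from the Hopf lemma) and $2\mu-2\mu^2 z\ge 0$ precisely on the strip $z\le 1/\mu$. Thus the $g'(t)$-term in your supersolution inequality is \emph{favorable}, not an obstruction: the inequality $w_t-w_{xx}\ge w(1-w-kv)$ is implied by the $t$-independent condition $2M_1\mu^2\ge w(1-w-kv)$, and since $w\le M_1$ and $v\ge 0$ one only needs $2\mu^2\ge 1$. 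Hence $\mu$ can be fixed in advance depending only on $M_1$ (and on $u_0$ so that $w(\cdot,0)\ge u_0$ near $g_0$), and there is no circularity and no bootstrap argument required. This is exactly the point of the Du--Lin construction: the monotonicity $g'>0$ renders the moving domain irrelevant for the barrier estimate. The rest of your sketch is sound, and the headline conclusion — that the proposition follows from \cite{DL14} — matches the paper.
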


Next we recall   a comparison result for the free boundary problem, which  is a special case of Lemma 2.6 in \cite{DL14}.
\begin{prop}\lbl{the comparison principle}\,{\rm (Comparison principle)}
Assume that $T\in(0,\infty)$, $\underline{g},\overline{g}\in C^1([0,T])$, $\underline{u}\in C(\overline{D_T^\ast})\cap C^{2,1}(D_T^\ast)$
with $D_T^\ast=\{(x,t)\in\mathbb{R}^2:x\in(0,\underline{g}(t)),t\in(0,T]\}$, $\overline{u}\in C(\overline{D_T^{\ast\ast}})\cap C^{2,1}(D_T^{\ast\ast})$
with $D_T^{\ast\ast}=\{(x,t)\in\mathbb{R}^2:x\in(0,\overline{g}(t)),t\in(0,T]\}$, $\underline{v},\overline{v}\in (L^\infty\cap C)([0,\infty)\times[0,T])\cap C^{2,1}([0,\infty)\times[0,T])$
and
\bess
\left\{
\begin{aligned}
&\displaystyle\overline{u}_t-\overline{u}_{xx}\geq \overline{u}(1-\overline{u}-k\underline{v}),&0<x<\overline{g}(t),&\quad0<t<T,\lbl{section4-1}\\
&\displaystyle \underline{u}_t-\underline{u}_{xx}\leq \underline{u}(1-\underline{u}-k\overline{v}),&0<x<\underline{g}(t),&\quad0<t<T,\lbl{section4-2}\\
&\displaystyle \overline{v}_t-d\overline{v}_{xx}\geq r\overline{v}(1-\overline{v}-h\underline{u}),&0<x<\infty,&\quad0<t<T,\lbl{section4-3}\\
&\displaystyle \underline{v}_t-d\underline{v}_{xx}\leq r\underline{v}(1-\underline{v}-h\overline{u}),&0<x<\infty,&\quad0<t<T,\lbl{section4-4}\\
&\overline{u}_x(0,t)\leq0,\quad\underline{v}_x(0,t)\geq0,\quad\overline{u}(x,t)=0,&
x=\overline{g}(t),&\quad0<t<T,\lbl{section4-5}\\
&\underline{u}_x(0,t)\geq0,\quad\overline{v}_x(0,t)\leq0,\quad\underline{u}(x,t)=0,&
x=\underline{g}(t),&\quad0<t<T,\lbl{section4-6}\\
&\underline{g}'(t)\leq-\gamma\underline{u}_x(\underline{g}(t),t),
\quad\overline{g}'(t)\geq-\gamma\overline{u}_x(\overline{g}(t),t),&&\quad0<t<T,\\
&\underline{u}(x,0)\leq u_{0}(x)\leq\overline{u}(x,0),&0\leq x\leq g_0,&\lbl{section4-9}\\
&\underline{v}(x,0)\leq v_{0}(x)\leq\overline{v}(x,0),&0\leq x<\infty.&
\end{aligned}
\right.\lbl{section4-10}
\eess
Then the solution $(u,v,g)$ of \eqref{section1-1} satisfies
$$\begin{aligned}
&\underline{g}(t)\leq g(t)\leq\overline{g}(t),\quad\underline{v}(x,t)\leq v(x,t)\leq\overline{v}(x,t)\; \mbox{ for }\; 0\leq x<\infty,\quad t\in[0,T],\\
&\underline{u}(x,t)\leq u(x,t)\mbox{ for }0\leq x<\underline{g}(t),\quad u(x,t) \leq \overline{u}(x,t) \;\mbox{ for }\; 0\leq x<g(t),\quad t\in[0,T].
\end{aligned}$$
\end{prop}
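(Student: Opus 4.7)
The plan is to argue by contradiction through a first-touching time, exploiting the fact that the competition coupling has definite sign so that, with the specific pairing of $\overline{u}$ against $\underline{v}$ (and $\underline{u}$ against $\overline{v}$) in the hypotheses, each difference function satisfies a scalar linear parabolic differential inequality to which the strong maximum principle and the Hopf lemma apply. By the symmetry of the hypotheses under the exchange $(\overline{u},\overline{g},\underline{v})\leftrightarrow(\underline{u},\underline{g},\overline{v})$, it suffices to prove the three upper estimates $g\le\overline{g}$, $u\le\overline{u}$ on $\{0\le x\le g(t)\}$, and $\underline{v}\le v$ on $[0,\infty)\times[0,T]$; the mirror argument then delivers the three lower estimates.

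Before invoking the maximum principle, I would slightly enlarge the super-solution and slightly lower the sub-solution: replace $(\overline{u},\overline{g},\underline{v})$ by perturbed triples $(\overline{u}^{\ep},\overline{g}^{\ep},\underline{v}^{\ep})$ obtained by adding a small $\ep$-dependent perturbation (with enough exponential weight in $x$ to dominate the first-order term produced by the moving-boundary velocity) to $\overline{u}$ and $\overline{g}$, and subtracting a similar term from $\underline{v}$. For each small $\ep>0$ the perturbed triple satisfies strict versions of the differential, boundary, free-boundary and initial inequalities; the conclusion of the proposition then follows by letting $\ep\downarrow0$.

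Define $t^{*}\in(0,T]$ as the supremum of times $\tau\le T$ for which the three strict inequalities $g(t)<\overline{g}^{\ep}(t)$, $u(x,t)<\overline{u}^{\ep}(x,t)$ on $\{0\le x<g(t)\}$, and $v(x,t)>\underline{v}^{\ep}(x,t)$ on $[0,\infty)$ all hold for every $t\le\tau$. Assuming for contradiction that $t^{*}<T$, I would examine each way the failure can occur. If $u$ and $\overline{u}^{\ep}$ first touch at an interior point, subtracting the two equations and using $v>\underline{v}^{\ep}$ up to $t^{*}$ yields $(\overline{u}^{\ep}-u)_t-(\overline{u}^{\ep}-u)_{xx}+A(\overline{u}^{\ep}-u)\ge ku(v-\underline{v}^{\ep})\ge 0$ for a bounded coefficient $A$, contradicting the strong maximum principle. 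An analogous computation for $v-\underline{v}^{\ep}$ produces $(v-\underline{v}^{\ep})_t-d(v-\underline{v}^{\ep})_{xx}+B(v-\underline{v}^{\ep})\ge rh\,\underline{v}^{\ep}(\overline{u}^{\ep}-u)\ge 0$, again ruling out an interior touching. If instead $g(t^{*})=\overline{g}^{\ep}(t^{*})$, then $\overline{u}^{\ep}-u\ge 0$ achieves its minimum $0$ at the lateral boundary point $(\overline{g}^{\ep}(t^{*}),t^{*})$; Hopf's boundary-point lemma gives $(\overline{u}^{\ep}-u)_x<0$ there, so $\overline{g}^{\ep}{}'(t^{*})\ge-\gamma\overline{u}^{\ep}_x>-\gamma u_x=g'(t^{*})$, contradicting $\overline{g}^{\ep}(t)>g(t)$ for $t$ in a left neighbourhood of $t^{*}$.

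The feature that makes these scalar arguments go through is precisely the crossed pairing in the hypotheses: the opposite-monotone competition term in the super-solution equation for $\overline{u}^{\ep}$ (respectively $\underline{v}^{\ep}$) is evaluated at $\underline{v}^{\ep}$ (respectively $\overline{u}^{\ep}$), so the inhomogeneous term produced by subtraction carries the favourable sign supplied by the strict inequalities that still hold up to $t^{*}$. The main obstacle I anticipate is the book-keeping at the moving boundary: one must justify that $u$ and $\overline{u}^{\ep}$ are $C^1$ up to their respective free boundaries so that Hopf's lemma is applicable, and one must work on the larger common domain $\{0\le x\le\overline{g}^{\ep}(t)\}$ by extending $u$ continuously by $0$ past $g(t)$ so that the comparison is carried out on a single set. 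Once the contradiction is reached we have $t^{*}=T$, and passing to the limit $\ep\downarrow 0$ finishes the argument.
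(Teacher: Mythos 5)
The paper does not prove this proposition: it is stated with the phrase ``a special case of Lemma 2.6 in \cite{DL14},'' so there is no in-paper proof to compare against, only a citation. Your sketch does capture the right mechanism, which is the one used in the cited sources: the crossed pairing of $\overline u$ with $\underline v$ and of $\underline u$ with $\overline v$ makes each scalar difference satisfy a linear parabolic inequality with a nonnegative source, so that the strong maximum principle together with the Hopf boundary-point lemma at a first contact of the free boundaries drives the contradiction. (The ``symmetry'' you invoke for the lower estimates is better called a mirror argument: the hypotheses are not invariant under the proposed exchange because all inequality signs reverse, but the same argument run with everything reversed does deliver those estimates.)

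The genuine gap is the $\ep$-perturbation you lean on but never construct. For the Hopf step at the contact $(g(t^*),t^*)=(\overline g^{\ep}(t^*),t^*)$ you need $\overline u^{\ep}-u$ to vanish there, which forces $\overline u^{\ep}$ to vanish on its own curve $x=\overline g^{\ep}(t)$; otherwise there is no boundary zero to which Hopf applies and you cannot compare $(\overline u^{\ep})_x$ with $u_x$ at the contact. An $x$-exponential weight, as you propose, is bounded away from zero at the moving boundary and so destroys exactly this condition; and any perturbation that does vanish there must also remain a supersolution on the widened strip $\overline g(t)<x<\overline g^{\ep}(t)$, where $\overline u$ is only defined by extension, and must preserve the Stefan inequality $(\overline g^{\ep})'\ge-\gamma(\overline u^{\ep})_x$ on the new curve --- none of which is automatic. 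The cleaner route, and essentially the structure in \cite{DL10,DL14}, dispenses with the perturbation: first show $g(t)<\overline g(t)$ for all $t\in(0,T]$ by a first-equal-time contradiction (strong maximum principle for the difference on the nested domain, then Hopf at the contact point), and then apply the ordinary comparison principle for the function components on the resulting domains. You should also note that your three ``cases'' at $t^*$ are not disjoint alternatives: the correct order is to first use the strong maximum principle to exclude interior contacts of $u$ with $\overline u^{\ep}$ and of $v$ with $\underline v^{\ep}$ at time $t^*$, so that the only possible failure is $g(t^*)=\overline g^{\ep}(t^*)$, and only then invoke Hopf.
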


\begin{remark}\lbl{remarkp} In system \eqref{section1-1}, if the boundary conditions at $x=0$ are replaced by
\bess
u(0,t)=m_1(t),\quad v(0,t)=m_2(t),&t>0,
\eess
then Proposition \ref{the comparison principle} also holds if we replace
$$\overline{u}_x(0,t)\leq0,\quad \underline{v}_x(0,t)\geq0,\quad \underline{u}_x(0,t)\geq0,\quad \overline{v}_x(0,t)\leq0$$
by
$$\overline{u}(0,t)\geq m_1(t),\quad \underline{v}(0,t)\leq m_2(t),\quad \underline{u}(0,t)\leq m_1(t),\quad \overline{v}(0,t)\geq m_2(t)$$
for $0<t<T$.
\end{remark}

Lastly in this section, we modify some well known upper and lower solution technique to show the existence of a solution for a
general coorperative system of the form
\bes\begin{cases}
d_1\varphi_1''-c\varphi_1'+f_1(\varphi)=0,&s\in\mathbb{R}^+,\\
d_2\varphi_2''-c\varphi_2'+f_2(\varphi)=0,&s\in\mathbb{R},\\
\varphi_1(s)\equiv0,&s\leq0,
\end{cases}\lbl{function1}
\ees
where $\varphi=(\varphi_1,\varphi_2), c\geq0, d_i>0$, and $f_i:\mathbb{R}^2\rightarrow\mathbb{R}$  ($i=1,2$) satisfy the following conditions:

\begin{itemize}
\item[$(\textbf{A}_1)$] there is a strictly positive vector $\textbf{K}=(k_1,k_2)\in \mathbb{R}^2$ such that $f_i(\textbf{0})=f_i(\textbf{K})=0$ for $i\in\{1,2\}$, and $f_i(u_1,u_2)\not=0$ for $(u_1, u_2)\in \big((0,k_1]\times [0,k_2]\big)\setminus\{{\bf K}\}, i\in\{1,2\}$;
\item[$(\textbf{A}_2)$] the system \eqref{function1} is a cooperative system, that is, $f_i(u_1, u_2)$ is nondecreasing in $u_j$ for $i,j\in\{1,2\},i\neq j$, $(u_1, u_2)\in [0, k_1]\times [0, k_2]$, and there exists a constant $\beta_0\geq0$ such that $\beta_0 u_i+f_i(u_1, u_2)$ is  nondecreasing in $u_i$ for $ i=1,2$ and $(u_1, u_2)\in [0, k_1]\times [0, k_2]$;
\item[$(\textbf{A}_3)$] $f_1$ and $f_2$ are locally Lipschitz continuous.
\end{itemize}
We are particularly interested in solutions  $\varphi$ of \eqref{function1} that satisfy the asymptotic boundary conditions
\bes
\varphi_2(-\infty)=0,\quad \varphi(\infty)=\textbf{K}.
\lbl{function2a2}
\ees
Indeed, solving \eqref{function1} and \eqref{function2a2} will supply the main step for solving \eqref{section12}. On the other hand,
our method here to solve the more general problems \eqref{function1} and \eqref{function2a2} may have other applications.

We will write $(\varphi_1,\varphi_2)\leq(\tilde{\varphi}_1,\tilde{\varphi}_2)$ if $\varphi_i\leq\tilde{\varphi}_i,i=1,2.$
Let $\mathcal{R}\subset \mathbb{R}^2$ denote the rectangle
$$\mathcal{R}=[0, k_1]\times [0, k_2].$$
It is convenient to introduce the following notations:
\bess\begin{aligned}
&C_{\mathcal{R}}(\mathbb{R}^+,\mathbb{R}^2)=\{\varphi\in C(\mathbb{R}^+,\mathbb{R}^2)
:\varphi(s) \in \mathcal{R} \mbox{ for } s>0\},\\
&C_{\mathcal{R}}(\mathbb{R},\mathbb{R}^2)=\{\varphi\in C(\mathbb{R},\mathbb{R}^2)
: \varphi(s) \in \mathcal{R} \mbox{ for }s\in\mathbb{R}\}.
\end{aligned}\eess

\begin{definition}\lbl{s2} Suppose that $\overline{\varphi}=(\overline{\varphi}_1,\overline{\varphi}_2)\in C_{\mathcal{R}}(\mathbb{R},\mathbb{R}^2)$, $\underline{\varphi}=(\underline{\varphi}_1,\underline{\varphi}_2)\in C_{\mathcal{R}}(\mathbb{R},\mathbb{R}^2)$,
and $\overline{\varphi}_j,\underline{\varphi}_j$ are twice continuously differentiable in $\mathbb{R}\setminus \Omega_j$ for $j=1,2$, where $\Omega_1\subset [0,\infty)$ and $\Omega_2\subset \mathbb{R}$ are two finite sets, say $\Omega_1=\{\xi_i\geq 0: i=1,2,\cdots,m_1\}$ and $\Omega_2=\{\eta_i\in\mathbb{R}: i=1,2,\cdots,m_2\}$. Moreover,

{\rm \bf (i)} the functions $\overline{\varphi}$ and $\underline{\varphi}$ satisfy the inequalities
\bes\left\{\begin{aligned}
&d_1\overline{\varphi}_1''-c\overline{\varphi}_1'-\beta\overline{\varphi}_1+H_1(\overline{\varphi})\leq0\quad\mbox{for }
s\in\mathbb{R}^+\setminus \Omega_1,\\
&d_2\overline{\varphi}_2''-c\overline{\varphi}_2'-\beta\overline{\varphi}_2+H_2(\overline{\varphi})\leq0
\quad\mbox{for } s\in\mathbb{R}\setminus \Omega_2,\\
&\overline{\varphi}_1(s)=0\mbox{ for }s\leq0,\quad\overline{\varphi}_2(-\infty)=0
\end{aligned} \right.\lbl{upper}
\ees
and
\bes\left\{\begin{aligned}
&d_1\underline{\varphi}_1''-c\underline{\varphi}_1'-\beta\underline{\varphi}_1+H_1(\underline{\varphi})\geq0
\quad\mbox{for } s\in\mathbb{R}^+\setminus \Omega_1,\\
&d_2\underline{\varphi}_2''-c\underline{\varphi}_2'-\beta\underline{\varphi}_2+H_2(\underline{\varphi})\geq0
\quad\mbox{for } s\in\mathbb{R}\setminus \Omega_2,\\
&\underline{\varphi}_1(s)=0\mbox{ for }s\leq0,\quad\underline{\varphi}_2(-\infty)=0;
\end{aligned}\right.\lbl{lower}
\ees

{\rm \bf (ii)} the derivatives of $\overline{\varphi}$ and $\underline{\varphi}$  satisfy
\bes\begin{aligned}
&\underline{\varphi}_1'(\xi_i-)\leq\underline{\varphi}_1'(\xi_i+),\quad \overline{\varphi}_1'(\xi_i+)\leq\overline{\varphi}_1'(\xi_i-)\mbox{ for }\xi_i\in\Omega_1\setminus\{0\},\\
&\underline{\varphi}_2'(\eta_i-)\leq\underline{\varphi}_2'(\eta_i+),\quad \overline{\varphi}_2'(\eta_i+)\leq\overline{\varphi}_2'(\eta_i-)\mbox{ for }\eta_i\in\Omega_2.\lbl{lower1}
\end{aligned}\ees
Then $\overline{\varphi}$ and $\underline{\varphi}$ are called a weak \textbf{upper solution} and a weak \textbf{lower solution} of \eqref{function1}-\eqref{function2a2} associated with $\mathcal{R}$, respectively.
\end{definition}

\begin{prop}\lbl{s7} Assume that $(\textbf{A}_1)-(\textbf{A}_3)$ hold. Suppose \eqref{function1}-\eqref{function2a2} has a pair of upper and lower solutions associated with $\mathcal{R}$ satisfying
\[\mbox{
$\underline \varphi_1(s)\not\equiv 0$ \; and\;\;
$\sup_{t\leq s}\underline{\varphi}(t)\leq\overline{\varphi}(s)$  for $s\in\mathbb{R}$.}
\]
 Then \eqref{function1} has a monotone non-decreasing solution $\varphi$ satisfying  \eqref{function2a2}.
\end{prop}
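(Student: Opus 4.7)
The plan is to prove existence by monotone iteration on an integral reformulation of \eqref{function1}. Fix $\beta\geq\beta_0$ with $\beta_0$ from $(\textbf{A}_2)$ and set $H_i(\varphi):=\beta\varphi_i+f_i(\varphi)$; by $(\textbf{A}_2)$, $H_i$ is nondecreasing in each component on $\mathcal{R}$. Let $\lambda_i^\pm:=(c\pm\sqrt{c^2+4d_i\beta})/(2d_i)$ be the roots of $d_i\lambda^2-c\lambda-\beta=0$, so $\lambda_i^-<0<\lambda_i^+$. Rewrite \eqref{function1} as $d_i\varphi_i''-c\varphi_i'-\beta\varphi_i=-H_i(\varphi)$, let $\mathcal{T}_2$ be the Green's-function inverse of $d_2\partial_s^2-c\partial_s-\beta$ on $\mathbb{R}$ (selecting the bounded solution), and let $\mathcal{T}_1$ be the Green's-function inverse of $d_1\partial_s^2-c\partial_s-\beta$ on $\mathbb{R}^+$ with zero boundary value at $s=0$ and boundedness at $+\infty$. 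Both Green's functions are strictly positive, so $\mathcal{T}:=(\mathcal{T}_1,\mathcal{T}_2)$ is a monotone nondecreasing operator on $C_{\mathcal{R}}(\mathbb{R},\mathbb{R}^2)$.

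Before iterating I would replace $\underline{\varphi}$ by its running supremum $\tilde{\underline{\varphi}}(s):=\sup_{t\leq s}\underline{\varphi}(t)$; by the standing hypothesis $\tilde{\underline{\varphi}}\leq\overline{\varphi}$, and $\tilde{\underline{\varphi}}$ is nondecreasing in $s$ while still satisfying the weak lower-solution inequalities in \eqref{lower} (a short verification using the monotonicity of $H_i$). Next I would verify $\mathcal{T}\overline{\varphi}\leq\overline{\varphi}$ and $\mathcal{T}\tilde{\underline{\varphi}}\geq\tilde{\underline{\varphi}}$. For the former, set $z_i:=\overline{\varphi}_i-\mathcal{T}_i\overline{\varphi}$; the inequality \eqref{upper} gives $d_iz_i''-cz_i'-\beta z_i\leq 0$ away from the exceptional set $\Omega_i$, the jump conditions \eqref{lower1} force the distributional source at $\Omega_i$ to have the correct (nonpositive) sign, and the boundary data at $s=0$ (for $z_1$) and at $\pm\infty$ are nonnegative; the maximum principle, applicable because the zero-order coefficient $-\beta$ is strictly negative, then gives $z_i\geq 0$. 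The reverse argument yields $\mathcal{T}\tilde{\underline{\varphi}}\geq\tilde{\underline{\varphi}}$.

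I would then iterate $\varphi^{(n+1)}:=\mathcal{T}\varphi^{(n)}$ from $\varphi^{(0)}:=\tilde{\underline{\varphi}}$; monotonicity of $\mathcal{T}$ combined with the two inequalities above shows that $\{\varphi^{(n)}\}$ is nondecreasing in $n$ and sandwiched in $[\tilde{\underline{\varphi}},\overline{\varphi}]$, so its pointwise limit $\varphi^*$ exists. Dominated convergence gives $\mathcal{T}\varphi^*=\varphi^*$, and interior ODE regularity upgrades $\varphi^*$ to a $C^2$ classical solution of \eqref{function1}. Monotonicity of $\varphi^{(n)}$ in $s$ is preserved under the iteration: if $\varphi^{(n)}$ is nondecreasing in $s$, then so is $H_i(\varphi^{(n)})$, and $w_i:=(\varphi^{(n+1)}_i)'$ satisfies $d_iw_i''-cw_i'-\beta w_i\leq 0$; together with $w_i(\pm\infty)=0$ and $w_1(0^+)\geq 0$ (read off the Green's-function formula for $\mathcal{T}_1$), the maximum principle gives $w_i\geq 0$, so $\varphi^{(n+1)}$ is again nondecreasing in $s$, and hence so is $\varphi^*$.

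Finally, to verify the asymptotic conditions \eqref{function2a2}, the pinch $0\leq\varphi^*_2(-\infty)\leq\overline{\varphi}_2(-\infty)=0$ gives $\varphi^*_2(-\infty)=0$. Since $\varphi^*$ is monotone and bounded in $\mathcal{R}$, the limit $\varphi^*(\infty)$ exists; passing to the limit in the ODE forces $f_i(\varphi^*(\infty))=0$. Since $\underline{\varphi}_1\not\equiv 0$ we have $\tilde{\underline{\varphi}}_1(\infty)>0$, whence $\varphi^*_1(\infty)>0$, and $(\textbf{A}_1)$ then forces $\varphi^*(\infty)=\textbf{K}$. The main technical obstacle I expect is the careful handling of the weak-sense inequalities \eqref{upper}--\eqref{lower} together with the jump conditions \eqref{lower1} inside the maximum-principle arguments of the second step; one may either argue directly in the distributional sense or first approximate the weak upper/lower solutions by smooth ones, and this is where the bulk of the technical work lies.
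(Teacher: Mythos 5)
Your proposal takes a genuinely different route from the paper. The paper reformulates \eqref{function1} as a fixed-point problem for the integral operator $F=(F_1,F_2)$ (essentially your $\mathcal{T}$), shows that $F$ maps the set $\Gamma$ of \emph{nondecreasing} functions pinched between $\underline\varphi$ and $\overline\varphi$ into itself (Lemmas \ref{s3}--\ref{s4}), proves $F$ is continuous and compact on $\Gamma$ with respect to a weighted norm (Lemmas \ref{s5}--\ref{s6}), and then invokes Schauder's fixed point theorem. You instead propose monotone iteration, starting from a nondecreasing initial datum and using the order-preserving structure; this avoids the compactness step entirely, which is a real simplification. On the other hand, the paper's verification that the order interval is invariant is done by a direct integration-by-parts computation against the explicit Green's kernel (see the string of inequalities in the proof of Lemma \ref{s4}), which handles the finitely many jump points $\Omega_j$ cleanly; your maximum-principle alternative is workable but needs a little more care at the jump points, where $z_i$ is only $C^1$ from one side and the usual ``$z_i''\geq 0$ at an interior minimum'' step is not immediately available.

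One step in your argument, as written, contains a genuine gap. You assert that the running supremum $\tilde{\underline\varphi}(s)=\sup_{t\leq s}\underline\varphi(t)$ ``still satisfies the weak lower-solution inequalities \eqref{lower} (a short verification).'' This is not short, and in fact it may fail to be a weak lower solution in the sense of Definition~\ref{s2} at all: taking the running supremum can create new non-smooth points beyond the original finite sets $\Omega_j$, and in principle infinitely many of them, so $\tilde{\underline\varphi}$ need not be piecewise $C^2$ with a \emph{finite} exceptional set as the definition requires. Fortunately you do not actually need $\tilde{\underline\varphi}$ to be a lower solution. It suffices to observe that $\mathcal{T}\underline\varphi\geq\underline\varphi$ (this uses only that the \emph{original} $\underline\varphi$ is a weak lower solution, exactly as in Lemma~\ref{s4}), that $\mathcal{T}$ is order-preserving, and that $\mathcal{T}$ always outputs functions nondecreasing in $s$ when fed a nondecreasing input (Lemma~\ref{s3}(ii)). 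Then $\mathcal{T}\tilde{\underline\varphi}\geq\mathcal{T}\underline\varphi\geq\underline\varphi$ pointwise, and since $\mathcal{T}\tilde{\underline\varphi}$ is nondecreasing, $\mathcal{T}\tilde{\underline\varphi}(s)\geq\sup_{t\leq s}\underline\varphi(t)=\tilde{\underline\varphi}(s)$, which is the inequality you need to launch the iteration. With this repair your monotone-iteration scheme is sound, and the remaining steps (passage to the limit, regularity, and the identification of $\varphi^*(\infty)=\textbf{K}$ via $(\textbf{A}_1)$ and $\underline\varphi_1\not\equiv 0$) match the paper's Lemma~\ref{s8}.
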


Proposition \ref{s7} will play an important role in the proof of Theorem \ref{F4} in the next section. The proof of Proposition \ref{s7} is based on some upper and lower solution arguments and involves the Schauder fixed point theorem. Our proof is similar in spirit to that in several works on various different traveling wave problems (see, for example,
\cite{LLR,Ma,WNW,WZ}), but the detailed techniques are rather different. Since the proof is  long, it is postponed to Section 6
 at the end of the paper.

{\setlength{\baselineskip}{16pt}{\setlength\arraycolsep{2pt} \indent}}
\section{Semi-wave solutions}
\setcounter{equation}{0}

The aim of this section is to prove Theorem \ref{F4}. Firstly, we recall some known results for the Fisher-KPP equation
\bes\left\{\begin{aligned}
&d\chi''-c\chi'+a\chi(b-\chi)=0,\quad0<s<\infty,\\
&\chi(0)=0.
\end{aligned}\right.\lbl{KPP}
\ees
\begin{lem}\lbl{S22}
Let $a>0,b>0$ and $ d>0$ be fixed constants.
\begin{itemize}
\item[(i)] If $c\in[0,2\sqrt{abd})$, then  \eqref{KPP} has a unique  solution $\chi(s)$.

\item[(ii)] For each $c\in [0, 2\sqrt{abd})$, the solution $\chi(s)$ of \eqref{KPP} is strictly increasing and has the following asymptotic behavior
$$\chi(s)=b-[b_\chi +o(1)]e^{\frac{c-\sqrt{c^2+4ab}}{2d}s} \mbox{ as }s\rightarrow\infty,$$
where $b_\chi$ is a positive constant.
\end{itemize}
\end{lem}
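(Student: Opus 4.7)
I would set $p = \chi'$ and recast \eqref{KPP} as the planar system $\chi' = p$, $d p' = c p - a\chi(b-\chi)$, whose equilibria in $[0,b]\times\mathbb{R}$ are $(0,0)$ and $(b,0)$. The linearization at $(b,0)$ has one positive and one negative eigenvalue, so $(b,0)$ is a saddle with a one-dimensional stable manifold. The plan is to select the branch of that manifold lying in $\{\chi<b,\ p>0\}$, parametrized so that the orbit enters $(b,0)$ as $s\to+\infty$; this yields an orbit $(\chi(s),p(s))$ on a maximal interval $(-T,\infty)$ that, after translation, becomes a candidate solution of \eqref{KPP}.

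The hard part will be to show $T<\infty$ with $\chi(-T)=0$ and $p(-T)>0$. This is where the hypothesis $c<2\sqrt{abd}$ enters: it makes the eigenvalues of the linearization at $(0,0)$, namely $(c\pm i\sqrt{4abd-c^2})/(2d)$, complex with positive real part, so $(0,0)$ is an unstable focus, ruling out the backward orbit limiting to the origin. An invariant-region argument based on the nullclines $p=0$ and $p=a\chi(b-\chi)/c$ then confines $(\chi,p)$ to the strip $\{0\le\chi\le b,\ p>0\}$ for as long as it exists, so the orbit cannot escape to infinity or recross $\chi=b$; the unstable spiralling near the origin forces it to cross $\{\chi=0\}$ transversally at some finite $s=-T$.

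For uniqueness I would argue that any solution $\chi$ of \eqref{KPP} with range in $[0,b]$ must satisfy $\chi(s)\to b$ as $s\to\infty$: at an interior critical point one has $d\chi''=-a\chi(b-\chi)<0$, so a bounded solution admits no oscillation at infinity and must limit to $0$ or $b$, and the oscillatory linearization rules out $0$. Hence $\chi$ lies on the stable manifold of $(b,0)$, which is a single orbit up to translation, and the normalization $\chi(0)=0$ fixes the translation uniquely.

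For part (ii), monotonicity is immediate from the same critical-point observation: a zero of $\chi'$ at $s_0>0$ with $0<\chi(s_0)<b$ would be a strict local maximum (since $\chi''(s_0)<0$), contradicting $\chi(\infty)=b$. For the asymptotic expansion, set $w=b-\chi$ to obtain $dw''-cw'-abw+aw^2=0$; the stable-manifold theorem applied at $w=0$, whose linearization has negative characteristic root equal to the exponent displayed in the statement, yields $w(s)=[b_\chi+o(1)]e^{\lambda_- s}$ as $s\to\infty$, with $b_\chi>0$ because $\chi$ approaches $b$ monotonically from below.
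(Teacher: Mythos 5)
The paper does not prove Lemma~\ref{S22} --- it refers to \cite{DL10,BDK} for (i) and to \cite{S} for (ii) --- so your phase-plane argument is supplying a proof rather than varying one; and it is the standard route: locate the branch of the stable manifold of the saddle $(b,0)$ in $\{\chi<b,\ p>0\}$, follow it backward until it hits $\{\chi=0\}$, translate, and read off uniqueness, monotonicity and the exponential rate from the same phase portrait.

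The genuine gap is at the endpoint $c=0$, which the lemma explicitly allows. At $c=0$ the eigenvalues at the origin are $\pm i\sqrt{ab/d}$, with real part zero, so $(0,0)$ is not an unstable focus; moreover the nullcline $p=a\chi(b-\chi)/c$ that you invoke is undefined, so the step ``unstable spiralling near the origin forces the crossing of $\{\chi=0\}$'' collapses. You need a separate (and in fact cleaner) argument there: the $c=0$ system is Hamiltonian with conserved energy $E(\chi,p)=\tfrac{d}{2}p^{2}+a\bigl(\tfrac{b\chi^{2}}{2}-\tfrac{\chi^{3}}{3}\bigr)$, the origin is a genuine center so no orbit converges to it, and the stable-manifold orbit lies on the level set $E\equiv E(b,0)=\tfrac{ab^{3}}{6}>0$, a compact arc in $\{0\le\chi\le b,\ p\ge0\}$ that reaches $\{\chi=0\}$ at $p=\sqrt{ab^{3}/(3d)}>0$ in finite backward time. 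Even for $c>0$ your sketch is compressed: you should spell out why the backward orbit is forced toward the origin if it never meets $\{\chi=0\}$ --- namely $\chi$ is strictly decreasing backward since $p>0$, $p$ is a priori bounded (e.g.\ $E'=cp^{2}\ge 0$ gives $E\le E(b,0)$), and closed orbits are ruled out because the divergence of the vector field is $c/d>0$, so the $\alpha$-limit set must be the equilibrium $(0,0)$ --- since only at that point does the rotation of the flow near a focus yield the desired contradiction. With these repairs the remaining steps (uniqueness from the single stable-manifold orbit, monotonicity from ``every interior critical point is a strict maximum,'' and the rate from linearizing $w=b-\chi$ at $w=0$) are correct.
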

The conclusion (i) can be found in \cite{DL10, BDK}, and the proof of (ii) is standard (see, for example, \cite{S}).

\begin{lem}\lbl{S21} {\rm (\cite{DWZ})}
Let $\tilde{f},\tilde{g}\in C([0,\infty))$ with $\tilde{g}$ nonnegative and not identically 0, and $c,d$ be given constants with $d>0$. Assume that $u_i(s)>0$ in $(0,\infty)$, $u_1(0)\leq u_2(0)$ and
$$cu_1'-du_1''-u_1[\tilde{f}(s)-\tilde{g}(s)u_1]\leq0\leq cu_2'-du_2''-u_2[\tilde{f}(s)-\tilde{g}(s)u_2],\quad 0<s<\infty.$$
If $\limsup_{s\rightarrow\infty}\frac{u_1(s)}{u_2(s)}\leq 1$, then
$$u_1(s)\leq u_2(s)\quad\mbox{ for }\; 0\leq s<\infty.$$
\end{lem}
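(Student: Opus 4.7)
The plan is a sliding argument. Define
\[
\alpha^{*}\defequal\inf\bigl\{\alpha\geq 1:u_{1}(s)\leq\alpha u_{2}(s)\ \text{for every}\ s\geq 0\bigr\},
\]
and show $\alpha^{*}=1$, which is exactly the required inequality. First I need to know that this infimum is finite. Since $\limsup_{s\to\infty}u_{1}/u_{2}\leq 1$, the ratio is bounded on some tail $[S,\infty)$; on $(0,S]$ it is bounded by continuity and positivity of the $u_{i}$; and boundedness up to $s=0$ follows from $u_{1}(0)\leq u_{2}(0)$, combined with an l'H\^opital-type expansion in the degenerate case $u_{2}(0)=0$.

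Assume for contradiction that $\alpha^{*}>1$. Because $\limsup u_{1}/u_{2}\leq 1<\alpha^{*}$, the supremum of $u_{1}/u_{2}$ over $[0,\infty)$ is actually attained at some finite $s^{*}\geq 0$, with $u_{1}(s^{*})=\alpha^{*}u_{2}(s^{*})$. Set $w\defequal u_{1}-\alpha^{*}u_{2}$, so $w\leq 0$ and $w(s^{*})=0$. Subtracting $\alpha^{*}$ times the supersolution inequality for $u_{2}$ from the subsolution inequality for $u_{1}$, and using the algebraic identity $u_{1}^{2}-\alpha^{*}u_{2}^{2}=w(u_{1}+\alpha^{*}u_{2})+\alpha^{*}(\alpha^{*}-1)u_{2}^{2}$, one arrives at
\[
dw''-cw'+\bigl[\tilde f-\tilde g(u_{1}+\alpha^{*}u_{2})\bigr]w\;\geq\;\tilde g\,\alpha^{*}(\alpha^{*}-1)\,u_{2}^{2}\;\geq\;0
\]
on $(0,\infty)$.

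If $s^{*}>0$, the strong maximum principle applies: $w\leq 0$ attains its maximum value $0$ at the interior point $s^{*}$, and because the attained maximum equals zero no sign condition is needed on the zero-order coefficient, hence $w\equiv 0$ on $(0,\infty)$, i.e.\ $u_{1}\equiv\alpha^{*}u_{2}$. This already contradicts $\limsup u_{1}/u_{2}\leq 1$; moreover the same identity forces $\tilde g\,\alpha^{*}(\alpha^{*}-1)u_{2}^{2}\equiv 0$, so $\tilde g\equiv 0$ on $(0,\infty)$, contradicting the hypothesis that $\tilde g\not\equiv 0$. If $s^{*}=0$, then $u_{1}(0)=\alpha^{*}u_{2}(0)$ combined with $u_{1}(0)\leq u_{2}(0)$ and $\alpha^{*}>1$ forces $u_{1}(0)=u_{2}(0)=0$; Hopf's boundary-point lemma applied to the same differential inequality either forces $w\equiv 0$ (the previous contradiction) or yields $w'(0^{+})<0$. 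The latter gives $u_{1}'(0)<\alpha^{*}u_{2}'(0)$, and a Taylor expansion then shows $u_{1}(s)/u_{2}(s)\to u_{1}'(0)/u_{2}'(0)<\alpha^{*}$ as $s\to 0^{+}$, contradicting the supposed maximality of the ratio at $s^{*}=0$.

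The main obstacle is extracting a clean contradiction from the maximum principle given that $\tilde g$ is only nonnegative (not strictly positive) and that the zero-order coefficient of the resulting linear operator has no definite sign. The crucial observation that makes the argument go through is that the attained maximum value of $w$ is \emph{zero}, which removes the usual sign restriction in the strong maximum principle, while $\tilde g\not\equiv 0$ together with $u_{2}>0$ on $(0,\infty)$ ensures the right-hand side $\tilde g\alpha^{*}(\alpha^{*}-1)u_{2}^{2}$ is not identically zero, so the displayed inequality actually rules out $w\equiv 0$.
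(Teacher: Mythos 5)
This lemma is cited in the paper from \cite{DWZ} and carries no internal proof, so there is nothing in the present paper to compare against; I evaluate your argument on its own.

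Your sliding argument is the natural one and the core computations are correct. The algebraic identity $u_1^2-\alpha^*u_2^2=w(u_1+\alpha^*u_2)+\alpha^*(\alpha^*-1)u_2^2$ with $w=u_1-\alpha^*u_2$ checks out, and the derived inequality
\[
dw''-cw'+\bigl[\tilde f-\tilde g(u_1+\alpha^*u_2)\bigr]w\;\geq\;\tilde g\,\alpha^*(\alpha^*-1)u_2^2
\]
is right. Your observation that the strong maximum principle and Hopf lemma apply despite the unsigned zero-order coefficient, because the attained maximum of $w$ is exactly $0$ (decompose $c=c^+-c^-$, use $w\le 0$ to absorb $c^+w\le 0$), is precisely the correct device and is the crux of the matter.

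The one place where I would press you is the assertion that $\alpha^*<\infty$. On any tail $[S,\infty)$ and any compact subinterval of $(0,S]$ the ratio is clearly bounded, but near $s=0$ in the degenerate case $u_2(0)=0$ (hence also $u_1(0)=0$) your ``l'H\^opital-type expansion'' implicitly assumes $u_2'(0)>0$, or at least that $u_1$ vanishes to at least the same order as $u_2$ at $s=0$. Neither follows directly from $u_1(0)\le u_2(0)$ and the given differential inequalities: the subsolution inequality for $u_1$ at $s=0$ gives $u_1''(0)\ge 0$, which is consistent with $u_1'(0)=0$, and a Hopf argument applied to $u_1$ itself would require $Lu_1\le 0$, not $\ge 0$, so it does not yield $u_1'(0)>0$. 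Without a lower bound on the order of vanishing of $u_1$ relative to $u_2$ you cannot yet conclude that $\alpha^*$ is finite, and the whole contradiction scheme presupposes it. In the applications made of this lemma in the paper $u_2(0)>0$ always, so the gap is a corner case rather than a fatal flaw; nonetheless, as a proof of the stated lemma in full generality it needs either an additional hypothesis (e.g.\ $u_2(0)>0$, or $u_2'(0)>0$ in the degenerate case) or a sharper argument at the boundary. The remainder of the proof, including the case $s^*>0$ and the Taylor/Hopf dichotomy when $s^*=0$, is sound once finiteness of $\alpha^*$ is secured.
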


Next, we consider the problem
\bes\left\{\begin{aligned}
&d\omega''+c\omega'+r(hu^\ast-\omega)(1-hu^\ast+\omega)=0,\quad0<s<\infty,\\
&\omega(0)=0.
\end{aligned}\right.\lbl{KP10}
\ees
\begin{lem}\lbl{S23}
For any constant $d>0$, $c\geq0$, the problem \eqref{KP10} admits a unique positive solution $\omega(s,c)$ satisfying $\omega(\infty, c)=h u^*$. Moreover, $\omega(s,c)$ is strictly increasing in $s$ for $s>0$, and there exists $b_\omega>0$ such that
 \bes
\omega(s)=hu^\ast-[b_\omega+o(1)]e^{\frac{-c-\sqrt{c^2+4dr}}{2d}s}\mbox{ as }s\rightarrow\infty.
\lbl{expk}
\ees

\end{lem}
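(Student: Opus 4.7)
My plan is to prove Lemma \ref{S23} by a phase-plane / stable-manifold analysis of \eqref{KP10}. Rewrite the equation as the first-order planar system
\[
\omega' = p,\qquad p' = -\frac{c}{d}\,p - \frac{r}{d}(hu^\ast - \omega)(1 - hu^\ast + \omega)
\]
on the strip $\Sigma:=\{(\omega,p):0\leq\omega\leq hu^\ast\}$. Since $hu^\ast\in(0,1)$, the only equilibrium of this system in $\Sigma$ is $(hu^\ast,0)$, and linearization there gives the characteristic equation $d\mu^2+c\mu-r=0$ with roots $\mu_\pm=(-c\pm\sqrt{c^2+4dr})/(2d)$. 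Hence $(hu^\ast,0)$ is a hyperbolic saddle, and its one-dimensional stable manifold $\mathcal{M}^s$ is tangent at the equilibrium to the eigenvector $(1,\mu_-)$. I would focus on the branch $\mathcal{M}^s_+$ of $\mathcal{M}^s$ that lies locally in the quadrant $\{\omega<hu^\ast,\;p>0\}$, since any orbit realizing $\omega(0)=0$ and $\omega(\infty)=hu^\ast$ must live there.

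Existence and monotonicity then reduce to extending $\mathcal{M}^s_+$ backward in $s$ all the way down to $\{\omega=0\}$ while remaining in $\{p>0\}$. Representing this branch as a graph $p=P(\omega)$, the chain rule yields
\[
P'(\omega) = -\frac{c}{d} - \frac{r(hu^\ast-\omega)(1-hu^\ast+\omega)}{d\,P(\omega)} < 0,
\]
so $P$ is strictly decreasing in $\omega$, and the elementary bound $(hu^\ast-\omega)(1-hu^\ast+\omega)\leq 1/4$ together with the asymptotic $P'(\omega)\to -c/d$ as $P\to\infty$ rules out blow-up, giving at most linear growth of $P$ as $\omega$ decreases. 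On the open segment $\{p=0,\;0<\omega<hu^\ast\}$ one has $p'<0$, which prevents $\mathcal{M}^s_+$ from re-entering $\{p\leq 0\}$, and no other equilibrium lies in $\Sigma\setminus\{(hu^\ast,0)\}$ to trap the orbit. Consequently $\mathcal{M}^s_+$ reaches $\{\omega=0\}$ at a unique point $(0,\alpha)$ with $\alpha>0$. Parametrizing the orbit by $s$ (the integral $\int d\omega/P(\omega)$ diverges at $\omega=hu^\ast$ and is finite near $\omega=0$) and translating so that $\omega=0$ corresponds to $s=0$ produces a strictly increasing solution of \eqref{KP10} on $[0,\infty)$ with $\omega(\infty)=hu^\ast$ and $\omega'(s)=p(s)>0$ for all $s>0$.

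Uniqueness is then immediate: any solution with $\omega(\infty)=hu^\ast$ must converge to $(hu^\ast,0)$ and therefore lie on $\mathcal{M}^s$, which meets the line $\{\omega=0\}$ at a single point, so the initial velocity is forced to equal $\alpha$. For the asymptotic expansion \eqref{expk}, I would invoke the Stable Manifold Theorem at the saddle: it gives $\omega(s)-hu^\ast = A\,e^{\mu_-s}(1+o(1))$ as $s\to\infty$ for some $A\neq 0$, and monotonicity forces $\omega(s)<hu^\ast$ for large $s$, so $A<0$; setting $b_\omega:=-A>0$ yields \eqref{expk}.

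The main technical obstacle is confirming that $\mathcal{M}^s_+$ truly extends globally to $\{\omega=0\}$ without either escaping to $p=\infty$ or accumulating at an interior equilibrium. Both possibilities are excluded by the a priori linear bound on $P(\omega)$ derived above, combined with the fact that $(hu^\ast,0)$ is the only equilibrium in $\Sigma$. Every other step is a standard consequence of hyperbolic saddle-point dynamics and smooth dependence on initial data.
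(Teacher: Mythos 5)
Your phase-plane/stable-manifold argument is correct, and it takes a genuinely different route from the paper's. The paper obtains existence of a solution with $0\le\omega\le hu^\ast$ via the constant ordered pair of lower/upper solutions $\underline\omega\equiv0$, $\overline\omega\equiv hu^\ast$; it shows eventual monotonicity and $\omega(\infty)=hu^\ast$ by rewriting \eqref{KP10} in divergence form so that $e^{(c/d)s}\omega'(s)$ is strictly decreasing; it gets the expansion \eqref{expk} from the saddle structure of $(hu^\ast,0)$ (same as you); it then upgrades to strict monotonicity for all $s>0$ using that expansion; and it proves uniqueness by the substitution $\chi_i=1-hu^\ast+\omega_i$, which turns \eqref{KP10} into a half-line Fisher--KPP problem to which the sliding comparison result Lemma \ref{S21} applies. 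Your approach instead extracts existence, strict monotonicity, uniqueness and the asymptotics all from the hyperbolic saddle at $(hu^\ast,0)$ and the monotone, finite graph $p=P(\omega)$ on $[0,hu^\ast)$; this is self-contained and avoids invoking Lemma \ref{S21}, whereas the paper's route reuses machinery it has already set up and is structurally parallel to its later arguments (Lemmas \ref{S26} and \ref{S27}).

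Two points you state but do not justify deserve a line each. First, $\omega(\infty)=hu^\ast$ does not by itself yield $(\omega,\omega')\to(hu^\ast,0)$; you should note, for instance, that the energy $E(s)=\tfrac d2(\omega')^2+V(\omega)$ with $V'(\omega)=\tfrac rd(hu^\ast-\omega)(1-hu^\ast+\omega)$ satisfies $E'=-c(\omega')^2\le0$, so $\omega'$ is bounded, $\omega'\in L^2(0,\infty)$ when $c>0$ and $\omega''$ is bounded, forcing $\omega'\to0$ (the $c=0$ case is even simpler since $E$ is conserved). Second, the claim that $\mathcal{M}^s$ meets $\{\omega=0\}$ at a single point tacitly uses that the opposite branch $\mathcal{M}^s_-$, which emanates into $\{\omega>hu^\ast,\ p<0\}$, never returns to $\{\omega=0\}$; the same sign computation you did for $P'(\omega)$ shows $P'(\omega)<0$ on that side as well, so $p$ stays negative as $\omega$ increases and $\mathcal{M}^s_-$ remains in $\{\omega>hu^\ast\}$ for all backward time. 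With these two remarks added, your proof is complete and a legitimate alternative to the paper's.
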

\begin{proof}
It is easily seen that
\bes\underline{\omega}(s):=0,
\quad\overline{\omega}(s):=h u^*
\lbl{KP1}
\ees
are a pair of lower and upper solutions for \eqref{KP10}. Thus, \eqref{KP10} has at least one solution satisfying
$0\leq \omega(s)\leq hu^*$. The strong maximum principle infers that $\omega(s)<hu^*$ for $s>0$.

 We claim $\omega(s)$ is increasing and $\omega(\infty)=hu^\ast$. Rewrite \eqref{KP10} as $$-\left(de^{\frac{c}{d}s}\omega'\right)'=re^{\frac{c}{d}s}(hu^\ast-\omega)(1-hu^\ast+\omega).$$
Noting that $0\leq \omega<hu^\ast$, we have
$$-\left(de^{\frac{c}{d}s}\omega'\right)'>0\mbox{ for }s>0.$$
Hence, $e^{\frac{c}{d}s}\omega'$ is a decreasing function. We claim that $\omega(s)$ is monotone in $(R,\infty)$ for some
large $R>0$. Otherwise $\omega(s)$ is oscillating near $s=\infty$ and hence we can find a sequence $s_n\rightarrow\infty$ as $n\rightarrow\infty$ such that $\omega'(s_n)=0$. It follows that for any fixed $s>0$,
 $$e^{\frac{c}{d}s}\omega'(s)>\lim_{n\rightarrow\infty}e^{\frac{c}{d}s_n}\omega'(s_n)=0.$$
Thus, we have $\omega'>0$ in $(0,\infty)$, a contradiction to the assumption. Hence, for large $R>0$, $\omega$ is monotone in $(R,\infty)$.
By \eqref{KP10} and $0\leq \omega<hu^*$, we easily obtain $\omega(\infty)=hu^\ast$.
Furthermore, a simple calculation indicates that the ODE
system satisfied by $(\omega,\omega')$ has $(hu^\ast,0)$ as a saddle point. It follows from standard ODE theory that, there exists a constant $b_\omega>0$ such that \eqref{expk} holds, and
\[
\omega'(s)=\frac{c+\sqrt{c^2+4dr}}{2d}[b_\omega+o(1)]e^{\frac{-c-\sqrt{c^2+4dr}}{2d}s}=o(1) e^{-\frac{c}{d}s} \mbox{ as }s\rightarrow\infty.
\]
We thus obtain, for any fixed $s>0$,
 $$e^{\frac{c}{d}s}\omega'(s)>\lim_{s\rightarrow\infty}e^{\frac{c}{d}s}\omega'(s)=0.$$
Hence $\omega'>0$ in $(0,\infty)$ and $\omega$ is strictly increasing in $(0,\infty)$.

It remains to show the uniqueness of positive solution of \eqref{KP10} satisfying $\omega(\infty)=hu^*$.
Let $\omega_1$ and $\omega_2$ be two positive solutions of \eqref{KP10} satisfying $\omega_i(\infty)=hu^*$, $i=1,2$.
We easily see that $\omega_i\leq hu^*$ for otherwise there exists $s_i>0$ such that
\[
\omega_i(s_i)=\max_{s\geq 0}\omega_i(s)>hu^*, \; \omega_i'(s_i)=0\geq \omega_i''(s_i),
\]
which gives a contradiction to \eqref{KP10} when evaluated at $s=s_i$. The strong maximum principle then yields
$\omega_i(s)<h u^*$ for $s>0$. We may then argue as for $\omega$ above to obtain $\omega_i(\infty)=h u^*$.

Set $\chi_i(s):=1-hu^*+\omega_i(s)$ and we find that $\chi_i$ satisfies
\[
d\chi''-c\chi'+r(1-\chi)\chi=0 \mbox{ for } s>0, \; \chi(0)=1-hu^*,\; \chi(\infty)=1.
\]
By Lemma \ref{S21} we immediately obtain $\chi_1\equiv \chi_2$ and hence $\omega_1\equiv \omega_1$.
The uniqueness is thus proved.
\end{proof}

Now, we turn to consider system \eqref{section12}.
For convenience, we change it to a coorperative system by setting
\bess
\tilde{\phi}(s):=\phi(s),\quad \tilde{\psi}(s):=1-\psi(s).
\eess
Clearly $(\phi,\psi)$ solves \eqref{section12} if and only if $(\tilde{\phi},\tilde{\psi})$ satisfies
\bes\left\{
\begin{aligned}
&\tilde{\phi}''-c\tilde{\phi}'+\tilde{\phi}(1-k-\tilde{\phi}+k\tilde{\psi})=0,\quad\tilde{\phi}'>0\quad\mbox{for } s\in\mathbb{R}^+,\\
&d\tilde{\psi}''-c\tilde{\psi}'+r(1-\tilde{\psi})(h\tilde{\phi}-\tilde{\psi})=0,\quad\tilde{\psi}'>0\quad\mbox{for } s\in\mathbb{R},\\
&\tilde{\phi}(s)=0\mbox{ for }s\leq0,\quad \tilde{\psi}(-\infty)=0,\quad(\tilde{\phi}(\infty),\tilde{\psi}(\infty))=(u^\ast,hu^\ast).
\end{aligned}
\right.\lbl{sect1}\ees

\medskip

Let $\Sigma=\Sigma_1\times\Sigma_2$, where
\bess\begin{aligned}
&\Sigma_1=\{\tilde{\phi}\in C(\mathbb{R})\cap C^2([0,\infty)):\tilde{\phi}(s)\equiv0\, (s\leq0),\quad\tilde{\phi}'(s)>0\, (s>0),
\quad\tilde{\phi}(\infty)=u^\ast\},\\
&\Sigma_2=\{\tilde{\psi}\in C^2(\mathbb{R}):\tilde{\psi}(-\infty)=0,\quad\tilde{\psi}'(s)> 0\, (s\in\mathbb{R}),
\quad\tilde{\psi}(\infty)=hu^\ast\}.
\end{aligned}\eess
We define a functional $\theta$ on $\Sigma$ by
$$\theta(\tilde{\phi},\tilde{\psi})=\min\left\{\inf_{s\in\mathbb{R}^+}\Theta_1(\tilde{\phi},\tilde{\psi})(s),
\quad\inf_{s\in\mathbb{R}}\Theta_2(\tilde{\phi},\tilde{\psi})(s)\right\},$$
where
$$\begin{aligned}
&\Theta_1(\tilde{\phi},\tilde{\psi})(s)=\frac{\tilde{\phi}''(s)
+\tilde{\phi}(s)(1-k-\tilde{\phi}(s)+k\tilde{\psi}(s))}{\tilde{\phi}'(s)},\quad s\in\mathbb{R}^+,\\
&\Theta_2(\tilde{\phi},\tilde{\psi})(s)=\frac{d\tilde{\psi}''(s)
+r(1-\tilde{\psi}(s))(h\tilde{\phi}(s)-\tilde{\psi}(s))}{\tilde{\psi}'(s)},\quad s\in\mathbb{R}.
\end{aligned}$$
Clearly, if $(\tilde{\phi},\tilde{\psi})$ is a solution of \eqref{sect1} with $c\geq0$, then $(\tilde{\phi},\tilde{\psi})\in\Sigma$ and $\theta(\tilde{\phi},\tilde{\psi})=c$. Therefore,
$$c\leq c_0^\ast:=\sup_{(\tilde{\phi},\tilde{\psi})\in\Sigma}\theta(\tilde{\phi},\tilde{\psi}).$$

We will show $c_0^*=c_*>0$, where $c_*$ is given in Proposition \ref{F6}. For the moment we assume $c_0^*>0$ and
 prove that \eqref{sect1} has a solution for every $c\in[0,c_0^\ast)$ by using an upper and lower solution argument.
From Lemma \ref{S22}, the following equation
\bess\left\{\begin{array}{ll}\medskip
\displaystyle
\hat{\chi}''+\hat{\chi}(u^\ast-\hat{\chi})=0,\quad 0<s<\infty,\\
\hat{\chi}(0)=0
\end{array}\right.\lbl{}
\eess
has a unique strictly increasing solution $\hat{\chi}$ satisfying $\hat\chi(\infty)=u^\ast$.
We define
\bes\lbl{upfl}
\overline{\phi}(s)=\left\{\begin{array}{ll}\medskip
\displaystyle
0,&-\infty<s<0,\\
\hat{\chi}(s),&\quad 0\leq s<\infty,
\end{array}\right.
\quad\overline{\psi}(s)=\left\{\begin{array}{ll}\medskip
\displaystyle
hu^\ast-\omega(-s,0),&-\infty<s<0,\\
hu^\ast,&\quad 0\leq s<\infty.
\end{array}\right.
\ees

\begin{lem}\lbl{S24}
For $c\geq0$, the pair of functions $(\overline{\phi}(s),\overline{\psi}(s))$ is an upper solution of \eqref{sect1}
associated with $\mathcal{R}:=[0, u^*]\times [0, hu^*]$.
\end{lem}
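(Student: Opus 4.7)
The plan is to verify the defining properties of a weak upper solution from Definition~\ref{s2} piece by piece for the candidate pair $(\overline{\phi},\overline{\psi})$, taking the exceptional set $\Omega_1=\Omega_2=\{0\}$. Note that with $H_i(\varphi)=\beta\varphi_i+f_i(\varphi)$, the inequalities in \eqref{upper} reduce to the standard ones $d_i\overline{\varphi}_i''-c\overline{\varphi}_i'+f_i(\overline{\varphi})\le 0$, where here $f_1(u_1,u_2)=u_1(1-k-u_1+ku_2)$ and $f_2(u_1,u_2)=r(1-u_2)(hu_1-u_2)$. First I would confirm $(\overline{\phi},\overline{\psi})\in C_{\mathcal{R}}(\mathbb{R},\mathbb{R}^2)$: by construction $\overline{\phi}$ takes values in $[0,u^*]$ (since $\hat{\chi}$ is increasing from $0$ to $u^*$), while Lemma~\ref{S23} gives $0\le \omega(\cdot,0)\le hu^*$, so $\overline{\psi}$ takes values in $[0,hu^*]$. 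The boundary behavior $\overline{\phi}\equiv 0$ on $(-\infty,0]$ and $\overline{\psi}(-\infty)=hu^*-\omega(\infty,0)=0$ is immediate.

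For the differential inequality on $\overline{\phi}$: on $s<0$ it is trivial since $\overline{\phi}\equiv 0$. On $s>0$, the key algebraic identity to exploit is that $u^*=\frac{1-k}{1-hk}$ yields $1-k+k(hu^*)=u^*$, so when $\overline{\psi}=hu^*$ we get $1-k-\overline{\phi}+k\overline{\psi}=u^*-\hat{\chi}$. Using $\hat{\chi}''+\hat{\chi}(u^*-\hat{\chi})=0$ then collapses the expression to $-c\hat{\chi}'\le 0$, since $c\ge 0$ and $\hat{\chi}'>0$.

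For the differential inequality on $\overline{\psi}$: on $s>0$ we have $\overline{\psi}\equiv hu^*$, so $\overline{\psi}'=\overline{\psi}''=0$, and the nonlinear term equals $r(1-hu^*)h(\overline{\phi}-u^*)$; since $h,k\in(0,1)$ force $u^*<1$ (hence $1-hu^*>0$) and $\hat{\chi}(s)<u^*$ for $s>0$, this is $\le 0$. On $s<0$, substitute $W(s):=\omega(-s,0)$ so that $\overline{\psi}=hu^*-W$; the ODE for $\omega$ with $c=0$ gives $dW''(s)=-r(hu^*-W)(1-hu^*+W)=-r\overline{\psi}(1-\overline{\psi})$, so $d\overline{\psi}''=r\overline{\psi}(1-\overline{\psi})$. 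Combined with $\overline{\phi}\equiv 0$ on $s<0$, the reaction term $r(1-\overline{\psi})(h\cdot 0-\overline{\psi})=-r\overline{\psi}(1-\overline{\psi})$ cancels $d\overline{\psi}''$ exactly, leaving $-c\overline{\psi}'$; since $\overline{\psi}'(s)=\omega'(-s,0)>0$ by Lemma~\ref{S23}, this is again $\le 0$.

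Finally I would verify the one-sided derivative condition~\eqref{lower1} at the singular points. For $\overline{\phi}$, the condition in Definition~\ref{s2} is required only at $\xi_i\in\Omega_1\setminus\{0\}$, so there is nothing to check. For $\overline{\psi}$ at $\eta=0$, continuity holds since $\overline{\psi}(0-)=hu^*-\omega(0,0)=hu^*=\overline{\psi}(0+)$; for derivatives, $\overline{\psi}'(0+)=0$ while $\overline{\psi}'(0-)=\omega'(0,0)>0$, so $\overline{\psi}'(0+)\le \overline{\psi}'(0-)$ as required. No step is a genuine obstacle: the only mildly delicate point is making the sign bookkeeping of $\overline{\psi}'(s)=\omega'(-s,0)$ transparent and remembering that the identity $1-k+khu^*=u^*$ is what makes the $\overline{\phi}$ inequality close.
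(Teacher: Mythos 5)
Your proof is correct and follows essentially the same route as the paper: on $s>0$ you use the identity $1-k+khu^*=u^*$ together with $\hat{\chi}''+\hat{\chi}(u^*-\hat{\chi})=0$ to reduce the $\overline{\phi}$-inequality to $-c\hat{\chi}'\le 0$, and you reduce the $\overline{\psi}$-inequality on $s>0$ to $rh(1-hu^*)(\hat{\chi}-u^*)\le 0$; on $s<0$ you use the ODE for $\omega$ with $c=0$ to leave $-c\overline{\psi}'\le 0$, and you check the corner condition at $s=0$. Your extra remarks on the range containment and the sign bookkeeping of $\overline{\psi}'$ are just a slightly more explicit version of the same argument.
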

\begin{proof} For $s\geq0$, we have
\bess\begin{array}{ll}\medskip
&\overline{\phi}''-c\overline{\phi}'+\overline{\phi}(1-k-\overline{\phi}+k\overline{\psi})\\
=&\hat{\chi}''-c\hat{\chi}'+\hat{\chi}(u^\ast-\hat{\chi})=-c\hat{\chi}'\leq0
\end{array}\eess
and
\bess\begin{array}{ll}\medskip
&d\overline{\psi}''-c\overline{\psi}'+r(1-\overline{\psi})(h\overline{\phi}-\overline{\psi})
=r h(1-hu^\ast)(\hat{\chi}-u^\ast)\leq0.
\end{array}\eess
For $s<0$, we have
\bess\begin{array}{ll}\medskip
&\overline{\phi}''-c\overline{\phi}'+\overline{\phi}(1-k-\overline{\phi}+k\overline{\psi})=0
\end{array}\eess
and
\bes\lbl{lsol}
\begin{array}{ll}\medskip
&d\overline{\psi}''-c\overline{\psi}'+r(1-\overline{\psi})(h\overline{\phi}-\overline{\psi})
=-c\overline{\psi}'\leq0.
\end{array}\ees
Moreover, it is easily seen that
\[
\overline{\psi}'(0-)\geq \overline{\psi}'(0+).
\]

Finally, by definition, $\overline \phi(s)\equiv 0$ for $s\leq 0$, $ \overline\phi(\infty)=u^*$,
$\overline \psi(-\infty)=0$ and $\overline\psi(\infty)=h u^*$. Hence $(\overline\phi,\overline\psi)$ meets all the requirements for an upper solution associated with $\mathcal{R}$ in Definition \ref{s2}.
This completes the proof.\end{proof}

\begin{lem}\lbl{S25} Assume $h, k\in (0,1)$, $c_0^*>0$ and $c\in[0,c_0^\ast)$. Then \eqref{sect1} has a solution $(\tilde{\phi},\tilde{\psi})$.
\end{lem}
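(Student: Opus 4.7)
The plan is to apply Proposition \ref{s7} to the cooperative reformulation \eqref{sect1}, viewed as an instance of \eqref{function1}--\eqref{function2a2} with $\mathbf{K} = (u^\ast, hu^\ast)$ and rectangle $\mathcal{R} = [0,u^\ast]\times[0,hu^\ast]$. First I would verify the hypotheses $(\mathbf{A}_1)$--$(\mathbf{A}_3)$. With $f_1(u_1,u_2) = u_1(1-k-u_1+ku_2)$ and $f_2(u_1,u_2) = r(1-u_2)(hu_1-u_2)$, cooperativity is immediate from $\partial_{u_2}f_1 = ku_1 \geq 0$ and $\partial_{u_1}f_2 = rh(1-u_2) \geq 0$ on $\mathcal{R}$ (using $hu^\ast < 1$); local Lipschitz continuity is trivial since the $f_i$ are polynomials; and the identity $(1-hk)u^\ast = 1-k$ shows that $\mathbf{0}$ and $\mathbf{K}$ are the only common zeros of $(f_1,f_2)$ inside $\mathcal{R}$. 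The upper solution $(\overline\phi,\overline\psi)$ associated with $\mathcal{R}$ is furnished by Lemma \ref{S24}, so what remains is a matching lower solution.

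Because $c < c_0^\ast$, the supremum definition of $c_0^\ast$ produces a pair $(\tilde\phi_0, \tilde\psi_0)\in\Sigma$ with $c' := \theta(\tilde\phi_0,\tilde\psi_0) > c$. By the definition of $\theta$, this means $\Theta_1(\tilde\phi_0,\tilde\psi_0) \geq c'$ on $\mathbb{R}^+$ and $\Theta_2(\tilde\phi_0,\tilde\psi_0) \geq c'$ on $\mathbb{R}$; multiplying through by the strictly positive derivatives $\tilde\phi_0'$ and $\tilde\psi_0'$ produces
\begin{align*}
\tilde\phi_0'' - c\tilde\phi_0' + \tilde\phi_0(1-k-\tilde\phi_0+k\tilde\psi_0) &\geq (c'-c)\tilde\phi_0' > 0,\quad s > 0,\\
d\tilde\psi_0'' - c\tilde\psi_0' + r(1-\tilde\psi_0)(h\tilde\phi_0-\tilde\psi_0) &\geq (c'-c)\tilde\psi_0' > 0,\quad s\in\mathbb{R},
\end{align*}
which, together with $\tilde\phi_0\equiv 0$ on $(-\infty,0]$ and $\tilde\psi_0(-\infty)=0$, are precisely the weak lower-solution inequalities of Definition \ref{s2}.

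To secure the remaining ingredient of Proposition \ref{s7}, namely $\sup_{t\leq s}\underline\varphi(t) \leq \overline\varphi(s)$ (which, by the monotonicity built into $\Sigma$, reduces to the pointwise bound $\underline\varphi\leq\overline\varphi$), I would translate the pair to the right, setting $\underline\varphi(s) := (\tilde\phi_0(s-L),\tilde\psi_0(s-L))$ for $L>0$ large. Autonomy of \eqref{sect1} preserves the lower-solution inequalities, and the shift drives $\underline\varphi$ pointwise small on any compact interval, so the comparison with $\overline\varphi$ is easy there. If a local conflict persists near the ends, one can cap $\underline\varphi$ from above by a constant strictly below $(u^\ast,hu^\ast)$ and verify the corner condition \eqref{lower1}, whose sign is correct because capping \emph{decreases} the function. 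Proposition \ref{s7} then yields a monotone nondecreasing solution $(\tilde\phi,\tilde\psi)$; the strong maximum principle applied to the linear equations satisfied by $\tilde\phi'$ and $\tilde\psi'$ upgrades monotonicity to strict monotonicity, while sandwiching between $\underline\varphi$ and $\overline\varphi$ delivers $(\tilde\phi(\infty),\tilde\psi(\infty)) = (u^\ast,hu^\ast)$.

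I expect the main obstacle to be enforcing the pointwise bound $\underline\varphi\leq\overline\varphi$ at the asymptotic ends: both $\tilde\phi_0(s-L)$ and $\overline\phi(s)=\hat\chi(s)$ tend to $u^\ast$ as $s\to+\infty$, and both $\tilde\psi_0(s-L)$ and $\overline\psi(s)$ tend to $0$ as $s\to-\infty$, so the rightward shift alone does not force the inequalities there. Controlling them requires either choosing $(\tilde\phi_0,\tilde\psi_0)$ with exponential approach rates compatible with those of $(\overline\phi,\overline\psi)$ given by Lemmas \ref{S22}(ii) and \ref{S23}, or inserting a careful local truncation whose resulting corners meet the derivative-jump requirement \eqref{lower1}.
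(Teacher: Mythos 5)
You have correctly reproduced the paper's setup: reducing to Proposition \ref{s7} with $\mathbf{K}=(u^\ast,hu^\ast)$, verifying $(\mathbf{A}_1)$--$(\mathbf{A}_3)$, taking the upper solution from Lemma \ref{S24}, and extracting a lower solution from a near-optimizer $(\tilde\phi_0,\tilde\psi_0)\in\Sigma$ of $\theta$. However, the gap you flag at the end is genuine, and neither of your two proposed remedies closes it. A rightward shift by $L$ leaves the limits $\tilde\phi_0(s-L)\to u^\ast$ as $s\to+\infty$ and $\tilde\psi_0(s-L)\to 0$ as $s\to-\infty$ unchanged, and the rate of approach of an arbitrary element of $\Sigma$ is uncontrolled --- $\Sigma$ prescribes only the limits, not any decay rate --- so there is no reason the shifted lower solution stays below $(\overline\phi,\overline\psi)$ near the ends. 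The truncation idea also fails at the corner: capping the nondecreasing component $\underline\varphi_1$ from above by a constant produces a kink where the left derivative is positive and the right derivative is zero, which violates the lower-solution corner requirement $\underline\varphi_1'(\xi-)\leq\underline\varphi_1'(\xi+)$ in \eqref{lower1}; the sign there is opposite to what you assert.

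The paper closes the comparison with no shift and no truncation, by taking the lower solution to be $(\tilde\phi_0,\tilde\psi_0)$ itself and invoking Lemma \ref{S21}, the scalar comparison principle with the asymptotic ratio condition $\limsup_{s\to\infty}u_1(s)/u_2(s)\leq 1$. Two of the four pointwise inequalities are trivial ($\underline\psi\leq hu^\ast=\overline\psi$ on $s\geq 0$ because $\underline\psi\in\Sigma_2$ increases to $hu^\ast$; $\underline\phi=0=\overline\phi$ on $s\leq 0$ by definition). The remaining two, $\underline\phi\leq\overline\phi$ on $[0,\infty)$ and $\underline\psi\leq\overline\psi$ on $(-\infty,0]$ (the latter after the substitution $u_i(s)=1-\psi_i(-s)$), are exactly of the form Lemma \ref{S21} handles: a scalar sub/supersolution pair agreeing at $s=0$ whose ratio tends to $1$ at infinity. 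This is the tool your proposal is missing, and it is tailored precisely to the ``both tails tend to the same limit'' difficulty you correctly anticipated.
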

\begin{proof}
For $c\in[0,c_0^\ast)$, by the definition of $c_0^\ast$, there exists $(\underline{\phi}(s),\underline{\psi}(s))\in\Sigma$ such that $\theta(\underline{\phi},\underline{\psi})>c$. Thus we have
\bess\left\{\begin{aligned}
&\underline{\phi}''-c\underline{\phi}'+\underline{\phi}(1-k-\underline{\phi}+k\underline{\psi})\geq0,
\quad\underline{\phi}'>0,\quad s\in\mathbb{R}^+,\\
&d\underline{\psi}''-c\underline{\psi}'+r(1-\underline{\psi})(h\underline{\phi}-\underline{\psi})\geq0,
\quad\underline{\psi}'>0,\quad s\in\mathbb{R},\\
&\underline{\phi}(s)=0\mbox{ for }s\leq0,\quad \underline{\psi}(-\infty)=0,\quad(\underline{\phi}(\infty),\underline{\psi}(\infty))=(u^\ast,hu^\ast).
\end{aligned}\right.\eess
Hence, $(\underline{\phi},\underline{\psi})$ is a lower solution of \eqref{sect1} associated with $\mathcal{R}:=[0, u^*]\times [0, hu^*]$.

Next, we show that $(\underline{\phi}(s),\underline{\psi}(s))\leq(\overline{\phi}(s),\overline{\psi}(s))$ for $s\in\mathbb{R}$, where
$(\overline{\phi},\overline{\psi})$ is the upper solution obtained in Lemma \ref{S24}.
Clearly, $\underline{\psi}(s)\leq\overline{\psi}(s)$ for $s>0$ and $\underline{\phi}(s)=\overline{\phi}(s)=0$ for $s<0$.
We only need to show that $\underline{\psi}(s)\leq\overline{\psi}(s)$ for $s\leq0$, and $\underline{\phi}(s)\leq\overline{\phi}(s)$ for $s\geq0$.
Let $\underline{\psi}_1(s)=1-\underline{\psi}(-s)$ and $\overline{\psi}_1(s)=1-\overline{\psi}(-s)$. In view of
$\underline{\phi}(s)=\overline \phi(s)=0$ for $s<0$ and \eqref{lsol},
we have
\bess\begin{aligned}&-d\underline{\psi}_1''-c\underline{\psi}_1'-r\underline{\psi}_1(1
-\underline{\psi}_1)\geq 0\geq
-d\overline{\psi}_1''-c\overline{\psi}_1'-r\overline{\psi}_1(1
-\overline{\psi}_1)  \mbox{ for } s>0,\\
&\underline{\psi}_1(\infty)=\overline{\psi}_1(\infty)=1,\  \ \underline{\psi}_1(0)>\overline{\psi}_1(0).
\end{aligned}\eess
By Lemma \ref{S21}, $\underline{\psi}_1(s)\geq\overline{\psi}_1(s)$ for $s\geq0$.
Hence, $\underline{\psi}(s)\leq\overline{\psi}(s)$ for $s\leq0$. Similarly, we can prove $\underline{\phi}(s)\leq\overline{\phi}(s)$ for $s\geq0$.  Thus
\[
\mbox{$(\underline{\phi}(s),\underline{\psi}(s))\leq(\overline{\phi}(s),\overline{\psi}(s))$ for $s\in\mathbb{R}$.}
\]
The monotonicity of $\underline{\phi}(s)$ and $\underline{\psi}(s)$ then infers that
\[
\sup_{t\leq s}(\underline{\phi}(t),\underline{\psi}(t))\leq (\overline{\phi}(s),\overline{\psi}(s)) \mbox{ for } s\in\mathbb{R}.
\]
Therefore we can apply Proposition \ref{s7} to conclude that \eqref{sect1} has a positive solution $(\tilde{\phi},\tilde{\psi})$ for each $c\in[0,c_0^\ast)$, except that we only have $\tilde \phi'(s)\geq 0$ for $s>0$ and $\tilde\psi'(s)\geq 0$ for $s\in\mathbb R$.

It remains to prove $\tilde{\phi}'(s)>0$ for $s>0$ and $\tilde{\psi}'(s)>0$  for $s\in\mathbb{R}$.
Since $\tilde{\phi}'(s)\geq0$ for $s\in\mathbb{R}\setminus\{0\}$ and $\tilde{\psi}'(s)\geq0$  for $s\in\mathbb{R}$, and none of them is identically 0, applying the strong maximum principle to the cooperative system satisfied by $(\phi',\psi')$, we have $\tilde{\phi}'(s)>0$ for $s>0$ and $\tilde{\psi}'(s)>0$  for $s\in\mathbb{R}$.
\end{proof}

To prove uniqueness for solutions of \eqref{sect1}, we need to investigate the asymptotic behavior of solutions to \eqref{sect1} as $s\rightarrow\infty$. To this end we consider the linearized equation of \eqref{sect1} at $(u^\ast,hu^\ast)$:
\bes\left\{\begin{array}{ll}\medskip
 \check{\phi}''-c\check{\phi}'-u^\ast\check{\phi}+ku^\ast\check{\psi}=0,\\
d \check{\psi}''-c\check{\psi}'-rv^\ast\check{\psi}+rhv^\ast\check{\phi}=0.
\end{array}\right.\lbl{f7}
\ees
If $(\check{\phi},\check{\psi})=(me^{\mu s},ne^{\mu s})$ solves \eqref{f7},
then $(m,n)$ and $\mu$ must satisfy
\bes
A(\mu)(m,n)^T=(0,0)^T,
\lbl{eigenvector}\ees
where
\bess
A(\mu)=\left(
\begin{array}{cc}\medskip
 \mu^2-c\mu-u^\ast&ku^\ast\\
hrv^\ast&d\mu^2-c\mu-rv^\ast
\end{array}\right).
\eess
Let
\bess\lbl{poly}
P_1(\mu):={\rm det} (A(\mu))=(\mu^2-c\mu-u^\ast)(d\mu^2-c\mu-rv^\ast)-khru^\ast v^\ast.
\eess
Then \eqref{eigenvector} has a nonzero solution $(m,n)^T$ if and only if $P_1(\mu)=0$.

Let
$$\mu_1^\pm:=\frac{c\pm\sqrt{c^2+4u^\ast}}{2}\mbox{ and } \mu_2^\pm:=\frac{c\pm\sqrt{c^2+4drv^\ast}}{2d}$$
be the two roots of
$$\mu^2-c\mu-u^\ast=0\mbox{ and }d\mu^2-c\mu-rv^\ast=0,$$
respectively.
Clearly
$$
P_1(0)=(1-kh)ru^\ast v^\ast>0,\; P_1(\pm\infty)=\infty\mbox{ and } P_1(\mu_i^\pm)=-khru^*v^*<0\mbox{ for } i=1,2.
$$
Hence, for any $c\geq0$, $P_1(\mu)=0$ has four different real roots $\hat{\mu}_i\,(i=1,2,3,4)$ satisfying
\bes\begin{aligned}
&\hat{\mu}_1<\min\{\mu_1^-,\mu_2^-\}\leq\max\{\mu_1^-,\mu_2^-\}<\hat{\mu}_2<0,\\
&0<\hat{\mu}_3<\min\{\mu_1^+,\mu_2^+\}\leq\max\{\mu_1^+,\mu_2^+\}<\hat{\mu}_4.\end{aligned}\lbl{exp1}\ees

\begin{lem}\lbl{S26}Let $(\tilde{\phi}(s),\tilde{\psi}(s))$ be a solution of \eqref{sect1}. Then there exist positive constants $m$ and $n$ independent of $(\tilde{\phi},\tilde{\psi})$, and a positive constant $\beta$ depending on $(\tilde{\phi},\tilde{\psi})$, such that
\bes
(\tilde{\phi}(s),\tilde{\psi}(s))=(u^\ast,hu^\ast)- \beta e^{\hat{\mu}_2s}(m,n)[1+o(1)]\mbox{ as }s\rightarrow\infty.\lbl{expansion1}\ees
\end{lem}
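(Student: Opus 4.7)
The strategy is to linearize about the equilibrium $(u^\ast,hu^\ast)$ and apply the standard asymptotic theory for perturbed autonomous linear systems. I introduce $U(s):=u^\ast-\tilde\phi(s)$ and $V(s):=hu^\ast-\tilde\psi(s)$; both are positive on $\mathbb{R}$ by the strict monotonicity of $\tilde\phi$ and $\tilde\psi$, and both tend to $0$ as $s\to\infty$. Using $u^\ast(1-hk)=1-k$ and $1-hu^\ast=v^\ast$, direct substitution into \eqref{sect1} yields
\begin{equation*}
\begin{cases}
U''-cU'-u^\ast U+ku^\ast V=-U^2+kUV,\\
dV''-cV'-rv^\ast V+rhv^\ast U=-rhUV+rV^2,
\end{cases}
\end{equation*}
in which the linear part is exactly \eqref{f7} and the right-hand side is $O(U^2+V^2)$.

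Rewriting this as a first-order $4$-dimensional ODE $Y'=\mathcal{A}Y+F(Y)$ in $Y=(U,V,U',V')^T$, with $|F(Y)|=O(|Y|^2)$ near $Y=0$, the eigenvalues of $\mathcal{A}$ are precisely the four real roots $\hat\mu_1<\hat\mu_2<0<\hat\mu_3<\hat\mu_4$ of $P_1(\mu)=0$ arranged as in \eqref{exp1}. Since $Y(s)\to 0$, the orbit lies on the local stable manifold at the origin, which is tangent to the span of the eigenvectors for $\hat\mu_1$ and $\hat\mu_2$. By the classical asymptotic theory for such systems (e.g.\ Coddington--Levinson, Chapter~13, or Hartman's ODE textbook), one then obtains
\begin{equation*}
(U(s),V(s))=\beta_1 e^{\hat\mu_1 s}(m_1,n_1)[1+o(1)]+\beta_2 e^{\hat\mu_2 s}(m_2,n_2)[1+o(1)]\quad\text{as }s\to\infty,
\end{equation*}
where $(m_j,n_j)\neq 0$ solves $A(\hat\mu_j)(m_j,n_j)^T=0$ (and is thus intrinsic to $\mathcal{A}$, independent of the particular wave), and $\beta_1,\beta_2$ are scalars depending on $(\tilde\phi,\tilde\psi)$.

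The signs and nondegeneracy are then pinned down by \eqref{exp1}. Because $\mu_1^-<\hat\mu_2<0<\mu_1^+$, one has $\hat\mu_2^2-c\hat\mu_2-u^\ast<0$, so the relation $(\hat\mu_2^2-c\hat\mu_2-u^\ast)m_2+ku^\ast n_2=0$ forces $m_2$ and $n_2$ to share sign; I normalize so that $m:=m_2>0$ and $n:=n_2>0$. In contrast, $\hat\mu_1<\mu_1^-$ gives $\hat\mu_1^2-c\hat\mu_1-u^\ast>0$, so $m_1$ and $n_1$ are of opposite sign. To exclude $\beta_2=0$, I argue by contradiction: if $\beta_2=0$, then $(U,V)\sim\beta_1 e^{\hat\mu_1 s}(m_1,n_1)$ as $s\to\infty$, so one of $U,V$ becomes negative for all large $s$, contradicting $U,V>0$. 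Hence $\beta_2\neq 0$, and matching the positivity of $(U,V)$ with $(m,n)>0$ forces $\beta:=\beta_2>0$. Because $\hat\mu_1<\hat\mu_2$, the $\hat\mu_1$-term is absorbed into the remainder, yielding \eqref{expansion1}.

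The main obstacle is the rigorous derivation of the two-term asymptotic expansion invoked in the second paragraph. Although $F$ is quadratic in $(U,V)$, the full phase-space variable also carries the derivatives $(U',V')$, so one must check that these too remain small enough to apply the stable-manifold/Hartman-type expansion theorem. A convenient route is to first establish some crude exponential decay of $(U,V,U',V')$ via a super-solution comparison, and only then invoke the sharp expansion theorem on the rapidly-decaying perturbation, at which point the leading behavior is forced to come from the stable eigenvalues of $\mathcal{A}$.
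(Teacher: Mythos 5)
Your proposal follows essentially the same route as the paper: subtract the limiting state to obtain a decaying pair $(U,V)$, view the resulting system as a perturbation of the linearization \eqref{f7}, invoke asymptotic ODE theory to obtain an expansion in the fundamental solutions $e^{\hat\mu_i s}(m_i,n_i)$, and use positivity of $(U,V)$ together with the sign structure of the eigenvectors to conclude $\beta_3=\beta_4=0$, $\beta_2>0$. The technical gap you flag at the end is genuine but is closed exactly as you suggest: the paper first derives exponential decay of the orbit from the saddle structure (Coddington--Levinson, Ch.~13, Thm.~4.1), which makes the coefficients $\delta_i(s)$ in the rewritten system integrable, and then applies the Levinson-type expansion theorem (Coddington--Levinson, Ch.~3, Thm.~8.1) to obtain the $(1+o(1))$ asymptotics for a fundamental system of the perturbed linear equations; your sign analysis of $(m_1,n_1)$ and $(m_2,n_2)$ via the first row of $A(\hat\mu_i)$ is correct and slightly more explicit than the paper's.
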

\begin{proof}\,
Let $(\tilde{\phi}(s),\tilde{\psi}(s))$ be an arbitrary solution of \eqref{sect1}. The inequalities \eqref{exp1} imply that the first
order ODE system satisfied by $(\tilde{\phi}(s),\tilde{\phi}'(s),\tilde{\psi}(s),\tilde{\psi}'(s))$ has a critical point at $(u^\ast,0,hu^\ast,0)$, which is a saddle point. By standard stable manifold theory (see, e.g., Theorem 4.1 and its proof in Chapter 13 of \cite{Co}), we can conclude that
$$(u^\ast,hu^\ast)-(\tilde{\phi}(s),\tilde{\psi}(s))\rightarrow(0,0)\mbox{ exponentially as }s\rightarrow\infty.$$
Let $(\hat{\phi},\hat{\psi})=(u^\ast,hu^\ast)-(\tilde{\phi}(s),\tilde{\psi}(s))$.  Then $(\hat{\phi},\hat{\psi})$ satisfies
\bes\left\{\begin{aligned}
& \hat{\phi}''-c\hat{\phi}'-u^\ast\hat{\phi}+ku^\ast\hat{\psi}
 +\delta_1(s)\hat{\phi}+\delta_2(s)\hat{\psi}=0,\\
&d \hat{\psi}''-c\tilde{\psi}'-rv^\ast\hat{\psi}+hrv^\ast\hat{\phi}
+\delta_3(s)\hat{\phi}+\delta_4(s)\hat{\psi}=0,
\end{aligned}\right.\lbl{f6}
\ees
where
$$\delta_1(s):=\hat{\phi}(s),\delta_2(s):=-k\hat{\phi}(s),\delta_3(s):=-rh\hat{\psi}(s),\delta_4(s):=r\hat{\psi}(s).$$
Clearly,
\bess
\delta_i(s)\rightarrow0\mbox{ exponentially as }s\rightarrow\infty\mbox{ for }i=1,2,3,4.
\eess

Now we turn to consider the linear system \eqref{f7}.
Recall that $P_1(\mu)=0$ has four different real roots satisfying $\hat{\mu}_1<\hat{\mu}_2<0<\hat{\mu}_3<\hat{\mu}_4$.
Let $(m_i,n_i)$ be an eigenvector corresponding to $\mu=\hat{\mu}_i$ in \eqref{eigenvector}, i.e.,
\[
(m_i, n_i)\not=(0,0) \mbox{ and }  A(\hat \mu_i)(m_i,n_i)^T=(0,0)^T.
\]
 Then \eqref{f7} has four linearly independent solutions
\[
 \Upsilon_i=(m_i,n_i)e^{\hat{\mu}_is},\; i=1,2,3,4,
\]
which form a fundamental system for \eqref{f7}.
Applying Theorem 8.1 in Chapter 3 of \cite{Co} to the system \eqref{f6}, viewed as a perturbed linear system of \eqref{f7}, we conclude that \eqref{f6} has four linearly independent solutions $\tilde{\Upsilon}_i,\; i=1,2,3,4,$ satisfying
$$
\tilde{\Upsilon}_i(s)=(1+o(1))\Upsilon_i(s)\mbox{ as }s\rightarrow\infty,\; i=1,2,3,4,
$$
which form a fundamental system for \eqref{f6} (viewed as a linear system).
So the solution $(\hat{\phi},\hat{\psi})$ of \eqref{f6} can be represented as
$$(\hat{\phi}(s),\hat{\psi}(s))=\sum_{i=1}^4\beta_i\tilde{\Upsilon}_i(s),
$$
where $\beta_i\; (i=1, 2, 3, 4)$ are constants.

Since $(\hat{\phi}(\infty),\hat{\psi}(\infty))=(0,0)$, and $0<\hat \mu_3<\hat\mu_4$, we necessarily have  $\beta_3=\beta_4=0$.
We claim that $\beta_2\not=0$. Otherwise we necessarily have $\beta_1\not=0$ and
\[
(\hat{\phi}(s),\hat{\psi}(s))=\beta_1\tilde{\Upsilon}_1(s)=(1+o(1))\beta_1(m_1,n_1)e^{\hat{\mu}_1s}\mbox{ as } s\rightarrow\infty.
\]
However, it is easily checked that all the four elements of the matrix $A(\hat\mu_1)$ are positive, which implies that
$m_1\cdot n_1<0$ and so the two components of the vector $\beta_1(m_1, n_1)$ have opposite signs. It follows that
for all large $s$, $(\hat{\phi}(s),\hat{\psi}(s))$ has a component which is negative, contradicting the fact that $(\hat{\phi}(s),\hat{\psi}(s))>(0,0)$ for all $s>0$. Therefore we must have $\beta_2\not=0$. It is also easily checked that the two rows of the matrix
$A(\hat \mu_2)$ have opposite signs and so $m_2\cdot n_2>0$. For definiteness, we may assume that $m_2$ and $n_2$ are positive. Moreover, due to $\hat \mu_1<\hat \mu_2<0$, we have
\[
(\hat{\phi}(s),\hat{\psi}(s))=\sum_{i=1}^2\beta_i\tilde{\Upsilon}_i(s)=(1+o(1))\beta_2(m_2,n_2)e^{\hat{\mu}_2s}\mbox{ as } s\rightarrow\infty.
\]
Using $(\hat{\phi}(s),\hat{\psi}(s))>(0,0)$ for all $s>0$ we further obtain that $\beta_2>0$, and hence
\eqref{expansion1} holds with $(m,n):=(m_2,n_2)$ and $\beta:=\beta_2$.
\end{proof}

\begin{lem}\lbl{S27}The  solution of \eqref{sect1} is unique.
\end{lem}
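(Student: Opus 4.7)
My plan is a translation/sliding argument leveraging the cooperative structure of the transformed system \eqref{sect1} together with the sharp asymptotic expansion at $+\infty$ provided by Lemma \ref{S26}. Let $(\tilde\phi_1,\tilde\psi_1)$ and $(\tilde\phi_2,\tilde\psi_2)$ be two solutions of \eqref{sect1} with the same speed $c$. By Lemma \ref{S26},
\[
(u^\ast,hu^\ast)-(\tilde\phi_i,\tilde\psi_i)(s)=\beta_i(m,n)e^{\hat\mu_2 s}[1+o(1)] \mbox{ as } s\to\infty,\quad i=1,2,
\]
with a common positive eigenvector $(m,n)$ and constants $\beta_i>0$. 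Relabelling if necessary, assume $\beta_1\geq\beta_2$, and set $\tau_0:=\log(\beta_1/\beta_2)/(-\hat\mu_2)\geq 0$.

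For $\tau\geq 0$ introduce the right-translation $(\Phi_\tau,\Psi_\tau)(s):=(\tilde\phi_2(s-\tau),\tilde\psi_2(s-\tau))$, which still solves the ODEs of \eqref{sect1} for $s>\tau$ and vanishes identically in the first component on $(-\infty,\tau]$. For $\tau$ large, a pointwise argument using the monotonicity of $\tilde\phi_2,\tilde\psi_2$ and the decay of $\Phi_\tau,\Psi_\tau$ at each fixed $s$ as $\tau\to\infty$ yields $\Phi_\tau\leq\tilde\phi_1$ on $[0,\infty)$ and $\Psi_\tau\leq\tilde\psi_1$ on $\mathbb R$. Define
\[
\tau^\ast:=\inf\{\tau\geq\tau_0:\Phi_\tau\leq\tilde\phi_1 \mbox{ on } [0,\infty),\; \Psi_\tau\leq\tilde\psi_1 \mbox{ on } \mathbb R\}.
\]
The differences $W_1:=\tilde\phi_1-\Phi_{\tau^\ast}$ and $W_2:=\tilde\psi_1-\Psi_{\tau^\ast}$ are nonnegative and solve a linear cooperative system whose off-diagonal couplings $k\tilde\phi_1$ and $rh(1-\Psi_{\tau^\ast})$ are nonnegative (positive in the interior, since $hu^\ast<h<1$ when $h,k\in(0,1)$).

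If $\tau^\ast>\tau_0$, then the leading asymptotic gap at $+\infty$ remains strictly positive, so the touching forced by minimality of $\tau^\ast$ must occur at a finite point. Interior touching is ruled out by the strong maximum principle applied to the cooperative system; the only ``boundary'' option is a touching of $W_2$ at $s=\tau^\ast$ (since $W_1(\tau^\ast)=\tilde\phi_1(\tau^\ast)>0$ automatically), and this is excluded by Hopf's lemma. Hence $\tau^\ast=\tau_0$. To conclude $\tau_0=0$, I refine the analysis at $\tau^\ast=\tau_0$: here the leading asymptotic coefficients match exactly, and using the next-order terms of the stable-manifold expansion around the saddle $(u^\ast,0,hu^\ast,0)$ together with backward ODE uniqueness identifies the two trajectories as the same orbit of the autonomous system, i.e., $(\tilde\phi_1,\tilde\psi_1)(s)=(\tilde\phi_2,\tilde\psi_2)(s-\tau_0)$ globally. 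Evaluating this at $s=\tau_0$ gives $\tilde\phi_1(\tau_0)=\tilde\phi_2(0)=0$, contradicting $\tilde\phi_1(s)>0$ for $s>0$ unless $\tau_0=0$. Thus $\beta_1=\beta_2$ and, combining with the symmetric sliding argument obtained by interchanging the roles of the two solutions, $(\tilde\phi_1,\tilde\psi_1)\equiv(\tilde\phi_2,\tilde\psi_2)$.

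The principal technical obstacle is precisely the critical case $\tau^\ast=\tau_0>0$, in which the leading-order asymptotic has been matched exactly by the shift and classical sliding alone cannot close: a touching ``at infinity'' is not immediately excluded. Resolving it requires a careful refinement of the asymptotic expansion at $(u^\ast,0,hu^\ast,0)$ to identify the two shifted solutions as points on a single orbit of the two-dimensional stable manifold, and then invoking the pinned front condition $\tilde\phi(0)=0$ in conjunction with $\tilde\phi(s)>0$ on $(0,\infty)$ to rule out any nonzero shift. The cooperative structure — made possible by the change of variables $\tilde\psi=1-\psi$ — is essential for the strong maximum principle step; without it the linear system for $(W_1,W_2)$ would not admit the comparison tools the sliding argument requires.
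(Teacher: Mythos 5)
Your overall strategy — a sliding argument combined with the sharp $s\to+\infty$ expansion from Lemma \ref{S26} — is the same as the paper's, and the construction of the lower bound for the shift (your $\tau_0$) and of the critical shift $\tau^*$ is sound. However, there is a genuine gap at the step you yourself flag as the ``principal technical obstacle,'' namely the critical case $\tau^*=\tau_0$.

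Your justification there is that ``the leading asymptotic coefficients match exactly, and using the next-order terms of the stable-manifold expansion \ldots together with backward ODE uniqueness identifies the two trajectories as the same orbit of the autonomous system.'' This does not work as stated. First, backward ODE uniqueness requires the two solutions to agree at a finite point and to satisfy the same ODE system on a common domain; here $\tilde\phi_1$ satisfies the first equation only for $s>0$ while the shifted $\Phi_{\tau_0}$ satisfies it only for $s>\tau_0$, and they are not known to agree anywhere. Second, having the same leading $\hat\mu_2$-coefficient does not identify orbits on a two-dimensional stable manifold: there is a one-parameter freedom in the fast ($\hat\mu_1$-) direction, and you have not shown those coefficients also coincide. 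So the inference ``same leading coefficient $\Rightarrow$ same orbit'' is unjustified.

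The correct mechanism, which the paper uses and which you are implicitly circling, is the positivity of the difference together with the sign structure of the eigenvectors at the saddle. Concretely: if $(W_1,W_2)\not\equiv(0,0)$ then the strong maximum principle applied to the cooperative linear system gives $W_1>0$ on $(\tau^*,\infty)$ and $W_2>0$ on $\mathbb R$; repeating the perturbed-linear-system argument of Lemma \ref{S26} for $(W_1,W_2)$, the $\hat\mu_1$-eigenvector has components of opposite sign, so positivity forces the leading term of $(W_1,W_2)$ to be in the $\hat\mu_2$-direction with a \emph{strictly positive} coefficient. At $\tau^*=\tau_0$ that coefficient is zero by construction, so the only possibility is $W_1\equiv W_2\equiv 0$. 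The paper never splits off a critical case: it slides directly, and the positivity-driven expansion of $(P,Q)$ already guarantees $\bar C_1,\bar C_2>0$, so one can always shift by a further $\epsilon>0$; this handles the would-be ``critical'' scenario automatically. Your separate case $\tau^*>\tau_0$ is also a bit loose (``the touching forced by minimality of $\tau^*$ must occur at a finite point'' requires either a uniform-in-$\tau$ control or the explicit $\epsilon$-shift argument the paper carries out, plus a Lemma \ref{S21}-type comparison to control the $\psi$-component near $-\infty$), but this part could be repaired along the paper's lines; the missing positivity/sign-structure reasoning in the $\tau^*=\tau_0$ step is the substantive gap.
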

\begin{proof}\,
 Let $(\phi,\psi)$  and $(\phi_1,\psi_1)$ be two arbitrary solutions of \eqref{sect1}. We are going to show that
\bes\lbl{geq}
(\phi(s),\psi(s))\geq (\phi_1(s),\psi_1(s)) \mbox{ for } s\in\mathbb R.
\ees
Note that if we are able to prove \eqref{geq}, then the same argument can also be used to show $(\phi_1,\psi_1)\geq (\phi,\psi)$. Hence uniqueness will follow if we can show
\eqref{geq}.

For $s\in\mathbb R$ and $\xi\geq 0$, define
\bess
\underline{\phi}(s)=\underline \phi^\xi(s):=\phi_1(s-\xi),
\quad\underline{\psi}(s)=\underline{\psi}^\xi(s):=\psi_1(s-\xi).
\eess
We claim that there exists a constant $\xi_0>0$ such that, for every $\xi\geq\xi_0$
\bes\lbl{xi_0}
(\underline{\phi}^\xi(s),\underline{\psi}^\xi(s))\leq(\phi(s),\psi(s))\mbox{ for all }s\in\mathbb{R}.
\ees

Since $\psi_1(-\infty)=0<\psi(0)$, there exists $\xi_1>0$ large enough such that
$\psi_1(-\xi_1)\leq\psi(0)$. Then
$\underline{\psi}^\xi(0)=\psi_1(-\xi)\leq\psi(0)$ for all $\xi\geq\xi_1$, and $\psi(s)$, $\underline{\psi}^\xi(s)$ satisfy
\bess
\begin{aligned}
&d\psi''-c\psi'+r\psi(\psi-1)=0= d\underline{\psi}''-c\underline{\psi}'+r\underline{\psi}(\underline{\psi}-1),\quad s\leq0,\\
&\psi(-\infty)=\underline{\psi}(-\infty)=0,\quad \psi(0)\geq\underline{\psi}(0).
\end{aligned}
\eess
Let $u_1(s):=1-\underline{\psi}(-s)$ and $u_2(s):=1-\psi(-s)$. Then $u_1$ and $u_2$ satisfy
\bess\left\{
\begin{aligned}
&du_2''+cu_2'+ru_2(1-u_2)=0=du_1''+cu_1'+ru_1(1-u_1),\quad s\geq0,\\
&u_2(\infty)=u_1(\infty)=1,\quad u_2(0)\leq u_1(0).
\end{aligned}\right.
\eess
By Lemma \ref{S21}, we deduce that $u_2(s)\leq u_1(s)$  for all $s\geq0$. We thus obtain
\bes\lbl{s<0}
 \mbox{ $\underline{\psi}^\xi(s)\leq\psi(s)$ for all $s\leq0$ and $\xi\geq \xi_1$.}
\ees

Applying Lemma \ref{S26},  we can find $\xi_2>\xi_1$ and $s_0\gg1$ such that
$$
\underline{\phi}^{\xi_2}(s) \leq \phi(s) \mbox{ for all } s\geq s_0.
$$
Denote $\xi_0=\xi_2+s_0$. Since $\phi_1$ is nondecreasing in $\mathbb{R}$ and is identically 0 in $(-\infty, 0]$, it follows that
\bes\lbl{R}
\underline{\phi}^\xi(s)\leq\phi(s) \mbox{ for } s\in\mathbb{R},\; \xi\geq\xi_0.
\ees
Similarly, for $s>0$, $\psi(s)$ and $\underline{\psi}^\xi(s)$ satisfy
\bess\left\{
\begin{aligned}
&-d\psi''+c\psi'=r(1-\psi)
(h\phi-\psi),\quad s>0.\\
&-d\underline{\psi}''+c\underline{\psi}'=r(1-\underline{\psi})
(h\underline{\phi}^\xi-\underline{\psi})\leq r(1-\underline{\psi})
(h\phi-\underline{\psi}),\quad s>0,\\
&\psi(\infty)=\underline{\psi}(\infty)=hu^\ast,\quad \psi(0)\geq\underline{\psi}(0).
\end{aligned}
\right.
\eess
Let $1-\underline{\psi}=v_1$ and $1-\psi_1=v_2$.
Then $v_1$ and $v_2$ satisfy
\bess\left\{
\begin{aligned}
&dv_2''-cv_2'=-rv_2
(1-h\phi-v_2),\quad s>0.\\
&dv_1''-cv_1'\leq r(1-\underline{\psi})
(h\phi-\underline{\psi})= -rv_1
(1-h\phi-v_1),\quad s>0,\\
&v_2(0)\leq v_1(0),\quad v_2(\infty)=v_1(\infty)=1-hu^\ast.
\end{aligned}\right.
\eess
Using Lemma \ref{S21} again,  we have $v_2(s)\leq v_1(s)$ for all $s\geq0$ and $\xi\geq\xi_0$, and hence
\bes\lbl{s>0}
\mbox{ $\psi(s)\geq\underline{\psi}^\xi(s)$ for all $s>0$ and $\xi\geq\xi_0$.}
\ees
Combining \eqref{s>0}, \eqref{s<0} and \eqref{R}, we immediately obtain \eqref{xi_0}.

Define
\bess
\bar{\zeta}:=\inf\{\zeta_0>0:(\phi(s), \psi(s))\geq(\phi_1(s-\zeta), \psi_1(s-\zeta) \mbox{ for } s\in\mathbb{R},\quad\forall \zeta\geq \zeta_0\}.
\eess
By \eqref{xi_0}, $\bar\zeta$ is well defined. Since $\phi(0)=0<\phi_1(-\zeta)$ for $\zeta>0$, we have $\bar\zeta\geq 0$.
Clearly,
$$
(\phi(s), \psi(s))\geq(\phi_1(s-\bar{\zeta}),\psi_1(s-\bar{\zeta})) \mbox{ for } s\in\mathbb{R}.
$$

If $\bar{\zeta}=0$, then the above inequality already yields \eqref{geq}, and the proof is finished.
  Suppose $\bar{\zeta}>0$. We are going to derive a contradiction. To simplify notations we write
\[
(\phi_{\bar\zeta}(s),\psi_{\bar\zeta}(s))=(\phi_1(s-\bar \zeta), \psi_1(s-\bar\zeta)),
\]
 and set
\[
 P(s):=\phi(s)-\phi_{\bar{\zeta}}(s),\;\;   Q(s):=\psi(s)-\psi_{\bar{\zeta}}(s).
\]
Then the nonnegative functions $ P$ and $ Q$ satisfy
\bes\left\{\begin{aligned}
& P''-c P'+(1-k-\phi-\phi_{\bar{\zeta}}+k\psi_{\bar{\zeta}}) P+k\phi Q=0,&s>\bar{\zeta},\\
&d Q''-c Q'+r(\psi+\psi_{\bar{\zeta}}-1-h\psi) Q+rh(1-\psi) P=0,&s\in\mathbb{R},\\
& P(0)= P(\infty)= Q(-\infty)= Q(\infty)=0.
\end{aligned}\right.\lbl{ff10}
\ees
The strong maximum principle implies that $ P(s)>0$ for $s\geq\bar\zeta$ and $ Q(s)>0$ for $s\in\mathbb{R}$. Rewrite \eqref{ff10} as
\bess\left\{\begin{aligned}
& P''-c P'-u^\ast P+ku^\ast Q+\epsilon_1(s) P+\epsilon_2(s) Q=0,&s>\bar\zeta,\\
&d Q''-c Q'-rv^\ast Q+rhv^\ast P+\epsilon_3(s) P+\epsilon_4(s) Q=0,&s\in\mathbb{R},
\end{aligned}\right.\lbl{f10}
\eess
where
\bess\begin{aligned}
&\epsilon_1=1-k-\phi-\phi_{\bar{\zeta}}+k\psi_{\bar{\zeta}}+u^\ast,\quad\epsilon_2=k\phi-ku^\ast,\\
&\epsilon_3=rh(1-\psi)-rhv^\ast,\quad\epsilon_4=r(\psi+\psi_{\bar{\zeta}}-1-h\psi)+rv^\ast.
\end{aligned}\eess
By Lemma \ref{S26}, $\epsilon_i(s)\rightarrow0$ exponentially as $s\rightarrow\infty$ for $i=1,2,3,4$. We may now
repeat the proof process of Lemma \ref{S26} to obtain
$$( P(s), Q(s))=(\bar{C}_1+o(1),\bar{C}_2+o(1))e^{\hat{\mu}_2s}\mbox{ as }s\rightarrow\infty,$$
where $\bar{C}_1,\; \bar C_2$ are positive constants. By Lemma \ref{S26}, there are positive constants $C_*$ and $C$ such that
\bess\begin{aligned}
&(\phi(s),\psi(s))=(u^\ast,hu^\ast)-C_\ast(m+o(1),n+o(1))e^{\hat{\mu}_2s}\mbox{ as }s\rightarrow\infty,\\
&(\phi_{\bar\zeta}(s),\psi_{\bar\zeta}(s))=(u^\ast,hu^\ast)-C(m+o(1),n+o(1))e^{\hat{\mu}_2s}\mbox{ as }s\rightarrow\infty,
\end{aligned}\eess
which lead to
$$\left(m(Ce^{-\hat{\mu}_2\bar{\zeta}}-C_\ast),
n(Ce^{-\hat{\mu}_2\bar{\zeta}}-C_{\ast})\right)=(\bar{C}_1,\bar{C}_2)>(0,0).$$
Therefore, there exists $\epsilon_0>0$ sufficiently small so that for any $\epsilon\in(0,2\epsilon_0]$,
$$\left(m(Ce^{-\hat{\mu}_2(\bar{\zeta}-\epsilon)}-C_{\ast}),
n(Ce^{-\hat{\mu}_2(\bar{\zeta}-\epsilon)}-C_{\ast})\right)>(0,0).$$
It follows that, for all large $s$, say $s\geq M>\bar\zeta$, we have
$$(\phi(s),\psi(s))\geq (\phi_1(s-\bar{\zeta}+\epsilon), \psi_1(s-\bar{\zeta}+\epsilon)) \; (\forall\epsilon\in(0,\epsilon_0]).$$
Since $( P(s), Q(s))>(0,0)$ for $s\in[\bar\zeta,M],$
by continuity, we can find ${\epsilon_1}\in(0,\epsilon_0]$ such that, for every $\epsilon\in (0,\epsilon_1]$,
\[
(\phi(s),\psi(s))\geq (\phi_1(s-\bar{\zeta}+{\epsilon}), \psi_1(s-\bar{\zeta}+{\epsilon}))\mbox{ for } s\in[ \bar\zeta, M].
\]
Hence
\bes\lbl{phi-psi-half}
(\phi(s),\psi(s))\geq (\phi_1(s-\bar{\zeta}+{\epsilon}), \psi_1(s-\bar{\zeta}+{\epsilon}))\mbox{ for } s\geq \bar\zeta,\; \epsilon\in (0,\epsilon_1].
\ees

Since $\phi(\bar\zeta)>0=\phi_1(0)$, by continuity, there exists $\epsilon_2\in (0,\epsilon_1]$ such that
$\phi(\bar\zeta-\epsilon)\geq \phi_1(\epsilon)$ for $\epsilon\in (0,\epsilon_2]$. It follows that
\[
\phi(s)\geq \phi_1(s-\bar\zeta+\epsilon) \mbox{ for } s\in [\bar\zeta-\epsilon,\bar\zeta].
\]
Since $\phi_1(s-\bar\zeta+\epsilon)\equiv 0 $ for $s\leq \bar\zeta-\epsilon$, the above inequality holds for all $s\in(-\infty,\bar\zeta]$.
This and \eqref{phi-psi-half} imply
\bes\lbl{phi-R}
\phi(s)\geq \phi_1(s-\bar\zeta+\epsilon) \mbox{ for } s\in \mathbb R,\;\epsilon\in (0,\epsilon_2].
\ees

Denote
\[
(\phi_\epsilon(s),\psi_\epsilon(s)):=(\phi_1(s-\bar\zeta+\epsilon),\psi_1(s-\bar\zeta+\epsilon)).
\]
We obtain, for any fixed $\epsilon\in (0,\epsilon_2]$,
\[
-d\psi_\epsilon''+c\psi'_\epsilon=r(1-\psi_\epsilon)(h\phi_\epsilon-\psi_\epsilon)\leq r(1-\psi_\epsilon)(h\phi-\psi_\epsilon)
\mbox{ for } s\in \mathbb R.
\]
Moreover, $\psi_\epsilon(-\infty)=0,\; \psi_\epsilon(\infty)=hu^*$.
Hence $u_2(s):=1-\psi_\epsilon(-s)$ satisfies
\[
d u_2''+cu_2'+r u_2(1-h\phi(-s)-u_2)\leq 0 \mbox{ for } s\in\mathbb R,\; u_2(\infty)=1.
\]
Since $u_1(s):=1-\psi(-s)$ satisfies
\[
d u_1''+cu_1'+r u_1(1-h\phi(-s)-u_1)= 0 \mbox{ for } s\in\mathbb R,\; u_1(\infty)=1
\]
and $u_1(-\bar \zeta)\leq u_2(-\bar\zeta)$, we may apply Lemma \ref{S21} to conclude that
$u_1(s)\leq u_2(s)$ for $s\geq -\bar\zeta$, i.e., $\psi(s)\geq \psi_\epsilon(s)$ for $s\leq\bar\zeta$.
Combining this with \eqref{phi-psi-half}, we obtain
\[
\psi(s)\geq \psi_\epsilon(s)=\psi_1(s-\bar\zeta+\epsilon) \mbox{ for } s\in\mathbb R,\;\epsilon\in (0,\epsilon_2].
\]
This and \eqref{phi-R} clearly contradict the definition of $\bar\zeta$.
Hence the  case $\bar{\zeta}>0$ can not happen, and the proof is complete.
\end{proof}

Next we will make use of problem \eqref{entirewave0}. Setting $\tilde{\Phi}(s):=\Phi(s),\tilde{\Psi}(s):=1-\Psi(s)$, we may change
 \eqref{entirewave0} to the following cooperative system
\bes\left\{
\begin{aligned}
&\tilde{\Phi}''-c\tilde{\Phi}'+\tilde{\Phi}(1-k-\tilde{\Phi}+k\tilde{\Psi})=0,\; \tilde \Phi'>0, \quad s\in\mathbb{R},\\
&d\tilde{\Psi}''-c\tilde{\Psi}'+r(1-\tilde{\Psi})(h\tilde{\Phi}-\tilde{\Psi})=0,\; \tilde \Psi'>0, \quad s\in\mathbb{R},\\
&(\tilde{\Phi},\tilde{\Psi})(-\infty)=(0,0),\quad(\tilde{\Phi},\tilde{\Psi})(\infty)=(u^\ast,hu^\ast ).
\end{aligned}
\right.\lbl{sect1entire}\ees

From Proposition \ref{F6}, we know that there exists $c_*\geq2\sqrt{1-k}$ such that  \eqref{sect1entire}  possesses a solution if and only if $c\geq c_*$.

In what follows, we shall show $c_0^\ast=c_{\ast}$ and \eqref{sect1} has no solution for $c\geq c_{\ast}$.

\begin{lem}\lbl{aa1} $c_0^\ast\geq c_{\ast}$.
\end{lem}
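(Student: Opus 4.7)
I will prove $c_0^\ast\ge c_\ast$ constructively: for every $c\ge c_\ast$ I exhibit a family $(\tilde\phi_\tau,\tilde\psi_\tau)\in\Sigma$, depending on a shift parameter $\tau>0$, with $\theta(\tilde\phi_\tau,\tilde\psi_\tau)\to c$ as $\tau\to\infty$. Since $c_0^\ast$ is the supremum of $\theta$ over $\Sigma$, this forces $c_0^\ast\ge c$, and taking $c\downarrow c_\ast$ completes the argument. The building block is an entire travelling wave $(\tilde\Phi,\tilde\Psi)$ of the cooperative system \eqref{sect1entire} at speed $c$, whose existence for $c\ge c_\ast$ is guaranteed by Proposition~\ref{F6}. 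Note that $(\tilde\Phi,\tilde\Psi)$ itself is not in $\Sigma$ because $\tilde\Phi>0$ on $\mathbb{R}$, but it makes $\Theta_1$ and $\Theta_2$ identically equal to $c$, so it is the natural object to perturb.

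Phase-plane analysis at the rest point $(0,0)$ of \eqref{sect1entire} gives $\tilde\Phi(s)\sim B\,e^{\mu^-s}$ and $\tilde\Phi'(s)\sim B\mu^-\,e^{\mu^-s}$ as $s\to-\infty$, where $\mu^->0$ is the smaller root of $\mu^2-c\mu+(1-k)=0$; in particular $\delta_\tau:=\tilde\Phi(-\tau)$ decays like $e^{-\mu^-\tau}$. I then take $\tilde\psi_\tau(s):=\tilde\Psi(s-\tau)$, which already lies in $\Sigma_2$, and define $\tilde\phi_\tau$ by subtracting from $\tilde\Phi(\cdot-\tau)$ on $[0,\infty)$ a smooth, positive, decreasing corrector $\delta_\tau\,\eta_\tau(s)$ that takes the value $\delta_\tau$ at $s=0$, vanishes at $+\infty$, and whose exponential decay is matched to the leading profile $Be^{\mu^-(s-\tau)}$ of $\tilde\Phi$ near $s=0$. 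A direct check then shows that $\tilde\phi_\tau\in\Sigma_1$ for all large $\tau$: it extends continuously by $0$ on $(-\infty,0]$, is $C^2$ on $[0,\infty)$ with $\tilde\phi_\tau'>0$ (the corrector is decreasing, so $-\delta_\tau\eta_\tau'>0$), and $\tilde\phi_\tau(\infty)=u^\ast$.

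Substituting $(\tilde\phi_\tau,\tilde\psi_\tau)$ into $\Theta_1,\Theta_2$ and using that $(\tilde\Phi(\cdot-\tau),\tilde\Psi(\cdot-\tau))$ exactly solves the system at speed $c$, one obtains
\[
\Theta_i(\tilde\phi_\tau,\tilde\psi_\tau)(s)-c=\frac{\delta_\tau\,R_i(s;\tau)}{\tilde\phi_\tau'(s)}\quad\text{or}\quad\frac{\delta_\tau\,R_i(s;\tau)}{\tilde\psi_\tau'(s)},
\]
where the remainder $R_i$ collects the terms generated by the corrector. The tuning of the corrector's decay rate to $\mu^-$ is arranged precisely so that the leading $O(\delta_\tau)$ contribution to $R_i$ cancels, leaving $R_i=O(\delta_\tau)+O(e^{-\nu\tau})$ with $\nu>\mu^-$, while $\tilde\phi_\tau'(s)\ge c'\delta_\tau$ near $s=0$ and $\tilde\phi_\tau'(s)\to\tilde\Phi'(s-\tau)$ away from $s=0$. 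Together these bounds give $\Theta_i(\tilde\phi_\tau,\tilde\psi_\tau)-c\to 0$ uniformly in $s$, hence $\theta(\tilde\phi_\tau,\tilde\psi_\tau)\to c$, as required.

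The main obstacle is precisely this uniform control near $s=0$: both the numerator's remainder and the denominator $\tilde\phi_\tau'(s)$ are exponentially small in $\tau$ of the common order $\delta_\tau$, so without the exponent-matched corrector the ratio would carry a non-vanishing constant error (of the type $c-\mu^+$ that one obtains by the naive constant subtraction $\tilde\phi_\tau=\tilde\Phi(\cdot-\tau)-\delta_\tau$), and $\theta$ would not approach $c$. This matching of exponential rates, analogous in spirit to but more delicate than the single-species and weak-strong-competition constructions of \cite{DL10,DWZ}, is the technical crux of the proof.
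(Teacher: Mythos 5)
Your approach differs fundamentally from the paper's. The paper does not try to drive $\theta$ up to $c_\ast$ directly; instead it works at the \emph{slowed-down} speed $c_\ast-\epsilon$, builds a lower solution of \eqref{sect1} there by subtracting exponential correctors from a shifted entire wave, applies Proposition~\ref{s7} to obtain an exact solution of \eqref{sect1} at speed $c_\ast-\epsilon$, and then notes that any such exact solution gives $\theta=c_\ast-\epsilon$ by definition. The whole point of the $\epsilon$-slowdown is the extra positive term $\epsilon\Phi_0'$ (resp.\ $\epsilon\Psi_0'$) that appears on the right of the subsolution inequalities; this term absorbs the error produced by the correctors. Your construction keeps the speed at $c\ge c_\ast$ and has no such cushion, and this is where the proof breaks down.

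Concretely, consider $\Theta_1$ near $s=0$. There $\tilde\phi_\tau$ and $\tilde\psi_\tau$ are both of order $\delta_\tau=\tilde\Phi(-\tau)$, so after linearizing one finds
\[
\Theta_1(\tilde\phi_\tau,\tilde\psi_\tau)(s)-c
\approx-\frac{\eta_\tau''(s)-c\eta_\tau'(s)+(1-k)\eta_\tau(s)}{\mu^-e^{\mu^- s}-\eta_\tau'(s)}+O(\delta_\tau).
\]
To push this to zero you need $\eta_\tau''-c\eta_\tau'+(1-k)\eta_\tau\approx 0$. But the two roots of $\lambda^2-c\lambda+(1-k)=0$ are $\mu^\pm>0$ whenever $c\ge 2\sqrt{1-k}$, so \emph{every} nontrivial solution of that linear equation grows exponentially. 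There is no corrector $\eta_\tau$ that is positive, decreasing, equal to $1$ at $s=0$, bounded on $[0,\infty)$, and (approximately) annihilated by $d^2/ds^2-c\,d/ds+(1-k)$. Whatever decaying profile $\eta_\tau(s)\sim e^{-\lambda s}$ you choose produces $\eta_\tau''-c\eta_\tau'+(1-k)\eta_\tau=(\lambda^2+c\lambda+1-k)\eta_\tau>0$, and hence $\Theta_1(0)-c$ converges, as $\tau\to\infty$, to a \emph{strictly negative constant} that does not vanish. The ``exponent matching'' you invoke is precisely the thing that cannot be arranged here, and this is exactly the obstruction that the paper's $\epsilon$-slowdown is designed to overcome.

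There is a second gap you do not address at all: $\Theta_2$ on $s<0$. For $s<0$ one has $\tilde\phi_\tau(s)=0$, and using the equation satisfied by $\tilde\Psi$ one computes
\[
\Theta_2(\tilde\phi_\tau,\tilde\psi_\tau)(s)=c-\frac{rh\,\bigl(1-\tilde\Psi(s-\tau)\bigr)\tilde\Phi(s-\tau)}{\tilde\Psi'(s-\tau)},\qquad s<0.
\]
When $\tilde\Phi$ and $\tilde\Psi$ decay at the same exponential rate as $z\to-\infty$ (the generic situation for the stable manifold at $(0,0)$, since the eigenvector for the dominant rate has both components nonzero), the ratio $\tilde\Phi(z)/\tilde\Psi'(z)$ tends to a positive constant, and hence $\inf_{s<0}\Theta_2$ stays bounded away from $c$ uniformly in $\tau$. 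Nothing in your construction repairs this. Finally, a global sanity check: you claim the construction works ``for every $c\ge c_\ast$''; if it did, it would give $c_0^\ast\ge c$ for all such $c$, i.e.\ $c_0^\ast=\infty$, contradicting Lemma~\ref{aa0} (which shows $c_0^\ast\le 2\sqrt{u^\ast}$ by an argument independent of the present lemma). So the scope of the claim alone already signals that a step must fail, and the steps that fail are the two just described.
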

\begin{proof}\,
Let $(\Phi_0,\Psi_0)$ be a solution of \eqref{sect1entire} with $c=c_{\ast}$. It is easily checked that $(0,0,0,0)$ is a saddle equilibrium point of the ODE system satisfied by $(\Phi_0,\Phi_0',\Psi_0,\Psi'_0)$. It follows that
\[
\Phi_0,\Phi_0',\Psi_0,\Psi'_0\to 0 \mbox{ exponentially  as $s\to-\infty$}.
\]

Following the idea in the proof of Lemma \ref{S26}, we rewrite the equation satisfied by $\Phi_0$ as
\[
\Phi_0''-c_*\Phi_0'+(1-k)\Phi_0+\epsilon(s)\Phi_0=0
\]
with
\[\epsilon(s):=k\Psi_0(s)-\Phi_0(s)\to 0 \mbox{ exponentially as $s\to-\infty$},
\]
and view it as a perturbed linear equation to
\[
\Phi''-c_*\Phi'+(1-k)\Phi=0.
\]
Using the fundamental solutions of this latter equation we see that, as
$s\rightarrow-\infty$, the asymptotic behaviour of $(\Phi_0, \Phi_0')$ is given by
\bes\lbl{asymp1}
(\Phi_0(s), \Phi_0'(s))=\left\{\begin{array}{ll}\medskip
\displaystyle
(1, \alpha_1)k_0 e^{\alpha_1 s}(1+o(1)), & \mbox{ when } c_*>2\sqrt{1-k},\\
(1,\alpha_1)k_0|s|e^{\alpha_1 s}(1+o(1))&\\
\hspace{.2cm} \mbox{ or } (1,\alpha_1)k_0e^{\alpha_1 s}(1+o(1)),& \mbox{ when } c_*=2\sqrt{1-k}
\end{array}\right.
\ees
for some $k_0>0$, $\alpha_1\in\left\{\frac 12\left(c_{\ast}+\sqrt{c_{\ast}^2-4(1-k)}\right), \frac 12\left(c_{\ast}-\sqrt{c_{\ast}^2-4(1-k)}\right)\right\}$.

Fix $\epsilon\in(0, k/\alpha_1)$ small. In view of \eqref{asymp1}, there exists a constant $M_0<0$ such that
\bes\lbl{qq1}\frac{\Phi_0'(s)}{\Phi_0(s)}\geq\frac{\alpha_1}{2},\quad\max\{\Phi_0(s),
\Psi_0(s)\}<\min\left\{\frac{\epsilon\alpha_1}{4k},1-k\right\}\mbox{ for }s<M_0.\ees

Next, we prove that system \eqref{sect1} has a solution for $c=c_{\ast}-\epsilon$. To this end, we will treat the cases $c_{\ast}>2\sqrt{1-k}$ and $c_{\ast}=2\sqrt{1-k}$ separately.

{\bf Case 1}: $c_{\ast}>2\sqrt{1-k}$.

Introduce an auxiliary function
$$p_1(s)=0\mbox{ for }s\geq M_0,\quad p_1(s)=e^{\beta_1 s}\mbox{ for }s\leq M_0-1$$
and for $s\in(M_0-1,M_0)$, $p_1(s)>0$, $p_1'(s)\leq0$, where $\beta_1=\left(c_{\ast}-\sqrt{c_{\ast}^2+2dr}\right)/(2d)<0$
and  $M_0$ is given by \eqref{qq1}. Moreover, $p_1(s)$ is $C^2$ everywhere.  Define
\[
\psi_1(s):=\Psi_0(s)-\epsilon_1p_1(s),
\]
where the positive constant $\epsilon_1$ will be determined later.

We now calculate
\bes\begin{aligned}\lbl{up1}
&d\psi_1''-(c_{\ast}-\epsilon)\psi_1'+r(1-\psi_1)(h\Phi_0-\psi_1)\\
=&r(1-\psi_1)(h\Phi_0-\psi_1)-r(1-\Psi_0)(h\Phi_0-\Psi_0)-d\epsilon_1p_1''-\epsilon\epsilon_1 p_1'+\epsilon\Psi_0'+c_{\ast}\epsilon_1p_1'\\
=&\epsilon\Psi_0'+\epsilon_1[rp_1(1+h\Phi_0-2\Psi_0+\epsilon_1p_1)-dp_1''-\epsilon p_1'+c_{\ast}p_1'].
\end{aligned}\ees
Hence we can fix $\epsilon_1>0$ sufficiently small so that, for $s\in[M_0-1,M_0]$,
\bess\begin{aligned}\lbl{up2}d\psi_1''-(c_{\ast}-\epsilon)\psi_1'
+r(1-\psi_1)(h\Phi_0-\psi_1)>0\end{aligned}\eess
and
$$\psi_1'(s)>0,\quad\psi_1(s)>0.$$
By the definition of $p_1(s)$ for $s\leq M_0-1$, clearly $\psi_1'(s)>0$  for $s\leq M_0-1$, and $\psi_1(s)\rightarrow-\infty$ as $s\rightarrow-\infty$.
Hence there exists a unique constant $M_1<M_0-1$ such that $\psi_1(M_1)=0$. We define
\bess
\underline{\Phi}(s)=\Phi_0(s)\mbox{ for }s\geq M_1,
\quad\underline{\Psi}(s)=\left\{\begin{array}{ll}\medskip
\displaystyle
\Psi_0(s)-\epsilon_1p_1(s),&s>M_1,\\
0,&s\leq M_1.
\end{array}\right.
\eess
For $s\in(M_1,M_0-1)$, by the choice of $\Psi_0\leq1/4$ in \eqref{qq1} for $s\leq M_0$, we have
\bess\begin{aligned}
&rp_1(1+h\Phi_0-2\Psi_0+\epsilon_1p_1)-dp_1''-\epsilon p_1'+c_{\ast}p_1'\\
>&rp_1(1+h\Phi_0-2\Psi_0)-dp_1''+c_{\ast}p_1'\\
>&-dp_1''+c_{\ast}p_1'+\frac{r}{2}p_1=0.
\end{aligned}\eess
Therefore, it follows from \eqref{up1} that
$$d\underline{\Psi}''-(c_{\ast}-\epsilon)\underline{\Psi}'+r(1-\underline{\Psi})(h\underline{\Phi}-\underline{\Psi})>0,\quad s\in(M_1,M_0-1).$$
Since $(\underline{\Phi},\underline{\Psi})=(\Phi_0,\Psi_0)$ for $s>M_0$, and $\underline{\Psi}(s)=0$ for $s\leq M_1$, it is easy to verify that
for any smooth extension of $\underline\Phi(s)$ to $s\leq M_1$ satisfying $\underline{\Phi}(s)\geq0$ in $(-\infty, M_1)$, we have
$$d\underline{\Psi}''-(c_{\ast}-\epsilon)\underline{\Psi}'
+r(1-\underline{\Psi})(h\underline{\Phi}-\underline{\Psi})\geq0 $$
for all $s\in\mathbb{R}$. Moreover, $\underline{\Psi}'(M_1-)=0\leq \underline{\Psi}'(M_1+)$.

In view of \eqref{qq1} and $\psi_1(M_1)=0$, we have $\epsilon_1p_1(M_1)=\Psi_0(M_1)<\frac{\epsilon\alpha_1}{4k}.$
For $s\in(M_1,M_0)$, by \eqref{qq1}, we can fix $\epsilon_1>0$ sufficiently small so that
\bess
\begin{aligned}
\Phi_0''-(c_{\ast}-\epsilon)\Phi_0'+\Phi_0(1-k-\Phi_0+k\underline{\Psi})
=\epsilon\Phi_0'-k\epsilon_1p_1\Phi_0\geq(\frac{\alpha_1}{2}-k\epsilon_1p_1(M_1))\Phi_0
\geq0.
\end{aligned}
\eess
For $s\leq M_1$, we choose $\epsilon>0$ sufficiently small so that the quadratic equation
$\alpha^2-(c_*-\epsilon)\alpha+(1-k)=0$ has a root $\alpha_\epsilon \in (\alpha_1/2, \alpha_1)$.

Define
$$p_2(s)=0\mbox{ for }s\geq M_1,\quad p_2(s)=e^{\alpha_\epsilon s}\mbox{ for }s\leq M_1-1$$
and for $s\in[M_1-1,M_1]$, we define $p_2(s)$ so that $p_2(s)>0$, and $p_2(s)$ is $C^2$ everywhere.
We define
$$\underline{\Phi}(s)=\Phi_0(s)-\epsilon_2p_2(s),$$
with $\epsilon_2>0$ to be determined.

Since $\alpha_\epsilon<\alpha_1$, by \eqref{asymp1} we can find $M_1^\epsilon<M_1-1$ such that
$$\Phi_0'(s)>\alpha_\epsilon\Phi_0(s)\mbox{ for }s\leq M_1^\epsilon.$$
It follows that, for $s\leq M_1^\epsilon$,
\bess
\underline{\Phi}'(s)=\Phi_0'(s)-\epsilon_2p_2'(s)>\alpha_\epsilon\Phi_0(s)
-\epsilon_2\alpha_\epsilon p_2(s)=\alpha_\epsilon\underline{\Phi}(s).
\eess
Recall  that $\alpha_\epsilon>\alpha_1/2$. We now choose $\epsilon_2$ sufficiently small such that, for $x\in[M_1^\epsilon,M_1]$,
$$\underline{\Phi}(s)>0,\quad\underline{\Phi}'(s)>0$$
and
\bess
\begin{aligned}
&\underline{\Phi}''-(c_{\ast}-\epsilon)\underline{\Phi}'+\underline{\Phi}(1-k
-\underline{\Phi}+k\underline{\Psi})\\
=&-k\Phi_0\Psi_0+\epsilon\Phi_0'+\epsilon_2[c_{\ast}p_2'-p_2''-\epsilon p_2'
-p_2(1-k-2\Phi_0+\epsilon_2p_2)]\\
>&-k\Phi_0\Psi_0+\epsilon\alpha_\epsilon\Phi_0+\epsilon_2[c_{\ast}p_2'-p_2''-\epsilon p_2'
-p_2(1-k-2\Phi_0+\epsilon_2p_2)]\\
>&\frac{\epsilon}{4}\alpha_1\Phi_0+\epsilon_2[c_{\ast}p_2'-p_2''-\epsilon p_2'
-p_2(1-k-2\Phi_0+\epsilon_2p_2)]>0.
\end{aligned}
\eess
Due to $\alpha_\epsilon<\alpha_1$ and \eqref{asymp1}, we easily deduce $\lim\limits_{s\rightarrow-\infty}\frac{\Phi_0(s)}{e^{\alpha_\epsilon s}}=0.$
It follows that $$\underline{\Phi}(x)=e^{\alpha_\epsilon s}(\frac{\Phi_0(s)}{e^{\alpha_\epsilon s}}-\epsilon_2)<0$$ for all large negative $s.$
Since $\underline{\Phi}(M_1^\epsilon)>0$, by continuity, there exists $M_2<M_1^\epsilon$ such that
$$\underline{\Phi}(M_2)=0,\quad\underline{\Phi}(s)>0\mbox{ for }s\in(M_2,M_1^\epsilon].$$
Thus for $s\in(M_2,M_1^\epsilon)$, we have $-2\Phi_0+\epsilon_2p_2<-\Phi_0<0$ and
\bess
\begin{aligned}
&\underline{\Phi}''-(c_{\ast}-\epsilon)\underline{\Phi}'+\underline{\Phi}(1-k
-\underline{\Phi}+k\underline{\Psi})\\
>&\frac{\epsilon}{4}\alpha_1\Phi_0+\epsilon_2[(c_{\ast}-\epsilon)p_2'-p_2''
-p_2(1-k-2\Phi_0+\epsilon_2p_2)]\\
\geq&\frac{\epsilon}{4}\alpha_1\Phi_0+\epsilon_2[(c_{\ast}-\epsilon)p_2'-p_2''
-(1-k)p_2]=\frac{\epsilon}{4}\alpha_1\Phi_0>0.
\end{aligned}
\eess
Define
\bess
\quad\underline{\Phi}(s)=\left\{\begin{array}{ll}\medskip
\displaystyle
\Phi_0-\epsilon_2p_2(s),&s\geq M_2,\\
0,&s\leq M_2.
\end{array}\right.
\eess
It is easy to see that
$$\underline{\Phi}''-(c_{\ast}-\epsilon)\underline{\Phi}'+\underline{\Phi}(1-k
-\underline{\Phi}+k\underline{\Psi})\geq0$$
for all $s\in\mathbb{R}$. Moreover, $\underline{\Phi}'(M_2-)=0\leq \underline{\Phi}'(M_2+)$.

Finally, we accomplish the proof by the upper and lower solution argument.  Define
\[
(\underline{\phi}(s),\underline{\psi}(s)):=(\underline{\Phi}(s+M_2),\underline{\Psi}(s+M_2)).
\]
Then $(\underline{\phi}(s),\underline{\psi}(s))$ is a lower solution for \eqref{sect1} with $c=c_{\ast}-\epsilon$ associated with $\mathcal R:=[0,u^*]\times [0, hu^*]$. Moreover, it is easy to see that $(\overline{\phi}(s),\overline{\psi}(s))$ is a upper solution for \eqref{sect1} with $c=c_{\ast}-\epsilon$
associated with $\mathcal R$, where $(\overline{\phi}(s),\overline{\psi}(s))$ is given by \eqref{upfl}.

We next check that
\[
\mbox{$\sup_{t\leq s}\big(\underline\phi(t), \underline\psi(t)\big)\leq \big(\overline\phi(s),\overline\psi(s)\big)$ holds for all $s\in\mathbb R$.}
\]
By the monotonicity of $\underline \phi$ and $\underline \psi$, it suffices to show
\bes\lbl{lo<up}
\mbox{$\big(\underline\phi(s), \underline\psi(s)\big)\leq \big(\overline\phi(s),\overline\psi(s)\big)$  for all $s\in\mathbb R$.}
\ees

For $s>0$,
\[
\underline \psi(s)\leq \Psi_0(s+M_2)<hu^*=\overline \psi(s).
\]
Since $M_1-M_2>0$ and $\underline \psi(s)=0$ for $s<M_1-M_2$, we thus see that
\[
\underline\psi(s)\leq \overline\psi(s) \mbox{ for all } s\in\mathbb R.
\]

Clearly $\underline \phi(s)=0=\overline\phi(s)$ for $s\leq 0$. For $s>0$, due to $\underline\psi(s)\leq \underline\psi(\infty)=\Psi_0(\infty)=hu^*<1$,
we have
\[
0\leq \underline \phi''-(c_*-\epsilon)\underline \phi'+\underline \phi(1-k-\underline \phi+k\underline \psi)
\leq \underline \phi''+\underline \phi(1-\underline \phi).
\]
Moreover, $\underline\phi(\infty)=\Phi_0(\infty)=u^*=\overline \phi(\infty)$. Hence we can apply Lemma \ref{S21} to conclude that
$\underline \phi(s)\leq \overline \phi(s)$ for $s>0$.

We have thus proved \eqref{lo<up}. Clearly $\underline \varphi(s)\not\equiv 0$ and the nonlinearity functions in \eqref{sect1} satisfy
${\bf (A_1), \; (A_2), \; (A_3)}$.
We may now apply Proposition \ref{s7} to conclude  that \eqref{sect1} (with $\tilde\phi'>0$ and $\tilde\psi'>0$ relaxed to
$\tilde\phi'\geq 0$ and $\tilde\psi'\geq 0$) has a solution $(\phi,\psi)$ with $c=c_{\ast}-\epsilon$, which would imply $c_0^{\ast}\geq c_{\ast}-\epsilon$ if we can further prove $\phi'> 0$ and $\psi'> 0$.  But these strict inequalities follow easily from the strong maximum principle applied to the coorperate system satisfied by $(\phi', \psi')$. By the arbitrariness of $\epsilon$, it follows $c_0^\ast\geq c_{\ast}$.

{\bf Case 2}: $c_{\ast}=2\sqrt{1-k}$.

It follows from Lemma \ref{S22} that the following problem
\bess\left\{\begin{array}{ll}\medskip
\displaystyle
\tilde{\chi}''-(c_{\ast}-\epsilon)\tilde{\chi}'+\tilde{\chi}(1-k-\tilde{\chi})=0,\quad0<s<\infty,\\
\tilde{\chi}(0)=0
\end{array}\right.\lbl{}
\eess
has a unique strictly increasing solution $\tilde{\chi}$ satisfying $\tilde \chi(\infty)=1-k$.
Define
\bess
\underline{\Phi}(s):=\left\{\begin{array}{ll}\medskip
\displaystyle
0,&-\infty<s<0,\\
\delta\tilde{\chi}(s),&0\leq s<\infty,
\end{array}\right.
\quad\underline{\Psi}(s):=0,
\eess
with $\delta>0$ small such that $\underline{\Phi}(s)\leq\overline{\phi}(s)$ for $s\in\mathbb{R}$, where $\overline{\phi}(s)$ is given by \eqref{upfl}.
It is easy to verify that $(\underline{\Phi}(s),\underline{\Psi}(s))$ is a lower solution of \eqref{sect1} associated with $\mathcal R$.
Moreover, it is easy to see that $(\overline{\phi}(s),\overline{\psi}(s))$ is an upper solution for \eqref{sect1} with $c=c_{\ast}-\epsilon$
associated with $\mathcal R$, and $(\underline{\Phi}(s),\underline{\Psi}(s))\leq (\overline{\phi}(s),\overline{\psi}(s))$ for $s\in\mathbb{R}$. It follows from Proposition \ref{s7} and the strong maximum principle (applied to $(\phi',\psi')$ as in Case (i) above) that \eqref{sect1} has a solution $(\phi,\psi)\in\Sigma$ with $c=c_{\ast}-\epsilon$,
which implies $c_0^{\ast}\geq c_{\ast}-\epsilon$.  By the arbitrariness of $\epsilon$, it follows $c_0^\ast\geq c_{\ast}$.
\end{proof}

\begin{lem}\lbl{S28}For $c\geq c_{\ast}$, problem \eqref{sect1} has no solution. 
\end{lem}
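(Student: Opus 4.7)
The plan is a proof by contradiction via a sliding comparison with an entire wave. Suppose $(\tilde\phi,\tilde\psi)$ solves \eqref{sect1} for some $c\geq c_\ast$. By Proposition~\ref{F6} there exists an entire wave $(\tilde\Phi,\tilde\Psi)$ solving \eqref{sect1entire} with the same speed $c$. The key asymmetry is that $\tilde\Phi>0$ on all of $\mathbb{R}$ while $\tilde\phi(0)=0$, and I would exploit this via a sliding argument in the spirit of the proof of Lemma~\ref{S27}.

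First I would check that the asymptotic expansion of Lemma~\ref{S26} applies verbatim to $(\tilde\Phi,\tilde\Psi)$,
\[
(\tilde\Phi,\tilde\Psi)(s)=(u^\ast,hu^\ast)-\beta_\Phi(m_2,n_2)e^{\hat\mu_2 s}[1+o(1)]\quad\text{as }s\to\infty,
\]
for some $\beta_\Phi>0$, since its proof depends only on the hyperbolic linearisation at $(u^\ast,hu^\ast)$. Writing $\tilde\Phi^L:=\tilde\Phi(\cdot+L)$ and $\tilde\Psi^L:=\tilde\Psi(\cdot+L)$, this together with a routine comparison of exponential tails at $-\infty$ (where $\tilde\psi$ obeys the decoupled equation $d\tilde\psi''-c\tilde\psi'-r\tilde\psi(1-\tilde\psi)=0$ with rate $\mu_+^{\psi}=(c+\sqrt{c^2+4dr})/(2d)$) shows that $\tilde\Phi^L\geq\tilde\phi$ on $[0,\infty)$ and $\tilde\Psi^L\geq\tilde\psi$ on $\mathbb{R}$ hold for all sufficiently large $L$. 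Hence
\[
L^\ast:=\inf\bigl\{L\in\mathbb{R}:\ \tilde\Phi^L\geq\tilde\phi\text{ on }[0,\infty),\ \tilde\Psi^L\geq\tilde\psi\text{ on }\mathbb{R}\bigr\}
\]
is a finite real number.

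At $L=L^\ast$ the comparison is saturated. Set $D:=\tilde\Phi^{L^\ast}-\tilde\phi\geq 0$ and $E:=\tilde\Psi^{L^\ast}-\tilde\psi\geq 0$; the pair solves a cooperative linear system whose off-diagonal couplings on $(0,\infty)$ are $k\tilde\Phi^{L^\ast}>0$ and $rh(1-\tilde\Psi^{L^\ast})>0$. In case (a), at least one of $D,E$ vanishes at an interior point; strict positivity of the couplings together with the sign conditions at a local minimum force $(D,D',E,E')$ to vanish together at that point, and ODE uniqueness propagates the vanishing to all of $(0,\infty)$. Continuity at $s=0$ then yields $\tilde\Phi(L^\ast)=\tilde\phi(0)=0$, contradicting $\tilde\Phi>0$. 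In case (b), $D$ and $E$ are strictly positive on the relevant domains but the first-order tails match at $+\infty$, which forces $\beta_\phi=\beta_\Phi e^{\hat\mu_2 L^\ast}$ and hence the vanishing of the $\Upsilon_2$-coefficient in the Coddington--Levinson expansion of $(D,E)$ at $+\infty$. The leading behaviour of $(D,E)$ then becomes proportional to $(m_1,n_1)e^{\hat\mu_1 s}$; but $A(\hat\mu_1)$ has all positive entries, whence $m_1 n_1<0$ (as already noted in the proof of Lemma~\ref{S26}), so the two components of $(D,E)$ would be forced to have opposite signs for all large $s$, contradicting $D,E>0$.

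Case (b) is the principal obstacle: leading-order tail coincidence alone does not yield an immediate pointwise contradiction, and one has to descend one eigendirection further in the asymptotic expansion and invoke the sign information on the eigenvector at $\hat\mu_1$ to close the argument.
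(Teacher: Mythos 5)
Your high-level strategy — sliding a translate of the entire wave against the semi-wave and exploiting the asymptotic expansion from Lemma~\ref{S26} plus the sign of the eigenvector at $\hat\mu_1$ — is the same as the paper's. But the logical skeleton around that strategy has a genuine gap, and it is not cosmetic.

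The central problem is your sentence ``Hence $L^\ast := \inf\{\dots\}$ is a finite real number.'' The preceding argument only shows the set is nonempty, i.e.\ $L^\ast<\infty$; nothing you wrote rules out $L^\ast=-\infty$. The paper in fact \emph{proves} that the infimum is $-\infty$: at any hypothetical finite infimum $\eta^\ast$, the strong maximum principle forces $E:=\tilde\Psi_{\eta^\ast}-\tilde\psi>0$ strictly (because $\tilde\Phi_{\eta^\ast}>0=\tilde\phi$ on $s<0$ makes the comparison inequality strict there), then $D>0$ strictly, and the Lemma~\ref{S26}-type expansion applied to $(D,E)$ forces the $\Upsilon_2$-coefficient to be \emph{strictly positive} — precisely because, as you yourself observe in your case (b), a vanishing $\Upsilon_2$-coefficient would leave $\Upsilon_1\sim(m_1,n_1)e^{\hat\mu_1 s}$ dominant with $m_1n_1<0$, contradicting $D,E>0$. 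So at a finite infimum one never lands in your case (a) (interior touching is excluded by the strong maximum principle) nor in your case (b) (tail matching is excluded by the very sign argument you invoke). The case that actually occurs is the one your dichotomy omits: $D,E>0$ strictly with a nonzero leading tail coefficient. In that case the contradiction is not a pointwise one; it is that the strictly positive gap, including its $e^{\hat\mu_2 s}$ leading tail, permits a further downward shift $\eta^\ast-\epsilon$ that still satisfies the ordering, contradicting minimality. This is what forces $\eta^\ast=-\infty$.

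You then need the final step that your proposal lacks entirely: once $\eta^\ast=-\infty$, the ordering $\tilde\Phi(s+\eta)\geq\tilde\phi(s)$ holds for every $\eta\in\mathbb{R}$, and letting $\eta\to-\infty$ with $\tilde\Phi(-\infty)=0$ yields $\tilde\phi(s)\leq 0$ for $s>0$, contradicting positivity of the semi-wave. In short: your cases (a) and (b) are correctly argued but vacuous; the real dichotomy is three-way, the third alternative is the operative one, its resolution is a sliding (minimality) contradiction rather than a pointwise/eigenvector one, and the conclusion of that step is $L^\ast=-\infty$, from which the contradiction you need is the $\tilde\Phi(-\infty)=0$ argument, not a saturation argument at a finite~$L^\ast$.
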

\begin{proof}\, Suppose on the contrary that for some $c\geq c_{\ast}$, \eqref{sect1} has a solution $(\tilde{\phi},\tilde{\psi})$. By Proposition \ref{F6}, the system \eqref{sect1entire}  has a solution $(\tilde{\Phi},\tilde{\Psi})$ for such $c$. We are going to  derive a contradiction by making use of $(\tilde{\Phi},\tilde{\Psi})$.

We note that by repeating the arguments in the proof of Lemma \ref{S26}, the monotone increasing functions $\tilde{\phi},\tilde{\psi},\tilde{\Phi}$ and $\tilde{\Psi}$ can be expanded near $\infty$ in the form \eqref{expansion1}. In view of $\tilde{\Phi}'(s)>0$ and $\tilde{\Psi}'(s)>0$, there exists some $\eta_0>0$ such that
$$(\tilde{\Phi}(s+\eta),\tilde{\Psi}(s+\eta))\geq(\tilde{\phi}(s),\tilde{\psi}(s)),\quad\forall s\geq0,\quad\eta\geq\eta_0.$$
Clearly
\bes\lbl{ew0}\tilde{\Phi}(s+\eta)>0=\tilde{\phi}(s)\mbox{ for }s<0.\ees
Now we prove that
$$\tilde{\Psi}(s+\eta)\geq\tilde{\psi}(s)\mbox{ for }s\in\mathbb{R}\mbox{ and }\eta\geq\eta_0.$$
We only need to show this for $s<0$. Denote,  for $\eta\geq\eta_0$,
$$\tilde{\Phi}_\eta(s):=\tilde{\Phi}(s+\eta),\quad\tilde{\Psi}_\eta(s):=\tilde{\Psi}(s+\eta),
$$
and let
$$\hat{\Phi}_\eta(s)=1-\tilde{\Phi}_\eta(-s),\quad\hat{\Psi}_\eta(s)=1-\tilde{\Psi}_\eta(-s),\quad\psi(s)=1-\tilde{\psi}(-s).$$
Then
\bes\left\{\begin{aligned}\lbl{wave11}
&-c\hat{\Psi}_\eta'-d\hat{\Psi}_\eta''=r\hat{\Psi}_\eta(1-\hat{\Psi}_\eta-h\tilde{\Phi}_\eta)\leq r\hat{\Psi}_\eta(1-\hat{\Psi}_\eta-h\tilde{\phi}),\quad s\in\mathbb{R},\\
&-c\psi'-d\psi''=r\psi(1-\psi-h\tilde{\phi}),\quad s\in\mathbb{R},\\
&\hat{\Psi}_\eta(\infty)=1=\psi(\infty),\quad\hat{\Psi}_\eta(0)\leq\psi(0).
\end{aligned}
\right.\ees
Using Lemma \ref{S21} we deduce $\hat{\Psi}_\eta(s)\leq\psi(s)$ in $[0, \infty)$, and hence
$\tilde{\Psi}_\eta(s)\geq\tilde{\psi}(s)$ in $(-\infty,0]$.

We are now able to define
\bess\eta^\ast=\inf\left\{\eta_0\in\mathbb{R}:\quad\tilde{\Phi}_\eta(s)\geq\tilde{\phi}(s)\mbox{ in }[0,\infty), \quad\tilde{\Psi}_\eta(s)\geq\tilde{\psi}(s)\mbox{ in }\mathbb{R},\quad\forall \eta\geq\eta_0\right\}.\eess
We claim that $\eta^\ast=-\infty$. Otherwise, $\eta^\ast$ is a finite real number, and by continuity,
\bess\tilde{\Phi}_{\eta^\ast}(s)\geq\tilde{\phi}(s)\mbox{ in }[0,\infty), \quad\tilde{\Psi}_{\eta^\ast}(s)\geq\tilde{\psi}(s)\mbox{ in }\mathbb{R}.\eess
The first inequality of \eqref{wave11} still holds for $\eta=\eta^\ast$, and this inequality is strict for $s<0$ due to \eqref{ew0}. Hence $\tilde{\Psi}_{\eta^\ast}(s)\not\equiv\tilde{\psi}(s)$, and by the strong maximum principle we obtain
$$\tilde{\Psi}_{\eta^\ast}(s)>\tilde{\psi}(s)\mbox{ for }s\in\mathbb{R}.$$
We now have
\bess\begin{aligned}\lbl{wave1}
&c\tilde{\Phi}_{\eta^\ast}'-\tilde{\Phi}_{\eta^\ast}''=\tilde{\Phi}_{\eta^\ast}(1-k-\tilde{\Phi}_{\eta^\ast}
+k\tilde{\Psi}_{\eta^\ast})\geq \tilde{\Phi}_{\eta^\ast}(1-k-\tilde{\Phi}_{\eta^\ast}+k\tilde{\psi}),\quad s\in\mathbb{R}^+,\\
&c\tilde{\phi}'-\tilde{\phi}''=\tilde{\phi}(1-k-\tilde{\phi}+k\tilde{\psi}),\quad s\in\mathbb{R}^+,\\
&\tilde{\Phi}_{\eta^\ast}(0)\geq\psi(0),\quad\tilde{\Phi}_{\eta^\ast}(\infty)=\tilde{\phi}(\infty)=u^\ast.
\end{aligned}\eess
Using Lemma \ref{S21} we deduce
$$\tilde{\Phi}_{\eta^\ast}(s)\geq\tilde{\phi}(s)\mbox{ for }s\in[0,\infty).$$
We may now use the expansion of $(\tilde{\Phi}_{\eta^\ast}-\tilde{\phi},\tilde{\Psi}_{\eta^\ast}-\tilde{\psi})$ near $s=\infty$ as the proof of Lemma \ref{S27} to derive that
\bess
\tilde{\Phi}_{\eta^\ast-\epsilon}(s)\geq\tilde{\phi}(s),\quad\tilde{\Psi}_{\eta^\ast-\epsilon}(s)>\tilde{\psi}(s) \mbox{ for }s\in[0,\infty)\mbox{ and some small }\epsilon>0.
\eess
It then follows from the monotonicity of $\tilde \Phi$ and $\tilde \Psi$ that  for all $\eta\geq\eta^\ast-\epsilon$,
$$\tilde{\Phi}_\eta(s)\geq\tilde{\phi}(s)\mbox{ in }[0,\infty), \quad\tilde{\Psi}_\eta(s)\geq\tilde{\psi}(s)\mbox{ in }\mathbb{R},$$
which contradicts the definition of $\eta^\ast$. Hence, $\eta^\ast=-\infty$. The fact $\eta^\ast=-\infty$ implies $\tilde{\Phi}(s+\eta)\geq\tilde{\phi}(s)\mbox{ in }[0,\infty)$ for all $\eta\in\mathbb{R}$. For any fixed $s>0$, letting $\eta\rightarrow-\infty$ and using $\tilde{\Phi}(-\infty)=0$  we obtain $0\geq \tilde{\phi}(s)$. This is a contradiction to the fact that $(\tilde{\phi},\tilde{\psi})$ is a solution of \eqref{sect1}.
\end{proof}

\begin{lem}\lbl{c_*}$c_0^*=c_*$.
\end{lem}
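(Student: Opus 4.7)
The claim $c_0^\ast = c_\ast$ should follow immediately by combining the three preceding lemmas, so the plan is very short and essentially a bookkeeping argument; the real work has already been done in Lemmas~\ref{aa1}, \ref{S25} and \ref{S28}.

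First I would record that the inequality $c_0^\ast \geq c_\ast$ is exactly the content of Lemma~\ref{aa1}, so only the reverse inequality $c_0^\ast \leq c_\ast$ needs proof. I would argue this by contradiction: suppose $c_0^\ast > c_\ast$, and pick any $c$ with $c_\ast \leq c < c_0^\ast$. By Lemma~\ref{S25} the strict inequality $c < c_0^\ast$ supplies a solution $(\tilde\phi,\tilde\psi)$ of \eqref{sect1} at this speed. But then $c \geq c_\ast$ together with Lemma~\ref{S28} asserts that no such solution can exist. This contradiction forces $c_0^\ast \leq c_\ast$, and combined with Lemma~\ref{aa1} yields $c_0^\ast = c_\ast$.

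Since the only subtlety is that Lemma~\ref{S25} gives solutions on the half-open interval $[0,c_0^\ast)$ rather than up to the endpoint, I would emphasize that the intermediate value $c \in [c_\ast, c_0^\ast)$ used in the contradiction is available precisely because we are assuming $c_0^\ast > c_\ast$ strictly. No new estimates, compactness arguments, or asymptotic analyses are required; the heavy lifting (the asymptotic expansion in Lemma~\ref{S26}, the sliding argument used in Lemma~\ref{S28}, and the upper/lower solution construction in Lemma~\ref{aa1}) has already been done. Consequently there is no real ``hard step'' here—the only thing to be careful about is not to misquote the strict versus non-strict inequalities in the statements of Lemmas~\ref{S25} and \ref{S28}.
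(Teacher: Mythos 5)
Your proposal is correct and matches the paper's proof in all essentials: both deduce $c_0^* \geq c_*$ from Lemma~\ref{aa1}, then use Lemma~\ref{S25} (applicable since $c_0^*\geq c_*>0$) to produce a solution at any speed in $[c_*,c_0^*)$, which Lemma~\ref{S28} forbids, forcing $c_0^*\leq c_*$. The paper states this directly rather than as an explicit contradiction, but the logical content is identical.
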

\begin{proof}
Lemmas \ref{aa1} and \ref{S25} imply that \eqref{sect1} has a solution for every $c\in [0, c_0^*)$. Therefore Lemma \ref{S28}
implies $c_*\geq c_0^*$. In view of Lemma \ref{aa1}, we must have $c_0^*=c_*$.
\end{proof}

\begin{lem}\lbl{aa0}$c_0^\ast\leq2\sqrt{u^\ast}$.
\end{lem}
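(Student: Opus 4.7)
By Lemma~\ref{c_*} it is equivalent to prove $c_0^\ast \leq 2\sqrt{u^\ast}$. Since $c_0^\ast=\sup_{(\tilde\phi,\tilde\psi)\in\Sigma}\theta(\tilde\phi,\tilde\psi)$ and $\theta\leq \inf_{s>0}\Theta_1$, it suffices to show that
\[
\inf_{s>0}\frac{\tilde\phi''(s)+\tilde\phi(s)(1-k-\tilde\phi(s)+k\tilde\psi(s))}{\tilde\phi'(s)} \leq 2\sqrt{u^\ast}
\]
for every $(\tilde\phi,\tilde\psi)\in\Sigma$. The strategy is to reduce this to a scalar Fisher--KPP-type inequality for $\tilde\phi$ alone, and then invoke the classical Aronson--Weinberger spreading theorem to derive a contradiction.

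First I would use the bound $\tilde\psi(s)\leq hu^\ast$ together with the algebraic identity $1-k+khu^\ast=u^\ast$ (which is simply a rearrangement of $u^\ast=(1-k)/(1-hk)$). This gives $1-k-\tilde\phi+k\tilde\psi\leq u^\ast-\tilde\phi$, and consequently
\[
\Theta_1(\tilde\phi,\tilde\psi)(s) \leq \frac{\tilde\phi''(s)+\tilde\phi(s)(u^\ast-\tilde\phi(s))}{\tilde\phi'(s)}, \quad s>0.
\]
Setting $c:=\inf_{s>0}\Theta_1(\tilde\phi,\tilde\psi)$, this translates into the differential inequality
\[
\tilde\phi''-c\tilde\phi'+\tilde\phi(u^\ast-\tilde\phi)\geq 0 \ \text{on } (0,\infty), \qquad \tilde\phi(0)=0, \ \tilde\phi(\infty)=u^\ast, \ \tilde\phi'>0,
\]
and the task is reduced to showing $c\leq 2\sqrt{u^\ast}$.

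To this end, suppose by contradiction that $c>2\sqrt{u^\ast}$, and choose $c'\in(2\sqrt{u^\ast},c)$. Define the parabolic profile $V(x,t):=\tilde\phi(ct-x)$ on $\mathbb R\times(0,\infty)$, which vanishes on $\{x\geq ct\}$ because $\tilde\phi\equiv 0$ on $(-\infty,0]$. A direct computation on $\{x<ct\}$ yields $V_t-V_{xx}-V(u^\ast-V)=-[\tilde\phi''-c\tilde\phi'+\tilde\phi(u^\ast-\tilde\phi)]\leq 0$, while along $\{x=ct\}$ the function $V$ is continuous and $V_x$ jumps from $-\tilde\phi'(0^+)\leq 0$ on the left to $0$ on the right, so the distributional $V_{xx}$ contains the non-negative Dirac contribution $\tilde\phi'(0^+)\delta_{\{x=ct\}}$. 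Hence $V$ is a weak subsolution of the Fisher--KPP equation $v_t=v_{xx}+v(u^\ast-v)$ on $\mathbb R\times(0,\infty)$. Letting $v$ denote the solution of the same equation with Heaviside initial data $v(x,0)=u^\ast\mathbf 1_{(-\infty,0]}(x)$, and noting $V(x,0)=\tilde\phi(-x)\leq u^\ast\mathbf 1_{(-\infty,0]}(x)$, the weak parabolic comparison principle yields $V\leq v$. The Aronson--Weinberger spreading theorem then forces $v(c't,t)\to 0$ as $t\to\infty$ (since $c'>2\sqrt{u^\ast}$), whereas $V(c't,t)=\tilde\phi((c-c')t)\to u^\ast>0$; this contradiction establishes $c\leq 2\sqrt{u^\ast}$.

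The one technical subtlety is justifying the parabolic comparison across the slope discontinuity of $V$ at $\{x=ct\}$. The cleanest remedy is to mollify $\tilde\phi$ near $s=0$ to produce $C^2$ approximants that still satisfy the subsolution inequality and retain the same asymptotics at infinity, apply the standard comparison, and pass to the limit; equivalently, one may invoke the distributional/viscosity formulation of the comparison principle, noting that the Dirac term in $V_{xx}$ has the correct sign to preserve the subsolution property.
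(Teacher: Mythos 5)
Your proof is correct in substance but takes a genuinely different route from the paper. The paper assumes for contradiction that $c_0^*>2\sqrt{u^*}$, uses Lemma \ref{S25} to produce an actual solution $(\tilde\phi,\tilde\psi)$ of \eqref{sect1} for some $c>2\sqrt{u^*}$, proves uniform bounds on $\tilde\phi',\tilde\phi''$ from the integral representation, and then multiplies the first equation of \eqref{sect1} by $e^{-\mu s}$ with $\mu$ chosen strictly between the two positive roots of $\mu^2-c\mu+u^*=0$; integrating over $(0,\infty)$ and using $1-k+k\tilde\psi\leq u^*$ yields $\tilde\phi'(0)+\int_0^\infty\tilde\phi^2e^{-\mu s}\,ds\leq(\mu^2-c\mu+u^*)\int_0^\infty\tilde\phi e^{-\mu s}\,ds<0$, contradicting $\tilde\phi'(0)>0$. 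You instead work directly with the supremum definition of $c_0^*$: for an arbitrary element of $\Sigma$ you bound $\Theta_1$ above via $1-k+khu^*=u^*$, read the resulting inequality as saying $V(x,t)=\tilde\phi(ct-x)$ is a (distributional) subsolution of the scalar Fisher--KPP equation $v_t=v_{xx}+v(u^*-v)$, compare with the solution starting from Heaviside data, and apply the KPP spreading result. Your route avoids the existence machinery (Lemma \ref{S25}) and the a priori derivative bounds on $\tilde\phi$ that the paper needs to justify its integration by parts, working with \emph{any} member of $\Sigma$; in exchange, it imports the parabolic comparison across the slope discontinuity at $\{x=ct\}$ (which you correctly flag, and whose remedy by mollification or a viscosity formulation is standard but not free) together with the spreading theorem. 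One minor slip: your opening sentence appeals to Lemma \ref{c_*} to ``reduce'' to $c_0^*\leq2\sqrt{u^*}$, but that is the statement itself; the reduction you actually make and use is the one in the following sentence, namely that it suffices to show $\inf_{s>0}\Theta_1\leq2\sqrt{u^*}$ for every $(\tilde\phi,\tilde\psi)\in\Sigma$.
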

\begin{proof}\, Suppose on the contrary that $c_0^*>2\sqrt{u^*}$. Then  system \eqref{sect1} has a solution for some $c>2\sqrt{u^\ast}$. The monotonicity of $(\tilde{\phi}(s),\tilde{\psi}(s))$ implies that $(\tilde{\phi}(s),\tilde{\psi}(s))\leq(u^\ast,hu^\ast)$ on $\mathbb{R}^+$.

We claim that
$\tilde{\phi}'(s)$ and $\tilde{\phi}''(s)$ are uniformly bounded on $\mathbb{R}^+$. Let $\beta=\max\{1+k,r(1+h)\}$; it follows from Lemma 6.1 and \eqref{s002} that
\bess\begin{aligned}
|\tilde{\phi}'(s)|&=\left|\frac{1}{\lambda_{2}-\lambda_{1}}\left[\int_{0}^sK_{1s}(\xi,s)\tilde{\phi}(\xi)(\beta+1
-k-\tilde{\phi}(\xi)+k\tilde{\psi}(\xi))d\xi\right.\right.\\
&\quad\;\;\;\;\;\;\;\;\;\;+\left.\left.\int_s^{\infty}K_{2s}(\xi,s)\tilde{\phi}(\xi)(\beta+1-k
-\tilde{\phi}(\xi)+k\tilde{\psi}(\xi))d\xi\right]\right|\\
&\leq\frac{2(\beta+1+k+u^\ast+khu^\ast)u^\ast}{\lambda_{2}-\lambda_{1}}:=\hat{C}_1.
\end{aligned}\eess
Using the boundedness of $\tilde{\phi},\tilde{\psi},\tilde{\phi}'$  and \eqref{sect1}, we obtain that
\bess\begin{aligned}
|\tilde{\phi}''|&=|c\tilde{\phi}'-\tilde{\phi}(1-k-\tilde{\phi}+k\tilde{\psi})|\\
&\leq c|\tilde{\phi}'|+|\tilde{\phi}(1-k-\tilde{\phi}+k\tilde{\psi})|\\
&\leq c\hat{C}_1+u^\ast(1+k+u^\ast+khu^\ast):=\bar{C}_1.
\end{aligned}\eess
Thus $|\tilde{\phi}'|,|\tilde{\phi}''|<C$ with $C:=\max\{\hat{C}_1,\bar{C}_1\}$.

Thanks to the uniform boundedness of $\tilde{\phi},\tilde{\psi},\tilde{\phi}'$ and $\tilde{\phi}''$, the integrals $\int_0^\infty\tilde{\phi}(s)\tilde{\psi}(s)e^{-\mu s}ds$ and
$\int_0^\infty\tilde{\phi}^{(l)}e^{-\mu s}ds(l=0,1,2)$ are well defined for any $\mu>0$. In view of $c>2\sqrt{u^\ast}$, we know that
$$\mu^2-c\mu+u^\ast=0$$
has two positive roots, say $\tilde{\mu}_1,\tilde{\mu}_2$ with $0<\tilde{\mu}_1<\tilde{\mu}_2$.
Now, choosing $\mu\in(\tilde \mu_1,\tilde\mu_2)$, multiplying the first equation in
 \eqref{sect1} by $e^{-\mu s}$ and integrating from $0$ to $\infty$, we obtain
\bes\begin{aligned}
&\tilde{\phi}'(0)+\int_0^\infty\tilde{\phi}^2e^{-\mu s}ds\\
=&(\mu^2-c\mu)\int_0^\infty\tilde{\phi}e^{-\mu s}ds+\int_0^\infty(1-k+k\tilde{\psi})\tilde{\phi} e^{-\mu s}ds\\
\leq&(\mu^2-c\mu+u^\ast)\int_0^\infty\tilde{\phi}e^{-\mu s}ds<0.
\end{aligned}
\lbl{nonexistence}
\ees
Since $\tilde{\phi}'(0)>0$ by the Hopf boundary Lemma, we have $\tilde{\phi}'(0)+\int_0^\infty\tilde{\phi}^2e^{-\mu s}ds>0$, which contradicts \eqref{nonexistence}.
\end{proof}

\begin{lem}\lbl{T9}Let $(\tilde{\phi}_c,\tilde{\psi}_c)$ denote the unique solution of \eqref{sect1}. Then $0\leq c_1<c_2< c_0^\ast $ implies
$$\tilde{\phi}_{c_1}'(0)>\tilde{\phi}_{c_2}'(0),\quad \tilde{\phi}_{c_1}(s)>\tilde{\phi}_{c_2}(s)\mbox{ in }\mathbb{R}^+,
\quad \tilde{\psi}_{c_1}(s)>\tilde{\psi}_{c_2}(s)\mbox{ in }\mathbb{R}.$$
\end{lem}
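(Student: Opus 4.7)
\medskip

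\noindent\textbf{Proof proposal.} The proof will follow the sliding/comparison philosophy of Lemma~\ref{S27}, with the crucial advantage that, for $c_1<c_2$, the solution at speed $c_2$ is a \emph{strict} subsolution of the $c_1$-problem, and this strict slack both makes the sliding argument close and yields strict monotonicity with respect to $c$.

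\emph{Step 1: strict subsolution property.} Since $\tilde\phi_{c_2}'>0$ on $(0,\infty)$ and $\tilde\psi_{c_2}'>0$ on $\mathbb R$, and since the system is autonomous, for every $\xi\geq 0$ the left-shift $(\underline\phi^\xi,\underline\psi^\xi):=(\tilde\phi_{c_2}(\cdot-\xi),\tilde\psi_{c_2}(\cdot-\xi))$ satisfies, on $s>\xi$ and $s\in\mathbb R$ respectively,
\begin{align*}
(\underline\phi^\xi)''-c_1(\underline\phi^\xi)'+\underline\phi^\xi(1-k-\underline\phi^\xi+k\underline\psi^\xi)&=(c_2-c_1)\tilde\phi_{c_2}'(\cdot-\xi)>0,\\
d(\underline\psi^\xi)''-c_1(\underline\psi^\xi)'+r(1-\underline\psi^\xi)(h\underline\phi^\xi-\underline\psi^\xi)&=(c_2-c_1)\tilde\psi_{c_2}'(\cdot-\xi)>0,
\end{align*}
so each shift is a strict lower solution of \eqref{sect1} with speed $c_1$, associated with $\mathcal R=[0,u^*]\times[0,hu^*]$.

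\emph{Step 2: sliding from infinity.} Mimicking the first half of the proof of Lemma~\ref{S27}, I will show that there exists $\xi_0>0$ such that
\[
(\underline\phi^\xi(s),\underline\psi^\xi(s))\leq(\tilde\phi_{c_1}(s),\tilde\psi_{c_1}(s))\quad\text{for all }s\in\mathbb R,\ \xi\geq\xi_0.
\]
The bound on the two half-lines follows as in Lemma~\ref{S27}: for large negative $s$ the comparison for $\tilde\psi$ reduces, after the substitution $u_i(s)=1-\tilde\psi_{\cdot}(-s)$, to a scalar KPP comparison handled by Lemma~\ref{S21}; for large positive $s$ the asymptotic expansion of Lemma~\ref{S26} (applied to the shift, which has the same exponent $\hat\mu_2$) gives the required ordering once $\xi$ is large enough; the monotonicity of $\tilde\phi_{c_2}$ and the fact that $\underline\phi^\xi\equiv 0$ on $(-\infty,\xi]$ handle the remaining region.

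\emph{Step 3: lowering the shift to $0$.} Define
\[
\bar\zeta:=\inf\bigl\{\zeta_0\geq 0: (\underline\phi^\xi,\underline\psi^\xi)\leq(\tilde\phi_{c_1},\tilde\psi_{c_1})\text{ on }\mathbb R,\ \forall\xi\geq\zeta_0\bigr\},
\]
so $\bar\zeta\in[0,\xi_0]$ and, by continuity, the ordering holds at $\xi=\bar\zeta$. Suppose $\bar\zeta>0$. Setting $P:=\tilde\phi_{c_1}-\underline\phi^{\bar\zeta}\geq 0$ and $Q:=\tilde\psi_{c_1}-\underline\psi^{\bar\zeta}\geq 0$, subtraction yields a linearized cooperative system for $(P,Q)$ of the same form as \eqref{ff10} but with an additional strictly negative forcing $-(c_2-c_1)\tilde\phi_{c_2}'(\cdot-\bar\zeta)$ and $-(c_2-c_1)\tilde\psi_{c_2}'(\cdot-\bar\zeta)$ on the right. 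The strong maximum principle therefore gives $P>0$ on $(\bar\zeta,\infty)$ and $Q>0$ on $\mathbb R$. I then repeat verbatim the expansion-at-$+\infty$ argument in Lemma~\ref{S27} (the two solutions have the same leading exponential profile $e^{\hat\mu_2 s}$ by Lemma~\ref{S26}, only with different positive prefactors $C_*$ and $C e^{-\hat\mu_2\bar\zeta}$, and the strict positivity of $Ce^{-\hat\mu_2\bar\zeta}-C_*$ can be shifted by a small $\epsilon$); combined with the Lemma~\ref{S21}-type comparison on $s\leq\bar\zeta$, this allows me to replace $\bar\zeta$ by $\bar\zeta-\epsilon$ for some $\epsilon>0$, contradicting the definition of $\bar\zeta$. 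Hence $\bar\zeta=0$, giving $(\tilde\phi_{c_2},\tilde\psi_{c_2})\leq(\tilde\phi_{c_1},\tilde\psi_{c_1})$ pointwise. The main obstacle of the proof lies precisely in this step: verifying that the asymptotic-expansion/shift trick from Lemma~\ref{S27} still produces the required comparison when the two speeds differ and the boundary interface sits at $s=\bar\zeta>0$ rather than $s=0$.

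\emph{Step 4: strict inequalities and derivative comparison.} With $\bar\zeta=0$ in hand, set $W:=\tilde\phi_{c_1}-\tilde\phi_{c_2}\geq 0$ on $[0,\infty)$ and $Z:=\tilde\psi_{c_1}-\tilde\psi_{c_2}\geq 0$ on $\mathbb R$. Subtracting the two systems (with different $c$'s) gives a linear cooperative system for $(W,Z)$ with strictly negative inhomogeneous terms $-(c_2-c_1)\tilde\phi_{c_2}'$ and $-(c_2-c_1)\tilde\psi_{c_2}'$; the strong maximum principle then forces $W>0$ on $(0,\infty)$ and $Z>0$ on $\mathbb R$, which are the strict inequalities claimed. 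Finally, $W(0)=0$ and $W$ satisfies an inequality of the form
\[
-W''+c_1 W'+K\,W\ \geq\ (c_2-c_1)\tilde\phi_{c_2}'(s)+k\tilde\phi_{c_1}(s)Z(s)
\]
for some constant $K\geq 0$ absorbing the zeroth-order coefficient; on any right neighborhood of $0$ the right-hand side is strictly positive (by the Hopf-type bound $\tilde\phi_{c_2}'(0+)>0$ already used for $\tilde\phi_{c_2}$ itself). Hopf's boundary lemma applied to $W$ at $s=0$ therefore gives $W'(0)>0$, i.e.\ $\tilde\phi_{c_1}'(0)>\tilde\phi_{c_2}'(0)$, completing the proof.
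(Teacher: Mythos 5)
Your sliding strategy is workable, but it contains a factual error and is much longer than the paper's argument. The error is in Steps~2--3: the leading decay exponent $\hat\mu_2$ from Lemma~\ref{S26} depends on $c$, since it is a root of $P_1(\mu)=(\mu^2-c\mu-u^*)(d\mu^2-c\mu-rv^*)-khru^*v^*$ whose coefficients involve $c$. Thus $\tilde\phi_{c_1}$ and $\tilde\phi_{c_2}$ do \emph{not} share ``the same leading exponential profile'' at $+\infty$, and the prefactor bookkeeping of Lemma~\ref{S27} cannot be repeated verbatim. What actually holds (check via the implicit function theorem, using $P_1'(\hat\mu_2)>0$ and $\partial_c P_1(\hat\mu_2)<0$ on the relevant interval) is that $\hat\mu_2$ is strictly increasing in $c$, so $\hat\mu_2(c_1)<\hat\mu_2(c_2)<0$. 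This is favorable --- $u^*-\tilde\phi_{c_1}$ decays strictly faster than $u^*-\tilde\phi_{c_2}(\cdot-\bar\zeta)$, so the comparison near $+\infty$ holds regardless of $\bar\zeta$ --- but you must argue from the monotonicity of $\hat\mu_2$ in $c$, not from a false identity of exponents.

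The paper never slides. Your Step~1 taken at $\xi=0$ already says $(\tilde\phi_{c_2},\tilde\psi_{c_2})$ is a strict lower solution of the $c_1$-system associated with $\mathcal R=[0,u^*]\times[0,hu^*]$; the pair $(\overline\phi,\overline\psi)$ of \eqref{upfl} is an upper solution, and $(\tilde\phi_{c_2},\tilde\psi_{c_2})\leq(\overline\phi,\overline\psi)$ follows from the construction in Lemma~\ref{S25} plus uniqueness. Since both components of $(\tilde\phi_{c_2},\tilde\psi_{c_2})$ are nondecreasing, the hypothesis $\sup_{t\leq s}\underline\varphi(t)\leq\overline\varphi(s)$ of Proposition~\ref{s7} is automatic, and Proposition~\ref{s7} yields a solution of the $c_1$-system squeezed between; uniqueness (Lemma~\ref{S27}) identifies it with $(\tilde\phi_{c_1},\tilde\psi_{c_1})$, giving the ordering at once. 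Your Step~4 --- strong maximum principle for strictness and the Hopf lemma at $s=0$ --- then matches the paper exactly. So the content of Steps~2--3 is entirely avoidable here.
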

\begin{proof} From the proof of Lemma \ref{S25} and the uniqueness of solutions to \eqref{sect1}, we have $(\tilde{\phi}_{c_2}(s),\tilde{\psi}_{c_2}(s))\leq(\overline{\phi}(s),\overline{\psi}(s))$, where $(\overline{\phi}(s),\overline{\psi}(s))$ is given by
\eqref{upfl}.
Moreover, due to $0\leq c_1<c_2$ and $\tilde\phi'_{c_2}>0$ in $\mathbb R^+$ and $\tilde\psi'_{c_2}>0$ in $\mathbb R$, we have
\bess\left\{
\begin{aligned}
&-\tilde{\phi}_{c_2}''+c_1\tilde{\phi}_{c_2}'<\tilde{\phi}_{c_2}(1-k-\tilde{\phi}_{c_2}
+k\tilde{\psi}_{c_2}),&0<s<\infty,\\
&-d\tilde{\psi}_{c_2}''+c_1\tilde{\psi}_{c_2}'< r(1-\tilde{\psi}_{c_2})(h\tilde{\phi}_{c_2}-\tilde{\psi}_{c_2}),&-\infty<s<\infty.
\end{aligned}
\right.\eess
Hence, it follows from Proposition \ref{s7} and Lemma \ref{S27} that $\tilde{\phi}_{c_1}(s)\geq\tilde{\phi}_{c_2}(s)$ in $\mathbb{R}^+$ and
$\tilde{\psi}_{c_1}(s)\geq\tilde{\psi}_{c_2}(s)$ in $\mathbb{R}$. Furthermore, the strong maximum principle yields
$\tilde{\phi}_{c_1}(s)>\tilde{\phi}_{c_2}(s)$ for $s>0$,
$\tilde{\psi}_{c_1}(s)>\tilde{\psi}_{c_2}(s)$ for $s\in\mathbb{R}$.
Let $\widetilde{\phi}=\tilde{\phi}_{c_1}-\tilde{\phi}_{c_2}$. Then
$$-\widetilde{\phi}''+c_2\widetilde{\phi}'>\widetilde{\phi}(1-k-\tilde{\phi}_{c_1}-\tilde{\phi}_{c_2}
+k\tilde{\phi}_{c_1}),\quad\widetilde{\phi}(s)>0\mbox{ for }s>0,\quad\widetilde{\phi}(0)=0.$$
By the Hopf boundary lemma, we deduce $\widetilde{\phi}'(0)>0$, that is, $\tilde{\phi}_{c_1}'(0)>\tilde{\phi}_{c_2}'(0)$.
\end{proof}

\begin{lem}\lbl{T10}Let $(\tilde{\phi}_c,\tilde{\psi}_c)$ be the unique monotone solution of \eqref{sect1}. Then the mapping $c\longmapsto(\tilde{\phi}_c,\tilde{\psi}_c)$ is continuous from $[0,c_0^\ast)$ to $C_{loc}^2([0,\infty))\times C_{loc}^2(\mathbb{R})$. Moreover,
\bess\lim_{c\rightarrow c_0^\ast-}(\tilde{\phi}_c,\tilde{\psi}_c)=(0,0)\mbox{ in }C_{loc}^2([0,\infty))\times C_{loc}^2(\mathbb{R}).\lbl{critical}\eess
\end{lem}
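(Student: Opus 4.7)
The plan is to treat the two assertions separately, establishing continuity on the interior of $[0,c_0^\ast)$ first and then analyzing the degenerate limit at $c_0^\ast=c_\ast$.

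For continuity at $c_\infty\in[0,c_0^\ast)$, I would pick an arbitrary sequence $c_n\to c_\infty$ and exploit the uniform a priori bounds $0\leq \tilde{\phi}_{c_n}\leq u^\ast$, $0\leq \tilde{\psi}_{c_n}\leq hu^\ast$ together with classical interior regularity for second-order ODEs to obtain uniform $C^{2+\alpha}$ estimates on any compact set. Arzel\`a--Ascoli then extracts a subsequential limit $(\phi_\infty,\psi_\infty)$ in $C_{loc}^2([0,\infty))\times C_{loc}^2(\mathbb{R})$ that satisfies the ODE system in \eqref{sect1} at speed $c_\infty$ with $\phi_\infty\equiv 0$ on $(-\infty,0]$. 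The key input is the monotonicity in $c$ provided by Lemma \ref{T9}: picking $c_-<c_\infty<c_+$ in $[0,c_0^\ast)$ (with the convention $c_-=0$ when $c_\infty=0$) yields, for $n$ large, the sandwich $\tilde{\phi}_{c_+}\leq \tilde{\phi}_{c_n}\leq \tilde{\phi}_{c_-}$ and $\tilde{\psi}_{c_+}\leq \tilde{\psi}_{c_n}\leq \tilde{\psi}_{c_-}$, which passes to the limit, simultaneously rules out triviality, and transfers the asymptotic conditions $\phi_\infty(+\infty)=u^\ast$, $\psi_\infty(+\infty)=hu^\ast$, $\psi_\infty(-\infty)=0$ to the limit. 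Strict monotonicities follow from the strong maximum principle applied to the cooperative system satisfied by $(\phi_\infty',\psi_\infty')$, so $(\phi_\infty,\psi_\infty)$ is a genuine solution of \eqref{sect1} at speed $c_\infty$. Uniqueness (Lemma \ref{S27}) identifies it with $(\tilde{\phi}_{c_\infty},\tilde{\psi}_{c_\infty})$, and because every subsequence has the same limit the full family converges.

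For the degenerate limit $c\nearrow c_0^\ast$, the monotonicity in $c$ from Lemma \ref{T9} makes the pointwise limits $\phi_\ast(s):=\lim_{c\nearrow c_0^\ast}\tilde{\phi}_c(s)$ and $\psi_\ast(s):=\lim_{c\nearrow c_0^\ast}\tilde{\psi}_c(s)$ well defined, non-negative, non-decreasing, and bounded. The same compactness argument promotes this to $C_{loc}^2$ convergence, with $(\phi_\ast,\psi_\ast)$ satisfying the ODE system at the critical speed $c_0^\ast=c_\ast$ and $\phi_\ast\equiv 0$ on $(-\infty,0]$. Monotonicity in $s$ guarantees that the limits $a:=\phi_\ast(+\infty)$ and $b:=\psi_\ast(+\infty)$ exist and together form an equilibrium of the reaction system in $[0,u^\ast]\times[0,hu^\ast]$; a direct computation shows that in this rectangle the only such equilibria are $(0,0)$ and $(u^\ast,hu^\ast)$, producing a dichotomy.

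The main obstacle, and the heart of the argument, is ruling out the case $(a,b)=(u^\ast,hu^\ast)$. Granting this configuration, I would upgrade $(\phi_\ast,\psi_\ast)$ to a full solution of \eqref{sect1} at speed $c_\ast$: the remaining asymptotic $\psi_\ast(-\infty)=0$ is forced by reading off the second equation as $s\to-\infty$ (using $\phi_\ast\equiv 0$ there) together with the bound $\psi_\ast\leq hu^\ast<1$, while the strict monotonicities $\phi_\ast'>0$ on $(0,\infty)$ and $\psi_\ast'>0$ on $\mathbb{R}$ come from the strong maximum principle applied to the cooperative system satisfied by $(\phi_\ast',\psi_\ast')$ once non-triviality is in hand. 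This would produce a solution of \eqref{sect1} at $c=c_\ast$, contradicting Lemma \ref{S28}. Hence $(a,b)=(0,0)$, and the monotonicity of $\phi_\ast$ on $[0,\infty)$ together with $\phi_\ast(0)=0$, and of $\psi_\ast$ on $\mathbb{R}$, forces $\phi_\ast\equiv 0$ and $\psi_\ast\equiv 0$, completing the proof.
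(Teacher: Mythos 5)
Your proposal is correct and follows the same skeleton as the paper's proof: uniform a priori $C^{2}$ bounds and Arzel\`a--Ascoli to extract a $C^2_{loc}$-convergent subsequence; the $c$-monotonicity of Lemma \ref{T9} to transfer the asymptotic conditions and rule out triviality for interior $c_\infty$; uniqueness (Lemma \ref{S27}) to identify the limit; and a contradiction with the nonexistence result for $c\geq c_*$ (Lemma \ref{S28}) to force the degenerate limit at $c_0^*$. The one place where you genuinely diverge from the paper is in how the trivial limit at $c_0^*$ is handled. The paper first splits on $\hat{\phi}_*\equiv 0$ versus $\hat{\phi}_*\not\equiv 0$; in the first case it sets $\tilde{\psi}_*=1-\hat{\psi}_*$ and uses a comparison with solutions of the Dirichlet logistic problem on large intervals (via Lemma 2.1 of \cite{DM}) to force $\tilde{\psi}_*\equiv 1$. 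You instead observe that the monotone bounded limits $a=\phi_*(+\infty)$, $b=\psi_*(+\infty)$ must be an equilibrium of $(f_1,f_2)$ in the rectangle $[0,u^*]\times[0,hu^*]$, classify the equilibria as $\{(0,0),(u^*,hu^*)\}$, and then the $(0,0)$ branch is immediate from monotonicity without any blow-up comparison. This is a tidier route for the $(0,0)$ subcase; the trade-off is that you need the standard (but unstated in your sketch) Barbalat-type argument that the uniform $C^3$ bounds force $\phi_*',\phi_*'',\psi_*',\psi_*''\to 0$ at $+\infty$, so the limiting values are indeed zeros of the reaction terms. You should also note that your asserted claim that $(\phi_*,\psi_*)$ becomes ``a full solution of \eqref{sect1}'' when $(a,b)=(u^*,hu^*)$ is exactly the step the paper also takes implicitly before invoking Lemma \ref{S28}, and your filling in the asymptotic $\psi_*(-\infty)=0$ (either via the equilibrium argument at $-\infty$ or via the sandwich from Lemma \ref{T9}) is a useful clarification of a step the paper leaves terse.
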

\begin{proof}
Suppose $\{c_i\}$ is a sequence in $[0,c_0^\ast)$ such that $c_i\rightarrow \hat{c}\in[0,c_0^\ast]$ as $i\rightarrow\infty$.
Let $(\tilde{\phi}_{c_i},\tilde{\psi}_{c_i})$ be the solution of \eqref{sect1} with $c=c_i$. We claim that $(\tilde{\phi}_{c_i},\tilde{\psi}_{c_i})$ has a subsequence that converges to $(\tilde{\phi}_{\hat{c}},\tilde{\psi}_{\hat{c}})$ in $C_{loc}^2([0,\infty))\times C_{loc}^2(\mathbb{R})$, which clearly  implies  the continuity of the mapping $c\longmapsto(\tilde{\phi}_c,\tilde{\psi}_c)$.

Firstly, we consider the case $\hat{c}<c_0^\ast$. Let $\bar{c}\in(\hat{c},c_0^\ast)$.  Then $c_i\in[0,\bar{c})$ for all large $i$,
and without loss of generality we assume that this is the case for all $i\geq 1$. For simplicity, we denote $(\tilde{\phi}_{c_i},\tilde{\psi}_{c_i})$ by $(\tilde{\phi}_{i},\tilde{\psi}_{i})$. Rewrite equation \eqref{sect1} in the integral form of \eqref{l01} and \eqref{l02}.
Noting that $\tilde{\phi}_{i}$ and $\tilde{\psi}_{i}$ are uniformly bounded, similar arguments as in Lemma \ref{aa0} indicate that $|\tilde{\phi}_{i}'|$ and $|\tilde{\phi}_{i}''|$ are bounded for all $i$ and $s\in\mathbb{R}^+$. Moreover, by similar arguments as in the proof of Lemma \ref{aa0} again, we can prove  $|\tilde{\psi}_{i}'|$ and $|\tilde{\psi}_{i}''|$ are bounded for all $i$ and $s\in\mathbb{R}$. Differentiating both sides of \eqref{sect1} with respect to $s$, applying the uniform boundedness of $\tilde{\phi}_{i}^{(j)}$ and $\tilde{\psi}_{i}^{(j)}(j=0,1,2)$, we have $|\tilde{\phi}_{i}'''|$ and $|\tilde{\psi}_{i}'''|$ are bounded for $s\in\mathbb{R}^+$ and $s\in\mathbb{R}$, respectively. Hence, $\{\tilde{\phi}_{i}^{(j)}\}$ and $\{\tilde{\psi}_{i}^{(j)}\}(j=0,1,2)$ are uniformly bounded and equi-continuous  for $s\in\mathbb{R}^+$ and $s\in\mathbb{R}$, respectively. Using Arzela-Ascoli's theorem, the nested subsequence argument and Lebesgue's dominated convergence theorem, there is a subsequence $c_{i_k}$ of $\{c_i\}$ such that
$(c_{i_k},\tilde{\phi}_{i_k}^{(j)},\tilde{\psi}_{i_k}^{(j)})\rightarrow(\hat{c},\hat{\phi}^{(j)},\hat{\psi}^{(j)})$
uniformly as $k\rightarrow\infty$ in $C_{loc}^2([0,\infty))\times C_{loc}^2(\mathbb{R})$. Moreover, $(\hat{\phi},\hat{\psi})$ solves \eqref{sect1} with $c=\hat{c}$, except that we only have $\hat{\phi}'\geq0$ and $\hat{\psi}'\geq0$.
Using Lemma \ref{T9}, the required asymptotic behavior of $(\hat{\phi},\hat{\psi})$ at $\pm\infty$ follows from $$(\tilde{\phi}_{\bar{c}},\tilde{\psi}_{\bar{c}})\leq(\hat{\phi},\hat{\psi})\leq(\overline{\phi},\overline{\psi}).$$
Applying the strong maximum principle to the system satisfied by $(\hat{\phi}',\hat{\psi}')$, we deduce $\hat{\phi}'>0$ in $[0,\infty)$ and $\hat{\psi}'>0$ in $\mathbb{R}$. Thus $(\hat{\phi},\hat{\psi})$ is a solution of  \eqref{sect1} with $c=\hat{c}$.
By uniqueness, we have $(\hat{\phi},\hat{\psi})=(\tilde{\phi}_{\hat{c}},\tilde{\psi}_{\hat{c}})$.

It remains to consider the case $\hat{c}=c_0^\ast$. Repeating the above arguments, we conclude that, passing to a subsequence,
$$(c_{i_k},\tilde{\phi}_{i_k},\tilde{\psi}_{i_k})\rightarrow( c_0^\ast ,\hat{\phi}_\ast,\hat{\psi}_\ast)\mbox{ in }C_{loc}^2([0,\infty))\times C_{loc}^2(\mathbb{R})\mbox{ as }k\rightarrow\infty$$
and $(\hat{\phi}_\ast,\hat{\psi}_\ast)$ solves \eqref{sect1} with $c= c_0^\ast $, except that we only have $\hat{\phi}_\ast'\geq0$ in $[0,\infty)$ and $\hat{\psi}_\ast'\geq0$ in $\mathbb{R}$. If $\hat{\phi}_\ast\equiv0$, then $\hat{\psi}_\ast$ satisfies
$$d\hat{\psi}_\ast''- c_0^\ast \hat{\psi}_\ast'=r\hat{\psi}_\ast(1-\hat{\psi}_\ast).$$
Let $\tilde{\psi}_\ast=1-\hat{\psi}_\ast$. Then $\tilde{\psi}_\ast$ satisfies
$$-d\tilde{\psi}_\ast''\geq -d\hat{\psi}_\ast''+ c_0^\ast \hat{\psi}_\ast'= r\tilde{\psi}_\ast(1-\tilde{\psi}_\ast).$$
For large $L>1$, assume $u_L$ is the unique positive solution of
$$-du''=ru(1-u),\quad u(\pm L)=0.$$
It is well known that $u_L\rightarrow1$ in $C_{loc}^2(\mathbb{R})$ as $L\rightarrow\infty$. By Lemma 2.1 of \cite{DM}, we have $u_L\leq\tilde{\psi}_\ast\leq1$ in $[-L,L]$. Letting $L\rightarrow\infty$ we obtain $\tilde{\psi}_\ast=1$, as we wanted.
Next, assume that $\hat{\phi}_\ast\not\equiv0$. Let $(\hat{\phi}_\ast,\hat{\psi}_\ast)$ be a solution of \eqref{sect1} with $c=c_0^\ast$. Then we may repeat the proof of Lemma \ref{S28} to conclude $\hat{\phi}_\ast\leq0$, a contradiction.
\end{proof}

\begin{lem}\lbl{tlem13}Let $(\tilde{\phi}_c,\tilde{\psi}_c)$ be the unique monotone solution of \eqref{sect1}. For any $\gamma>0$, there exists a unique $c=c(\gamma)\in(0, c_0^\ast )$ such that $\gamma\tilde{\phi}_c'(0)=c$. Moreover, $\gamma\longmapsto c(\gamma)$ is strictly increasing and $\lim_{\gamma\rightarrow\infty}c(\gamma)= c_0^\ast $.
\end{lem}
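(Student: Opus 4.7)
The plan is to recast the statement as a root-finding problem for the continuous function
\[
F(c):=\gamma\,\tilde\phi_c'(0)-c,\qquad c\in[0,c_0^\ast),
\]
and to extract everything from three earlier lemmas: the strict monotonicity of $c\mapsto\tilde\phi_c'(0)$ from Lemma \ref{T9}, the continuity of $c\mapsto(\tilde\phi_c,\tilde\psi_c)$ and the limit as $c\nearrow c_0^\ast$ from Lemma \ref{T10}, and the uniqueness of the monotone solution of \eqref{sect1} from Lemma \ref{S27}.

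First I would verify that $F$ is continuous and strictly decreasing on $[0,c_0^\ast)$. Continuity follows because Lemma \ref{T10} gives $\tilde\phi_c\to\tilde\phi_{c_0}$ in $C^2_{loc}([0,\infty))$ whenever $c\to c_0\in[0,c_0^\ast)$, so in particular $\tilde\phi_c'(0)\to\tilde\phi_{c_0}'(0)$. Strict monotonicity follows from Lemma \ref{T9}, which already gives $\tilde\phi_{c_1}'(0)>\tilde\phi_{c_2}'(0)$ for $0\leq c_1<c_2<c_0^\ast$; combined with the subtracted $-c$ term, $F$ is strictly decreasing. This already forces uniqueness of any solution of $F(c)=0$.

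Next I would compute the boundary values. At $c=0$, $\tilde\phi_0$ is the unique monotone solution at $c=0$, which is not identically zero, so the Hopf boundary lemma applied to the first equation of \eqref{sect1} at $s=0$ yields $\tilde\phi_0'(0)>0$; hence $F(0)=\gamma\tilde\phi_0'(0)>0$. As $c\nearrow c_0^\ast$, Lemma \ref{T10} gives $\tilde\phi_c\to 0$ in $C^2_{loc}([0,\infty))$, so $\tilde\phi_c'(0)\to 0$, and consequently $F(c)\to -c_0^\ast<0$. By the intermediate value theorem and the strict monotonicity of $F$, there is a unique $c_\gamma\in(0,c_0^\ast)$ with $F(c_\gamma)=0$.

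For monotonicity in $\gamma$, take $0<\gamma_1<\gamma_2$ and set $c_i:=c(\gamma_i)$. Write $F_j(c):=\gamma_j\tilde\phi_c'(0)-c$. Then
\[
F_2(c_1)=\gamma_2\tilde\phi_{c_1}'(0)-c_1=(\gamma_2-\gamma_1)\tilde\phi_{c_1}'(0)>0=F_2(c_2),
\]
and the strict monotonicity of $F_2$ forces $c_2>c_1$. Finally, to show $c(\gamma)\to c_0^\ast$ as $\gamma\to\infty$, note that $\gamma\mapsto c(\gamma)$ is increasing and bounded above by $c_0^\ast$, so the limit $c_\infty:=\lim_{\gamma\to\infty}c(\gamma)\in(0,c_0^\ast]$ exists. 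If $c_\infty<c_0^\ast$, then by Lemma \ref{T10} (continuity on $[0,c_0^\ast)$), $\tilde\phi_{c(\gamma)}'(0)\to\tilde\phi_{c_\infty}'(0)>0$, and from $c(\gamma)=\gamma\tilde\phi_{c(\gamma)}'(0)$ we would deduce $c(\gamma)\to\infty$, contradicting $c(\gamma)\leq c_0^\ast$. Hence $c_\infty=c_0^\ast$. The only genuine subtlety is checking that all three ingredients (strict monotonicity in $c$, continuity up to $c_0^\ast$, and the vanishing $\tilde\phi_c'(0)\to 0$) are valid; but these are precisely the content of Lemmas \ref{T9} and \ref{T10}, so no additional analytic work is needed beyond invoking them.
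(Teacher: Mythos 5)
Your proposal is correct and follows essentially the same route as the paper: define $p(c,\gamma)=\gamma\tilde\phi_c'(0)-c$, use Lemma \ref{T9} for strict decrease in $c$ and Lemma \ref{T10} for continuity and the limit $\tilde\phi_c'(0)\to 0$ as $c\nearrow c_0^\ast$, then apply the intermediate value theorem. The only cosmetic difference is in the last step: the paper observes directly that for any $\epsilon>0$, $p(c,\gamma)\geq p(c_0^\ast-\epsilon,\gamma)\to\infty$ as $\gamma\to\infty$ on $[0,c_0^\ast-\epsilon]$, forcing the root above $c_0^\ast-\epsilon$; you argue by contradiction via the monotone limit $c_\infty$ of $c(\gamma)$ — both are equally valid and equally short.
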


\begin{proof}\,By Lemma \ref{T10} and Lemma \ref{T9}, for fixed $\gamma>0$, the function $p(c,\gamma)=\gamma\tilde{\phi}_c'(0)-c$ is continuous and strictly decreasing for $c\in[0, c_0^\ast )$. Note that $\lim_{c\rightarrow c_0^\ast }p(c,\gamma)=- c_0^\ast <0$ and $p(0,\gamma)=\gamma\tilde{\phi}_{0}'(0)>0$. Therefore, there exists a unique $c=c(\gamma)\in(0, c_0^\ast )$ such that $p(c,\gamma)=0$, i.e. $\gamma\phi_c'(0)=c$. Moreover, note that $p(c,\gamma)$ is strictly increasing in $\gamma$ for any given $c\in(0, c_0^\ast )$. Hence, $c(\gamma)$ is strictly increasing in $\gamma$. For any $\epsilon>0$ and $c\in[0, c_0^\ast -\epsilon]$, we have $p(c,\gamma)\geq p( c_0^\ast -\epsilon,\gamma)\rightarrow\infty$ as $\gamma\rightarrow\infty$. It follows that
$ c_0^\ast -\epsilon<c(\gamma)< c_0^\ast $ for all large $\gamma$,  which means that $\lim_{\gamma\rightarrow\infty}c= c_0^\ast $.
\end{proof}

Theorem \ref{F4} now follows directly from Lemmas \ref{S25}--\ref{tlem13}.
{\setlength{\baselineskip}{16pt}{\setlength\arraycolsep{2pt} \indent
\section{The spreading-vanishing dichotomy}

\setcounter{equation}{0}

We prove Theorems 1.1 and 1.2 in this section.
Let us  recall that for the problem
\bess\left\{
\begin{aligned}
&u_t=du_{xx}+u(a-bu),&0<x<h(t),&\quad t>0,\\
&u_x(0,t)=0,u(x,t)\equiv0,&h(t)\leq x,&\quad t>0,\\
&h'(t)=-\nu u_x(h(t),t),&\nu>0,&\quad t>0,\\
&h(0)=h_0>0,\quad u(x,0)=u_0(x),&0\leq x\leq h_0,&
\end{aligned}
\right.\lbl{(se1)}
\eess
the following result holds.
\begin{lem}\label{t3} {\rm (\cite{DL10})} If $h_0\geq \frac{\pi}{2}\sqrt{\frac{d}{a}}$, then spreading always happens. If $h_0<\frac{\pi}{2}\sqrt{\frac{d}{a}}$, then there exists $\nu^\ast>0$ depending on $u_0$ such that vanishing happens when $\nu\leq\nu^\ast$, and spreading happens when $\nu>\nu^\ast$.
\end{lem}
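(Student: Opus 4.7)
The plan is to establish the spreading--vanishing dichotomy through monotonicity of $h(t)$, comparison arguments, and explicit sub/super-solutions keyed to the Dirichlet principal eigenvalue $\lambda_1(L) := d(\pi/(2L))^2$ of $-d\partial_{xx}$ on $(0,L)$ with Neumann at $0$ and Dirichlet at $L$. First I would observe that $h'(t) = -\nu u_x(h(t),t) > 0$ by the Hopf lemma at the free boundary, so $h(t)$ is strictly increasing; set $h_\infty := \lim_{t\to\infty} h(t) \in (h_0,\infty]$. A standard trichotomy argument shows that \emph{spreading} ($h_\infty = \infty$ and $u \to a/b$ locally uniformly) and \emph{vanishing} ($h_\infty < \infty$ and $u \to 0$ uniformly on $[0,h(t)]$) are the only possibilities: indeed, if $h_\infty < \infty$, uniform H\"older estimates away from $x=h(t)$ together with the decay of $h'(t)$ force $\lambda_1(h_\infty) \geq a$, so the limit of $u$ is a bounded solution of a problem on $(0,h_\infty)$ with strictly positive Dirichlet eigenvalue, which must be $0$. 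The task therefore reduces to deciding when $h_\infty = \infty$.

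For the first assertion, suppose $h_0 \geq \frac{\pi}{2}\sqrt{d/a}$. Then $h(t) \geq h_0$ gives $\lambda_1(h(t)) \leq a$, with strict inequality for $t>0$. Let $\varphi$ be the positive principal eigenfunction on $(0,h_0)$ (Neumann at $0$, Dirichlet at $h_0$) normalized to $\|\varphi\|_\infty = 1$, and set $\underline{u}(x) := \epsilon \varphi(x)$ on $[0,h_0]$. A direct computation shows that for $\epsilon > 0$ small enough, $\underline u$ is a (stationary) sub-solution of the PDE on $(0,h_0)$ and satisfies $\underline u(h_0) = 0 \leq u(h_0, t)$ and $\underline u_x(0)=0$. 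Comparison on $[0,h_0]\times[0,\infty)$ yields $u(x,t) \geq \underline u(x)$ there, so $u$ cannot vanish; combined with the trichotomy this forces $h_\infty = \infty$, i.e.\ spreading.

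For the second assertion, assume $h_0 < \frac{\pi}{2}\sqrt{d/a}$, and view the solution as depending on $\nu$ with free boundary $h(t;\nu)$. By the one-species analogue of Proposition \ref{the comparison principle}, $\nu \mapsto h(t;\nu)$ is nondecreasing, so $S := \{\nu>0 : h_\infty(\nu) < \infty\}$ is downward closed. To show $S \neq \emptyset$ for small $\nu$, I would construct an upper solution of the form
\begin{equation*}
\bar u(x,t) = M e^{-\delta t}\cos\!\left(\frac{\pi x}{2\sigma(t)}\right),\qquad \sigma(t) = h_0\bigl(1 + \mu(1-e^{-\delta t})\bigr),
\end{equation*}
with $\mu > 0$ chosen so that $\sigma(\infty) < \frac{\pi}{2}\sqrt{d/a}$, $M \geq \|u_0\|_\infty$, and $\delta > 0$ small. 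The parabolic inequality $\bar u_t - d\bar u_{xx} - \bar u(a-b\bar u)\geq 0$ reduces to $a \leq d(\pi/(2\sigma(t)))^2 + \delta + O(e^{-\delta t})$, valid by the spectral-gap condition, while the Stefan inequality $\sigma'(t) \geq -\nu \bar u_x(\sigma(t),t)$ reduces to an estimate of the form $C_1 \delta \mu e^{-\delta t} \geq C_2 \nu M e^{-\delta t}$, which holds for all $\nu$ below some $\nu_\sharp > 0$; comparison then yields $h(t;\nu) \leq \sigma(t) \to \sigma(\infty) < \infty$, so vanishing occurs. To show $S \neq (0,\infty)$, I would use a rescaled sub-solution based on $\varphi$ above on an enlarged interval $(0,L)$ with $L > \frac{\pi}{2}\sqrt{d/a}$, checking that for $\nu$ large the Stefan condition pushes $h(t;\nu)$ past $L$ in finite time; once $h(t_0;\nu) \geq L$, the argument of the previous paragraph (applied at time $t_0$) gives spreading. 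Setting $\nu^* := \sup S$ and invoking monotonicity and continuous dependence on $\nu$ finishes the dichotomy.

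The main obstacle is the super-solution construction for small $\nu$: one must reconcile the parabolic inequality with the moving-boundary Stefan inequality under a single choice of $(M,\delta,\mu,\sigma)$, and the restriction $h_0 < \frac{\pi}{2}\sqrt{d/a}$ is precisely what supplies the positive spectral gap $a - d(\pi/(2\sigma(\infty)))^2 < 0$ needed to sustain the exponential damping $e^{-\delta t}$ while keeping $\sigma(t)$ bounded. Everything else — the free-boundary Hopf lemma, the comparison principle, and the eigenvalue-based sub-solution for the spreading regime — is fairly routine once this construction is in hand.
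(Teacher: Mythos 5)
The paper does not prove Lemma~\ref{t3} at all --- it is cited directly from \cite{DL10} --- so there is no in-paper argument to compare against. Your sketch is a reasonable reconstruction of the Du--Lin proof and is broadly correct in outline: strict monotonicity of $h(t)$ via Hopf, the dichotomy via the Dirichlet--Neumann principal eigenvalue $\lambda_1(L)=d(\pi/(2L))^2$, comparison monotonicity in $\nu$, a decaying cosine super-solution with slowly expanding support for vanishing at small $\nu$, a sub-solution argument for spreading at large $\nu$, and $\nu^*=\sup S$ together with a continuous-dependence argument (the one-species analogue of Lemma~\ref{t8} in this paper) to show $\nu^*\in S$.

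One genuine flaw in the write-up: the stationary sub-solution $\underline u=\epsilon\varphi$ on $(0,h_0)$ does \emph{not} work in the boundary case $h_0=\frac{\pi}{2}\sqrt{d/a}$, because there $\lambda_1(h_0)=a$ and $d\underline u_{xx}+\underline u(a-b\underline u)=-b\epsilon^2\varphi^2<0$, so $\underline u$ fails to be a sub-solution. Either shift to $(0,h(t_0))$ for some $t_0>0$ (using $h(t_0)>h_0$ to get a strict spectral gap and then comparing for $t\ge t_0$), or drop the sub-solution altogether and argue directly from your trichotomy: if $h_\infty<\infty$ then $\lambda_1(h_\infty)\ge a$, while $h_\infty>h_0\ge\frac{\pi}{2}\sqrt{d/a}$ gives $\lambda_1(h_\infty)<a$, a contradiction. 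Also note that the step ``$\lambda_1(h_\infty)\ge a$ whenever $h_\infty<\infty$'' is itself a nontrivial lemma of Du--Lin (not just a consequence of regularity estimates) and should be invoked explicitly rather than asserted in passing. With those two corrections the sketch matches the cited proof.
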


\begin{lem}\lbl{t4}Let $(u,v,g)$  be the solution of \eqref{section1-1}. If $\lim_{t\rightarrow\infty}g(t)=g_\infty<\infty$, then the solution of equation \eqref{section1-1} satisfies
\bess
\lim_{t\rightarrow\infty}\|u(\cdot, t)\|_{C([0,g(t)])}=0,\;\lim_{t\to\infty}v(\cdot,t)=1 \mbox{ in } C_{loc}([0,\infty)).
\eess
\end{lem}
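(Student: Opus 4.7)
The plan is to prove the two conclusions in order: first $\|u(\cdot,t)\|_{C([0,g(t)])} \to 0$, then $v(\cdot,t) \to 1$ in $C_{loc}([0,\infty))$. This is a standard vanishing lemma for two-species Stefan problems in the Du--Lin framework; the nontrivial point here is just to verify that the competition term involving $v$ does not obstruct the argument.

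For the first conclusion, I would argue by an $\omega$-limit compactness argument. Since $g$ is nondecreasing with $g(t)\to g_\infty<\infty$, we have $g'\in L^1(0,\infty)$, equivalently $u_x(g(t),t)\in L^1(0,\infty)$ by the Stefan condition. Straightening the moving boundary by $y=x/g(t)$ and applying parabolic Schauder/$L^p$ theory to the transformed equation (using the uniform bound on $u$ from Proposition~\ref{t1} and the uniform bound $0<g'(t)\leq M$) yields uniform $C^{1+\alpha,(1+\alpha)/2}$ bounds for $u$ on $[0,g(t)]\times[t,t+1]$ for $t\geq 1$; in particular $u_x(g(t),t)$, hence $g'$, is uniformly H\"older continuous on $[1,\infty)$. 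Combined with $g'\in L^1$, this forces $g'(t)\to 0$. Now take any $t_n\to\infty$. Passing to a subsequence, the translates $(u_n,v_n)(x,t):=(u,v)(x,t+t_n)$ (with $u_n$ extended by zero) converge in $C^{1,0}_{loc}([0,g_\infty]\times\mathbb{R})$ to a limit $(w,\tilde v)$ with $w\geq 0$ solving $w_t=w_{xx}+w(1-w-k\tilde v)$ on $[0,g_\infty]\times\mathbb{R}$, satisfying $w_x(0,t)=0$, $w(g_\infty,t)=0$, and $w_x(g_\infty,t)=0$ (the last because $g'(t+t_n)\to 0$). Hopf's boundary lemma applied to the nonnegative function $w$ then forces $w\equiv 0$. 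Since every subsequence converges to the same limit, $\|u(\cdot,t)\|_{C([0,g(t)])}\to 0$.

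For the second conclusion, I would use a squeeze with the logistic equation. The upper bound $\limsup_{t\to\infty}v(x,t)\leq 1$ uniformly on compacts follows by comparing $v$ with the spatially homogeneous solution $\overline V$ of $\overline V'=r\overline V(1-\overline V)$ with $\overline V(0)=\|v_0\|_\infty$, since $v_t\leq dv_{xx}+rv(1-v)$. For the lower bound, fix $\epsilon\in(0,1)$; by Step~1 there exists $T_\epsilon$ with $hu(x,t)\leq\epsilon$ for all $x\geq 0$, $t\geq T_\epsilon$, where $u$ is extended by $0$ past $g(t)$. Then $v_t\geq dv_{xx}+rv(1-\epsilon-v)$ on $[0,\infty)\times[T_\epsilon,\infty)$ with $v_x(0,t)=0$. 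The strong maximum principle applied on $[0,T_\epsilon]$ (plus the conditions on $v_0$ in \eqref{section1-1a}) gives $v(\cdot,T_\epsilon)>0$ with $\liminf_{x\to\infty}v(x,T_\epsilon)>0$, so a standard result for the logistic equation on the half-line with Neumann condition yields that the subsolution obtained by comparison converges to $1-\epsilon$ in $C_{loc}([0,\infty))$. Hence $\liminf_{t\to\infty}v(x,t)\geq 1-\epsilon$ on compacts, and letting $\epsilon\to 0$ finishes the argument.

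The main obstacle is the identification of the boundary condition $w_x(g_\infty,t)=0$ in the limit problem of Step~1; this is what allows Hopf's lemma to kill $w$. Establishing it requires that $g'(t)\to 0$ pointwise, which in turn requires uniform H\"older continuity of $g'$ on $[1,\infty)$ rather than just boundedness. This regularity follows from the standard rectification $y=x/g(t)$ combined with interior parabolic estimates, but must be carried out carefully because the coefficients of the rectified equation depend on $g(t)$ and $g'(t)$; having the uniform bound $0<g'\leq M$ from Proposition~\ref{t1} is exactly what makes this routine.
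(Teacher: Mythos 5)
Your proposal is correct and follows essentially the same route as the paper: rectify the moving boundary by $s=g_0x/g(t)$, use parabolic $L^p$/Schauder estimates to obtain a uniform $C^{1+\alpha/2}$ bound on $g$, deduce $g'(t)\to 0$ from $g'\in L^1$ together with equicontinuity, and then combine a compactness argument for time-translates with the Hopf boundary lemma; the paper frames the key step as a contradiction starting from a sequence where $u\geq\delta/2$ and shows $g'(t_{k_i})$ stays bounded below, whereas you argue directly that the $\omega$-limit $w$ must satisfy $w_x(g_\infty,\cdot)=0$ and hence vanish, but the ingredients and the role of the Hopf lemma are identical. Your treatment of the second conclusion by squeezing $v$ between the logistic ODE supersolution and a $(1-\epsilon)$-logistic subsolution (valid once $hu\leq\epsilon$) is exactly the ``simple comparison argument'' the paper alludes to without spelling out.
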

\begin{proof} The proof is similar  to  the proof of Lemma 4.6 in \cite{DL14}. For readers' convenience, we give the details here. Define
$$s:=\frac{g_0x}{g(t)},\quad \hat{u}(s,t):=u(x,t),\quad \hat{v}(s,t):=v(x,t).$$
By direct calculation,
$$u_t=\hat{u}_t-\frac{g'(t)}{g(t)}s\hat{u}_s,\quad u_x=\frac{g_0}{g(t)}\hat{u}_s,
\quad u_{xx}=\frac{g_0^2}{g^2(t)}\hat{u}_{ss}.$$
Hence $\hat u$ satisfies
$$\left\{
\begin{aligned}
&\hat{u}_{t}-\frac{g_0^2}{g^2(t)}\hat{u}_{ss}-\frac{g'(t)}{g(t)}s\hat{u}_s
=\hat{u}(1-\hat{u}-k\hat{v}),&0< s<g_0,&\quad t>0,\\
&\hat{u}_{s}(0,t)=\hat{u}(g_0, t)=0,&&\quad t>0,\\
&\hat{u}(s, 0)=u_{0}(s),&0\leq s\leq g_0.&
\end{aligned}
\right.\eqno{}$$
 By Proposition 2.1, there exists $M>0$ such that
$$\|1-\hat{u}-k\hat{v}\|_{L^\infty}\leq 1+(1+k)M,\quad \left\|\frac{g'(t)}{g(t)}\right\|_{L^\infty}\leq \frac{M}{g_0}.$$
Since $g_0\leq g(t)<g_\infty<\infty$,  the differential operator is uniformly parabolic. Therefore we can apply standard $L^p$ theory to obtain, for any $p>1$,
$$\|\hat{u}\|_{W_p^{2,1}([0,g_0]\times[0,2])}\leq C_1,$$
where $C_1$ is a constant depending on $p,g_0,M$ and $\|u_{0}\|_{C^{1+\alpha}[0,g_0]}$.
For each $T\geq1$, we can apply the partial interior-boundary estimate over $[0,g_0]\times[T, T+2]$ to
obtain $\|\hat{u}\|_{W_p^{2,1}([0,g_0]\times[T+1/2, T+2])}\leq C_2$ for some constant $C_2$ depending on $\alpha, g_0,M$ and $\|u_{0}\|_{C^{1+\alpha}[0,g_0]}$, but
independent of $T$. Therefore, we can use the Sobolev imbedding theorem to obtain, for any $\alpha\in(0,1),$
\bes\|\hat{u}\|_{C^{1+\alpha, (1+\alpha)/2}([0,g_0]\times[0,\infty))}\leq C_3,\lbl{eshu}\ees
where $C_3$ is a constant depending on $\alpha, g_0,M$ and $\|u_{0}\|_{C^{1+\alpha}[0,g_0]}$.
Similarly we may use interior estimates to the equation of $\hat{v}$ to obtain
\bes\|\hat{v}\|_{C^{1+\alpha,(1+\alpha)/2}([0,g_0]\times[0,+\infty))}\leq C_4,\lbl{eshv}\ees
where $C_4$ is a constant depending on $\alpha,g_0,M$ and $\|v_{0}\|_{C^{1+\alpha}[0,g_\infty+1]}$.

Since
\[
g'(t)=-\gamma u_x(g(t),t)=-\gamma \frac{g_0}{g(t)}\hat u_s(g_0,t),
\]
it follows that there exists a constant $\tilde{C}$ depending on
$\alpha,\gamma,g_0,\|(u_{0},v_0)\|_{C^{1+\alpha}[0,g_0]}$ and $g_\infty$ such that
\bes\|g\|_{C^{1+\alpha/2}([0,+\infty))}\leq \tilde{C}.\lbl{se3l1}\ees

For contradiction, we assume that
$$\limsup_{t\rightarrow+\infty}\|u(\cdot ,t)\|_{C([0,g(t)])}=\delta>0.$$
Then there exists a sequence $(x_k,t_k)$ with $0\leq x_k<g(t_k),\ 1<t_k<\infty$ such that $u(x_k,t_k)\geq\frac{\delta}{2}>0$ for all $k\in\mathbb{N}$,
and $t_k\rightarrow+\infty$ as $k\rightarrow+\infty$. By \eqref{se3l1}, we know $|u_x(g(t),t)|$ is uniformly bounded for $t\in[0,+\infty)$, and there exists $\sigma>0$ such that $x_k\leq g(t_k)-\sigma$ for all $k\geq1$. Therefore there exists a subsequence of $\{x_k\}$ that converges to
some $x_0\in[0,g_\infty-\sigma]$. Without loss of generality, we may assume $x_k\rightarrow x_0$ as $k\rightarrow+\infty$, which leads to
$s_k=\frac{g_0x_k}{g(t_k)}\rightarrow s_0=\frac{g_0x_0}{g_\infty}<g_0.$

Set
$$\hat{u}_k(s,t)=\hat{u}(s,t_k+t),\quad \hat{v}_k(s,t)=\hat{v}(s,t_k+t)$$
for $(s,t)\in [0,g_0]\times[-1, 1].$  It follows from \eqref{eshu} and \eqref{eshv} that
$\{(\hat{u}_k,\hat{v}_k)\}$ has a subsequence $\{(\hat{u}_{k_i},\hat{v}_{k_i})\}$ such that
$$\|(\hat{u}_{k_i},\hat{v}_{k_i})-(\hat{u}^*,\hat{v}^*)\|_{C^{1+\alpha',(1+\alpha')/2}([0,g_0]\times[-1, 1])}\rightarrow0 \mbox{ as }i\rightarrow+\infty,$$
where $\alpha'\in(0, \alpha)$.
Since $\|g\|_{C^{1+\alpha/2}([0,+\infty))}\leq \tilde{C}$, $g'(t)>0$ and $g(t)\leq g_\infty$, we necessarily have $g'(t)\rightarrow0$ as $t\rightarrow\infty.$
Hence, $(\hat{u}^*, \hat{v}^*)$ satisfies
$$\begin{cases}
\hat{u}^*_t-(\frac{g_0}{g_\infty})^2\hat{u}^*_{ss}=\hat{u}^*(1-\hat{u}^*-k\hat{v}^*),& 0\leq s<g_0,\quad t\in(-1, 1),\\
\hat{u}^*_s(0, t)=\hat{u}^*(g_0, t)=0,&\quad\quad \quad \quad \quad \quad  t\in[-1, 1].
\end{cases}$$
Clearly, $\hat{u}_k(s_k,0)=u(x_k, t_k)\geq\frac{\delta}{2}$, Hence, we have $\hat{u}^*(s_0, 0)\geq\frac{\delta}{2}.$
By the maximum principle,  $\hat{u}^*>0$ in $[0,g_0)\times(-1, 1)$. Thus we can apply the Hopf boundary lemma to conclude that $\theta_0:=\hat{u}^*_s(g_0, 0)<0$. It follows that $u_{x}(g(t_{k_i}),t_{k_i})=\partial_s\hat{u}_{k_i}(g_0,0)\frac{g_0}{g(t_{k_i})}\leq\frac{\theta_0g_0}{2g_\infty}<0$ for all large $i$, and hence
$$g'(t_{k_i})=-\gamma u_{x}(g(t_{k_i}), t_{k_i})\geq-\gamma\theta_0/2>0$$
for all large $i$. On the other hand, recalling that $g'(t)\rightarrow0$ as $t\rightarrow+\infty$, we obtain a contradiction.
Hence we must have
\[
\lim_{t\rightarrow+\infty}\|u(\cdot, t)\|_{C([0,g(t)])}=0.
\]
Using this fact and a simple comparison argument we easily deduce $\lim_{t\to\infty} v(\cdot, t)=1$ uniformly in any compact subset of $[0,\infty)$.
\end{proof}

\begin{lem}\lbl{t6}Let $(u,v,g)$  be the solution of \eqref{section1-1} and suppose $g_\infty=\infty$. Then
\bess
(u(x,t),v(x,t))\rightarrow\left(u^\ast,v^\ast\right) \mbox{ as }t\rightarrow\infty
\eess
uniformly for $x$ in any compact subset of $[0,\infty)$.
\end{lem}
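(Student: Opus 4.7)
The plan is to use a squeezing argument via alternating upper and lower bounds that converge to $(u^\ast,v^\ast)$, exploiting Proposition \ref{the comparison principle} together with the long-time dynamics of the single-species free-boundary logistic problem (Lemma \ref{t3} and \cite{DL10}).

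First I would establish global upper bounds. Since $v_t-dv_{xx}\leq rv(1-v)$ on $[0,\infty)\times(0,\infty)$ with $v_x(0,t)=0$, comparison with the logistic ODE yields $\limsup_{t\to\infty}v(x,t)\leq 1$ uniformly in $x$; analogously $\limsup u\leq 1$ on the moving domain. Next, fix small $\epsilon>0$ and choose $T_1$ large enough that $v\leq 1+\epsilon$ and $u\leq 1+\epsilon$ beyond $T_1$, and (thanks to $g_\infty=\infty$) $g(T_1)>\pi/(2\sqrt{1-k(1+\epsilon)})$. Then $u$ is a super-solution of the free-boundary logistic problem with growth rate $1-k(1+\epsilon)$ and initial data $(u(\cdot,T_1),g(T_1))$; by Proposition \ref{the comparison principle}, Lemma \ref{t3}, and the convergence result of \cite{DL10}, this comparison solution spreads and tends to $1-k(1+\epsilon)$ locally uniformly, so $\liminf u\geq 1-k$ locally uniformly after letting $\epsilon\to 0$. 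Similarly, using the uniform positivity of $v_0$ from \eqref{section1-1a}, comparison of $v$ with a half-line logistic of growth rate $1-h(1+\epsilon)$ gives $\liminf v\geq 1-h$ uniformly on $[0,\infty)$.

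I would then iterate. Setting
\[
\bar U_0=\bar V_0=1,\quad \underline U_{n+1}=1-k\bar V_n,\quad \underline V_{n+1}=1-h\bar U_n,\quad \bar U_{n+1}=1-k\underline V_{n+1},\quad \bar V_{n+1}=1-h\underline U_{n+1},
\]
the condition $hk<1$ gives $\bar U_n\searrow u^\ast$, $\bar V_n\searrow v^\ast$, $\underline U_n\nearrow u^\ast$, $\underline V_n\nearrow v^\ast$. By induction on $n$, repeating the arguments of the previous step with the growth rates updated according to the current bounds (and absorbing an arbitrarily small error $\delta$ into the inequalities), one shows that for every $\delta>0$ there is $T_n$ past which
\[
\underline U_n-\delta\leq u(x,t)\leq \bar U_n+\delta,\qquad \underline V_n-\delta\leq v(x,t)\leq \bar V_n+\delta
\]
locally uniformly in $x$. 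Sending $n\to\infty$ yields the desired convergence $(u,v)\to(u^\ast,v^\ast)$ in $C_{loc}([0,\infty))$; upgrading the topology to $C^2_{loc}$ would follow from standard parabolic interior estimates, though this conclusion is stated only in $L^\infty_{loc}$ here.

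The main technical hurdle is the inductive step, because at each stage an improved bound on one species is needed on the full (and in the case of $u$, expanding) spatial domain of the other species' equation, while comparison with logistic-type problems delivers only locally uniform convergence. To handle this, the uniform global upper bounds $u,v\leq 1+\epsilon$ from the first step are always available, and one can compare on a bounded interval $[0,L]$ with Dirichlet datum set to the current global upper bound at $x=L$; letting $L\to\infty$ recovers the interior logistic rate as the effective local bound, which suffices to close the induction. Near the free boundary of $u$, where $u$ drops to $0$, the bounds on $v$ are propagated using the uniform positivity of $v_0$ to ensure that the corresponding half-line logistic comparison solution never degenerates.
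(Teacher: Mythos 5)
Your proof takes essentially the same route as the paper's: iterated two-sided logistic bounds that squeeze $(u,v)$ to $(u^*,v^*)$, initialized by the global upper bound $u,v\lesssim 1$ and the observation that $g_\infty=\infty$ lets one compare with (sub)logistic problems on arbitrarily large intervals. The only differences are cosmetic: for the first lower bound on $u$ you invoke the one-species free-boundary comparison (Lemma \ref{t3} plus the convergence result of \cite{DL10}), whereas the paper compares directly with Dirichlet logistic problems on $[0,l]$ and lets $l\to\infty$ — and you yourself fall back to precisely this bounded-interval device for the inductive step, since the improved bound on $v$ is only locally uniform; and your iteration is of Gauss--Seidel type while the paper's is Jacobi type, both of which contract to $(u^*,v^*)$ under $hk<1$. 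Your closing remark about the topology is correct — Lemma \ref{t6} only asserts locally uniform convergence, and the upgrade to $C^2_{loc}$ claimed in Theorem \ref{f2} is a routine parabolic-regularity add-on.
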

\begin{proof} We define
\[
\overline{u}_1= \overline{v}_1=1,\; \underline u_1=1-h,\; \underline v_1=1-k.
\]
Then define inductively for $n\geq 1$,
\[
\overline{u}_{n+1}=1-k\underline{v}_{n},\ \overline{v}_{n+1}=1-h\underline{u}_{n},\ \underline{u}_{n+1}=1-k\overline{v}_{n},\ \underline{v}_{n+1}=1-h\overline{u}_{n}.
\]
It is easily checked that $\{\overline u_n\}$ and $\{\overline v_n\}$ are decreasing, $\{\underline u_n\}$ and $\{\underline v_n\}$ are increasing, and
\bes\lbl{u_n-v_n}
\lim_{n\to\infty} (\overline u_n, \overline v_n)=\lim_{n\to\infty} (\underline u_n, \underline v_n)=(u^*, v^*).
\ees
We claim that, for every $n\geq 1$,
\bes\begin{aligned}
&\liminf_{t\rightarrow\infty}u(x,t)\geq \underline{u}_{n},\; \liminf_{t\rightarrow\infty}v(x,t)\geq \underline{v}_{n},
\\
&\limsup_{t\rightarrow\infty}u(x,t)\leq\overline{u}_{n},\;\limsup_{t\rightarrow\infty}v(x,t)\leq\overline{v}_{n},
\end{aligned}
\lbl{it}\ees
 uniformly in any compact subset of $[0,\infty)$.
 The conclusion of the Lemma clearly follows directly from \eqref{it} and \eqref{u_n-v_n}. So it suffices to prove \eqref{it}. We do that by an induction argument.

{\bf Step 1}. \eqref{it} holds for $n=1$.

It follows from the comparison principle that $u(x,t)\leq\hat{u}_1(t)$ for $t>0$ and $x\in[0,g(t)]$, where $\hat{u}_1(t)$ satisfies
\bess
\left\{
\begin{aligned}
&\frac{d\hat{u}_{1}}{dt}=\hat{u}_1(1-\hat{u}_1),&t>0,\\
&\hat{u}_1(0)=\|u_0\|_\infty.
\end{aligned}
\right.
\eess
 Clearly, $\lim_{t\rightarrow\infty}\hat{u}_1(t)=1$. Hence,
\bes
\limsup_{t\rightarrow\infty}u(x,t)\leq1=\overline{u}_1\mbox{ uniformly for }x\in[0,\infty),\lbl{it1}
\ees
By the same argument as above, one gets
\bes
\limsup_{t\rightarrow\infty}v(x,t)\leq1=\overline{v}_1\mbox{ uniformly for }x\in[0,\infty).\lbl{it2}
\ees

For any given $l>\max\left\{g_0,\frac{\pi}{2}\frac{1}{\sqrt{1-k}},\frac{\pi}{2}\sqrt{\frac{d}{r(1-h)}}\right\}$.
In view of \eqref{it1}, \eqref{it2} and $g_\infty=\infty$, for any small $\epsilon>0$, there exists $t_1>0$ such that
$g(t)>l$ for $t\geq t_1$ and $u(x,t)<\overline{u}_1+\epsilon$, $v(x,t)<\overline{v}_1+\epsilon$
for $x\in [0, l]$, $t>t_1$.
It follows that
\bess\left\{
\begin{aligned}
&v_t\geq dv_{xx}+rv(1-v-h(1+\epsilon)),&0<x<l,\quad t>t_1,\\
&v_x(0,t)=0,\quad v(l,t)>0,&0\leq x<l,\quad t>t_1,\\
\end{aligned}
\right.\lbl{}\eess
which implies that $v$ is an upper solution to the problem
\bess\left\{
\begin{aligned}
&\hat{v}_t=d\hat{v}_{xx}+r\hat{v}(1-\hat{v}-h(1+\epsilon)),&&0<x<l, & t>t_1,\\
&\hat{v}_x(0,t)=0,\quad \hat{v}(l,t)=0,&&&  t>t_1,\\
&\hat v(x, t_1)=v(x, t_1), && 0\leq x\leq l.&
\end{aligned}
\right.\lbl{}\eess
Hence
\[
v(x,t)\geq \hat v(x,t) \mbox{ for $x\in[0,l]$ and $t>t_1$.}
\]
In view of $l>\frac{\pi}{2}\sqrt{\frac{d}{r(1-h)}}$, it is well known that $\lim_{t\rightarrow\infty}\hat{v}(x,t)=\hat{v}^\ast(x)$, where $\hat{v}^\ast(x)$ is the unique positive solution of
\bess\left\{
\begin{aligned}
&d\hat{v}^\ast_{xx}+r\hat{v}^\ast(1-\hat{v}^\ast-h(1+\epsilon))=0,&0<x<l,\\
&\hat{v}^\ast_x(0)=0,\quad \hat{v}^\ast(l)=0.
\end{aligned}
\right.\lbl{}\eess
On the other hand, $\hat{v}^\ast\rightarrow1-h(1+\epsilon)$ uniformly in any compact subset of $[0,\infty)$ as $l\rightarrow\infty$
(see, for example, Lemma 2.2 in \cite{DM}). Thanks to the arbitrariness of $l$ and $\epsilon$, we thus obtain from $v(x,t)\geq \hat v(x,t)$ in $[0,l]\times (t_1,\infty)$ that
\bess\liminf_{t\rightarrow\infty}v(x,t)\geq1-h=\underline{v}_1\mbox{ uniformly in any compact subset of }[0,\infty).\lbl{it3}\eess

Similarly, we have
\bess\left\{
\begin{aligned}
&u_t-u_{xx}\geq u(1-u-k(1+\epsilon)),&0<x<l,&\quad t>t_1,\\
&u_x(0,t)=0,\quad u(l,t)>0,&&\quad t>t_1,
\end{aligned}
\right.\eess
which leads to
\bess\liminf_{t\rightarrow\infty}u(x,t)\geq1-k=\underline{u}_1\mbox{ uniformly in any compact subset of }[0,\infty).\lbl{it4}\eess
This completes the proof of Step 1.

{\bf Step 2}. If \eqref{it} holds for $n=j\geq 1$, then it holds for $n=j+1$.

Since \eqref{it} holds for $n=j$,  for any small $\epsilon>0$ and large  $l>\max\left\{g_0,\frac{\pi}{2}\frac{1}{\sqrt{1-k}},\frac{\pi}{2}\sqrt{\frac{d}{r(1-h)}}\right\}$, there is $t_2>0$ such that
\[
g(t)>l,\; u(x,t)\in[\underline{u}_j-\epsilon, \overline u_j+\epsilon], \;v(x,t)\in [\underline{v}_j-\epsilon, \overline v_j+\epsilon]
\;\mbox{ for } x\in [0, l],\;  t>t_2.
\]

It follows from the comparison principle that $u(x,t)\leq\overline{u}(x, t)$ for $x\in[0, l]$ and $t>t_2$, where $\overline{u}(x,t)$ satisfies
\bess
\left\{
\begin{aligned}
&\overline{u}_t-\overline u_{xx}=\overline{u}(1-\overline{u}-k(\underline{v}_j-\epsilon)),&&x\in (0,l),&t>t_2,\\
&\overline{u}_x(0,t)=0,\; \overline u(l, t)=\overline u_j+\epsilon, &&& t>t_2,\\
&\overline u(x,t_2)=u(x, t_2), && x\in [0, l].
\end{aligned}
\right.
\eess
It is well known that this problem has a unique positive steady-state solution $\hat u^*(x)$ and
$\lim_{t\rightarrow\infty}\hat{u}(x,t)=\hat u^*(x)$ uniformly for $x\in [0,l]$. Moreover,
\[
\lim_{l\to\infty} \hat u^*(x)=1-k(\underline v_j-\epsilon) \mbox{ locally uniformly in } [0,\infty).
\]
It follows, since $\epsilon>0$ can be arbitrarily small,  that
\[
\limsup_{t\to\infty} u(x,t)\leq 1-k\underline v_j=\overline u_{j+1}  \mbox{ locally uniformly in } [0,\infty).
\]

Analogously, from the comparison principle we obtain $u(x,t)\geq\underline{u}(x, t)$ for $x\in[0, l]$ and $t>t_2$, where $\underline{u}(x,t)$ satisfies
\bess
\left\{
\begin{aligned}
&\underline{u}_t-\underline u_{xx}=\underline{u}(1-\underline{u}-k(\overline{v}_j+\epsilon)),&&x\in (0,l),&t>t_2,\\
&\overline{u}_x(0,t)=0,\; \overline u(l, t)=\underline u_j-\epsilon, &&& t>t_2,\\
&\overline u(x,t_2)=u(x, t_2), && x\in [0, l],
\end{aligned}
\right.
\eess
from which we can deduce
\[
\liminf_{t\to\infty} u(x,t)\geq 1-k\overline v_j=\underline u_{j+1}  \mbox{ locally uniformly in } [0,\infty).
\]

The proof for
\[
\limsup_{t\to\infty} v(x,t)\leq \overline v_{j+1}, \; \liminf_{t\to\infty} v(x,t)\geq \underline v_{j+1}  \mbox{ locally uniformly in } [0,\infty)
\]
is similar, and we omit the details.
\end{proof}

Theorem \ref{f2} now follows directly from Lemmas \ref{t4} and \ref{t6}.

\begin{lem}\lbl{t5} If $g_\infty<+\infty$, then $g_\infty\leq \frac{\pi}{2\sqrt{1-k}}$. Hence $g_0\geq \frac{\pi}{2\sqrt{1-k}}$ implies $g_\infty=+\infty$.
\end{lem}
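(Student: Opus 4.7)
The plan is to argue by contradiction. Suppose $g_\infty<\infty$ yet $g_\infty>\frac{\pi}{2\sqrt{1-k}}$. By Lemma \ref{t4} we already know that $\|u(\cdot,t)\|_{C([0,g(t)])}\to 0$ and $v(\cdot,t)\to 1$ locally uniformly on $[0,\infty)$. The idea is to freeze $v\approx 1$ in the $u$-equation on an interval $(0,l)$ slightly smaller than $(0,g_\infty)$, obtaining a scalar logistic problem for $u$ whose linearization at $0$ has negative principal eigenvalue; this will force $u$ to stay bounded away from zero on $(0,l)$, contradicting $u(\cdot,t)\to 0$.

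Concretely, I would first choose $\epsilon>0$ so small that $g_\infty>\frac{\pi}{2\sqrt{1-k(1+\epsilon)}}$, and fix
\[
l\in\left(\frac{\pi}{2\sqrt{1-k(1+\epsilon)}},\,g_\infty\right).
\]
Using $g(t)\nearrow g_\infty$ and $v(\cdot,t)\to 1$ locally uniformly, pick $t_1>0$ with $g(t)>l$ and $v(x,t)<1+\epsilon$ for all $x\in[0,l]$, $t\geq t_1$. Then on $(0,l)\times(t_1,\infty)$ the inequality
\[
u_t-u_{xx}\geq u\bigl(1-u-k(1+\epsilon)\bigr)
\]
holds, and since $u(l,t)>0$ (by strict positivity of $u$ in $[0,g(t))\supset[0,l]$ from the strong maximum principle), parabolic comparison yields $u(x,t)\geq \hat u(x,t)$ for $(x,t)\in[0,l]\times[t_1,\infty)$, where $\hat u$ solves
\[
\begin{cases}
\hat u_t-\hat u_{xx}=\hat u\bigl(1-\hat u-k(1+\epsilon)\bigr), & 0<x<l,\ t>t_1,\\
\hat u_x(0,t)=0,\ \hat u(l,t)=0, & t>t_1,\\
\hat u(x,t_1)=u(x,t_1), & 0\leq x\leq l.
\end{cases}
\]

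The principal eigenvalue of $-\partial_{xx}-(1-k(1+\epsilon))$ on $(0,l)$ with these Neumann--Dirichlet boundary conditions is $\bigl(\pi/(2l)\bigr)^2-(1-k(1+\epsilon))<0$ by our choice of $l$, so the trivial steady state is linearly unstable and the reduced logistic problem admits a unique positive steady state $U^*(x)$ with $U^*>0$ on $[0,l)$. Standard scalar theory (e.g.\ from \cite{DL10, DM}) yields $\hat u(\cdot,t)\to U^*$ uniformly on $[0,l]$ as $t\to\infty$, for any positive initial datum. Consequently $\liminf_{t\to\infty}u(x,t)\geq U^*(x)>0$ on compact subsets of $[0,l)$, contradicting $\|u(\cdot,t)\|_{C([0,g(t)])}\to 0$. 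This proves $g_\infty\leq\frac{\pi}{2\sqrt{1-k}}$.

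The second statement is then immediate: if $g_0\geq \frac{\pi}{2\sqrt{1-k}}$ and $g_\infty<\infty$, the first part gives $g_0\leq g_\infty\leq\frac{\pi}{2\sqrt{1-k}}\leq g_0$, hence $g_\infty=g_0$, contradicting $g'(t)>0$ from Proposition \ref{t1}. The only place requiring care is the comparison at the lateral boundary $x=l$ for $\hat u$: one must verify $u(l,t)>0\equiv\hat u(l,t)$, which is guaranteed by the strong maximum principle applied to $u$ inside its moving support; no other step should present serious difficulty.
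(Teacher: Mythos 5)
Your argument is correct, but it takes a genuinely different route from the paper. The paper keeps the free boundary and compares $(u,g)$ with a scalar Stefan-type free boundary problem for $w$ on $(0,\underline g(t))$ after bounding $v\le 1+\epsilon$; it then invokes Lemma \ref{t3} to conclude that the lower free boundary $\underline g(t)\to\infty$ because $\underline g(T)>\frac{\pi}{2\sqrt{1-k(1+\epsilon)}}$, giving the contradiction $g_\infty=\infty$ directly, without ever using the limit behavior of $u$. You instead freeze a fixed interval $(0,l)\subset(0,g_\infty)$, compare with a scalar fixed-boundary logistic problem whose trivial state has negative principal Neumann--Dirichlet eigenvalue, and obtain a positive lower bound for $u$ on $[0,l)$ which contradicts $\|u(\cdot,t)\|_{C([0,g(t)])}\to 0$ from Lemma \ref{t4}. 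Both are sound: the paper's approach is shorter and independent of Lemma \ref{t4}, using only the scalar free boundary dichotomy (Lemma \ref{t3}); yours is slightly longer but more elementary in the sense that it uses only standard fixed-domain parabolic theory plus the already-established Lemma \ref{t4}. One small remark: you invoke $v(\cdot,t)\to 1$ locally uniformly via Lemma \ref{t4}, but only the upper bound $v\le 1+\epsilon$ is needed, and that follows from a direct ODE comparison as in the paper; this keeps the logical dependence lighter. Your derivation of the second sentence of the lemma from the first (via $g_0<g_\infty$ since $g'>0$) is also fine and essentially matches the intent of the paper.
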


\begin{proof}\ \ Assume for contradiction that $\frac{\pi}{2\sqrt{1-k}}<g_\infty<\infty$. Then there exists $T_1>0$ such that $g(T_1)>\frac{\pi}{2\sqrt{1-k(1+\epsilon)}}$ for $\epsilon$ sufficiently small. By a simple comparison consideration,
 there exists $T>T_1>0$ such that
 $$v(x,t)\leq1+\epsilon\mbox{ for }x\in[0,\infty), \quad t>T.$$
Hence $(u,g)$ satisfies
\bess
\left\{
\begin{aligned}
&u_t\geq u_{xx}+u(1-u-k(1+\epsilon)),&0<x<g(t),&\quad t>T,\\
&u_x(0,t)=u(g(t),t)=0,&&\quad t>T,\\
&g'(t)=-\gamma u_x(g(t),t),&&\quad t>T,\\
&u(x,T)>0,&0<x<g(T),&
\end{aligned}
\right.
\eess
which implies that $(u,g)$ is an upper solution to the problem
\bess
\left\{
\begin{aligned}
&w_t=w_{xx}+w(1-w-k(1+\epsilon)),&0<x<\underline{g}(t),&\quad t>T,\\
&w_x(0,t)=w(\underline{g}(t),t)=0,&&\quad t>T,\\
&\underline{g}'(t)=-\gamma w_x(\underline{g}(t),t),&&\quad t>T,\\
&w(x,T)=u(x,T),\quad \underline{g}(T)=g(T),&0<x<\underline{g}(T).&
\end{aligned}
\right.
\eess
Thus, $g(t)\geq\underline{g}(t)$ for $t>T$. Since $\underline{g}(T)=g(T)>g(T_1)>\frac{\pi}{2\sqrt{1-k(1+\epsilon)}}$, it follows from Lemma \ref{t3} that $\underline{g}(t)\rightarrow\infty$ and hence $g_\infty=\infty$. This contradiction leads to $g_\infty<\frac{\pi}{2\sqrt{1-k}}$.\end{proof}

\begin{lem}\label{t7} If $g_0<\frac{\pi}{2\sqrt{1-k}}$, then there exists $\underline{\gamma}\geq0$ depending on $u_0$ and $v_0$ such that spreading happens when $\gamma>\underline{\gamma}$.
\end{lem}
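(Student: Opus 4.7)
The plan is to show that, for $\gamma$ large enough, the free boundary $g(t)$ exceeds $L^{\ast}:=\pi/(2\sqrt{1-k})$ at some finite time, at which point Lemma \ref{t5} forces $g_\infty=\infty$ and Theorem \ref{f2} then gives spreading.

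First I would reduce to a Fisher--KPP Stefan problem. Pick $\epsilon>0$ small enough that $L_\epsilon:=\pi/(2\sqrt{1-k(1+\epsilon)})$ is finite; in particular $L_\epsilon>L^{\ast}$. Since $v_t-dv_{xx}\leq rv(1-v)$, maximum-principle comparison with the logistic ODE gives $v(x,t)\leq V(t)$ with $V(t)\to 1$, so there exists $T_0>0$ depending only on $r,\epsilon,\|v_0\|_\infty$ (crucially not on $\gamma$) such that $V(T_0)\leq 1+\epsilon$. For $t\geq T_0$, $u$ is a super-solution of the single-species Stefan problem
\[
W_t=W_{xx}+W(1-k(1+\epsilon)-W),\quad W_x(0,t)=W(H(t),t)=0,\quad H'(t)=-\gamma W_x(H(t),t),
\]
with $(W,H)(T_0)=(u(\cdot,T_0),g(T_0))$. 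The standard single-species comparison then yields $g(t)\geq H(t)$ for $t\geq T_0$, and Lemma \ref{t3} applied to $(W,H)$ (with $d=1$, $a=1-k(1+\epsilon)$, $b=1$) forces $H(t)\to\infty$ as soon as $g(T_0)\geq L_\epsilon$. Hence it suffices to exhibit $\underline\gamma$ with $g(T_0;\gamma)\geq L_\epsilon$ whenever $\gamma>\underline\gamma$.

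For this I would argue by contradiction. If no such $\underline\gamma$ exists, there is a sequence $\gamma_n\to\infty$ with $g(T_0;\gamma_n)<L_\epsilon$. Monotonicity of $g$ in $t$ gives $g(t;\gamma_n)\leq L_\epsilon$ on $[0,T_0]$, and integrating $g'=-\gamma u_x(g,t)$ yields
\[
\int_0^{T_0}\bigl|u_x(g(t;\gamma_n),t;\gamma_n)\bigr|\,dt\;\leq\;\frac{L_\epsilon-g_0}{\gamma_n}\;\longrightarrow\;0.
\]
Uniform $L^\infty$ bounds on $(u,v)$ come from ODE comparison independently of $\gamma$, and the fixed-domain change of variables $y=g_0 x/g(t)$ used in Lemma \ref{t4}, combined with the $L^1$ bound on $g'$ just derived, is enough to extract a subsequence along which $(u^{\gamma_n},v^{\gamma_n},g^{\gamma_n})\to(u^{\ast},v^{\ast},g^{\ast})$ with $u^{\ast}_x(g^{\ast}(t),t)=0$ for almost every $t\in[0,T_0]$. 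Since $u^{\ast}\geq 0$ solves a linear-coefficient parabolic equation on $\{0<x<g^{\ast}(t)\}$ with $u^{\ast}=u^{\ast}_x=0$ along the moving boundary, the parabolic Hopf boundary point lemma combined with the strong maximum principle forces $u^{\ast}\equiv 0$ throughout the cylinder $\{0\leq x\leq g^{\ast}(t),\,0\leq t\leq T_0\}$, contradicting $u^{\ast}(\cdot,0)=u_0>0$ on $[0,g_0)$.

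The main obstacle is the compactness step: because $g'(t;\gamma_n)$ is only bounded in $L^1$ rather than in $L^\infty$, uniform Schauder estimates up to the free boundary are not immediately available. The resolution is to avoid relying on pointwise convergence of $u_x^{\gamma_n}$ at the free boundary and instead pass the integral estimate through the limit directly: Fatou's lemma applied to the nonnegative integrand delivers $u^{\ast}_x(g^{\ast},\cdot)=0$ a.e., which is all that is needed to trigger the Hopf argument above.
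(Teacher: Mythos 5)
Your reduction to a single-species Fisher--KPP Stefan problem after a time $T_0$ at which $v\leq 1+\epsilon$ (with $T_0$ independent of $\gamma$ by ODE comparison) is the same first move the paper makes, with $t_3$ in the role of your $T_0$. After that the two arguments diverge. The paper does \emph{not} try to show $g(T_0;\gamma)\geq L_\epsilon$: it simply observes that $(u,g)$ is an upper solution of the single-species free boundary problem \eqref{s3-50} on $t\geq t_3$ and then invokes the second half of Lemma \ref{t3}, namely the existence of the threshold $\nu^\ast$ for that auxiliary problem. This yields a $\underline\gamma$ above which the auxiliary problem spreads, hence $g_\infty=\infty$ and, by Lemma \ref{t6}, spreading. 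There is no passage to a $\gamma\to\infty$ limit and no compactness step at all. (The paper's phrase that $g(t_3)$ and $u(\cdot,t_3)$ are ``uniquely determined by $u_0,v_0$'' is itself a little loose, since these data also depend on $\gamma$; one resolves it via monotonicity of $(u,g)$ in $\gamma$ and of the threshold $\nu^\ast$ in the data, but the essential point is that Lemma \ref{t3} is applied once at a fixed time.)

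Your contradiction--compactness route is a genuinely different, more ambitious strategy, and the obstacle you flag is where the argument actually breaks. The a priori bound in Proposition \ref{t1} gives $0<g'\leq M$ with $M$ depending on $\gamma$, so there is no uniform Lipschitz bound on the free boundary and no uniform $C^{1}$ bound on $u$ up to $x=g(t)$ along $\gamma_n\to\infty$. Helly gives only pointwise convergence of $g^{\gamma_n}$ to a bounded nondecreasing $g^\ast$, and interior parabolic estimates give local convergence of $u^{\gamma_n}$ away from the boundary; neither controls $u_x^{\gamma_n}$ at the moving boundary. Your Fatou fix does not close this: Fatou yields $\liminf_n\bigl|u_x^{\gamma_n}(g^{\gamma_n}(t),t)\bigr|=0$ a.e., but identifying this $\liminf$ with $\bigl|u^\ast_x(g^\ast(t),t)\bigr|$ requires exactly the boundary convergence that is missing. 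Moreover, even granting a limit $(u^\ast,g^\ast)$, the parabolic Hopf lemma at $(g^\ast(t_0),t_0)$ needs an interior-ball-type condition on the lateral boundary, and a BV nondecreasing $g^\ast$ does not supply it. So as written the argument has a real gap. The cleanest repair is simply to drop the attempt to force $g(T_0)\geq L_\epsilon$ and apply Lemma \ref{t3}'s threshold statement to the single-species problem at time $T_0$, as the paper does.
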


\begin{proof}Since $\limsup_{t\rightarrow\infty}v(x,t)\leq1$ uniformly for $x\in[0,\infty)$, there exists $t_3>0$, which is independent of $\gamma$, such that $v(x,t)\leq 1+\epsilon$ for $x\in[0,\infty),\; t\geq t_3$. Thus $(u,g)$ satisfies
\bess\left\{
\begin{aligned}
&u_t-u_{xx}\geq u(1-u-k(1+\epsilon)),&0<x<g(t),&\quad t>t_3,\\
&u_x(0,t)=0,\quad u(g(t),t)=0,&&\quad t>t_3,\\
&g'(t)=-\gamma u_x(g(t),t),&&\quad t>t_3,\\
&u(x,t_3)>0,&0\leq x<g(t_3).&\quad
\end{aligned}
\right.\lbl{}
\eess
Hence
 $(u,g)$ is an upper solution to the problem
\bes\left\{
\begin{aligned}
&\hat{u}_t-\hat{u}_{xx}=\hat{u}(1-\hat{u}-k(1+\epsilon)),&0<x<\hat{g}(t),&\quad t>t_3,\\
&\hat{u}_x(0,t)=0,\quad \hat{u}(\hat{g}(t),t)=0,&&\quad t>t_3\\
&\hat{g}'(t)=-\gamma \hat{u}_x(\hat{g}(t),t),&&\quad t>t_3,\\
&\hat{u}(x,t_3)=u(x,t_3),\quad \hat g(t_3)=g(t_3),&0\leq x<g(t_3).&
\end{aligned}
\right.\lbl{s3-50}
\ees
The comparison principle infers $g(t)\geq \hat g(t)$ for $t>t_3$.
Applying Lemma \ref{t3} to \eqref{s3-50} we see that there exists $\underline{\gamma}\geq0$ depending on $g(t_3)$ and $u(x, t_3)$ (which are uniquely determined by $u_0$ and $v_0$) such that spreading happens for \eqref{s3-50} when $\gamma>\underline{\gamma}$.
Thus $\lim_{t\to\infty} g(t)=\infty$ when $\gamma>\underline \gamma$, and by Lemma \ref{t6}, spreading happens to \eqref{section1-1} for such $\gamma$.
\end{proof}

\begin{lem}\label{t8} There exists $\gamma^\ast\geq0$, depending on $u_0$ and $v_0$, such that $g_\infty<\infty$ if $\gamma\leq\gamma^\ast$, and $g_\infty=\infty$ if $\gamma>\gamma^\ast$.
\end{lem}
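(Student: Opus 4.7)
\smallskip

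\noindent\textbf{Proof plan for Lemma \ref{t8}.} Write $g(t;\gamma)$ and $g_\infty(\gamma)$ to emphasize the dependence on the Stefan coefficient, and set
\[
\Sigma:=\{\gamma>0:g_\infty(\gamma)=\infty\},\qquad \gamma^{\ast}:=\inf\Sigma
\]
(with $\inf\emptyset=+\infty$). The plan is to show that $\Sigma$ is a non-empty upward-closed subset of $(0,\infty)$, so that $\gamma^{\ast}\in[0,\infty)$, and then to identify $\Sigma$ with $(\gamma^{\ast},\infty)$ using a continuous-dependence argument at $\gamma=\gamma^{\ast}$.

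\smallskip

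\noindent\textbf{Step 1 (monotonicity in $\gamma$).} For $0<\gamma_{1}<\gamma_{2}$ let $(u_{i},v_{i},g_{i})$ be the solutions of \eqref{section1-1} with coefficient $\gamma_{i}$ (same initial data). I would apply Proposition \ref{the comparison principle} to the $\gamma_{2}$-problem with $(\underline u,\overline v,\underline g):=(u_{1},v_{1},g_{1})$ and $(\overline u,\underline v,\overline g):=(u_{2},v_{2},g_{2})$. All interior PDE inequalities become equalities, and the only nontrivial check is the free-boundary inequality: since $(u_{1})_x(g_{1}(t),t)<0$ by Hopf, $-\gamma_{2}(u_{1})_x(g_{1}(t),t)>-\gamma_{1}(u_{1})_x(g_{1}(t),t)=g_{1}'(t)$, so $\underline g'\le -\gamma_{2}\underline u_x(\underline g,t)$. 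The proposition yields $g_{1}(t)\le g_{2}(t)$, $u_{1}\le u_{2}$ where both are defined, and $v_{2}\le v_{1}$; in particular $g_\infty(\gamma_{1})\le g_\infty(\gamma_{2})$, so $\Sigma$ is upward closed.

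\smallskip

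\noindent\textbf{Step 2 (non-emptyness and boundedness of $\gamma^{\ast}$).} Lemma \ref{t7} provides $\underline\gamma\ge 0$ such that $g_\infty(\gamma)=\infty$ for all $\gamma>\underline\gamma$, so $(\underline\gamma,\infty)\subset\Sigma$ and hence $\gamma^{\ast}\in[0,\underline\gamma]\subset[0,\infty)$. For $\gamma>\gamma^{\ast}$, pick $\gamma_{0}\in\Sigma\cap(\gamma^{\ast},\gamma)$ and use Step 1 to get $\gamma\in\Sigma$; for $0<\gamma<\gamma^{\ast}$ the definition of infimum gives $\gamma\notin\Sigma$, i.e. $g_\infty(\gamma)<\infty$.

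\smallskip

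\noindent\textbf{Step 3 (the critical case $\gamma=\gamma^{\ast}$).} It remains to show $g_\infty(\gamma^{\ast})<\infty$ whenever $\gamma^{\ast}>0$. Suppose for contradiction that $g_\infty(\gamma^{\ast})=\infty$. Then there exists $T>0$ with $g(T;\gamma^{\ast})>\tfrac{\pi}{2\sqrt{1-k}}$. By standard continuous-dependence estimates for the free boundary problem (straightening the boundary via $s=g_{0}x/g(t)$ as in the proof of Lemma \ref{t4}, then applying uniform $L^{p}$ and Schauder bounds on $[0,T]$), the map $\gamma\mapsto g(T;\gamma)$ is continuous, so $g(T;\gamma)>\tfrac{\pi}{2\sqrt{1-k}}$ for all $\gamma$ sufficiently close to $\gamma^{\ast}$. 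By monotonicity in $t$, $g_\infty(\gamma)>\tfrac{\pi}{2\sqrt{1-k}}$, and the contrapositive of Lemma \ref{t5} gives $g_\infty(\gamma)=\infty$, i.e. $\gamma\in\Sigma$. This forces $\gamma<\gamma^{\ast}$ to lie in $\Sigma$, contradicting $\gamma^{\ast}=\inf\Sigma$. Hence $g_\infty(\gamma^{\ast})<\infty$, which combined with Steps 1--2 completes the proof.

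\smallskip

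The main obstacle is the continuous-dependence claim in Step 3: although intuitively clear, it requires controlling $g(\cdot;\gamma)$ uniformly as $\gamma$ varies in a small interval, since the free boundary itself depends on the parameter. The fixed-domain transformation already used in Lemma \ref{t4}, together with the uniform $L^{\infty}$ bound of Proposition \ref{t1} and parabolic Schauder theory, gives Hölder control of $(\hat u,\hat v,g)$ on $[0,T]$ that is continuous in $\gamma$; passing to the limit in the transformed system then yields continuity of $\gamma\mapsto g(T;\gamma)$. Everything else is a routine application of the comparison principle and the already-established Lemmas \ref{t5} and \ref{t7}.
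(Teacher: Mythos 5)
Your proof takes essentially the same route as the paper: define $\gamma^*$ as the infimum of the set of $\gamma$ for which spreading occurs, use the comparison principle (free boundary moves monotonically in $\gamma$, with the nontrivial check being the Stefan condition via the Hopf lemma) to show this set is an interval, use Lemma \ref{t7} to see $\gamma^*<\infty$, and use continuous dependence of $g(T;\gamma)$ on $\gamma$ together with Lemma \ref{t5} to rule out spreading at $\gamma=\gamma^*$. The paper defines the threshold set as $\{\gamma>0:g_\infty>\frac{\pi}{2\sqrt{1-k}}\}$ rather than $\{\gamma>0:g_\infty=\infty\}$, but by Lemma \ref{t5} these coincide, and your extra remarks about how to justify continuous dependence fill in a detail the paper leaves implicit.
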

\begin{proof}
Set $\Lambda=\{\gamma>0:g_\infty>\frac{\pi}{2\sqrt{1-k}}\}.$
It follows from Lemmas \ref{t5} and \ref{t7} that $\gamma^\ast:=\inf\Lambda\in[0,\infty)$.
The comparison principle infers that
 $g_\infty=\infty$ if $\gamma>\gamma^\ast$ and $g_\infty<\infty$ if $0<\gamma<\gamma^\ast$.

It remains to show that $\gamma^\ast\not\in\Lambda$. Otherwise, $\gamma^*>0$ and $g_{\infty}>\frac{\pi}{2\sqrt{1-k}}$ for $\gamma=\gamma^\ast$. Hence we can find $T > 0$ such that $g(T)>\frac{\pi}{2\sqrt{1-k}}$. To emphasize the dependence of the solution of \eqref{section1-1} on $\gamma$, we denote it by $(u_\gamma,v_\gamma,g_\gamma)$ instead of $(u,v,g)$, and so $g_{\gamma^\ast}(T)>\frac{\pi}{2\sqrt{1-k}}$. By the
continuous dependence of $(u_\gamma,v_\gamma,g_\gamma)$ on $\gamma$, we can find $\epsilon>0$ small so that $g_\gamma(T)>\frac{\pi}{2\sqrt{1-k}}$ for $\gamma\in[\gamma^\ast-\epsilon,\gamma^\ast+\epsilon]$. It then follows
from Lemma \ref{t5} that for all such $\gamma$, $\lim_{t\rightarrow\infty}g_\gamma(t)=\infty$. This implies that $[\gamma^\ast-\epsilon,\gamma^\ast+\epsilon]\subset\Lambda$, and $\inf\Lambda\leq\gamma^\ast-\epsilon$, a contradiction to the definition of $\gamma^\ast$.
\end{proof}

\begin{lem}\label{t9}If $g_0<\frac{\pi}{2}$, then there exists $\overline{\gamma}>0$ depending on $u_0$ such that $g_\infty<+\infty$ if $\gamma\leq\overline{\gamma}$.
\end{lem}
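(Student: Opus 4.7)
The plan is to dominate the $u$-component of \eqref{section1-1} by the solution of a scalar logistic free boundary problem, and then invoke Lemma \ref{t3}. Since $v(x,t)\ge 0$ by Proposition \ref{t1}, the pair $(u,g)$ satisfies
\begin{equation*}
u_t - u_{xx} = u(1-u-kv) \le u(1-u),\quad 0<x<g(t),\; t>0,
\end{equation*}
together with $u_x(0,t)=0$, $u(g(t),t)=0$, $g'(t)=-\gamma u_x(g(t),t)$, and initial data $(u_0,g_0)$. Thus $(u,g)$ is a (weak) lower solution of the scalar free boundary problem
\begin{equation*}
\left\{\begin{aligned}
&\tilde u_t=\tilde u_{xx}+\tilde u(1-\tilde u),&&0<x<\tilde g(t),\; t>0,\\
&\tilde u_x(0,t)=0,\quad \tilde u(\tilde g(t),t)=0,&&t>0,\\
&\tilde g'(t)=-\gamma\tilde u_x(\tilde g(t),t),&&t>0,\\
&\tilde g(0)=g_0,\quad \tilde u(x,0)=u_0(x),&&0\le x\le g_0.
\end{aligned}\right.
\end{equation*}

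Next I would apply the standard comparison principle for scalar free boundary problems (essentially the one-species version of Proposition \ref{the comparison principle}, see Lemma 3.5 in \cite{DL10}) to conclude
\begin{equation*}
g(t)\le \tilde g(t),\quad u(x,t)\le \tilde u(x,t)\ \text{for } 0\le x\le g(t),\; t\ge 0.
\end{equation*}
In particular, $g_\infty\le \tilde g_\infty$.

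Finally I would invoke Lemma \ref{t3} with the parameter choice $a=b=d=1$, so that the critical length is $(\pi/2)\sqrt{d/a}=\pi/2$. Since by hypothesis $g_0<\pi/2$, Lemma \ref{t3} supplies a constant $\overline{\gamma}:=\nu^*>0$, depending on $u_0$ (through the initial data of the $\tilde u$-problem), such that $\tilde g_\infty<\infty$ whenever $\gamma\le\overline{\gamma}$. Combining with the inequality $g_\infty\le\tilde g_\infty$ above then yields $g_\infty<\infty$ for all $\gamma\in(0,\overline{\gamma}]$, which is the desired conclusion.

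The argument is essentially routine once the right comparison is set up; the only point requiring a bit of care is the comparison step between the full two-species free boundary problem \eqref{section1-1} and the scalar problem for $\tilde u$, since Proposition \ref{the comparison principle} is phrased for two-species systems. This can be handled either by using the scalar free boundary comparison principle (available in \cite{DL10}) directly, or by applying Proposition \ref{the comparison principle} with $\underline v\equiv 0$ paired with the scalar $\tilde u$ as an upper solution in a degenerate form, both of which are standard adaptations and do not require new ideas.
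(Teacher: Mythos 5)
Your proposal matches the paper's proof essentially step for step: drop the $-kv$ term using $v\ge0$, compare $(u,g)$ from below by the scalar logistic free boundary problem with the same initial data, and apply Lemma \ref{t3} with $a=b=d=1$ to produce $\overline\gamma$. No meaningful differences.
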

\begin{proof}Clearly, $(u,g)$ satisfies
\bess\left\{
\begin{aligned}
&u_t-u_{xx}\leq u(1-u),&0<x<g(t),&\quad t>0,\\
&u_x(0,t)=0,\quad u(g(t),t)=0,&&\quad t>0,\\
&g'(t)=-\gamma u_x(g(t),t),&&\quad t>0,\\
&u(x,0)=u_0(x),&0\leq x\leq g_0.&
\end{aligned}
\right.\lbl{(se111)}
\eess
That is, $(u,g)$ is a lower solution to the problem
\bess\left\{\begin{aligned}
&\bar{u}_t-\bar{u}_{xx}= \bar{u}(1-\bar{u}),&0<x<\bar{g}(t),&\quad t>0,\\
&\bar{u}_x(0,t)=0,\quad\bar{u}(\bar g(t),t)=0,&&\quad t>0,\\
&\bar{g}'(t)=-\gamma \bar{u}_x(\bar{g}(t),t),&&\quad t>0,\\
&\bar{u}(x,0)=u_0(x),&0\leq x\leq g_0.&
\end{aligned}
\right.\lbl{(se111)}
\eess
It follows from the comparison principle that $g(t)\leq\bar{g}(t)$. Since $g_0<\pi/2$, by Lemma \ref{t3}  there exists $\bar{\gamma}>0$ depending on $u_0$ such that $\bar{g}(\infty)<\infty$ if $\gamma\leq\bar{\gamma}$. Hence, $g_\infty<+\infty$ if $\gamma\leq\overline{\gamma}$.
\end{proof}

Theorem \ref{f3} now follows directly from Lemmas \ref{t5}-\ref{t9}.

{\setlength{\baselineskip}{16pt}{\setlength\arraycolsep{2pt} \indent
\section{Asymptotic spreading speed}
\setcounter{equation}{0}

We prove Theorem 1.3 in this section.

\begin{lem}\lbl{T15}Suppose spreading occurs, i.e., alternative {\rm (i)} happens in Theorem 1.1. Then
\bess\liminf_{t\rightarrow\infty}\frac{g(t)}{t}\geq c_\gamma.\lbl{section4e}\eess
\end{lem}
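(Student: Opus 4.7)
The plan is to prove $\liminf_{t\to\infty}g(t)/t\ge c$ for every $c\in(0,c_\gamma)$ and then let $c\nearrow c_\gamma$. The tool is the semi-wave $(\phi_c,\psi_c)$ from \eqref{section12}: by Lemmas \ref{T9} and \ref{tlem13}, $\gamma\phi_c'(0)>c$ whenever $c<c_\gamma$, which supplies the strict margin at the free boundary that is the hallmark of a lower solution. I would combine this profile, suitably translated, with the comparison machinery of Proposition \ref{the comparison principle} and Remark \ref{remarkp}.

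First I would exploit the spreading hypothesis (alternative (i) of Theorem \ref{f2}) to get $(u(\cdot,t),v(\cdot,t))\to(u^*,v^*)$ in $C^2_{loc}([0,\infty))$, together with the global bound $\limsup_{t\to\infty}\sup_{x\ge 0}v(x,t)\le 1$ coming from $v_t-dv_{xx}\le rv(1-v)$ and scalar Fisher--KPP comparison. Fix a small $\delta>0$ and pick $l_0>0$ so that $\phi_c(l_0)<u^*-\delta$ and $\psi_c(l_0)>v^*+\delta$; both are possible since $\phi_c(0)=0$ and $\psi_c(s)>v^*$ for every finite $s$. Then select $T_0\gg 1$ with $g(T_0)>l_0$, $u(\cdot,T_0)\ge u^*-\delta$ and $v(\cdot,T_0)\le v^*+\delta$ on $[0,l_0]$, and $v(\cdot,T_0)\le 1+\delta$ on $[0,\infty)$, and so that the traces satisfy $u(0,T_0+t)\ge u^*-\delta$, $v(0,T_0+t)\le v^*+\delta$ for all $t\ge 0$.

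Define the candidate lower solution
\[
\underline u(x,t):=\phi_c(ct+l_0-x),\quad \overline v(x,t):=\psi_c(ct+l_0-x),\quad \underline g(t):=ct+l_0.
\]
By \eqref{section12}, the PDEs in \eqref{section1-1} are satisfied with equality on $\{0<x<\underline g(t)\}$ and $\{0<x<\infty\}$; moreover $\underline u(\underline g(t),t)=\phi_c(0)=0$ and the Stefan condition holds strictly in the lower-solution direction, $\underline g'(t)=c<\gamma\phi_c'(0)=-\gamma\,\underline u_x(\underline g(t),t)$. The monotonicity of $\phi_c$ and $\psi_c$ together with the choice of $l_0$ ensures that the initial data on $[0,l_0]$ are correctly ordered against $(u(\cdot,T_0),v(\cdot,T_0))$. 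Since $\underline u_x(0,t)=-\phi_c'(ct+l_0)\le 0$ and $\overline v_x(0,t)=-\psi_c'(ct+l_0)\ge 0$ have the wrong sign for Neumann lower/upper solutions, I would invoke Remark \ref{remarkp} and replace the no-flux condition by matching Dirichlet traces; this is legal because $\underline u(0,t)=\phi_c(ct+l_0)<u^*-\delta\le u(0,T_0+t)$ and $\overline v(0,t)=\psi_c(ct+l_0)>v^*+\delta\ge v(0,T_0+t)$ for every $t\ge 0$.

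The step I expect to be the main obstacle is the supersolution inequality $\overline v(x,0)\ge v(x,T_0)$ for $x\ge l_0$: since $\psi_c<1$ everywhere while $v(x,T_0)$ may slightly exceed $1$, the bare semi-wave does not dominate. To remedy this I would cap $\overline v$ with a spatially constant ODE barrier $q(t)$ solving $q'=rq(1-q)$, $q(0)=\|v(\cdot,T_0)\|_\infty$; the inequality $v_t-dv_{xx}\le rv(1-v)$ gives $v(\cdot,T_0+t)\le q(t)$, and $q(t)\to 1$ exponentially. Setting $\overline v(x,t):=\max\{\psi_c(ct+l_0-x),\,q(t)\}$ (after a routine smoothing near the corner) produces a valid weak supersolution in the sense of Definition \ref{s2}, with derivative jumps of the correct sign. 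Once $(\underline u,\overline v,\underline g)$ is verified, Proposition \ref{the comparison principle} (combined with Remark \ref{remarkp}) delivers $g(T_0+t)\ge\underline g(t)=ct+l_0$ for all $t\ge 0$, hence $\liminf_{t\to\infty}g(t)/t\ge c$. Letting $c\nearrow c_\gamma$ then completes the proof.
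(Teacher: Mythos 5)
You correctly identify the true obstacle: since $\psi_c<1$ everywhere but $v(\cdot,T_0)$ can exceed $1$, the unmodified semi-wave fails to dominate $v$ at the initial time. Unfortunately, the proposed remedy does not repair this. If $q(0)=\|v(\cdot,T_0)\|_\infty>1$ and $q'=rq(1-q)$, then $q(t)>1$ for all $t$, while $\psi_c(s)<1$ for every finite $s$; hence $\max\{\psi_c(ct+l_0-x),q(t)\}=q(t)$ \emph{everywhere} --- there is no corner to smooth, and your $\overline{v}$ collapses to the spatially constant barrier $q(t)$. This then destroys the lower-solution inequality for $\underline{u}$: you need
\[
\underline{u}_t-\underline{u}_{xx}=\phi_c(1-\phi_c-k\psi_c)\;\le\;\underline{u}\bigl(1-\underline{u}-k\overline{v}\bigr)=\phi_c(1-\phi_c-kq(t)),
\]
i.e.\ $k\phi_c\,q(t)\le k\phi_c\,\psi_c$, which is false on $\{\phi_c>0\}$ because $q(t)>1>\psi_c$. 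In general, raising $\overline{v}$ is never free: $\overline{v}$ enters the $\underline{u}$-inequality with a minus sign, so any enlargement of $\overline{v}$ tightens the lower-solution constraint on $\underline{u}$, and here it breaks it outright.

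The paper resolves this with a different device: instead of the semi-wave of \eqref{section12}, it uses the solution $(\phi_\delta,\psi_\delta)$ of the $\delta$-perturbed system \eqref{section401}, in which $\psi_\delta(-\infty)=1+2\delta$. This perturbed profile genuinely exceeds $1+\delta$ for $s$ sufficiently negative, so it dominates $v(\cdot,T_0)\le 1+\delta$ for large $x$, and --- crucially --- the built-in $\pm 2\delta$ slack in the reaction terms guarantees both differential inequalities
\[
\underline{u}_t-\underline{u}_{xx}=\phi_\delta(1-2\delta-\phi_\delta-k\psi_\delta)\le \underline{u}(1-\underline{u}-k\overline{v}),
\qquad
\overline{v}_t-d\overline{v}_{xx}=r\psi_\delta(1+2\delta-\psi_\delta-h\phi_\delta)\ge r\overline{v}(1-\overline{v}-h\underline{u}),
\]
hold simultaneously. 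A scaling argument reduces the perturbed system back to \eqref{section12}, yielding a unique speed $c_\gamma^\delta$ with $\gamma\phi_\delta'(0)=c_\gamma^\delta$ and $c_\gamma^\delta\to c_\gamma$ as $\delta\to 0$, after which the comparison principle (with Remark \ref{remarkp} to handle Dirichlet traces at $x=0$) gives $g(t)\ge c_\gamma^\delta(t-T_0)+1$, and sending $\delta\to 0$ finishes. If you wish to stay closer to your plan of working with $c<c_\gamma$ directly, you would still need to replace $\psi_c$ by a profile that exceeds $1$ near $-\infty$; the max-with-$q$ construction cannot provide this while preserving the $\underline{u}$-inequality.
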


\begin{proof}\, Let $V(t)$ be the unique solution of $$V'=rV(1-V),\quad V(0)=\|v_0\|_\infty.$$
Then a simple comparison consideration yields $v(x,t)\leq V(t)$ for $x\geq0$ and $t>0$. Since $\lim_{t\rightarrow\infty}V(t)=1$, we can find $T_0'>0$ such that
\bes\lbl{infe1}
v(x,t)<1+\delta\quad\mbox{for } x\geq0,\quad t\geq T_0'.
\ees

We now consider the auxiliary problem
\bes\left\{
\begin{aligned}
&\phi_\delta''-c\phi_\delta'+\phi_\delta(1-2\delta-\phi_\delta-k\psi_\delta)=0,\quad \phi_\delta'>0\quad\mbox{for } s>0,\\
&d\psi_\delta''-c\psi_\delta'+r\psi_\delta(1+2\delta-\psi_\delta-h\phi_\delta)=0,\quad \psi_\delta'<0\quad\mbox{for } -\infty<s<\infty,\\
&\phi_\delta(s)=0\quad \mbox{for } s\leq 0,\; \gamma \phi_\delta'(0)=c,\quad \psi_\delta(-\infty)=1+2\delta,\\
&(\phi_\delta(\infty),\psi_\delta(\infty))=(u^*_\delta, v^*_\delta):=
\left(\frac{1-2\delta-k(1+2\delta)}{1-hk},\frac{1+2\delta-h(1-2\delta)}{1-hk}\right),
\end{aligned}
\right.\lbl{section401}\ees
where $\delta>0$ is small. We claim that
 there exists a unique $c^\delta_{\gamma}>0$ such that \eqref{section401} has a unique solution $(\phi_\delta,\psi_\delta)$ when $c=c_\gamma^\delta$; moreover,
$$
 \lim_{\delta\rightarrow0}c^\delta_{\gamma}=c_\gamma.
$$

Indeed, if we define
\[
\sigma_{\pm\delta}=\sqrt{1\pm 2\delta},\;  \phi_\delta(s)=\sigma_{-\delta}^2\tilde \phi_\delta(\sigma_{-\delta}s),
\;
\psi_\delta(s)=\sigma_{+\delta}^2\tilde \psi_\delta(\sigma_{-\delta}s),
\]
and
\[
\tilde c=c/\sigma_{-\delta},\; \tilde\gamma=\gamma \sigma_{-\delta}^4,
\;
\tilde r_\delta=r \left(\frac{\sigma_{+\delta}}{\sigma_{-\delta}}\right)^2,\; \tilde h_\delta=h \left(\frac{\sigma_{-\delta}}{\sigma_{+\delta}}\right)^2,\; \tilde k_\delta=k \left(\frac{\sigma_{+\delta}}{\sigma_{-\delta}}\right)^2,
\]
then a direct calculation shows that $(c, \phi_\delta, \psi_\delta)$ solves \eqref{section401} if and only if $(\tilde c, \tilde \phi_\delta,\tilde \psi_\delta)$ satisfies \eqref{section12} and $\tilde\gamma \tilde\phi'(0)=\tilde c$ when
$(r,h,k, u^*, v^*)$  in \eqref{section12} is replaced by $(\tilde r_\delta,\tilde h_\delta,\tilde k_\delta, u^*_\delta, v^*_\delta)$. So the claim follows directly from
Theorem \ref{F4} and Lemma \ref{tlem13}, and the continuous dependence of the unique solution on the parameters.

Since $\psi_\delta(-\infty)=1+2\delta$, $\psi_\delta(+\infty)=v^*_\delta>v^*$ and $\psi_\delta'<0$, there exists $L>1$ such that
\bes\psi_\delta(x)>1+\delta \mbox{ for } x\leq - L,\;
\psi_\delta(x)>v^*_\delta>v^* \mbox{ for }x\geq -L.
\lbl{infe5}\ees
Similarly it follows from $\phi_\delta(+\infty)=u^*_\delta<u^*$ and $\phi_\delta'>0$ that
\[
\phi_\delta(x)<u^*_\delta<u^* \mbox{ for } x\geq 0.
\]

By the spreading assumption, we have
\bess
\lim_{t\to\infty} g(t)=\infty,\; \lim_{t\to\infty}(u(x,t),v(x,t))= (u^*, v^*) \mbox{ in any compact subset of } [0,\infty).
\eess
Hence, in view of $u^*>u^*_\delta$ and $v^*<v^*_\delta$, there exists $T_0>T'_0$ large such that for $x\in [0, L+1]$ and $t\geq T_0$,
\[
g(t)>1,\; u(x,t)>u^*_\delta,\; v(x,t)<v^*_\delta.
\]

We now define
\bess\begin{aligned}
&\underline{g}(t)=c^\delta_{\gamma} (t-T_0)+1,\ \
&\underline{u}(x,t)=\phi_\delta(\underline{g}(t)-x),\ \
&\overline{v}(x,t)=\psi_\delta(\underline{g}(t)-x).
\end{aligned}\eess
Then $\underline g(T_0)=1<g(T_0)$,
\[
 \underline u(x, T_0)=\phi_\delta(1-x)<u^*_\delta<u(x, T_0) \mbox{ for } x\in [0,1]=[0,\underline g(T_0)],
\]
and in view of \eqref{infe5} and \eqref{infe1}, we also have
\[\begin{aligned}
&\overline v(x, T_0)=\psi_\delta(1-x)>v^*_\delta>v(x, T_0) \mbox{ for } x\in [0, L+1],
\\
&\overline v(x, T_0)=\psi_\delta(1-x)>1+\delta>v(x, T_0) \mbox{ for } x>L+1.
\end{aligned}
\]
Let us also note that
\[
\underline u(0, t)=\phi_\delta(\underline g(t))<u^*_\delta<u(0, t) \mbox{ for } t\geq T_0,
\]
\[
\overline v(0, t)=\psi_\delta(\underline g(t))>v^*_\delta>u(0, t) \mbox{ for } t\geq T_0,
\]
and moreover,
\bess
\underline{g}'(t)=c^\delta_{\gamma}=\gamma\phi_\delta'(0)=-\gamma\underline{u}_x(\underline{g}(t),t) \mbox{ for } t\geq T_0.
\eess
Furthermore,
\bess\begin{aligned}
\underline{u}_t-\underline{u}_{xx}&=c^\delta_{\gamma}\phi_\delta'-\phi_\delta''\\
&=\phi_\delta(1-2\delta-\phi_\delta-k\psi_\delta)\\
&\leq\phi_\delta(1-\phi_\delta-k\psi_\delta)\\
&=\underline{u}(1-\underline{u}-k\overline{v}),
\end{aligned}\eess
\bess\begin{aligned}
\overline{v}_t-d\overline{v}_{xx}&=c^\delta_{\gamma}\psi_\delta'-d\psi_\delta''\\
&=r\psi_\delta(1+2\delta-\psi_\delta-h\phi_\delta)\\
&\geq r\psi_\delta(1-\psi_\delta-h\phi_\delta)\\
&= r\overline{v}(1-\overline{v}-h\underline{u}).
\end{aligned}\eess
Hence, we can use Proposition \ref{the comparison principle} and Remark \ref{remarkp} to conclude that
$$\begin{array}{l}
g(t)\geq\underline{g}(t) \mbox{ for } t\geq T_0.
\end{array}$$
It follows that
$\liminf_{t\rightarrow\infty}\frac{g(t)}{t}\geq c^\delta_\gamma$, which yields the required inequality by letting $\delta\rightarrow0$.
\end{proof}

\begin{lem}\lbl{T14} Under the assumptions of Lemma \ref{T15} we have
\bess\limsup_{t\rightarrow\infty}\frac{g(t)}{t}\leq c_\gamma.\lbl{section4e}\eess
\end{lem}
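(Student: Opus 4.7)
The plan is to dualize the construction in Lemma \ref{T15}: I would build an upper solution whose free boundary grows at a speed $c_\gamma^{\delta}\to c_\gamma$ as $\delta\to 0^+$, and then pass to the limit. For small $\delta>0$ consider the perturbed semi-wave system
\begin{equation*}
\begin{cases}
\phi_\delta''-c\phi_\delta'+\phi_\delta(1+2\delta-\phi_\delta-k\psi_\delta)=0,\ \phi_\delta'>0,& s>0,\\
d\psi_\delta''-c\psi_\delta'+r\psi_\delta(1-2\delta-\psi_\delta-h\phi_\delta)=0,\ \psi_\delta'<0,& s\in\mathbb R,\\
\phi_\delta\equiv 0\text{ on }(-\infty,0],\ \gamma\phi_\delta'(0)=c,\ \psi_\delta(-\infty)=1-2\delta,\\
(\phi_\delta(\infty),\psi_\delta(\infty))=\left(\tfrac{1+2\delta-k(1-2\delta)}{1-hk},\tfrac{1-2\delta-h(1+2\delta)}{1-hk}\right)=:(u^\ast_\delta,v^\ast_\delta),
\end{cases}
\end{equation*}
in which the $\pm 2\delta$ signs are reversed relative to \eqref{section401}, giving $u^\ast_\delta>u^\ast$ and $v^\ast_\delta<v^\ast$. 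The rescaling used in Lemma \ref{T15} reduces this to \eqref{section12} with slightly perturbed parameters, so Theorem \ref{F4}, Lemma \ref{tlem13} and continuous dependence on parameters yield a unique admissible speed $c_\gamma^{\delta}$ and a unique solution $(\phi_\delta,\psi_\delta)$, with $c_\gamma^{\delta}\to c_\gamma$ as $\delta\to 0^+$.

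Fix such $\delta>0$ and, for $T$ to be chosen large and $K$ to be chosen much larger, set
\[
\overline g(t):=c_\gamma^{\delta}(t-T)+K,\quad \overline u(x,t):=\phi_\delta(\overline g(t)-x),\quad \underline v(x,t):=\psi_\delta(\overline g(t)-x).
\]
The semi-wave identities give, exactly as in the proof of Lemma \ref{T15},
\[
\overline u_t-\overline u_{xx}-\overline u(1-\overline u-k\underline v)=2\delta\,\phi_\delta\geq 0,\quad \underline v_t-d\underline v_{xx}-r\underline v(1-\underline v-h\overline u)=-2\delta\,r\psi_\delta\leq 0,
\]
the Stefan condition $\overline g'(t)=c_\gamma^{\delta}=\gamma\phi_\delta'(0)=-\gamma\overline u_x(\overline g(t),t)$ holds by construction, and $\overline u_x(0,t)\leq 0$, $\underline v_x(0,t)\geq 0$ follow from $\phi_\delta'\geq 0$, $\psi_\delta'\leq 0$. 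I would pair $(\overline u,\underline v,\overline g)$ with the trivial lower triple $(\underline u,\overline v,\underline g):=(0,\max\{1,\|v_0\|_\infty\},0)$, which satisfies the lower-solution inequalities vacuously, and invoke Proposition \ref{the comparison principle} on $[T,\infty)$ (after shifting time) to conclude $g(t)\leq\overline g(t)$ for $t\geq T$, whence $\limsup_{t\to\infty}g(t)/t\leq c_\gamma^{\delta}$; letting $\delta\to 0^+$ would yield the claim.

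The hard part is arranging the initial ordering at $t=T$. The inequality $\overline u(x,T)\geq u(x,T)$ on $[0,g(T)]$ I would handle by first upgrading Lemma \ref{t6} to a \emph{global-in-$x$} statement, via an iteration that at each step uses only the scalar ODE $U'=U(1-U-k(\underline v_n\pm\eta))$ (valid uniformly in $x$); this gives $u(x,t)\leq u^\ast+\eta$ uniformly in $x\in[0,g(t)]$ for $t$ large, which combined with $\phi_\delta(s)\to u^\ast_\delta>u^\ast+\eta$ as $s\to\infty$ yields the required inequality once $K-g(T)$ is large enough. The ordering $\underline v(x,T)\leq v(x,T)$ is the main obstacle, and has no direct analogue in Lemma \ref{T15}, where the scalar bound $v\leq V(t)\to 1$ was automatically uniform in $x$: here no uniform-in-$x$ lower bound on $v$ close to $1$ is available, since $v\to v^\ast<1$ behind the front. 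I would cope with this by splitting $[0,\infty)$ into two regions: on $[0,X_T]$ use the global iteration above to get $v(x,T)\geq v^\ast-\eta>v^\ast_\delta+\eta\geq\psi_\delta(K-x)$ (provided $K-X_T$ is so large that $\psi_\delta(K-x)\leq v^\ast_\delta+\eta$ there); on $[X_T,\infty)$ use that $v$ satisfies the pure logistic KPP equation on the set $\{u=0\}\supset[g(t),\infty)$, together with $\liminf_{x\to\infty}v_0>0$, to produce $X_T<\infty$ such that $v(x,T)\geq 1-\delta>1-2\delta\geq\psi_\delta(K-x)$. The eventual choice $K\geq\max\{g(T),X_T\}+L$, with $L$ dictated by the asymptotics of $(\phi_\delta,\psi_\delta)$ at $+\infty$, closes the loop.
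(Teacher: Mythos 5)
Your proposal is correct and follows essentially the same route as the paper: the same $\pm 2\tau$-perturbed semi-wave system (the paper uses $\tau$ in place of your $\delta$), the same rescaling to reduce it to \eqref{section12}, the same comparison triple $(\overline u,\underline v,\overline g)$ with the identical verification that $\overline u$ is an upper barrier and $\underline v$ a lower barrier, and the same domain-splitting idea to obtain the initial ordering of $\underline v$ against $v$ — near the origin by a global-in-$x$ argument giving $v\geq v^*_{\tau/2}$ and $u\leq u^*_{\tau/2}$, far out by exploiting that $v$ satisfies the pure Fisher--KPP equation on $\{x>g(T)\}$ and hence lies above a shifted translate of the half-line logistic profile. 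The only cosmetic difference is in the near-origin step: you propose to redo the Lemma \ref{t6} iteration entirely with scalar ODE barriers to get uniformity in $x$, whereas the paper (Step 2 of its proof) compares $(u,v)$ directly with the solution of the kinetic ODE system and invokes the known global stability of $(u^*,v^*)$ for that system (citing \cite{Goh}); these are equivalent, your iteration being essentially a proof of that ODE stability. The use of a trivial lower triple $(0,\max\{1,\|v_0\|_\infty\},\text{const})$ to complete the hypotheses of Proposition \ref{the comparison principle} is correct and is left implicit in the paper.
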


\begin{proof}\, For small $\tau>0$ we consider the auxiliary problem
\bes\left\{
\begin{aligned}
&\phi_\tau''-c\phi_\tau'+\phi_\tau(1+2\tau-\phi_\tau-k\psi_\tau)=0,\quad \phi_\tau'>0\quad \mbox{for }  0<s<\infty,\\
&d\psi_\tau''-c\psi_\tau'+r\psi_\tau(1-2\tau-\psi_\tau-h\phi_\tau)=0,\quad \psi_\tau'<0 \quad \mbox{for }-\infty<s<\infty,\\
&\phi_\tau(s)\equiv0\quad\mbox{for } s\leq0,\; \gamma \phi'_\tau(0)=c,\quad \psi_\tau(-\infty)=1-2\tau,\\
&(\phi_\tau,\psi_\tau)(\infty)=(u^*_\tau, v^*_\tau):=\left(\frac{1+2\tau-k(1-2\tau)}{1-hk},\frac{1-2\tau-h(1+2\tau)}{1-hk}\right).
\end{aligned}
\right.\lbl{teif}\ees
As in the proof of Lemma \ref{T15} we can use a change of variable trick to reduce \eqref{teif} to \eqref{section12}, and
then apply Theorem \ref{F4} and Lemma \ref{tlem13} to conclude that there exists a unique $c^\tau_{\gamma}>0$ such that \eqref{teif} has a unique solution $(\phi_\tau,\psi_\tau)$ when $c=c^\tau_\gamma$, and
moreover,
 $$ \lim_{\tau\rightarrow0}c^\tau_{\gamma}=c_\gamma.$$
Let us also observe that
\[
u^*<u^*_\tau,\; v^*>v^*_\tau,\; 0<\phi_\tau(x)<u^*_\tau \mbox{ for } x>0,\; v^*_\tau<\psi_\tau(x)<1-2\tau \mbox{ for } x\in (-\infty, \infty).
\]
For clarity we divide the analysis below into three steps.

{\bf Step 1.}
We prove that for any small $\tau>0$, we can find $T'_0>0$ such that for each $T\geq T'_0$, there exists $L(T)>0$
having the following property:
$$
v(x,T)\geq1-\tau\mbox{ for } x\geq L(T).
$$

By \eqref{section1-1a} we have
$$\tilde v_0:=\inf_{x\geq 0}v_0(x)>0.$$
Consider the auxiliary problem
\bes\lbl{tif}
\left\{
\begin{aligned}
&w_t-dw_{xx}=rw(1-w),&x>0,&\quad t>0,\\
&w(0,t)=0,&&\quad t>0,\\
&w(x,0)=\tilde v_0,&x>0.&
\end{aligned}
\right.
\ees
It is well known that the solution of \eqref{tif} satisfies
$$
\lim_{t\rightarrow\infty}w(x,t)=w_\ast(x)\mbox{ locally unformly for }x\in[0,\infty),
$$
where $w_\ast$ is the unique solution of
$$
-w_\ast''=rw_\ast(1-w_\ast)\mbox{ for }x\in[0,\infty),w_\ast(0)=0.
$$
Moreover, $w_*$ has the property that $w_\ast'>0$ and $w_\ast(\infty)=1$. Therefore, there exist positive constants $L_1, T'_0$ large enough such that
$$
w(L_1,t)\geq w_\ast(L_1)-\tau/2\geq1-\tau\mbox{ for }t\geq T'_0.
$$
Applying the maximum principle to the equation satisfied by $w_x(x,t)$, we deduce $w_x(x,t)\geq0$ for $x>0$ and $t>0$. It follows that
$$
w(x,t)\geq1-\tau\mbox{ for }x\geq L_1\mbox{ and }t\geq T'_0.
$$
Fix $T\geq T_0'$ and note that $v$ satisfies
$$\begin{cases}
v_t-dv_{xx}=rv(1-v), & x>g(T),\; 0<t\leq T, \\
v(x,0)\geq \tilde v_0, & x\geq g(T).\end{cases}$$
Set $\tilde{w}(x,t):=w(x-g(T),t).$
Then $\tilde{w}(x,t)$ satisfies
$$
\tilde{w}_t-d\tilde{w}_{xx}=r\tilde{w}(1-\tilde{w})\mbox{ for }x>g(T) \mbox{ and }0<t\leq T.
$$
Since
$$
\tilde{w}(g(T), t)=0<v(g(T), t)\mbox{ for } t\in (0, T],\quad \tilde{w}(x, 0)=\tilde v_0\leq v(x,0)\mbox{ for }x>g(T),
$$
we can use the comparison principle to deduce
$$
v(x,t)\geq\tilde{w}(x,t)=w(x-g(T),t)\mbox{ for }x>g(T)\mbox{ and }0<t\leq T .
$$
Thus we obtain
$$v(x,T)\geq w(x-g(T),T)\geq w(L_1,T)\geq1-\tau\mbox{ for }x\geq L(T):=L_1+g(T).$$
 This completes the proof of Step 1.

{\bf Step 2.} We prove that for any small $\tau>0$, there exists $T_1'>0$ such that
\bes\lbl{comp-ode}
u(x,t)\leq u^*_{\tau/2},\; v(x, t)\geq v^*_{\tau/2} \mbox{ for } x\geq 0,\; t\geq T_1'.
\ees

We prove the claimed inequalities in \eqref{comp-ode} by a comparison argument involving the following ODE system
\[\begin{cases}
\check u'(t)=\check  u(1-\check  u-k\check  v),& t>0,\\
\check v'(t)=r\check  v(1-\check  v-h\check  u), & t>0,\\
(\check  u(0), \check  v(0))=(\|u_0\|_\infty, \tilde v_0).&
\end{cases}
\]
Indeed, by the comparison principle for coorporative system we easily obtain
\[
u(x, t)\leq \check  u(t),\;  v(x,t)\geq \check  v(t) \mbox{ for } x\geq 0,\; t>0.
\]
But it is well known (for example, see \cite{Goh}) that 
\[
\lim_{t\to\infty} (\check  u(t), \check  v(t))=(u^*, v^*).
\]
The inequalties in \eqref{comp-ode} thus follow directly once we recall $u^*<u^*_{\tau/2}<u^*_\tau$ and $v^*>v^*_{\tau/2}>v^*_\tau$.

{\bf Step 3.} We complete the proof of the lemma by
constructing a suitable comparison function triple $(\overline{u}(x,t),
\underline{v}(x,t),\overline{g}(t))$, and applying the comparison principle.

We fix $T_0:=\max\{T_0', T_1'\}$. Then by the conclusions in Steps 1 and 2 we obtain
\[
u(x,T_0)\leq u^*_{\tau/2}<u^*_\tau \mbox{ for } x\geq 0,\; v(x, T_0)\geq v^*_{\tau/2}>v^*_\tau \mbox{ for } x\geq 0,
\]
and
\[
v(x, T_0)\geq 1-\tau \mbox{ for } x\geq L(T_0).
\]
Choose $S>L(T_0)>g(T_0)$ large so that
\[
\phi_\tau(x)>u^*_{\tau/2},\; \psi_\tau(x)<v^*_{\tau/2} \mbox{ for } x\geq S-L(T_0),
\]
and then define
\bess\begin{aligned}
&\overline{g}(t)=c^\tau_{\gamma} (t-T_0)+S,\ \
&\overline{u}(x,t)=\phi_\tau(\overline{g}(t)-x),\ \
&\underline{v}(x,t)=\psi_\tau(\overline{g}(t)-x).
\end{aligned}\eess
Clearly $\overline{g}(T_0)=S> g(T_0)$ and
\bess
\overline{g}'(t)=c^\tau_{\gamma}=\gamma\phi_\tau'(0)=-\gamma\overline{u}_x(\overline{g}(t),t)\quad \mbox{ for }
t\geq T_0.
\eess
Moreover,
\bess\begin{aligned}
&\overline{u}(x,T_0)=\phi_\tau(S-x)\geq\phi_\tau(S-g(T_0))>u^*_{\tau/2}\geq u(x,T_0)\quad\mbox{ for } x\in[0,g(T_0)],\\
&\underline{v}(x,T_0)=\psi_\tau(S-x)\leq\psi_\tau(S-L(T_0))<v^*_{\tau/2}\leq v(x,T_0)\quad\mbox{for } x\in[0,L(T_0)],
\end{aligned}\eess
and for $x>L(T_0)$,
$$\underline{v}(x,T_0)=\psi_\tau(S-x)<\psi_\tau(-\infty)=1-2\tau<v(x,T_0).$$
Furthermore,
\bess\begin{aligned}
&\overline{u}_x(0,t)=-\phi_\tau'(\overline{g}(t))<0,\ \
\overline{u}(\overline{g}(t),t)=\phi_\tau(0)=0,\ \
\underline{v}_x(0,t)=-\psi_\tau'(\overline{g}(t))>0 \quad \mbox{ for } t\geq T_0.
\end{aligned}\eess
Finally, direct calculations show that
\bess\begin{aligned}
\overline{u}_t-\overline{u}_{xx}&=c^\tau_{\gamma}\phi_\tau'-\phi_\tau''\\
&=\phi_\tau(1+2\tau-\phi_\tau-k\psi_\tau)\\
&\geq\phi_\tau(1-\phi_\tau-k\psi_\tau)\\
&=\overline{u}(1-\overline{u}-k\underline{v})
\end{aligned}\eess
and
\bess\begin{aligned}
\underline{v}_t-d\underline{v}_{xx}&=c^\tau_{\gamma}\psi_\tau'-d\psi_\tau''\\
&=r\psi_\tau(1-2\tau-\psi_\tau-h\phi_\tau)\\
&\leq r\psi_\tau(1-\psi_\tau-h\phi_\tau)\\
&=r\underline{v}(1-\underline{v}-h\overline{u}).
\end{aligned}\eess
Hence, we can use Proposition \ref{the comparison principle} to conclude that
$$
g(t)\leq\overline{g}(t)\quad \mbox{ for } t\geq T_0.
$$
It follows that
$\limsup_{t\rightarrow\infty}\frac{g(t)}{t}\leq c^\tau_\gamma$, which gives the required inequality  by letting $\tau\rightarrow0$.
\end{proof}

Theorem \ref{F5} now follows directly from Lemmas \ref{T15} and \ref{T14}. So the asymptotic spreading speed of $u$ in Theorem \ref{F5} is given by $c^0=c_\gamma$.

\section{Proof of Proposition \ref{s7}}
\setcounter{equation}{0}
Although we follow some standard steps in the proof of Proposition \ref{s7}, since the first equation of \eqref{function1}
is only satisfied for $s>0$, nontrivial changes are needed. We break the rather long proof  into several lemmas.

We start with a second order ODE of the following form
\bes\left\{\begin{aligned}
&d_1y''-cy'-\beta y+f(s)=0,\quad s>0,\\
& y(0)=0,
\end{aligned}\right.\lbl{2.3}
\ees
where the constants $c$ and $\beta$ are positive, and the nonlinear function $f$ is specified below.

Let $$
\lambda_{1}=\frac{c-\sqrt{c^2+4\beta d_1}}{2d_1},\quad\lambda_{2}=\frac{c+\sqrt{c^2+4\beta d_1}}{2d_1}
$$
be the two roots of equation $d_1\lambda^2-c\lambda-\beta=0.$
Then we have the following result.
\begin{lem}\lbl{s1}Assume $f:[0,\infty)\rightarrow\mathbb{R}$ is piecewise continuous and $|f(s)|\leq Ae^{\alpha s}$  for all $s\geq 0$ and some constants $A>0$, $\alpha\in(0,\min\{-\lambda_1,\lambda_2\})$.
Then  \eqref{2.3} has a unique solution satisfying $y(s)=O(e^{\alpha s})$ as $s\rightarrow\infty$, and it is given by
\bes
 y(s)=\frac{1}{d_1(\lambda_{2}-\lambda_{1})}\left[\int_{0}^sK_1(\xi,s)f(\xi)d\xi
+\int_s^{\infty}K_2(\xi,s)f(\xi)d\xi\right],
\lbl{lem1f0}
\ees
where
\bess
K_1(\xi,s)=e^{\lambda_1s}\big(e^{-\lambda_1\xi}-e^{-\lambda_{2}\xi}\big),\quad
K_2(\xi,s)=\big(e^{\lambda_{2}s}-e^{\lambda_{1}s}\big)e^{-\lambda_2\xi}.
\lbl{lem1f01}
\eess
\end{lem}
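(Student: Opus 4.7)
The plan is to treat this as a standard linear second-order ODE problem on the half-line, and to verify that the explicit Green's function representation in \eqref{lem1f0} actually solves \eqref{2.3}, gives the correct growth at infinity, and is the unique such solution. Since $\beta>0$, the characteristic roots $\lambda_1,\lambda_2$ satisfy $\lambda_1<0<\lambda_2$ and $\{e^{\lambda_1 s}, e^{\lambda_2 s}\}$ is a fundamental system of the associated homogeneous equation. The kernels $K_1, K_2$ are essentially the Green's function pieces adapted to the boundary condition $y(0)=0$ at the left endpoint and to the decay/growth prescription at the right endpoint.

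First I would verify the formula directly. Write
\[
A(s)=\int_0^s e^{-\lambda_1\xi}f(\xi)\,d\xi,\quad
C(s)=\int_s^\infty e^{-\lambda_2\xi}f(\xi)\,d\xi,\quad
D=\int_0^\infty e^{-\lambda_2\xi}f(\xi)\,d\xi,
\]
all of which are absolutely convergent because $-\lambda_1>\alpha$ and $\lambda_2>\alpha$ while $|f(\xi)|\le A e^{\alpha\xi}$. A short algebraic rearrangement shows
\[
d_1(\lambda_2-\lambda_1)y(s)=e^{\lambda_1 s}\bigl(A(s)-D\bigr)+e^{\lambda_2 s}C(s).
\]
Differentiating this identity, the boundary terms produced by $A'(s)=e^{-\lambda_1 s}f(s)$ and $C'(s)=-e^{-\lambda_2 s}f(s)$ cancel in $y'(s)$, so
\[
d_1(\lambda_2-\lambda_1)y'(s)=\lambda_1 e^{\lambda_1 s}\bigl(A(s)-D\bigr)+\lambda_2 e^{\lambda_2 s}C(s).
\]
Differentiating once more and substituting into $d_1 y''-cy'-\beta y$, the coefficients of $e^{\lambda_i s}(A(s)-D)$ and $e^{\lambda_2 s}C(s)$ vanish because $d_1\lambda_i^2-c\lambda_i-\beta=0$, leaving exactly $-f(s)$. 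This confirms \eqref{2.3}, and $y(0)=0$ is immediate since the first integral in \eqref{lem1f0} is empty at $s=0$ and the prefactor $e^{\lambda_2 s}-e^{\lambda_1 s}$ of the second integral vanishes there.

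Next I would establish the growth bound $y(s)=O(e^{\alpha s})$. This is the only step requiring any care: one splits the two integrals in \eqref{lem1f0} and uses $|f(\xi)|\le A e^{\alpha\xi}$ together with the strict inequalities $\alpha<-\lambda_1$ and $\alpha<\lambda_2$ to ensure convergence. The four typical terms one encounters are $e^{\lambda_1 s}\int_0^s e^{(-\lambda_1+\alpha)\xi}d\xi$ (of order $e^{\alpha s}$), $e^{\lambda_1 s}\int_0^s e^{(\alpha-\lambda_2)\xi}d\xi$ (uniformly bounded, hence $O(e^{\alpha s})$ trivially), and $e^{\lambda_2 s}\int_s^\infty e^{(\alpha-\lambda_2)\xi}d\xi$ together with its $e^{\lambda_1 s}$ counterpart (both of order $e^{\alpha s}$ after explicit integration). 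Combining these gives $|y(s)|\le C e^{\alpha s}$ for some constant $C$ depending on $A,\alpha,\lambda_1,\lambda_2,d_1$.

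Finally, uniqueness follows from linearity: if $y_1,y_2$ are two such solutions, then $w:=y_1-y_2$ solves the homogeneous equation with $w(0)=0$ and $w(s)=O(e^{\alpha s})$. The general solution is $w=C_1 e^{\lambda_1 s}+C_2 e^{\lambda_2 s}$; the growth condition together with $\lambda_2>\alpha$ forces $C_2=0$, and then $w(0)=0$ gives $C_1=0$. The only nontrivial piece of the whole argument is really the derivation and convergence check for the variation-of-parameters formula; everything else is bookkeeping.
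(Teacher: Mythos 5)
Your argument is correct and uses the same core tools as the paper's: the fundamental solutions $e^{\lambda_1 s}, e^{\lambda_2 s}$, the variation-of-constants structure, and the interplay $\lambda_1<-\alpha<0<\alpha<\lambda_2$ to control the integrals. The only difference is one of presentation: the paper starts from the one-parameter family of solutions of \eqref{2.3} with $y(0)=0$, multiplies by $e^{-\lambda_2 s}$ and lets $s\to\infty$ to pin down the free constant $\gamma$ (so that existence, the explicit formula, and uniqueness all drop out of the same computation), whereas you take \eqref{lem1f0} as given, verify it by direct differentiation and growth estimates, and then prove uniqueness separately by the standard observation that the difference of two solutions is a homogeneous solution $C_1e^{\lambda_1 s}+C_2e^{\lambda_2 s}$ killed by the growth condition and $w(0)=0$. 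Both routes are equally valid; the paper's derivation is marginally more self-explanatory about where the kernel $K_1,K_2$ comes from, while your verification-plus-uniqueness split is a bit cleaner logically, and your explicit term-by-term bound for $y(s)=O(e^{\alpha s})$ fills in a detail the paper dismisses as ``easy to check.''
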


\begin{proof}\, By the variation of constants formula, the solutions of \eqref{2.3} are given by
\bes\lbl{lem1f1}
 \;\;\;\; y(s)=\gamma\big (e^{\lambda_{1}s}-e^{\lambda_{2}s}\big)+\frac{1}{d_1(\lambda_{2}-\lambda_{1})}
\left[\int_{0}^se^{\lambda_{1}(s-\xi)}f(\xi)d\xi
-\int_{0}^se^{\lambda_{2}(s-\xi)}f(\xi)d\xi\right],\;\gamma\in\mathbb R.
\ees
Multiplying both sides of \eqref{lem1f1} by $e^{-\lambda_{2}s}$, we get
\bess\displaystyle
 y(s)e^{-\lambda_{2}s}=\gamma\big(e^{(\lambda_{1}-\lambda_{2})s}-1\big)+
\frac{e^{-\lambda_{2}s}}{d_1(\lambda_{2}-\lambda_{1})}
\left[\int_{0}^se^{\lambda_{1}(s-\xi)}f(\xi)d\xi
-\int_{0}^se^{\lambda_{2}(s-\xi)}f(\xi)d\xi\right].\lbl{lem1f11}
\eess
If $ y(s)=O(e^{\alpha s})$ as $s\rightarrow\infty$, then due to $\lambda_{1}<0<\lambda_{2}$ and $|\lambda_{1}|<\lambda_{2}$, we obtain
\bess
 y(s)e^{-\lambda_{2}s}\rightarrow0,\quad e^{(\lambda_{1}-\lambda_{2})s}\rightarrow0
\mbox{ and }\frac{e^{-\lambda_{2}s}}{d_1(\lambda_{2}-\lambda_{1})}
\int_{0}^se^{\lambda_{1}(s-\xi)}f(\xi)d\xi\rightarrow0
\eess
as $s\rightarrow\infty$. Therefore,
\bes
\gamma=\frac{-1}{d_1(\lambda_{2}-\lambda_{1})}
\int_{0}^\infty e^{-\lambda_{2}\xi}f(\xi)d\xi.
\lbl{lem1fu2}
\ees
Substituting \eqref{lem1fu2} into  \eqref{lem1f1}, we obtain  \eqref{lem1f0}.

If $ y(s)$ is given by \eqref{lem1f0}, then it is easy to check that $ y(s)$ satisfies  \eqref{2.3} and $ y(s)=O(e^{\alpha s})$ as $s\rightarrow\infty$.
\end{proof}

Define the operators $H_1:C_{\mathcal{R}}(\mathbb{R}^+,\mathbb{R}^2)\rightarrow C(\mathbb{R^+},\mathbb{R})$ and $H_2:C_{\mathcal{R}}(\mathbb{R},\mathbb{R}^2)\rightarrow C(\mathbb{R},\mathbb{R})$ by
\bess
\begin{aligned}
&H_1(\varphi)(s):=\beta \varphi_1(s)+f_1(\varphi(s)),\\
&H_2(\varphi)(s):=\beta \varphi_2(s)+f_2(\varphi(s)),
\end{aligned}
\eess
where the positive constant $\beta$ is  large enough such that $H_i(\varphi)$ is nondecreasing with respect to $\varphi_1$ and $\varphi_2$, for $(\varphi_1(s),\varphi_2(s))\in \mathcal R=[0, k_1]\times [0,k_2]$.

Let $F_1:C_{\mathcal{R}}(\mathbb{R},\mathbb{R})\rightarrow C(\mathbb{R},\mathbb{R})$ be given by
\bes
F_1(\varphi)(s):=\left\{\begin{aligned}
&\frac{1}{d_1(\lambda_{2}-\lambda_{1})}\left[\int_{0}^sK_1(\xi,s)H_1(\varphi)(\xi)d\xi\right. &\\
&\quad\quad\hspace{1.5cm}+\left.\int_s^{\infty}K_2(\xi,s)H_1(\varphi)(\xi)d\xi\right],&s>0,\\
&0,&s\leq0,
\end{aligned}\right.\lbl{l01}
\ees
where $K_i(\xi,s)$ is given by \eqref{lem1f01}. By Lemma \ref{s1},
it is easy to see that the operator $F_1$ is well defined and
\bess
\left\{\begin{aligned}
&d_1(F_1(\varphi))''(s)-c(F_1(\varphi))'(s)-\beta F_1(\varphi)(s)+H_1(\varphi)(s)=0,&s>0,\\
&F_1(\varphi)(s)=0,&s\leq0.
\end{aligned}\right.\lbl{slt1}
\eess

Let $$\mu_{1}=\frac{c-\sqrt{c^2+4\beta d_2}}{2d_2},\quad\mu_{2}=\frac{c+\sqrt{c^2+4\beta d_2}}{2d_2}$$
be the two roots of
$$d_2\mu^2-c\mu-\beta=0.$$
Define  $F_2:C_{\mathcal{R}}(\mathbb{R},\mathbb{R})\rightarrow C(\mathbb{R},\mathbb{R})$ by
\bes\lbl{l02}\begin{aligned}
F_2(\varphi)(s):&=\frac{1}{d_2(\mu_2-\mu_1)}\left[\int_{-\infty}^se^{\mu_1(s-\xi)}H_2(\varphi)(\xi)d\xi +\int_s^{\infty}e^{\mu_2(s-\xi)}H_2(\varphi)(\xi)d\xi\right].
\end{aligned}\ees
It is easy to show that the operator $F_2$ is well defined and satisfies
\bess
d_2(F_2(\varphi))''(s)-c(F_2(\varphi))'(s)-\beta F_2(\varphi)(s)+H_2(\varphi)(s)=0.\lbl{slt2}
\eess

We now define
 $F: C_{\mathcal{R}}(\mathbb{R},\mathbb{R}^2)\rightarrow C(\mathbb{R},\mathbb{R}^2)$ by
\[
F(\varphi):=(F_1(\varphi), F_2(\varphi)).
\]
 Clearly, $\varphi$ is a fixed point of the operator $F$ in $C_{\mathcal{R}}(\mathbb{R},\mathbb{R}^2)$ if and only if it is a solution of \eqref{function1} in $C_{\mathcal{R}}(\mathbb{R},\mathbb{R}^2)$.

Next, we introduce a Banach space with exponential decay norm. Fix $\sigma\in(0,\min\{|\lambda_{1}|,\lambda_{2},|\mu_{1}|,\mu_{2}\})$. It is easy to see that
$$B_\sigma(\mathbb{R},\mathbb{R}^2):=\left\{\varphi\in C(\mathbb{R},\mathbb{R}^2):\sup_{s\in\mathbb{R}}|\varphi(s)|e^{-\sigma |s|}<\infty\right\}$$
equipped with the  norm
$$|\varphi|_\sigma:=\sup_{s\in\mathbb{R}}|\varphi(s)|e^{-\sigma |s|}$$
 is a Banach space.

Let $\overline{\varphi}(s)$ and  $\underline{\varphi}(s)$ be the upper and lower solutions given in the statement of Proposition \ref{s7}.
Consider the set
$$\Gamma:=\Big\{\varphi=(\varphi_1,\varphi_2)\in B_\sigma(\mathbb{R},\mathbb{R}^2): \underline{\varphi}\leq\varphi\leq\overline{\varphi},\;
 \varphi_i\mbox{ is nondecreasing for } s\in\mathbb{R},i=1,2\Big\}.$$
Clearly $\Gamma$ is a nonempty, bounded, closed,  convex subset of
the Banach space $B_\sigma(\mathbb{R},\mathbb{R}^2)$.

We are going to show that $F$ maps $\Gamma$ into itself, and is completely comtinuous. Then the Schauder fixed point theorem will yield a fixed point of $F$ in $\Gamma$, and we will then show that
it satisfies  \eqref{function1} and  \eqref{function2a2}.

\begin{lem}\lbl{s3} {\rm (i)} $F(\hat{\varphi})(s)\leq F(\tilde{\varphi})(s)$ for $s\in\mathbb{R}$ if $\hat{\varphi}\leq \tilde{\varphi}$
and $\hat{\varphi}, \tilde{\varphi}\in\Gamma$;

{\rm (ii)} $F_1(\varphi)(s)$ and $F_2(\varphi)(s)$ are nondecreasing in $s\in\mathbb{R}$ for any $\varphi\in\Gamma$.
\end{lem}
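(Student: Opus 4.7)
The plan is to deduce both assertions directly from the integral representations \eqref{l01} and \eqref{l02}, using only the monotonicity built into $H_1, H_2$, nonnegativity of the kernels, and an elementary change of variables. For (i), recall that $\beta$ was chosen so that on $\mathcal R$ both $H_1$ and $H_2$ are nondecreasing in each component of their argument, so $\hat\varphi\le\tilde\varphi$ pointwise forces $H_i(\hat\varphi)(\xi)\le H_i(\tilde\varphi)(\xi)$ for every admissible $\xi$. One then just checks nonnegativity of the kernels: since $\lambda_1<0<\lambda_2$, we have $K_1(\xi,s)=e^{\lambda_1 s}(e^{-\lambda_1\xi}-e^{-\lambda_2\xi})\ge 0$ for $0<\xi<s$ and $K_2(\xi,s)=(e^{\lambda_2 s}-e^{\lambda_1 s})e^{-\lambda_2\xi}\ge 0$ for $0<s<\xi$, while the exponentials $e^{\mu_i(s-\xi)}$ in \eqref{l02} are strictly positive; inserting the pointwise inequality for $H_i$ under the integrals then yields $F(\hat\varphi)\le F(\tilde\varphi)$.

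For (ii), start with the observation that for $\varphi\in\Gamma$ the monotonicity of $\varphi_1,\varphi_2$ together with $(\mathbf A_2)$ makes $s\mapsto H_i(\varphi)(s)$ nondecreasing. For $F_2$ the substitutions $\eta=s-\xi$ in the first integral and $\eta=\xi-s$ in the second recast \eqref{l02} into the shift-invariant form
\begin{equation*}
F_2(\varphi)(s)=\frac{1}{d_2(\mu_2-\mu_1)}\left[\int_0^\infty e^{\mu_1\eta}H_2(\varphi)(s-\eta)\,d\eta+\int_0^\infty e^{-\mu_2\eta}H_2(\varphi)(s+\eta)\,d\eta\right],
\end{equation*}
from which monotonicity in $s$ is immediate, as each integrand is pointwise nondecreasing in $s$. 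The case of $F_1$ is analogous except that the first integral in \eqref{l01} has an $s$-dependent upper limit; by a direct rearrangement of \eqref{l01} (expanding $K_1, K_2$ and collecting the $e^{\lambda_1 s}$ term as in the derivation of Lemma \ref{s1}), for $s>0$ one rewrites
\begin{equation*}
F_1(\varphi)(s)=\gamma e^{\lambda_1 s}+\frac{1}{d_1(\lambda_2-\lambda_1)}\left[\int_0^s e^{\lambda_1\eta}H_1(\varphi)(s-\eta)\,d\eta+\int_0^\infty e^{-\lambda_2\eta}H_1(\varphi)(s+\eta)\,d\eta\right]
\end{equation*}
with $\gamma:=-\frac{1}{d_1(\lambda_2-\lambda_1)}\int_0^\infty e^{-\lambda_2\xi}H_1(\varphi)(\xi)\,d\xi$.

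The main obstacle is establishing $H_1(\varphi)\ge 0$ on $[0,\infty)$, which is what delivers $\gamma\le 0$ and also what controls the boundary piece $\int_{s_1}^{s_2}e^{\lambda_1\eta}H_1(\varphi)(s_2-\eta)\,d\eta$ when comparing $F_1(\varphi)(s_1)$ with $F_1(\varphi)(s_2)$. This sign follows from $(\mathbf A_1)$--$(\mathbf A_2)$: one has $H_1(\varphi)(\xi)=\beta\varphi_1(\xi)+f_1(\varphi(\xi))\ge f_1(0,\varphi_2(\xi))\ge f_1(0,0)=0$, the first step using that $u_1\mapsto\beta u_1+f_1(u_1,u_2)$ is nondecreasing on $\mathcal R$, the second using the cooperativity of $f_1$ in $u_2$ from $(\mathbf A_2)$, and the final equality coming from $f_1(\mathbf 0)=0$ in $(\mathbf A_1)$. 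Given $\gamma\le 0$ and $\lambda_1<0$, the term $\gamma e^{\lambda_1 s}$ is nondecreasing in $s$, and for $0<s_1<s_2$ the difference of the bracketed integral decomposes as $\int_{s_1}^{s_2}e^{\lambda_1\eta}H_1(\varphi)(s_2-\eta)\,d\eta+\int_0^{s_1}e^{\lambda_1\eta}[H_1(\varphi)(s_2-\eta)-H_1(\varphi)(s_1-\eta)]\,d\eta+\int_0^\infty e^{-\lambda_2\eta}[H_1(\varphi)(s_2+\eta)-H_1(\varphi)(s_1+\eta)]\,d\eta$, each summand nonnegative; monotonicity across $s=0$ is automatic since $F_1(\varphi)\equiv 0$ on $(-\infty,0]$ while $F_1(\varphi)\ge 0$ on $(0,\infty)$ by kernel positivity and $H_1(\varphi)\ge 0$.
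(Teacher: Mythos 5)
Your proof is correct and follows essentially the same route as the paper: for (i) you invoke kernel positivity and monotonicity of $H_i$ in $\varphi$; for (ii) you use that $s\mapsto H_i(\varphi)(s)$ is nondecreasing plus the sign information on $H_1(\varphi)$. Your rewriting of $F_1(\varphi)(s)$ as $\gamma e^{\lambda_1 s}$ plus two shift-friendly integrals and then comparing at $s_1<s_2$ is, after unwinding the change of variables, term-for-term the same decomposition the paper obtains by directly expanding $F_1(\varphi)(s+\theta)-F_1(\varphi)(s)$; your ``$\gamma e^{\lambda_1 s}$'' difference is precisely the piece the paper drops in its intermediate ``$\geq$'' step. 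The one genuine improvement in your write-up is that you explicitly verify $H_1(\varphi)\ge 0$ on $\Gamma$ via $H_1(\varphi)(\xi)=\beta\varphi_1+f_1(\varphi_1,\varphi_2)\ge f_1(0,\varphi_2)\ge f_1(0,0)=0$ using $(\mathbf A_1)$--$(\mathbf A_2)$; the paper uses this fact twice (to drop the nonnegative remainder and to control the boundary integral $\int_0^\theta e^{\lambda_1(s+\theta-\xi)}H_1(\varphi)(\xi)\,d\xi$) but never spells it out, so making it explicit is worthwhile.
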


\begin{proof}\, We show that $F_1$ satisfies (i) and (ii) stated in the lemma.

Since $F_1(\varphi)(s)=0$ for $s\leq0$, we only need to consider the case of $s>0$. In view of $\lambda_{1}<0<\lambda_{2}$, it is easy to see that
$K_{1}(\xi,s)>0$ for $0<\xi<s$ and $K_{2}(\xi,s)>0$ for $s<\xi$. Thus, by \eqref{l01} and the hypothesis $(\textbf{A}_2)$ we conclude that $F_1(\hat{\varphi})(s)\leq F_1(\tilde{\varphi})(s)$ for $s\in\mathbb{R}$ if $\hat{\varphi}\leq \tilde{\varphi}$
and $\hat{\varphi}, \tilde{\varphi}\in\Gamma$. This proves (i) for $F_1$.

We next consider (ii).
For $\varphi=(\varphi_1,\varphi_2)\in\Gamma$, the hypothesis $(\textbf{A}_2)$ implies that $H_1(\varphi)$ is nondecreasing in $\varphi_i$.
Since $\varphi_1$ and $\varphi_2$ are nondecreasing in $\mathbb{R}$, we have $\varphi_i(s+\theta)\geq\varphi_i(s)$ for
 $\theta>0$ and $i=1, 2$.
This leads to $H_1(\varphi)(s+\theta)-H_1(\varphi)(s)\geq0$. A direct computation gives
$$\begin{aligned}
& F_1(\varphi)(s+\theta)-F_1(\varphi)(s)\\
&=\frac{1}{d_1(\lambda_{2}-\lambda_{1})}\left[\int_0^{s+\theta}K_1(\xi,s+\theta)
H_1(\varphi)(\xi)d\xi+\int_{s+\theta}^{\infty}K_2(\xi,s+\theta)H_1(\varphi)(\xi)d\xi\right]\\
& \hspace{3cm} -\frac{1}{d_1(\lambda_{2}-\lambda_{1})}\left[\int_0^{s}K_1(\xi,s)H_1(\varphi)(\xi)d\xi
+\int_{s}^{\infty}K_2(\xi,s)H_1(\varphi)(\xi)d\xi\right]\\
&=\frac{1}{d_1(\lambda_{2}-\lambda_{1})}\left[\int_0^{s+\theta}e^{\lambda_{1}(s+\theta-\xi)}H_1(\varphi)(\xi)d\xi
+\int_{s+\theta}^{\infty}e^{\lambda_{2}(s+\theta-\xi)})H_1(\varphi)(\xi)d\xi\right.\\
&\hspace{4cm} -\int_0^{s}e^{\lambda_{1}(s-\xi)}H_1(\varphi)(\xi)d\xi
-\left.\int_{s}^{\infty}e^{\lambda_{2}(s-\xi)}H_1(\varphi)(\xi)d\xi\right]\\
&\hspace{0.5cm} +\frac{1}{d_1(\lambda_{2}
-\lambda_{1})}\left[\int_0^{s}e^{\lambda_{1}s}e^{-\lambda_{2}\xi}H_1(\varphi)(\xi)d\xi
+\int_{s}^{\infty}e^{\lambda_{1}s}e^{-\lambda_{2}\xi}H_1(\varphi)(\xi)d\xi\right.\\
&\hspace{4cm} -\left.\int_0^{s+\theta}e^{\lambda_{1}(s+\theta)}e^{-\lambda_{2}\xi}H_1(\varphi)(\xi)d\xi
-\int_{s+\theta}^{\infty}e^{\lambda_{1}(s+\theta)}e^{-\lambda_{2}\xi}H_1(\varphi)(\xi)d\xi
\right]\\
&=\frac{1}{d_1(\lambda_{2}-\lambda_{1})}\left[\int_0^{s+\theta}e^{\lambda_{1}(s+\theta-\xi)}H_1(\varphi)(\xi)d\xi
+\int_{s+\theta}^{\infty}e^{\lambda_{2}(s+\theta-\xi)})H_1(\varphi)(\xi)d\xi\right.\\
&\hspace{5cm} -\int_0^{s}e^{\lambda_{1}(s-\xi)}H_1(\varphi)(\xi)d\xi
-\left. \int_{s}^{\infty}e^{\lambda_{2}(s-\xi)}H_1(\varphi)(\xi)d\xi\right]\\
&\hspace{0.5cm}+\frac{1}{d_1(\lambda_{2}-\lambda_{1})}e^{\lambda_{1}s}\left(1-e^{\lambda_{1}\theta}\right)
\int_0^{\infty}e^{-\lambda_{2}\xi}H_1(\varphi)(\xi)d\xi\\
&\geq\frac{1}{d_1(\lambda_{2}-\lambda_{1})}\left[\int_0^{s+\theta}e^{\lambda_{1}(s+\theta-\xi)}H_1(\varphi)(\xi)d\xi
+\int_{s+\theta}^{\infty}e^{\lambda_{2}(s+\theta-\xi)})H_1(\varphi)(\xi)d\xi\right.\\
&\hspace{5cm} -\int_0^{s}e^{\lambda_{1}(s-\xi)}H_1(\varphi)(\xi)d\xi
      -\left.\int_{s}^{\infty}e^{\lambda_{2}(s-\xi)}H_1(\varphi)(\xi)d\xi\right]\\
&=\frac{1}{d_1(\lambda_{2}-\lambda_{1})}\left\{\int_0^{\theta}e^{\lambda_{1}(s+\theta-\xi)}
H_1(\varphi)(\xi)d\xi+\left[\int_0^{s}e^{\lambda_{1}(s-\xi)}
\Big(H_1(\varphi)(\xi+\theta)-H_1(\varphi)(\xi)\Big)d\xi\right.\right.\\
&\quad\hspace{7.5cm}+\left.\left.\int_{s}^{\infty}e^{\lambda_{2}(s-\xi)}\Big(H_1(\varphi)(\xi+\theta)-H_1(\varphi)(\xi)\Big)d\xi\right]\right\}\\
&\geq 0.
\end{aligned}$$
So $F_2$ satisfies (ii).

Similarly, we can prove $F_2$ satisfies  (i) and (ii).
\end{proof}

\begin{lem}\lbl{s4}
$F(\Gamma)\subset\Gamma$.
\end{lem}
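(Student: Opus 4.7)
The plan is to check the three defining properties of $\Gamma$ for $F(\varphi)$ when $\varphi\in\Gamma$. First, that $F(\varphi)\in B_\sigma(\mathbb{R},\mathbb{R}^2)$ follows by a direct bound: since $H_i(\varphi)$ is bounded on $\mathcal{R}$ and the exponentials in the representation formulas \eqref{l01}--\eqref{l02} decay (because $\lambda_1<0<\lambda_2$ and $\mu_1<0<\mu_2$), a routine integration gives $\|F_i(\varphi)\|_\infty<\infty$. Second, monotonicity of each $F_i(\varphi)$ in $s$ is exactly Lemma \ref{s3}(ii). So the real content is the order property $\underline\varphi\le F(\varphi)\le\overline\varphi$.

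For this last property the strategy is to use the monotonicity of $F$ (Lemma \ref{s3}(i)) to reduce to the two inequalities $\underline\varphi\le F(\underline\varphi)$ and $F(\overline\varphi)\le\overline\varphi$: once these hold, the chain
$$
\underline\varphi\le F(\underline\varphi)\le F(\varphi)\le F(\overline\varphi)\le\overline\varphi
$$
finishes the proof. I focus on $F(\overline\varphi)\le\overline\varphi$; the other inequality is symmetric. Setting $w_i:=F_i(\overline\varphi)-\overline\varphi_i$, combining the equations satisfied by $F_1(\overline\varphi)$ on $\mathbb{R}^+$ and $F_2(\overline\varphi)$ on $\mathbb{R}$ with the upper-solution inequalities \eqref{upper} yields
$$
d_1 w_1''-c w_1'-\beta w_1\ge 0\ \text{on}\ \mathbb{R}^+\setminus\Omega_1,\qquad d_2 w_2''-c w_2'-\beta w_2\ge 0\ \text{on}\ \mathbb{R}\setminus\Omega_2,
$$
with $w_1(0)=-\overline\varphi_1(0)\le 0$ and $w_2(-\infty)=0$. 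The latter limit is verified by an $\varepsilon$-$R$ splitting of the integrals in \eqref{l02}, using that $\overline\varphi_1\equiv 0$ on $(-\infty,0]$, $\overline\varphi_2(-\infty)=0$, and hypothesis $\mathbf{(A_1)}$ to get $H_2(\overline\varphi)(\xi)\to 0$ as $\xi\to -\infty$. At every corner of $\Omega_i$ the jump condition \eqref{lower1} for $\overline\varphi_i$, combined with the smoothness of $F_i(\overline\varphi)$, gives $w_i'(\xi+)\ge w_i'(\xi-)$.

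The main (and only non-routine) step is to deduce $w_i\le 0$ from these data by a maximum principle tailored to the coercive operator $d_i\partial_s^2-c\partial_s-\beta$. An interior positive maximum at a regular point is ruled out by the usual sign argument: $w_i'=0$, $w_i''\le 0$ at such a point force $d_i w_i''-c w_i'-\beta w_i<0$, contradicting the differential inequality. A positive maximum at a corner $\xi\in\Omega_i$ is forced by the jump condition to be a $C^1$ local maximum ($w_i'(\xi-)=w_i'(\xi+)=0$), and applying Hopf's boundary lemma on either smooth subinterval adjacent to $\xi$ then yields a contradiction. The remaining obstacle is to exclude a positive supremum attained only at $+\infty$.

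To handle this, I plan a translation-compactness argument: pick $s_n\to\infty$ with $w_i(s_n)\to\sup w_i=:M>0$, and consider the translates $w_i^{(n)}(s):=w_i(s+s_n)$. For large $n$ each compact set $[-R,R]$ avoids $\Omega_i$, so standard ODE estimates give uniform $C^2$ bounds on $w_i^{(n)}$ on $[-R,R]$; extracting a subsequence, $w_i^{(n)}\to w_i^\infty$ in $C^1_{loc}(\mathbb{R})$ with $w_i^\infty$ bounded, $w_i^\infty(0)=M$, $w_i^\infty\le M$ on $\mathbb{R}$, and still $d_i(w_i^\infty)''-c(w_i^\infty)'-\beta w_i^\infty\ge 0$ on $\mathbb{R}$. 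At the interior maximum $s=0$ we then get $-\beta w_i^\infty(0)\ge 0$, forcing $w_i^\infty(0)\le 0$, which contradicts $w_i^\infty(0)=M>0$. This completes the proof of $F(\overline\varphi)\le\overline\varphi$, and the symmetric argument yields $\underline\varphi\le F(\underline\varphi)$.
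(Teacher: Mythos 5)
Your approach is genuinely different from the paper's. The paper proves $\underline\varphi\le F(\underline\varphi)$ (and its companion) by plugging the weak lower-solution inequality into the integral representation \eqref{l01} and integrating by parts: the integral term reproduces $\underline\varphi_1(s)$ exactly because the kernels $K_1,K_2$ solve the adjoint homogeneous equation, the boundary contributions vanish by $K_1(0,s)=0$ and exponential decay of $K_2$, and the corner terms $\underline\varphi_1'(\xi_j+)-\underline\varphi_1'(\xi_j-)\ge 0$ appear with positive coefficients. That calculation hits the result in one line. Your route --- set $w_i=F_i(\overline\varphi)-\overline\varphi_i$, derive $d_iw_i''-cw_i'-\beta w_i\ge0$ off the corner set, and kill a positive maximum by a maximum/Hopf argument (using the coercivity $-\beta<0$ and the corner jump $w_i'(\xi+)\ge w_i'(\xi-)$) --- is a valid alternative and the parts through the finite-maximum case are sound. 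The reduction via Lemma \ref{s3}(i) to the two extremal inequalities, the boundary data $w_1(0)=0$, $w_2(-\infty)=0$ (your $\varepsilon$-$R$ argument for the latter is fine, using $f_2(\mathbf{0})=0$), and the ruling out of a positive max at a regular or corner point are all correct.

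The gap is in the translation-compactness step used to exclude a positive supremum approached only at $+\infty$. You claim ``standard ODE estimates give uniform $C^2$ bounds on $w_i^{(n)}$,'' but $w_i=F_i(\overline\varphi)-\overline\varphi_i$ and the $\overline\varphi_i$-part offers no such control: Definition \ref{s2} assumes only that $\overline\varphi_i$ is bounded (it takes values in $\mathcal R$) and satisfies a \emph{one-sided} differential inequality, which bounds $d_i\overline\varphi_i''\le c\overline\varphi_i'+\beta\overline\varphi_i-H_i(\overline\varphi)$ but gives no bound on $\overline\varphi_i'$ at all, hence no two-sided bound on $\overline\varphi_i''$. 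So $\{w_i^{(n)}\}$ need not be equicontinuous even in $C^0_{loc}$, Arzel\`a--Ascoli does not apply, and the limit object $w_i^\infty$ with an interior maximum at $0$ is not available. The fix is cheap and avoids compactness entirely: since $w_1$ is bounded, compare with the exponential barrier $z_\varepsilon(s):=w_1(s)-\varepsilon e^{\sigma' s}$ for small $\sigma'\in(0,\lambda_2)$. One computes $d_1 z_\varepsilon''-cz_\varepsilon'-\beta z_\varepsilon\ge -\varepsilon\big(d_1\sigma'^2-c\sigma'-\beta\big)e^{\sigma' s}>0$, and $z_\varepsilon\to-\infty$ as $s\to\infty$, so $z_\varepsilon$ attains a finite maximum on $[0,\infty)$, which your existing regular-point/corner arguments show is $\le 0$; letting $\varepsilon\to0$ gives $w_1\le0$. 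For $w_2$ on $\mathbb R$ use $\varepsilon\cosh(\sigma's)$ with $\sigma'\in(0,\mu_2)$ in the same way. With this replacement your argument closes.
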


\begin{proof}\,Due to Lemma \ref{s3}, it suffices to show that, for all $s\in\mathbb R$,
$$
\underline{\varphi}(s)\leq F(\underline{\varphi})(s),\; F(\overline{\varphi})(s)\leq\overline{\varphi}(s).
$$
We firstly show
\[
\underline{\varphi}_1(s)\leq F_1(\underline{\varphi})(s),
\;\forall s\in\mathbb R.
\]
 Since $\underline\varphi_1(s)=F_1(\varphi)(s)=0$ for $s\leq0$, we only need to consider the case of $s>0$. Without loss of generality, we denote $\xi_0=0,\xi_{m_1+1}=\infty$ and assume $\xi_i<\xi_{i+1}$ for $i=0,1,2,\cdots,m_1$. Here $\xi_i$, $i\in \{0,..., m_1\}$, are points in $\Omega_1$ so that $\underline \varphi_1$ satisfies the first inequality \eqref{lower} in $\mathbb R^+\setminus\Omega_1$. According to the definition of $F_1(\varphi)$ and Definition \ref{s2}, for any $s\in(\xi_i,\xi_{i+1})$, we have,
$$\begin{aligned}
F_1(\underline{\varphi})(s)&=\frac{1}{d_1(\lambda_{2}-\lambda_{1})}\left[\int_0^{s}K_1(\xi,s)H_1(\underline{\varphi})(\xi)d\xi
+\int_{s}^{\infty}K_2(\xi,s)H_1(\underline{\varphi})(\xi)d\xi\right]\\
&\geq\frac{1}{d_1(\lambda_{2}-\lambda_{1})}\left[\int_0^{s}K_1(\xi,s)\left(\beta\underline{\varphi}_1(\xi)-d_1\underline{\varphi}_1''(\xi)
+c\underline{\varphi}_1(\xi)'\right)d\xi\right.\\
&\quad\hspace{3cm}+\left.\int_{s}^{\infty}K_2(\xi,s)\left(\beta\underline{\varphi}_1(\xi)-d_1\underline{\varphi}_1''(\xi)
+c\underline{\varphi}_1'(\xi)\right)d\xi\right]\\
&=\underline{\varphi}_1(s)+\frac{1}{\lambda_{2}
-\lambda_{1}}\left[\sum_{j=1}^{i}K_1(\xi_j,s)\left(\underline{\varphi}_1'(\xi_j+)
-\underline{\varphi}_1'(\xi_j-)\right)\right.\\
&\quad\hspace{3.5cm}+\left.\sum_{j=i+1}^{m_1}K_2(\xi_j,s)\left(\underline{\varphi}_1'(\xi_j+)
-\underline{\varphi}_1'(\xi_j-)\right)\right]\\
&\geq\underline{\varphi}_1(s).
\end{aligned}$$
The continuity of $\underline{\varphi}(s)$ and $F_1(\underline{\varphi})(s)$ implies that $F_1(\underline{\varphi})(s)\geq\underline{\varphi}_1(s)$ for any $s\in\mathbb{R}^+$.

The proofs of
$$
F_1(\overline{\varphi})(s)
\leq\overline{\varphi}_1(s),
\;
\underline{\varphi}_2(s)\leq F_2(\underline{\varphi})(s),\; F_2(\overline{\varphi})(s)\leq \overline{\varphi}_2(s)
$$
for  $s\in\mathbb{R}$ are similar, and we omit the details.
\end{proof}

\begin{lem}\lbl{s5}
$F: \Gamma\rightarrow \Gamma$ is continuous.
\end{lem}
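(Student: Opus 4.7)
The plan is to establish Lipschitz continuity of $F$ on $\Gamma$ with respect to the weighted norm $|\cdot|_\sigma$, which immediately implies continuity. The key inputs are three facts: (a) every $\varphi\in\Gamma$ takes values in the bounded rectangle $\mathcal R=[0,k_1]\times[0,k_2]$, so by hypothesis $(\textbf{A}_3)$ the nonlinearities $H_i(\varphi)=\beta\varphi_i+f_i(\varphi)$ are Lipschitz uniformly on $\Gamma$; (b) for $\varphi,\tilde\varphi\in\Gamma$ the pointwise difference is controlled by $|\varphi(\xi)-\tilde\varphi(\xi)|\leq|\varphi-\tilde\varphi|_\sigma e^{\sigma|\xi|}$; and (c) the explicit exponential kernels $K_1, K_2$ and $e^{\mu_i(s-\cdot)}$ integrate nicely against $e^{\sigma|\xi|}$ because $\sigma$ was chosen to satisfy $\sigma<\min\{|\lambda_1|,\lambda_2,|\mu_1|,\mu_2\}$.

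First I would fix a Lipschitz constant $L>0$ (depending only on $\mathcal R$ and $\beta$) such that $|H_i(\varphi)(s)-H_i(\tilde\varphi)(s)|\leq L|\varphi(s)-\tilde\varphi(s)|$ for all $s\in\mathbb R$ and $\varphi,\tilde\varphi\in\Gamma$. Then, for $s>0$, I would plug (a)--(b) into the integral representation \eqref{l01} and estimate
\begin{equation*}
|F_1(\varphi)(s)-F_1(\tilde\varphi)(s)|\leq\frac{L|\varphi-\tilde\varphi|_\sigma}{d_1(\lambda_2-\lambda_1)}\Bigl[\int_0^s K_1(\xi,s)e^{\sigma\xi}\,d\xi+\int_s^\infty K_2(\xi,s)e^{\sigma\xi}\,d\xi\Bigr].
\end{equation*}
Using $K_1(\xi,s)=e^{\lambda_1 s}(e^{-\lambda_1\xi}-e^{-\lambda_2\xi})$ and $K_2(\xi,s)=(e^{\lambda_2 s}-e^{\lambda_1 s})e^{-\lambda_2\xi}$, and that $\sigma-\lambda_1>0$ and $\sigma-\lambda_2<0$, the integrals can be evaluated explicitly and dominated by $C_1 e^{\sigma s}$ for some $C_1>0$ depending only on $d_1,c,\beta,\sigma$. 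Since $F_1(\varphi)\equiv 0$ for $s\leq 0$, this yields $|F_1(\varphi)-F_1(\tilde\varphi)|_\sigma\leq C_1 L (d_1(\lambda_2-\lambda_1))^{-1}|\varphi-\tilde\varphi|_\sigma$.

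For $F_2$, I would carry out the analogous estimate based on \eqref{l02}, but here the integration range is all of $\mathbb R$ and the weight $e^{\sigma|\xi|}$ changes exponent at $\xi=0$. I therefore split each integral at $\xi=0$ (and also treat the cases $s\geq 0$ and $s<0$ separately) so that the integrand is of the form $e^{\mu_i(s-\xi)}e^{\pm\sigma\xi}$ on each piece. The assumption $\sigma\in(0,\min\{|\mu_1|,\mu_2\})$ guarantees $\mu_1+\sigma<0<\mu_2-\sigma$, which makes every resulting elementary integral finite and bounded by a constant multiple of $e^{\sigma|s|}$. Summing the four pieces gives $|F_2(\varphi)(s)-F_2(\tilde\varphi)(s)|\leq C_2 e^{\sigma|s|}|\varphi-\tilde\varphi|_\sigma$, and hence $|F_2(\varphi)-F_2(\tilde\varphi)|_\sigma\leq C_2'|\varphi-\tilde\varphi|_\sigma$.

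The main obstacle I anticipate is the bookkeeping for $F_2$ when $s<0$: the tail integral $\int_{-\infty}^s e^{\mu_1(s-\xi)}e^{-\sigma\xi}d\xi$ combines a growing weight $e^{-\sigma\xi}$ (as $\xi\to-\infty$) with the decaying kernel $e^{-\mu_1\xi}$, and one must use $\mu_1+\sigma<0$ to obtain an antiderivative of the correct sign, finally producing exactly the factor $e^{\sigma|s|}=e^{-\sigma s}$ needed to match the norm on the left. Once these kernel estimates are in place, combining the bounds for $F_1$ and $F_2$ gives a Lipschitz constant for $F=(F_1,F_2):\Gamma\to\Gamma$ in $|\cdot|_\sigma$, which proves the lemma.
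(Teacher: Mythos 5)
Your proposal is correct and follows essentially the same route as the paper: establish a uniform Lipschitz constant for $H_i$ on the bounded rectangle $\mathcal R$, then estimate $|F_i(\hat\varphi)-F_i(\tilde\varphi)|_\sigma$ by pushing the factor $e^{\sigma|\xi|}e^{-\sigma|s|}$ through the explicit kernel integrals, using $\sigma<\min\{|\lambda_1|,\lambda_2,|\mu_1|,\mu_2\}$ to make them converge. The paper only writes out the $F_1$ computation and dismisses $F_2$ with ``similarly''; your remarks about splitting the $F_2$ integrals at $\xi=0$ and treating $s\geq 0$ and $s<0$ separately are exactly the bookkeeping that the paper leaves implicit, so if anything your plan is slightly more explicit on the point the paper skips.
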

\begin{proof}\, From the hypothesis $(\textbf{A}_3)$, it is easy to see that, for some $L>0$ and all $\hat\varphi,\;\tilde\varphi\in \Gamma$,
$$|H_1(\hat{\varphi})-H_1(\tilde{\varphi})|_\sigma\leq (L+\beta)|\hat{\varphi}-\tilde{\varphi}|_\sigma.$$
By a direct calculation, we have
$$\begin{aligned}
&|F_1(\hat{\varphi})-F_1(\tilde{\varphi})|_\sigma\\
=& \left|\frac{1}{d_1(\lambda_{2}-\lambda_{1})}\left[\int_0^{s}K_1(\xi,s)(H_1(\hat{\varphi})(\xi)-H_1(\tilde{\varphi})(\xi))d\xi
+\int_{s}^{\infty}K_2(\xi,s)(H_1(\hat{\varphi})(\xi)-H_1(\tilde{\varphi})(\xi))d\xi\right]\right|_\sigma\\
\leq& \frac{1}{d_1(\lambda_{2}-\lambda_{1})}\left|\int_0^{s}K_1(\xi,s)|H_1(\hat{\varphi})(\xi)-H_1(\tilde{\varphi})(\xi)|d\xi
+\int_{s}^{\infty}K_2(\xi,s)|H_1(\hat{\varphi})(\xi)-H_1(\tilde{\varphi})(\xi)|d\xi\right|_\sigma\\
\leq&\frac{L+\beta}{d_1(\lambda_{2}-\lambda_{1})}|\hat{\varphi}-\tilde{\varphi}|_\sigma\sup_{s\in\mathbb{R}^+}\left[\int_0^{s}K_1(\xi,s)
e^{\sigma(\xi-s)}d\xi+\int_{s}^{\infty}K_2(\xi,s)e^{\sigma(\xi-s)}d\xi\right]\\
=&\frac{L+\beta}{d_1(\lambda_{2}-\lambda_{1})}|\hat{\varphi}-\tilde{\varphi}|_\sigma\sup_{s\in\mathbb{R}^+}\left(1
-e^{(\lambda_{1}-\sigma) s}\right)\frac{\lambda_{2}-\lambda_{1}}{(\lambda_{2}-\sigma) (\sigma-\lambda_{1})}\\
\leq&\frac{L+\beta}{d_1(\lambda_{2}-\sigma)(\sigma-\lambda_{1})}|\hat{\varphi}-\tilde{\varphi}|_\sigma,
\end{aligned}$$
which clearly implies $F_1: \Gamma\rightarrow B_\sigma(\mathbb{R},\mathbb{R}^2)$ is continuous.

Similarly we can show  $F_2: \Gamma\rightarrow B_\sigma(\mathbb{R},\mathbb{R}^2)$ is continuous. Hence $F$ is continuous on $\Gamma$.
\end{proof}

\begin{lem}\lbl{s6}
 $F:\Gamma\rightarrow\Gamma$ is compact.
\end{lem}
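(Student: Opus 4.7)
The plan is to show that the image $F(\Gamma)$ is relatively compact in the Banach space $(B_\sigma(\mathbb{R},\mathbb{R}^2),|\cdot|_\sigma)$ by combining equicontinuity on compact sets (via Arzel\`a--Ascoli) with the exponential weight in the norm, which handles the tails.

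First I would establish a uniform $C^1$ bound on $F(\Gamma)$. Since $\varphi \in \Gamma$ takes values in the rectangle $\mathcal{R}$, the nonlinearities $H_1(\varphi)$ and $H_2(\varphi)$ are bounded by a constant $M_0$ depending only on $\beta,f_1,f_2$ and $\mathcal{R}$. Using the explicit integral representations \eqref{l01} and \eqref{l02} and the corresponding formulas for $(F_i(\varphi))'$ (obtained by differentiating the kernels $K_1,K_2$ and the exponentials in $s$, as already used in the proof of Lemma \ref{aa0}), a direct calculation gives a constant $M_1 > 0$, independent of $\varphi\in\Gamma$, such that
\[
|F_i(\varphi)(s)|+|(F_i(\varphi))'(s)|\leq M_1\quad\text{for all }s\in\mathbb{R},\ i=1,2.
\]
In particular $F(\Gamma)$ is uniformly bounded and equicontinuous on each compact interval $[-N,N]\subset\mathbb{R}$.

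Next, given any sequence $\{\varphi^{(n)}\}\subset\Gamma$, I would apply Arzel\`a--Ascoli on $[-N,N]$ for each integer $N\geq 1$, and then extract a diagonal subsequence $\{\varphi^{(n_k)}\}$ such that $\{F(\varphi^{(n_k)})\}$ converges in $C(K,\mathbb{R}^2)$ uniformly on every compact set $K\subset\mathbb{R}$ to some limit $\Phi\in C(\mathbb{R},\mathbb{R}^2)$; the pointwise bound $|\Phi(s)|\leq M_1$ persists.

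Finally I would upgrade local uniform convergence to convergence in the $|\cdot|_\sigma$ norm using the exponential weight. For any $\epsilon>0$, pick $N=N(\epsilon)$ so large that $2M_1 e^{-\sigma N}<\epsilon/2$; then for all $k,\ell$,
\[
\sup_{|s|\geq N}\bigl|F(\varphi^{(n_k)})(s)-F(\varphi^{(n_\ell)})(s)\bigr|e^{-\sigma|s|}\leq 2M_1 e^{-\sigma N}<\epsilon/2,
\]
while the local uniform convergence on $[-N,N]$ controls the complementary supremum for $k,\ell$ large. Hence $\{F(\varphi^{(n_k)})\}$ is Cauchy in $(B_\sigma,|\cdot|_\sigma)$, so $F(\Gamma)$ is precompact and $F:\Gamma\to\Gamma$ is compact.

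The only step requiring genuine care is the uniform bound on $(F_i(\varphi))'$, since one must differentiate under the integral sign in \eqref{l01}, handle the (non-smooth) kernel $K_1$ at $\xi=s$, and check that the boundary terms arising from differentiating in $s$ cancel (as happens because $K_1(s,s)=K_2(s,s)=e^{\lambda_1 s}-e^{\lambda_2 s}$ contributes no pointwise jump); everything else is bookkeeping.
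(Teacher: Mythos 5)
Your plan follows the paper's proof essentially step for step: both establish a uniform bound on $(F_i(\varphi))'$ from the explicit integral representations, deduce equicontinuity, apply Arzel\`a--Ascoli together with a diagonal extraction to get local uniform convergence, and then convert this to Cauchyness in $|\cdot|_\sigma$ via the exponential weight and the uniform $L^\infty$ bound. (Minor slip: $K_1(s,s)=K_2(s,s)=1-e^{(\lambda_1-\lambda_2)s}$, not $e^{\lambda_1 s}-e^{\lambda_2 s}$, but the cancellation of boundary terms you point to is correct.)
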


\begin{proof}\,Since $F$ is continuous on $\Gamma$ by Lemma \ref{s5}, and $\Gamma$ is a bounded set in $B_\sigma(\mathbb R, \mathbb R^2)$,
it suffices to show that $F(\Gamma)$ is a relatively compact set. To this end, let
$$
\rho:=\sup\{|H_i(\varphi)|:\varphi\in\Gamma,i=1,2\}.
$$
In view of $F_1(\varphi)(s)=0$ for $s<0$, we get $(F_1(\varphi))'(s)=0$ when $s<0$. Moreover, for any $\varphi\in\Gamma$ and $s>0$,
\bess(F_1(\varphi))'(s)=\frac{1}{d_1(\lambda_{2}-\lambda_{1})}\left[\int_0^{s} K_{1s}(\xi,s)H_1(\varphi)(\xi)d\xi
+\int_s^{\infty}K_{2s}(\xi,s)H_1(\varphi)(\xi)d\xi\right].\lbl{s001}\eess
Hence,
\bes\begin{aligned}
|(F_1(\varphi))'(s)|&\leq\frac{\rho}{d_1(\lambda_{2}-\lambda_{1})}
\left[\int_{0}^s|K_{1s}(\xi,s)|d\xi
+\int_s^{\infty}|K_{2s}(\xi,s)|d\xi\right]\\
&=\frac{\rho}{d_1(\lambda_{2}-\lambda_{1})}e^{\lambda_{1}s}\left(1-\frac{\lambda_1}{\lambda_2}\right)\leq\frac{\rho}{d_1\lambda_{2}},\;\forall s>0.
\end{aligned}\lbl{s002}\ees
We thus see that $s\to F_1(\varphi)(s)$ is  Lipschitz continuous with Lipschitz constant $L_1:=\frac{\rho}{d_1\lambda_{2}}$ independent
of $\varphi\in\Gamma$.

 Similarly, from \eqref{l02} we have
\bess
(F_2(\varphi))'(s)&=\frac{1}{d_2(\mu_2-\mu_1)}\left[\mu_1\int_{-\infty}^se^{\mu_1(s-\xi)}H_2(\varphi)(\xi)d\xi +\mu_2\int_s^{\infty}e^{\mu_2(s-\xi)}H_2(\varphi)(\xi)d\xi\right],
\eess
and
\bess\begin{aligned}
|(F_2(\varphi))'(s)|&\leq\frac{\rho}{d_2(\mu_2-\mu_1)}\left[|\mu_1|\int_{-\infty}^se^{\mu_1(s-\xi)}d\xi +\mu_2\int_s^{\infty}e^{\mu_2(s-\xi)}d\xi\right]\\
&\leq\frac{2\rho}{d_2(\mu_2-\mu_1)}.
\end{aligned}\lbl{s005}\eess
Thus $\{F(\varphi)(s): \varphi\in\Gamma\}$ is a family of equi-continuous functions of $s\in\mathbb R$.

Let $\Phi_j$ be a sequence of $\Gamma$ and $\upsilon_j=F(\Phi_j)$. Then the sequence $\upsilon_j$ is equi-continuous.
It follows from Lemma \ref{s3}(ii) that $\upsilon_j(s)$ is nondecreasing in $s\in\mathbb{R}$.
Noting that $\Gamma$ is bounded in $L^\infty(\mathbb{R},\mathbb{R}^2)$,
by the Arzela-Ascoli theorem, we conclude that for any $R>0$, there exists a  convergent subsequence of $\upsilon_j|_{[-R,R]}$ in $C([-R,R],\mathbb{R}^2)$. Using a standard diagonal selection scheme, we can extract a subsequence $\upsilon_{j_k}$ that converges in $C([-R,R],\mathbb{R}^2)$ for every $R>0$. Without loss of generality, we assume that the sequence $\upsilon_j$ itself converges in each  $C([-R,R],\mathbb{R}^2)$. From this, it follows easily that
 $\upsilon_j$ is Cauchy in $B_\sigma(\mathbb{R},\mathbb{R}^2)$, and hence it is convergent. This proves the precompactness of $F(\Gamma)$.
\end{proof}

Since $\Gamma$ is a bounded closed convex set of $B_\sigma(\mathbb R, \mathbb R^2)$, by Lemmas \ref{s4}, \ref{s5} and \ref{s6}, we
can apply Schauder's fixed point theorem to conclude that $F$ has a fixed point $\varphi$ in $\Gamma$, which is a non-decreasing
solution of \eqref{function1}. To complete the proof of Proposition \ref{s7}, it remains to prove the following result.

\begin{lem}\lbl{s8} The fixed point
$\varphi$ obtained above satisfies \eqref{function2a2}.
\end{lem}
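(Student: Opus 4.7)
\medskip
\noindent\textit{Proof plan.} The plan is to exploit three ingredients already in hand: the monotonicity of $\varphi$, the sandwich bounds $\underline{\varphi}\leq\varphi\leq\overline{\varphi}$ with their prescribed asymptotics, and the structural hypothesis $(\mathbf A_1)$ that isolates $\mathbf{K}$ as essentially the only zero of $(f_1,f_2)$ in $\mathcal{R}$ once the first component is positive. The limits at $-\infty$ come almost for free from the comparison with $\overline{\varphi}$, while the limits at $+\infty$ require a little bit of ODE analysis to pin them to $\mathbf{K}$.

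\smallskip
\noindent\textbf{Limits at $-\infty$.} First I would note that $\varphi_1(s)\equiv 0$ for $s\leq 0$ by construction of $F_1$, so $\varphi_1(-\infty)=0$ trivially. For the second component, the bounds $0\leq\underline{\varphi}_2\leq\varphi_2\leq\overline{\varphi}_2$ together with $\overline{\varphi}_2(-\infty)=0$ from Definition \ref{s2} immediately force $\varphi_2(-\infty)=0$. This handles the left-hand asymptotic condition in \eqref{function2a2}.

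\smallskip
\noindent\textbf{Existence and positivity of the limits at $+\infty$.} Each $\varphi_i$ is non-decreasing and bounded (above by $\overline{\varphi}_i\leq k_i$ and below by $0$), so $a_i:=\varphi_i(\infty)\in[0,k_i]$ exists. Since the hypothesis gives $\underline{\varphi}_1\not\equiv 0$, there is some $s_0$ with $\underline{\varphi}_1(s_0)>0$; monotonicity of $\varphi_1$ and $\varphi_1\geq \underline{\varphi}_1$ then yield $a_1\geq \varphi_1(s_0)\geq\underline{\varphi}_1(s_0)>0$. Thus $(a_1,a_2)$ sits in $(0,k_1]\times[0,k_2]$, which is exactly the region in which $(\mathbf A_1)$ says the only simultaneous zero of $f_1$ and $f_2$ is $\mathbf{K}$.

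\smallskip
\noindent\textbf{Identifying the limits as $\mathbf{K}$.} The heart of the proof is to show $f_i(a_1,a_2)=0$ for $i=1,2$, after which $(\mathbf A_1)$ finishes the argument. The computation \eqref{s002} in Lemma \ref{s6} (and its analogue for $F_2$) already gave a uniform bound for $(F_i(\varphi))'$, so from $\varphi=F(\varphi)$ I obtain $|\varphi_i'|\leq M$ on its domain; then the ODE \eqref{function1} itself bounds $|\varphi_i''|$. Since $\varphi_i$ is monotone bounded, $\varphi_1'\in L^1(\mathbb{R}^+)$ and $\varphi_2'\in L^1(\mathbb{R})$; combined with the uniform bound on $\varphi_i''$, the standard fact that an $L^1$ function with bounded derivative must tend to zero gives $\varphi_i'(s)\to 0$ as $s\to+\infty$. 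Passing to the limit in $d_i\varphi_i''-c\varphi_i'+f_i(\varphi)=0$ and using continuity of $f_i$ (from $(\mathbf A_3)$), I conclude that $d_i\varphi_i''(s)\to -f_i(a_1,a_2)$. If this limit were nonzero, $\varphi_i'$ would be unbounded near $+\infty$, contradicting the bound just established, so $f_i(a_1,a_2)=0$ for $i=1,2$. Invoking $(\mathbf A_1)$ with $a_1>0$ gives $(a_1,a_2)=\mathbf{K}$, which is the remaining part of \eqref{function2a2}.

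\smallskip
\noindent\textbf{Main obstacle.} The only non-routine point is justifying $\varphi_i'(+\infty)=0$; this hinges on both the uniform derivative estimate from \eqref{s002} and on $\varphi_i'\in L^1$, and it is the step where I would be most careful (in particular checking that the kernel estimate for $F_2$ really yields a bound on $(F_2(\varphi))'$ that is uniform in $s\in\mathbb{R}$, not just locally). Everything else is a direct application of the sandwich bounds and $(\mathbf A_1)$.
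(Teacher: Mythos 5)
Your proposal is correct and follows the paper's overall structure: sandwich bounds give the limits at $-\infty$, monotonicity and $\underline\varphi_1\not\equiv 0$ give $\varphi_1(\infty)\in(0,k_1]$, the limit is shown to be an equilibrium of $(f_1,f_2)$, and then $(\mathbf{A}_1)$ identifies it as $\mathbf{K}$. The one place where you diverge is the equilibrium step: the paper simply cites Lemma~2.2 of \cite{WZ} for the fact that a monotone bounded solution of \eqref{function1} must converge to a zero of $(f_1,f_2)$, whereas you reprove it in place via a Barbalat-type argument (monotone bounded $\Rightarrow$ $\varphi_i'\in L^1$, the uniform derivative bound from the proof of Lemma~\ref{s6} plus the ODE bounds $\varphi_i''$, hence $\varphi_i'(s)\to 0$, and then a second pass through the ODE kills $f_i(a_1,a_2)$). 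That reproduction is sound — the bound right after \eqref{s002} is indeed uniform in $s\in\mathbb{R}$ for $F_2$, addressing your own concern — so your version buys a more self-contained proof at the cost of a paragraph, while the paper's is shorter by outsourcing this standard fact.
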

\begin{proof}
From $\underline \varphi(s)\leq \varphi(s)\leq \overline\varphi(s)$ and $\underline{\varphi}_1(s)=\overline{\varphi}_1(s)=0$ for $s\leq0$,
 $\underline{\varphi}_2(-\infty)=\overline{\varphi}_2(-\infty)=0$, we obtain $\varphi(-\infty)=(0,0)$. Moreover, due to $0\leq \underline \varphi_1(s)\not\equiv 0$ for $s\in\mathbb R$, we have $0\leq  \varphi_1(s)\not\equiv 0$ for $s\in\mathbb R$. It then follows
from the monotonicity of $\varphi_1(s)$ that
$\varphi_1(\infty)\in (0, k_1]$. Using \eqref{function1}, it is well known that (cf. lemma 2.2 in \cite{WZ}) $f_1(\varphi(\infty))=f_2(\varphi(\infty))=0$. Thus we may use
 $(\textbf{A}_1)$ to conclude that $\varphi(\infty)={\bf K}$. Hence  \eqref{function2a2} holds.\end{proof}

\section*{Acknowledgments}
The research in this work was supported by the Natural Science Foundation of China(11671243, 61672021), the Shaanxi New-star Plan of Science and Technology(2015KJXX-21),
the Natural Science Foundation of Shaanxi Province(2014JM1003), the Fundamental Research Funds for the Central Universities(GK201701001, GK201302005), and
 the Australian Research Council.

\end{document}